\newtheorem{thm}{Theorem}
\newtheorem{prop}{Proposition}[section]
\crefname{prop}{Proposition}{Propositions}		
\newtheorem{lm}[prop]{Lemma}
\crefname{lm}{Lemma}{Lemmas}				
\newtheorem{cor}[prop]{Corollary}
\theoremstyle{definition}
\newtheorem{dfn}[prop]{Definition}
\theoremstyle{remark}
\newtheorem{rem}[prop]{Remark}
\DeclareMathOperator{\im}{Im}
\DeclareMathOperator{\rdim}{rel\,dim}
\newcommand{\lrarr}{\longrightarrow}
\newcommand{\R}{\mathbb{R}}
\newcommand{\Z}{\mathbb{Z}}
\newcommand{\M}{\mathcal{M}}
\renewcommand{\P}{\mathbb{C}P}
\renewcommand{\L}{\Lambda}
\newcommand{\mI}{\mathcal{I}}
\newcommand{\Hh}{\widehat{H}}
\newcommand{\p}{\mathfrak{p}}
\newcommand{\pkl}{\p_{k,l}}
\newcommand{\q}{\mathfrak{q}}
\newcommand{\m}{\mathfrak{m}}
\renewcommand{\d}{\partial}
\newcommand{\mR}{\mathfrak{R}}
\newcommand{\at}{\tilde{\alpha}}
\newcommand{\etat}{\tilde{\eta}}
\newcommand{\Mt}{\widetilde{\M}}
\newcommand{\bt}{\tilde b}
\newcommand{\gt}{\tilde\gamma}
\newcommand{\qt}{\tilde\q}
\newcommand{\mC}{\mathfrak{C}}
\newcommand{\evbt}{\widetilde{evb}}
\newcommand{\evit}{\widetilde{evi}}
\newcommand{\evt}{\widetilde{ev}}
\newcommand{\mg}{\m^{\gamma}}
\renewcommand{\ll}{\langle\!\langle}
\renewcommand{\gg}{\rangle\!\rangle}
\newcommand{\RP}{\mathbb{R}P}
\newcommand{\sly}{\Pi}
\newcommand{\pr}{\varpi}
\newcommand{\lp}{{\prec}}
\renewcommand{\a}{\alpha}
\newcommand{\pg}{\p^{\gamma}}
\newcommand{\pbg}{\p^{b,\gamma}}
\newcommand{\qbg}{\q^{b,\gamma}}
\newcommand{\qg}{\q^{\gamma}}
\newcommand{\qtbg}{\qt^{\bt,\gt}}
\newcommand{\ptbg}{\pt^{\bt,\gt}}
\newcommand{\ptg}{\pt^{\gt}}
\newcommand{\V}{\Vert}
\newcommand{\A}{\mathcal{A}}
\newcommand{\lst}{\a}
\newcommand{\pt}{\tilde{\p}}
\newcommandx{\ptkl}[2][1=k,2=l]{\pt_{#1,#2}}
\newcommand{\lstt}{\tilde{\lst}}
\newcommand{\alt}{\a}
\newcommandx{\aslt}[1]{{\boldsymbol{\a}}_{(#1)}}
\newcommandx{\atslt}[1]{\boldsymbol{\at}_{(#1)}}
\newcommand{\glt}{\gamma}
\newcommandx{\gslt}[1]{\boldsymbol{\gamma}_{(#1)}}
\newcommandx{\gtslt}[1]{\boldsymbol{\gt}_{(#1)}}
\newcommand{\s}{\mathfrak{s}}
\newcommandx{\ngl}[1]{\langle #1 \rangle}
\newcommandx{\rgl}[1]{( #1 )}
\let\originalleft\left
\let\originalright\right
\renewcommand{\left}{\mathopen{}\mathclose\bgroup\originalleft}
\renewcommand{\right}{\aftergroup\egroup\originalright}
\DeclareMathOperator{\pslsym}{PSL}
\newcommandx{\psl}[2][2=\mathbb{C}]{\pslsym_{#1}(#2)}
\DeclareMathOperator{\autsym}{Aut}
\newcommandx{\aut}[1]{\autsym(#1)}
\DeclareMathOperator{\glsym}{GL}
\newcommandx{\gl}[2][2=\mathbb{C}]{\glsym_{#1}(#2)}
\newcommand{\norm}[1]{\left|{#1}\right|}
\newcommandx{\fspace}[7][3=, 4=, 5=, 6=, 7={,}]{
        \ifthenelse{\isempty{#5}}
            {\ifthenelse{\isempty{#6}}
                {\ifthenelse{\isempty{#3}}                  
                    {{#1}_{#4}^{#2}}
                    {{#1}_{#4}^{#2 #7 #3}}}
                {\fspace{#1}{#2}[#3][#4][#6][]}}            
            {\ifthenelse{\isempty{#6}}
                {\ifthenelse{\isempty{#3}}                  
                        {{#1}_{#4}^{#2} (#5)}               
                        {{#1}_{#4}^{#2, #3} (#5)}}          
                {\ifthenelse{\isempty{#3}}                  
                        {{#1}_{#4}^{#2} (#5 #7 #6)}           
                        {{#1}_{#4}^{#2, #3} (#5 #7 #6)}}}     
}
\newcommandx{\slist}[4][2=, 3=, 4={,}]{
  \ifthenelse{\isempty{#2} \AND \isempty{#3}}
  	{#1}
	{\ifthenelse{\isempty{#3}}
		{#1#4 #2}
		{#1#4 #2#4 #3}}}
\newcommandx{\fspacei}[9][2=,3=,4=,5=,6=,7=,8=,9=]{
  \ifthenelse{\isempty{#7}}
  	{{#1}^{\slist{#2}[#3][][,]}_{\slist{#4}[#5][#6][,]}}
  	{{#1}^{\slist{#2}[#3][][,]}_{\slist{#4}[#5][#6][,]}
	\left( \slist{{\slist{{#7}}[{#8}][][,]}}[{#9}][][;] \right)}
}
\newcommandx{\moduli}[6][1=k,2=\ell,3=\beta,4=X,5=L,6=J]
{\fspacei{\mathcal{M}}[#3][][#1][#2][][#4][#5][#6]}
\newcommandx{\modulis}[3][1=k+1,2=\ell,3=\beta]
{\fspacei{\mathcal{M}}[#3][][#1][#2][][][][]}
\newcommandx{\bc}[7][1=i,2=k_1,3=\ell_1,4=\beta_1,5=k_2,6=\ell_2,7=\beta_2]
{\mathcal{B}_{#1}\left( \left. #2, #3, #4 \, \right| \, #5, #6, #7 \right)}
\newcommandx{\qkl}[3][1=k,2=l,3=\beta]{\mathfrak{q}_{#1,#2}^{#3}}
\newcommandx{\df}[4][2=,3=,4=*]{\fspacei{A}[#4][][][][][#1][#3][{#2}]}
\newcommandx{\cz}[2][2=]{\fspacei{C}[][][0][][][{#1}][{#2}]} 
\newcommandx{\ck}[3][3=]{\fspacei{C}[#1][][][][][{#2}][{#3}]} 
\newcommandx{\lps}[2]{\fspacei{\ell}[#1][][][][][{#2}]} 
\newcommandx{\Set}[2][2=]{
    \ifthenelse{\isempty{#2}}
        {\left\{ {#1} \right\}}
        {\left\{ {#1}  \, \middle| \, {#2} \right\}}
}
\newcommandx{\degree}[1][1=]{\operatorname{deg} #1}
\newcommandx{\homo}[4][3=, 4=\mathbb{Z}]{\fspacei{H}[][][#1][][][#2][#3][#4]}
\newcommandx{\inner}[2][1={\cdot}, 2={\cdot}]{\left<{#1},{#2}\right>}
\newcommandx{\nnorm}[1][1=\cdot]{\left|\left|{#1}\right|\right|}
\newcommandx{\matring}[2][2=\mathbb{R}]{M_{#1}(#2)}
\newcommandx{\complexst}[2][2=]{\fspace{\mathcal{J}}{}[][][#1][#2]}
\newcommandx{\Hom}[4][3=,4=]{\fspace{\mathrm{Hom}}{#3}[][{#4}][{#1}][{#2}]}
\newcommandx{\complexify}[1]{{\fspace{{#1}}{}[][\mathbb{C}][][]}}
\newcommandx{\realpart}[2][2=]{\fspace{{#1}}{{#2}}[][\mathbb{R}][][]}
\newcommandx{\Grass}[3]{\fspace{\mathrm{Gr}}{}[][#1][#2][#3]}
\newcommandx{\TotallyReal}[2]{\fspace{\mathrm{TR}}{}[][][#1][#2]}
\newcommandx{\GLVec}[2]{\fspace{\glsym}{}[][#2][#1][]}
\newcommandx{\Ainf}{A_{\infty}}
\newcommandx{\RG}{\mathbb{R}_{\geq 0}}
\newcommandx{\intr}[1]{#1^{\circ}}
\newcommandx{\hilb}[1][1=]{\fspacei{\mathcal{H}}[][][][][][{#1}][]}
\newcommandx{\supnorm}[2][1=\cdot,2=]{\nnorm[#1]_{\infty,#2}}
\newcommandx{\lpnorm}[3]{\nnorm[#1]_{\lp{#2}{#3}}}
\newcommand{\ZZ}{\mathbb{Z}}      
\newcommand{\NN}{\mathbb{N}}      
\newcommandx{\cp}[1][1=n]{\mathbb{CP}^{#1}}	
\newcommandx{\Nov}[1][1=]{\fspace{\Lambda}{}[][0,\mathrm{nov}][{#1}][][]}
\newcommandx{\comoduliev}[5][1=k+1, 2=l, 3=X, 4=L, 5=\beta]{\comoduli[#1][#2][#3][#4][#5]}
\newcommandx{\coker}[1]{\operatorname{coker} \left( #1 \right)}
\newcommandx{\hsnorm}[1]{\nnorm[#1]_{\text{HS}}}
\newcommandx{\trnorm}[1]{\nnorm[#1]_{\text{tr}}}
\newcommand{\catname}[1]{{\normalfont\textbf{#1}}}
\newcommandx{\CVec}[1]{\fspace{\catname{Vec}}{}[][#1][][]}
\newcommandx{\evb}[4][1=j,2=k,3=\ell,4=\beta]{\mathrm{evb}_{#1}^{#2,#3,#4}}
\newcommandx{\evbs}[2][1=j,2=\beta]{\mathrm{evb}_{#1}^{#2}}
\newcommandx{\evi}[4][1=j,2=k,3=\ell,4=\beta]{\mathrm{evi}_{#1}^{#2,#3,#4}}
\newcommandx{\evis}[2][1=j,2=\beta]{\mathrm{evi}_{#1}^{#2}}
\newcommandx{\sign}[2][1=\alpha,2=\gamma]{\varepsilon(#1;#2)}
\newcommandx{\fbp}[2][1={\evbs[i][\beta_1]},2={\evbs[0][\beta_2]}]{\tensor*[_{#1}]{\times}{_{#2}}}
\newcommandx{\mc}[1]{\mathcal{#1}}
\newcommandx{\conn}[4][1=j,2=k,3=\ell,4=\beta]{\nabla_{#1}^{#2,#3,#4}}
\newcommandx{\tlb}[4][1=j,2=k,3=\ell,4=\beta]{\mc{L}_{#1}^{#2,#3,#4}}
\newcommandx{\xrightarrowdbl}[2][1=,2=]{%
  \xrightarrow[#2]{#1}\mathrel{\mkern-14mu}\rightarrow
}
\newcommandx{\tens}[1]{T \left( #1 \right)}		
\newcommandx{\tensr}[1]{\overline{T} \left( #1 \right)} 
\newcommand{\ootimes}{\overline{\otimes}}		
\newcommandx{\degr}[1]{\left| {#1} \right|}		
\newcommandx{\degb}[1]{| {#1} |}			
\newcommandx{\degs}[1]{\Vert {#1} \Vert}		
\newcommandx{\gen}[1]{\left< {#1} \right>}
\newcommandx{\eqcl}[1]{\left[ {#1} \right]}		
\newcommandx{\choch}[2][1,2=*]{{CH}_{#2} \left( #1 \right)}
\newcommandx{\hhoch}[2][1,2=*]{HH_{#2} \left( #1 \right)}
\newcommandx{\dhoch}[1][1=]{\fspacei{\partial_{\textrm{hoch}}}[][][][][][#1][][]}
\newcommandx{\chochnorm}[2][1,2=*]{\widetilde{CH}_{#2} \left( #1 \right)}
\newcommandx{\cconnes}[2][1,2=*]{{C}_{#2}^{\lambda} \left( #1 \right)}
\newcommandx{\hcyc}[2][1,2=*]{HC_{#2} \left( #1 \right)}
\renewcommandx{\t}[1][1=]{\fspacei{\tau}[][][][][][#1][][]}
\newcommandx{\cyccl}[1]{\left[ {#1} \right]}
\newcommandx{\ceconnes}[2][1,2=*]{{C}_{#2}^{\lambda,+} \left( #1 \right)}
\newcommandx{\hecyc}[2][1,2=*]{HC_{#2}^{+} \left( #1 \right)}
\newcommandx{\crconnes}[2][1,2=*]{\overline{C}_{#2}^{\lambda} \left( #1 \right)}
\newcommandx{\hrconnes}[2][1,2=*]{\overline{HC}_{#2} \left( #1 \right)}
\newcommandx{\cerconnes}[2][1,2=*]{\overline{C}_{#2}^{\lambda,+} \left( #1
\right)}
\newcommandx{\herconnes}[2][1,2=*]{\overline{HC}_{#2}^{+} \left( #1 \right)}
\newcommandx{\cdiff}[4][2=*,3=,4=,]{\fspacei{A}[#2][][][][][#1][#3][#4]}
\newcommandx{\hderham}[3][3=]{\fspacei{H}[#1][][\textrm{dR}][][][#2][][#3]}
\newcommandx{\ccur}[4][2=*,3=,4=]{\fspacei{\mathcal{A}}[#2][][#3][][][#1][][#4]}
\newcommandx{\hcur}[4][2=*,3=,4=]{\fspacei{\mathcal{H}}[#2][][#3][][][#1][][#4]}
\newcommandx{\dfzero}[4][4=]{\fspacei{A}[#1][][0][][][#2][#3][#4]}
\newcommandx{\hdfzero}[4][4=]{\fspacei{H}[#1][][0][][][#2][#3][#4]}
\title{Differential forms, open-closed maps, and Gromov-Witten axioms}
\subjclass[2020]{53D37, 53D45 (Primary) 19D55, 58A10, 32Q65 (Secondary)}
\author[P. Giterman]{Pavel Giterman}
\address{Institute of Mathematics\\ Hebrew University, Givat Ram\\Jerusalem, 9190401, Israel } \email{pavel.giterman@mail.huji.ac.il}
\author[J. Solomon]{Jake P. Solomon}
\address{Institute of Mathematics\\ Hebrew University, Givat Ram\\Jerusalem, 9190401, Israel } \email{jake@math.huji.ac.il}
\author[S. Tukachinsky]{Sara B. Tukachinsky}
\address{School of Mathematical Sciences\\ Tel Aviv University\\Tel Aviv, 6997801, Israel }\email{sarabt1@gmail.com}
\date{July 2026}
\begin{document}

\begin{abstract}
We construct open-closed maps on various versions of Hochschild and cyclic homology of the Fukaya $A_\infty$ algebra of a Lagrangian submanifold modeled on differential forms. The $A_\infty$ algebra may be curved. Properties analogous to Gromov-Witten axioms are verified.
The paper is written with applications in mind to gravitational descendants and obstruction theory.
\end{abstract}

\maketitle

\setcounter{tocdepth}{2}
\tableofcontents

\section{Introduction}

Open-closed maps~\cite{Abouzaid,FOOO,Seidel_ICM,Seidel_2009} take data that encodes information about Lagrangian submanifolds to data about the ambient symplectic manifold. For Liouville domains, this means that the Hochschild homology of the wrapped Fukaya category is mapped to the symplectic homology of the domain; for closed symplectic manifolds, the Hochschild homology of the Fukaya category is mapped to the quantum cohomology of the manifold.
Such maps can be used to derive information about the appropriate Fukaya category, e.g., verifying its non-triviality~\cite{RitterSmith} or giving a split-generation criterion~\cite{Abouzaid, RitterSmith, Sheridan, Sheridan25,AbouzaidFukayaOhOhtaOno}. They can also be used to derive information about the quantum/symplectic cohomology, e.g., to verify non-vanishing~\cite{RitterSmith} or study nilpotency of $c_1$~\cite{RitterSmith}.

The open-closed map is typically defined on chain level, on the Hochschild chain complex, and is then shown to descend to other related chain complexes --- most notably, the cyclic complex.
This cyclic version has further applications. For example, it is used to study obstruction classes in Lagrangian Floer theory~\cite{FOOO}, to verify that homological mirror symmetry implies enumerative mirror symmetry~\cite{GanatraSheridan}, to verify the isomorphism of the TE structures on Lagrangian Floer vs. quantum cohomology~\cite{Hugtenburg1}, and to find a cyclic $A_\infty$ minimal model of the Fukaya category~\cite{Ganatra2}.

The open-closed map can be restricted to sub-categories, and can further be specialized to the ``baby'' case where we fix one Lagrangian submanifold and take the Fukaya $A_\infty$ algebra associated to it. This last specialized version is the object of this paper. Specifically, we carry out the construction of an open-closed map via differential forms and currents.
The closest to our formalism in the existing literature is~\cite{Hugtenburg1}; the main difference is that, in order to circumvent the need for currents, the open-closed map in~\cite{Hugtenburg1} is defined by dualizing the closed-open map rather than by an explicit formula.

In our construction, the cyclic complex comes up naturally, as the operations that define the open-closed map have an obvious cyclic symmetry (Proposition~\ref{prop:p_sgn}).
Thus, we get chain-level open-closed maps, from various versions of the Hochschild and cyclic homology of the Fukaya $A_\infty$ algebra of a Lagrangian submanifold to the quantum cohomology.
For a curved $A_\infty$ algebra, the curvature term defines a cocycle in the cyclic complex. To make it exact,
we offer a new version of cyclic homology, the \textit{extended} cyclic homology.
We verify that the open-closed map descends to this homology as well.
A detailed discussion of the various types of homologies is found in Section~\ref{sec:Hochschild complexes}.

We proceed to verify that, on chain level, our open-closed maps satisfy properties analogous to the Gromov-Witten axioms. We conclude with a discussion of pseudo-isotopies of open-closed maps, which we include for the sake of future applications.

The machinery developed here is intended to be used to define open gravitational descendants and establish their properties in a future work~\cite{GitermanSolomon}.
It also has applications to the study of the space of bounding cochains, which in turn is useful in defining genus zero open Gromov-Witten invariants~\cite{T19}.

\subsection{Setting}\label{ssec:setting}

Consider a closed symplectic manifold $(X,\omega)$ with $\dim_{\R}X=2n$, and a connected closed Lagrangian submanifold $L$ with relative spin structure~$\s =\s_L$~\cite{FOOO,WehrheimWoodward}. Let $J$ be an $\omega$-tame~\cite{MS0} almost complex structure on $X$. Denote by $\mu:H_2(X,L;\Z) \to \Z$ the Maslov index~\cite{CieliebakGoldstein}.
Let $\sly$ be a quotient of $H_2(X,L;\Z)$ by a possibly trivial subgroup contained in the kernel of the homomorphism $\omega \oplus \mu : H_2(X,L;\Z) \to \R \oplus \Z.$ Thus the homomorphisms $\omega,\mu,$ descend to $\sly.$ Denote by $\beta_0$ the zero element of $\sly.$
We use the Novikov ring $\L$ which is a completion of a subring of the group ring of $\sly$.
The precise definition follows. Denote by $T^\beta$ the element of the group ring corresponding to $\beta \in \sly$, so $T^{\beta_1}T^{\beta_2} = T^{\beta_1 + \beta_2}.$
Then,
\[
\L=\left\{\sum_{i=0}^\infty a_iT^{\beta_i}\bigg|a_i\in\R,\beta_i\in \sly,\omega(\beta_i)\ge 0,\; \lim_{i\to \infty}\omega(\beta_i)=\infty\right\}.
\]
A grading is defined on $\L$ by declaring $T^\beta$ to be of degree $\mu(\beta).$ In particular, since the relative spin structure $\s$ on $L$ includes orientation, $T^\beta$ is of even degree for any $\beta$.

For $k,l\ge  0,$ denote by $\M_{k,l+1}(\beta)$ the moduli space of genus zero $J$-holomorphic open stable maps to $(X,L)$ of degree $\beta \in \sly$ with one boundary component, $k$ boundary marked points, and $l+1$ interior marked points. The boundary points are labeled according to their cyclic order. Denote by
$evb_i^\beta:\M_{k,l+1}(\beta)\to L,$ and
$evi_j^\beta:\M_{k,l+1}(\beta)\to X,$ the boundary and interior evaluation maps respectively, where $ i=1,\ldots,k,$ and $j=0,\ldots, l$.
Assume that $\M_{k,l+1}(\beta)$ is a smooth orbifold with corners. Then it carries a natural orientation induced by the relative spin structure on $(X,L)$, as in~\cite[Chapter 8]{FOOO}.
Examples of $(X,L)$ that satisfy our assumptions include $(\P^n, \RP^n)$ with the standard complex and symplectic structures or, more generally, flag varieties, Grassmannians, and various other homogeneous spaces, as detailed in~\cite[Example 1.5, Remark 1.6]{ST1}.

For a compact orbifold $M$, possibly with corners, denote by $A^*(M)$ the algebra of smooth differential forms on $M$ with coefficients in $\R$.
Denote by $\ccur{M}[*][][]$ the dual module of currents on $M$ with coefficients in $\R$, equipped with the cohomological grading, so the inclusion $A^*(M) \to \ccur{M}[*][][]$ preserves degree. Denote by $\hcur{M}[*][][]$ the cohomology of $\ccur{M}[*][][],$ which by~\cite[Theorem 14]{deRham} is isomorphic to the de Rham cohomology $H^*(M)$ of differential forms. On chain level, we need to use currents in order to push forward differential forms along maps that are not submersions, as in equation~\eqref{eq:pdef_intro} below. Working with currents as opposed to forms does not add much technical difficulty once their basic properties are established. Indeed, we avoid pulling back currents or taking products of currents, which can be delicate. See Section~\ref{ssec:currents} for background on currents on orbifolds.

Let $t_0,\ldots,t_N$ be formal variables with degrees in $\Z$.
Define the graded-commutative ring
\begin{equation}\label{eq:R}
R:=\L[[t_0,\ldots,t_N]],
\end{equation}
thought of as a differential graded algebra with trivial differential.

Define a valuation
$
\nu:R\lrarr \R
$
by
\[
\nu\left(\sum_{j=0}^\infty a_jT^{\beta_j}\prod_{i=0}^Nt_i^{l_{ij}} \right)
= \inf_{\substack{j\\a_j\ne 0}} \left(\omega(\beta_j)+\sum_{i=0}^N l_{ij}\right).
\]
Denote the positive valuation ideal in $R$ by $\mI_R: = \{\alpha\in R\,|\,\nu(\alpha)>0\}$.

Abbreviate $A^*(M;R)$ and $\ccur{M}[*][][R]$ for $A^*(M)\otimes R$ and $\ccur{M}[*][][]\otimes R$, respectively, where $\otimes$ is understood as the completed tensor product of differential graded-commutative algebras and modules respectively.
The gradings on $A^*(M;R)$ and $\A^*(M;R)$ take into account the degrees of $t_j, T^\beta,$ and the degree of differential forms and currents.
We denote the degree of an element $a$ of $A^*(M;R)$ or $\A^*(M;R)$ by $|a|$.
The valuation induced by $\nu$ on $A^*(M;R)$ and $\A^*(M;R)$ will still be denoted by $\nu$.
From now on, whenever tensor products and direct sums are taken in the text, we implicitly complete them with respect to $\nu$.

For the following particularly important special case, we use the notation
\[
C:= A^*(L;R).
\]
The $R$-algebra $C$ is endowed with a family of \textbf{$A_\infty$ structures} $\mg$, parameterized by certain elements $\gamma\in A^*(X;R)$. The operators $\mg$ are defined using moduli spaces of $J$-holomorphic stable disk maps, where $\gamma$ represents constraints imposed on interior marked points on the disk. See Section~\ref{ssec:q} for details. For a fixed $\gamma$, we say that $(C,\mg)$ is an \textbf{$A_\infty$-algebra}.
Furthermore, the constant function $1_L\in A^0(L)\subset C$ is a (strong) unit of the $A_\infty$ structure.

\subsection{Statement of results}

Fix $\gamma \in A^*(X;R)$ with $|\gamma|=2$, $d\gamma=0$, and $\nu(\gamma)>0$.
For $\beta\in \sly$ and $k,l\ge 0$, consider $evb_j^\beta:\M_{k,l+1}(\beta)\to L$ and
$evi_j^\beta:\M_{k,l+1}(\beta)\to X$ and
define
\begin{equation}\label{eq:p_intro}
\pg_k,\p^{\gamma,\beta}_k:C^{\otimes k}\lrarr \ccur{X}[*][][R]
\end{equation}
by
\begin{equation}\label{eq:pdef_intro}
\p^{\gamma,\beta}_k(\alpha_1,\ldots,\alpha_k):= (-1)^{\sum_{j=1}^k(n+j)(|\a_j|+1)}
\sum_{l\ge0}\frac{1}{l!}{evi_0^\beta}_* (\bigwedge_{j=1}^l(evi_j^\beta)^*\gamma\wedge \bigwedge_{j=1}^k (evb_j^\beta)^*\alpha_j).
\end{equation}
Note that $\pbg$ is defined using push-forward of a differential form on an orbifold with corners; the meaning of such operations is discussed thoroughly in~\cite{ST4} and is reviewed in Section~\ref{ssec:currents}.

Set
\begin{equation}\label{eq:pkg}
\pg_k :=\sum_{\beta\in\sly}T^\beta\p^{\gamma,\beta}_k.
\end{equation}
The same $\gamma$ can be used to define an $A_\infty$ structure $\mg$ on $C$.
We show that the operators $\pg_k$ induce maps from various versions of the Hochschild and cyclic homologies of $(C, \mg)$ into the appropriate versions of quantum cohomology, as detailed below.
The differentials of the various Hochschild and cyclic complexes depend on
$\gamma$ via the algebra structure $\mg$, but as we have fixed $\gamma$,
we omit it from our notation for the complexes and their homologies.

The \textbf{Hochschild chain complex} of $C$ is the shifted, reduced (i.e., starting with $j=1$) tensor algebra of $C$, so
\[
\choch[C] := \bigoplus_{j=1}^\infty C[1]^{\otimes j},
\]
equipped with the coboundary operator $\dhoch$, which is defined in terms of the $A_\infty$ structure on $C$ and increases degree by one. The fact that  $\partial^2_{\textrm{hoch}}=0$ is a direct consequence of the $A_\infty$ relations. For full details see Section~\ref{ssec:hoch}. The \textbf{Hochschild homology} is the cohomology of the Hochschild chain complex
\[
\hhoch[C] : = H^*(\choch[C][\bullet],\dhoch).
\]
Set
\begin{equation*}
  \p^{\gamma} = \sum_{k = 1}^{\infty} \p_k^{\gamma} \colon \choch[C]
  \rightarrow  \ccur{X}[*+n+1][][R].
\end{equation*}
The degree shift above is justified in Proposition~\ref{prop:p_deg}.

\begin{thm} \label{thm:pg-hoch}
  The map $\pg$ is a chain map of degree $n + 1$, i.e., it satisfies
  \begin{equation}
    d \circ \pg-(-1)^{n+1}\pg \circ \dhoch=0.
  \end{equation}
  Thus, it induces a map $\pg \colon \hhoch[C] \rightarrow \hcur{X}[* + n +
  1][][R]$
  between the Hochschild homology of $C$ and the de Rham currents cohomology of $X$.
  Alternatively, by de Rham's theorem, we may think of
$\pg$ as a map $\pg \colon \hhoch[C] \rightarrow H^{* + n +  1}(X;R)$.
\end{thm}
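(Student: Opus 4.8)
The plan is to reduce the theorem to a single chain-level identity relating the pushed-forward form $\pg_k(\alpha_1,\ldots,\alpha_k)$ and its exterior derivative to the $A_\infty$ structure maps $\mg$ appearing in $\dhoch$. The mechanism is Stokes' theorem on the moduli spaces $\M_{k,l+1}(\beta)$: applying $d$ to $(evi_0^\beta)_*(\bigwedge(evi_j^\beta)^*\gamma\wedge\bigwedge(evb_j^\beta)^*\alpha_j)$ and commuting $d$ past the pushforward produces two kinds of terms. First, there are terms where $d$ lands on one of the pulled-back forms; since $d\gamma=0$, only the $\alpha_j$ contribute, and these assemble into $\pg_k(\ldots,d\alpha_j,\ldots)$, matching the part of $\dhoch$ built from $\m_1^\gamma$ together with the internal differential on $C$. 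Second, Stokes produces a boundary contribution $\int_{\partial}$; the codimension-one boundary of $\M_{k,l+1}(\beta)$ is the union of fiber products $\M_{k_1+1,l_1+1}(\beta_1)\,{}_{evb}\!\times_{evb}\M_{k_2,l_2+1}(\beta_2)$ where a boundary node splits the disk, with the interior marked points and the $\gamma$-insertions distributed over the two components (the $\tfrac1{l!}$ weights combine correctly via the binomial identity coming from $\frac{1}{l_1!l_2!}\binom{l}{l_1}=\frac1{l!}\binom{l}{l_1,l_2}$). Pushing forward along the fiber product and using the projection/base-change formula for pushforwards expresses each boundary term as a composition in which one factor is an $\mg$ operator and the other is a $\pg$ operator, i.e. exactly the remaining terms of $d\circ\pg$ and of $\pg\circ\dhoch$.

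Concretely, I would first recall from Section~\ref{ssec:q} and Section~\ref{ssec:hoch} the precise formulas for $\mg$, for $\dhoch$, and the sign conventions, and record the structure of $\partial\M_{k,l+1}(\beta)$ (following the description used for the analogous closed-open/$\q$ operators). Next I would prove the key lemma: for fixed $\beta$ and inputs $\alpha_1,\ldots,\alpha_k\in C$,
\begin{equation*}
d\,\p^{\gamma,\beta}_k(\alpha_1,\ldots,\alpha_k)
= \pm\sum_j \p^{\gamma,\beta}_k(\ldots,d\alpha_j,\ldots)
\;\pm\!\!\sum_{\substack{\beta_1+\beta_2=\beta\\ \text{splittings}}}\!\! \p^{\gamma,\beta_1}\big(\ldots,\m^{\gamma,\beta_2}(\ldots),\ldots\big),
\end{equation*}
with all signs made explicit. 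This is obtained by applying the commutation rule "$d$ past $(evi_0^\beta)_*$ equals $(evi_0^\beta)_*\circ d$ up to a sign depending on the relative dimension, plus the boundary integral," which holds for currents as it does for forms, and then analyzing the boundary via the fiber-product decomposition exactly as in the treatment of the operators $\q$ in the cited works.

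Summing the key lemma over $\beta$ with the weights $T^\beta$ and over $k$, and matching against the definition of $\dhoch$ (which is precisely the alternating sum over ways of inserting an $\mg$ inside the tensor word, cyclically including the "wrap-around" term), gives $d\circ\pg=(-1)^{n+1}\pg\circ\dhoch$; the degree count $|\pg|=n+1$ is Proposition~\ref{prop:p_deg}, which I would invoke rather than reprove. The main obstacle is bookkeeping: getting every sign right — the Koszul signs from the shift $C[1]$, the orientation signs on $\M_{k,l+1}(\beta)$ and its boundary induced by the relative spin structure, and the sign in the Stokes/pushforward commutation, which involves $\mathrm{rel}\,\dim(evi_0^\beta)$ and hence the parity of $n$ — and confirming that the combinatorial weights $\tfrac1{l!}$ and the distribution of $\gamma$-insertions across nodes produce exactly the $A_\infty$/Hochschild combinatorics with no stray factors. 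A secondary technical point is justifying Stokes' theorem and the pushforward commutation at the level of currents on orbifolds with corners, for which I would cite the background in Section~\ref{ssec:currents}; once the key lemma is established at chain level, descending to homology is formal.
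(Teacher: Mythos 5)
Your proposal follows essentially the same route as the paper: the key chain-level identity you sketch is exactly the paper's Proposition~\ref{prop:pstructure}, proved as you describe via Stokes' theorem on $\M_{k,l+1}(\beta)$, the fiber-product decomposition of the codimension-one boundary, the push--pull formulas, and the $1/l!$ combinatorics, after which Theorem~\ref{thm:pg-hoch} follows by summing over $\beta$, $l$ and matching against $\dhoch$. The one ingredient you leave implicit is the cyclic symmetry of $\pg$ (Proposition~\ref{prop:p_sgn}), which the paper invokes explicitly to reconcile the geometric boundary terms --- where the $\mg$-output sits in the first slot and the consumed block may wrap around cyclically --- with the asymmetric form of the Hochschild differential (its wrap-around term); this is part of the sign bookkeeping you already acknowledge.
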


The \textbf{normalized Hochschild} chain complex $\chochnorm[C]$ is constructed
from $\choch[C]$ by modding out by those elements $\a_1\otimes\cdots\otimes\a_n$ such that $\a_j=1_L$ for some $j\ge 2$. See Section~\ref{sssec:normNred} for details.

\begin{thm}\label{thm:pg-normhoch}
  The map $\pg$ descends to the normalized Hochschild complex and gives
  us a chain map $\pg \colon \chochnorm[{C}] \rightarrow \ccur{X}[* + n + 1][][R]$.
\end{thm}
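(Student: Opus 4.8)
The plan is to show that $\pg$ annihilates the subspace $D\subset\choch[C]$ spanned by tensors $\a_1\otimes\cdots\otimes\a_m$ (of arbitrary length $m$) with $\a_p=1_L$ for some $p\ge 2$. Since $\chochnorm[C]=\choch[C]/D$ by construction (Section~\ref{sssec:normNred}), once this vanishing is known $\pg$ descends to $\chochnorm[C]$; and because $\pg$ is a chain map of degree $n+1$ on $\choch[C]$ (Theorem~\ref{thm:pg-hoch}) while $\dhoch$ descends to $\chochnorm[C]$, the induced map $\pg\colon\chochnorm[C]\to\ccur{X}[* + n + 1][][R]$ is again a chain map of degree $n+1$. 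By $R$-linearity and the formulas $\pg=\sum_k\pg_k$ and $\pg_k=\sum_{\beta\in\sly}T^\beta\,\p^{\gamma,\beta}_k$, the statement reduces to the claim that $\p^{\gamma,\beta}_k(\a_1,\ldots,\a_k)=0$ whenever $\a_p=1_L$ with $2\le p\le k$, for every $\beta\in\sly$.

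The key input is the forgetful-point property of the moduli spaces and evaluation maps underlying $\pg_k$, established for these moduli spaces in~\cite{ST1}. Fix $\beta$, $l\ge 0$, and $2\le p\le k$; in particular $k\ge 2$, so the moduli space $\M_{k-1,l+1}(\beta)$ is stable (it retains at least one boundary marked point together with the interior marked point $0$), and there is a forgetful map $\pi\colon\M_{k,l+1}(\beta)\to\M_{k-1,l+1}(\beta)$ that drops the $p$-th boundary marked point and stabilizes. This $\pi$ is proper with one-dimensional fibers, and the surviving evaluation maps factor through it: $evi_j^\beta=\overline{evi_j^\beta}\circ\pi$ for $0\le j\le l$, and $evb_i^\beta=\overline{evb_i^\beta}\circ\pi$ for $i\ne p$, where the barred maps denote the evaluation maps on $\M_{k-1,l+1}(\beta)$. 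Since $1_L\in A^0(L)$ is the constant function $1$, we have $(evb_p^\beta)^*1_L=1$, and as pullback commutes with the wedge product the integrand in~\eqref{eq:pdef_intro} equals $\pi^*\eta$, where
\[
\eta=\bigwedge_{j=1}^l(\overline{evi_j^\beta})^*\gamma\wedge\bigwedge_{\substack{1\le i\le k\\ i\ne p}}(\overline{evb_i^\beta})^*\a_i\in A^*(\M_{k-1,l+1}(\beta)),
\]
the overall sign in~\eqref{eq:pdef_intro} being a scalar irrelevant here. Hence for any test form $\psi$ on $X$ the current ${evi_0^\beta}_*(\pi^*\eta)$ pairs with $\psi$ to give $\pm\int_{\M_{k,l+1}(\beta)}\pi^*\bigl(\eta\wedge(\overline{evi_0^\beta})^*\psi\bigr)$, which vanishes because the integrand is pulled back along $\pi$ from $\M_{k-1,l+1}(\beta)$, hence has degree at most $\dim\M_{k-1,l+1}(\beta)=\dim\M_{k,l+1}(\beta)-1$ and so is not of top degree on the compact orbifold $\M_{k,l+1}(\beta)$. (Equivalently, on the level of forms, functoriality of fiber integration and the projection formula give ${evi_0^\beta}_*(\pi^*\eta)=(\overline{evi_0^\beta})_*(\eta\wedge\pi_*1)$ with $\pi_*1$ a form of degree $-1$, hence zero.) Thus $\p^{\gamma,\beta}_k(\a_1,\ldots,\a_k)=0$, and summing over $l$ and $\beta$ gives $\pg|_D=0$.

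The only genuine work is the forgetful-point property invoked above: that $\pi$ is a well-defined proper map of orbifolds with corners through which the relevant evaluation maps factor, and that pushforward along $evi_0^\beta$ is functorial with respect to it within the formalism of currents on orbifolds (Section~\ref{ssec:currents}). For these moduli spaces this is routine (cf.~\cite{ST1}), and the attendant sign bookkeeping, delicate as it may be elsewhere, plays no role in the vanishing. Everything else --- factoring $\pg$ through the projection and transporting the chain-map identity --- is purely formal.
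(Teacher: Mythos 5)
Your proposal is correct and follows essentially the same route as the paper: the paper deduces Theorem~\ref{thm:pg-normhoch} directly from the unit property (Proposition~\ref{prop:p_unit}, i.e.\ \eqref{eq:pg-unit-generic}), whose proof is exactly the forgetful-map/degree argument you give (forgetting the boundary point carrying $1_L$, noting the integrand is pulled back from a lower-dimensional moduli space, with the exceptional case $\M_{1,1}(\beta_0)$ excluded since the unit sits in position $\ge 2$ so $k\ge 2$). You merely inline that argument instead of invoking the stated unit property, so there is no substantive difference.
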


We work with a version of cyclic homology based on Connes' construction. The \textbf{Connes chain complex} $\cconnes[{C}]$ is obtained from $\choch[C]$ by identifying pure tensors that agree after a cyclic permutation.
Intuitively, we can think of elements of $\choch[C]$ as elements of $C$ arranged in a list, while elements of $\cconnes[{C}]$ are elements of $C$ arranged in a circle. The \textbf{cyclic homology} $\hcyc[C]$ is the cohomology of $\cconnes[{C}].$
See Section~\ref{sssec:connes} for details.

\begin{thm}\label{thm:pg-connes}
  The map $\pg$ descends to the Connes cyclic complex and gives us a chain map $\pg
  \colon \cconnes[{C}] \rightarrow \A^{* + n + 1}(X;R)$. Thus, it induces a map
  $\pg \colon \hcyc[C] \rightarrow \hcur{X}[* + n + 1][][R]$.
\end{thm}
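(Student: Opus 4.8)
The plan is to deduce the theorem from a cyclic symmetry of the moduli spaces. Since the cyclic operator $t$ exhibiting $\cconnes[C]$ as the coinvariants $\choch[C]/\Im(1-t)$ preserves the word-length grading of $\choch[C]$, it suffices to show that each component $\pg_k$ satisfies $\pg_k\circ(1-t)=0$; and since the Novikov coefficients $T^\beta$ are central and of even degree, it is enough to establish the corresponding cyclic invariance of each operator $\p^{\gamma,\beta}_k$ of \eqref{eq:pdef_intro}. For $\beta\in\sly$ and $k\ge1$, cyclically rotating the labels of the $k$ boundary marked points defines an isomorphism of orbifolds with corners $c=c_{k,l,\beta}\colon\M_{k,l+1}(\beta)\to\M_{k,l+1}(\beta)$ with $evi_j^\beta\circ c=evi_j^\beta$ for $j=0,\dots,l$ and $evb_j^\beta\circ c=evb_{j-1}^\beta$ for $j=1,\dots,k$, indices taken modulo $k$ with $evb_0^\beta:=evb_k^\beta$. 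The first step is to record the effect of $c$ on the canonical orientation of $\M_{k,l+1}(\beta)$ induced by the relative spin structure $\s$: this is a sign $\varepsilon(c)\in\{\pm1\}$ depending on $n$ and the combinatorial data $k,l$, which one reads off from the orientation conventions of \cite[Chapter 8]{FOOO} (see also \cite{ST1}); it is the same orientation bookkeeping that equips the $A_\infty$ algebra $(C,\mg)$ with its cyclic structure.

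The second step is to apply $c^*$ to the integrand of \eqref{eq:pdef_intro}. Because $c$ is an isomorphism and $evi_0^\beta\circ c=evi_0^\beta$, pushforward along $evi_0^\beta$ is unchanged under precomposition with $c$, up to the orientation sign: ${evi_0^\beta}_*\,\Omega=\varepsilon(c)\,{evi_0^\beta}_*(c^*\Omega)$ for every current $\Omega$ on $\M_{k,l+1}(\beta)$. Now $c^*$ fixes each factor $(evi_j^\beta)^*\gamma$, since the interior marked points are untouched, and since $|\gamma|=2$ these even factors commute freely through the rest of the product; meanwhile $c^*$ rotates the boundary factors, turning $\bigwedge_{j=1}^k(evb_j^\beta)^*\alpha_j$ into $\bigwedge_{j=1}^k(evb_{j-1}^\beta)^*\alpha_j$, and restoring the standard order $(evb_1^\beta)^*(-)\wedge\cdots\wedge(evb_k^\beta)^*(-)$ costs a Koszul sign in the form degrees $|\alpha_j|$. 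Collecting everything yields an identity relating $\p^{\gamma,\beta}_k(\alpha_1,\dots,\alpha_k)$ to $\p^{\gamma,\beta}_k$ evaluated on a cyclic permutation of $(\alpha_1,\dots,\alpha_k)$, up to the product of three explicit signs: the orientation sign $\varepsilon(c)$, the Koszul reordering sign, and the ratio of the prefactors $(-1)^{\sum_{j}(n+j)(|\alpha_j|+1)}$ between the two argument orders.

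The hard part will be the sign verification: one must check that this product of signs coincides with the sign built into the cyclic operator $t$ (the Koszul sign $(-1)^{\degs{\alpha_k}(\degs{\alpha_1}+\cdots+\degs{\alpha_{k-1}})}$, up to interchanging $t$ with $t^{-1}$, which does not affect $\Im(1-t)$), so that $\p^{\gamma,\beta}_k\circ(1-t)=0$ and hence $\pg$ descends to $\cconnes[C]$. This is a finite but delicate computation: it requires pinning down $\varepsilon(c)$ precisely — which is where the relative spin structure and the chosen orientation conventions for $\M_{k,l+1}(\beta)$ enter — and then reconciling it against the sign conventions already fixed for $\dhoch$, for $t$, and for the prefactor in \eqref{eq:pdef_intro}. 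Once this is carried out, the remaining claims are formal: the induced map $\pg\colon\cconnes[C]\to\A^{*+n+1}(X;R)$ is automatically a chain map, since $\pg$ is a chain map on $\choch[C]$ by Theorem~\ref{thm:pg-hoch} and the Connes differential is the one induced from $\dhoch$; passing to cohomology then produces the map $\pg\colon\hcyc[C]\to\hcur{X}[*+n+1][][R]$.
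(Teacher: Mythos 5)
Your route is the same as the paper's: descent to the Connes complex is exactly the statement $\pg\circ\t=\pg$, i.e.\ cyclic symmetry of the boundary inputs, obtained from the diffeomorphism of $\M_{k,l+1}(\beta)$ that relabels the boundary marked points, and then the chain-map claim is formal from Theorem~\ref{thm:pg-hoch} together with the fact that $\dhoch$ descends to $\choch[C]/\im(1-\t)$ (Lemma~\ref{lm:tlemma}). The one caveat is that, as written, you stop precisely where the actual work lies: the sign verification you defer is the entire content of the paper's Proposition~\ref{prop:p_sgn}, proved via Lemma~\ref{lm:epsilonsigma}. For completeness, the inputs that make it close are: (i) the relabeling diffeomorphism $\varphi_\sigma$ changes the orientation of $\M_{k,l+1}(\beta)$ by $(-1)^{sgn(\sigma)}$ (stated in the proof of Lemma~\ref{lm:pstar}); in particular the sign $\varepsilon(c)$ you leave open depends only on $k$ — for a one-step rotation it is $(-1)^{k-1}$ — and not on $n$ or $l$ as you suggest; and (ii) the congruence $sgn(\sigma)+s_\sigma(\alpha)+\bigl(\varepsilon_p(\alpha^\sigma)-\varepsilon_p(\alpha)\bigr)\equiv s_\sigma^{[1]}(\alpha)\pmod 2$, where the last difference is exactly Lemma~\ref{lm:epsilonsigma}; this produces the shifted Koszul sign $s_\sigma^{[1]}$, which is precisely the sign in $\t$, yielding \eqref{eq:pg-cyclic-symmetry}. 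With that supplied, the rest of your argument (invariance under $\t$ versus $\t^{-1}$ being equivalent, evenness of $T^\beta$, and inheriting the chain-map property of degree $n+1$ from Theorem~\ref{thm:pg-hoch}) is correct and coincides with what the paper does.
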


The \textbf{reduced} Connes complex $\crconnes[C]$, analogously to the normalized Hochschild complex, is obtained by modding out by those elements $\a_1\otimes\cdots\otimes\a_n$ of $\cconnes[C]$ such that $\a_j=1_L$ for some $j\ge 1$.
See Section~\ref{sssec:normNred} for details. Denote the cohomology of $\crconnes[C]$ by $\hrconnes[C]$.
The element $1_L$ (in the case $n=j=1$) is not mapped to zero by $\pg$, but rather to the current of integration on $L$. Therefore, in order to descend $\pg$ to $\hrconnes[C]$, we have to quotient the codomain by the subspace spanned by that current. We denote the resulting chain complex by $\ccur{X}[*][L][]$, and the resulting cohomology
by $\hcur{X}[*][L][R]$. See Section~\ref{ssec:currents} for full details. For example, in the case when $L$ is a rational homology sphere, $\hcur{X}[*][L][R]$ is isomorphic to $H_*(X\setminus L;R)$ for $* \neq n.$

\begin{thm}\label{thm:pg-redconnes}
  The map $\pg$ descends to a chain map $\pg \colon \crconnes[C] \rightarrow
  \ccur{X}[* + n + 1][L][R]$.
  Thus, it induces a map
  \begin{equation*}
    \pg \colon \hrconnes[C] \rightarrow \hcur{X}[* + n + 1][L][R]
  \end{equation*}
  between the reduced cyclic homology of $C$ and $\hcur{X}[*][L][R]$.
\end{thm}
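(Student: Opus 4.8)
The plan is to deduce Theorem~\ref{thm:pg-redconnes} from Theorems~\ref{thm:pg-connes} and~\ref{thm:pg-normhoch} together with one explicit computation. By Theorem~\ref{thm:pg-connes}, $\pg$ is already a chain map of degree $n+1$ on $\cconnes[C]$, so it suffices to prove that it descends to the quotient $\crconnes[C]=\cconnes[C]/\mD$, where $\mD\subseteq\cconnes[C]$ is the subcomplex (see Section~\ref{sssec:normNred}) spanned by the images of pure tensors $\a_1\otimes\cdots\otimes\a_j$ with $\a_i=1_L$ for some $i$. Since $L$ is closed, the current of integration on $L$ is closed, and since $R$ has trivial differential, the $R$-span of that current is a subcomplex of $\ccur{X}[*][][R]$ with quotient $\ccur{X}[*][L][R]$ (see Section~\ref{ssec:currents}). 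Thus, once we know that $\pg(\mD)$ is contained in that $R$-span, $\pg$ descends to a chain map $\crconnes[C]\to\ccur{X}[*+n+1][L][R]$, and the induced map $\hrconnes[C]\to\hcur{X}[*+n+1][L][R]$ follows formally.

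First I would dispose of the generators of $\mD$ of length $j\ge 2$. Given such a generator with $\a_i=1_L$, I cyclically rotate it to a representative whose last tensor factor is $1_L$. Since $\pg$ is well defined on $\cconnes[C]$ by Theorem~\ref{thm:pg-connes}, its value on the cyclic class equals, up to the sign occurring in the Connes identification, its value on this representative, which vanishes by Theorem~\ref{thm:pg-normhoch} because $1_L$ now sits in a slot of index $\ge 2$. Hence $\pg$ annihilates every length-$\ge2$ generator of $\mD$, and the only remaining generator is the length-one element $1_L\in C[1]^{\otimes1}$.

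The heart of the matter is therefore the computation of $\pg_1(1_L)$, which amounts to a fundamental-class axiom for $\pg$. By~\eqref{eq:pdef_intro}, \eqref{eq:pkg}, and the fact that $(evb_1^\beta)^*1_L$ is the constant function $1$ on $\M_{1,l+1}(\beta)$,
\[
\pg_1(1_L)=(-1)^{n+1}\sum_{\beta\in\sly}T^\beta\sum_{l\ge0}\frac{1}{l!}\,{evi_0^\beta}_*\Big(\bigwedge_{j=1}^l(evi_j^\beta)^*\gamma\Big).
\]
For $\beta\ne\beta_0$, the integrand is pulled back, and $evi_0^\beta$ factors, along the forgetful map $\pi\colon\M_{1,l+1}(\beta)\to\M_{0,l+1}(\beta)$ dropping the boundary marked point; its fibers are one-dimensional, so $\pi_*1=0$, and hence by the projection formula and functoriality of pushforward this summand vanishes. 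For $\beta=\beta_0$ all points are constant maps, so all interior evaluation maps coincide and the $l$-th summand equals $\pm\,\gamma^{\wedge l}\wedge{evi_0^{\beta_0}}_*1$; since $\M_{1,l+1}(\beta_0)$ projects to $L$ with positive-dimensional fibers for $l\ge1$, this vanishes unless $l=0$, and for $l=0$ the space $\M_{1,1}(\beta_0)$ is identified with $L$ and $evi_0^{\beta_0}$ with the inclusion $L\hookrightarrow X$, giving $\pm$ the current of integration on $L$. Thus $\pg_1(1_L)$ lies in the $R$-span of that current, and the descent is complete.

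I expect the last paragraph to be the main obstacle. Making it rigorous requires pinning down exactly which moduli spaces of constant disk maps are nonempty and of the expected dimension, checking that the identification $\M_{1,1}(\beta_0)\cong L$ and the associated evaluation maps respect orientations and the sign conventions in~\eqref{eq:pdef_intro}, and justifying the forgetful-map and projection-formula manipulations for pushforwards of currents on orbifolds with corners. By contrast, the length-$\ge2$ reduction and the passage to cohomology are formal, the only nontrivial external input being that $\mD$ is a subcomplex, which is supplied in Section~\ref{sssec:normNred}.
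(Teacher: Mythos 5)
Your proposal is correct and follows essentially the same route as the paper: the chain map of Theorem~\ref{thm:pg-connes} descends because $\pg$ kills every unit-containing tensor of length at least two and sends $1_L$ to $(-1)^{n+1}i_*1_L$, which dies in the quotient codomain $\ccur{X}[*+n+1][L][R]$. The only difference is that the computation of $\pg_1(1_L)$, which you single out as the main obstacle, is already supplied by the paper as Proposition~\ref{prop:p_unit} (restated in~\eqref{eq:pg-unit-special}), so no new geometric argument with forgetful maps or constant-map moduli is actually needed at this point.
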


The \textbf{extended cyclic} complex $\ceconnes[C]$ is obtained from the above-mentioned $\cconnes[{C}]$ by adding a generator that corresponds to the empty list. This is equivalent to taking the underlying chain complex to be the \textit{full} tensor algebra $\tens{C[1]}:=\bigoplus_{j=0}^\infty C[1]^{\otimes j}$ quotiented by the cyclic action as before. We denote the new generator by $1$ (as opposed to $1_L$, which was the unit in $C$). Such an extension would not work on $\choch[C]$, because the natural choice for $\dhoch[1]$ is not closed in $\choch[C]$, but it is in the cyclic complex.
See Section~\ref{sssec:extend} for a detailed discussion.
Here again we have a reduced version: The \textbf{extended and reduced} complex $\cerconnes[C]$ is obtained by modding out by elements of $\ceconnes[C]$ that have the unit in one of their components. See Section~\ref{sssec:normNred}.

As discussed, the new generator added to form the extended complex corresponds to the empty list. Thus, we need to extend the $\pg$ operator by adding the term
\[
\pg_0 : R\to \ccur{X}[n + 1][L][R]
\]
defined above~\eqref{eq:pdef_intro} by pushing forward along the evaluation maps $evi_0:\M_{0,l+1}(\beta)\to X$. Extended thusly, $\pg$ is well defined on the extended and extended reduced complexes. However, it does not immediately give a chain map.
The geometric reason for the violation of the chain map property is the degeneration of disks where the boundary collapses to a point, which is now possible since no marked points appear on the boundary of the stable maps involved in defining $\pg_0$.

The resulting extra contribution can be balanced out if the Lagrangian $L$ is homologically trivial inside the ambient manifold $X$. In this case, the current $\zeta_L=i_*1_L\in \A^*(X;R)$ is exact, where $i:L\hookrightarrow X$ is the inclusion. Specifically, the choice of a singular chain $S$ such that $\d S= -L$ corresponds to a current $\eta$ such that $d\eta = -\zeta_L$. See Section~\ref{ssec:currents}.
Then we can balance out the contribution of disks with potentially contractible boundary by adding contributions from spheres that pass through $S$. A similar idea appears in~\cite{PandharipandeSolomonWalcher,Joyce,ST3}. Concretely, we define an operator
\[
\qg_{\emptyset,1}: \A^*(X;R)\to \A^*(X;R)
\]
analogous to $\pg$ but defined using spaces of $J$-holomorphic \emph{spheres} rather than disks, see Section~\ref{ssec:qemptyset}.
Then we have the following.

\begin{thm}\label{thm:pg-extended}
Assume that $i_{*} ([L]) = 0$ in $H_n(X;\R)$.
Choose
a current $\eta$ with $d\eta=-\zeta_L$. Consider the operator
  $\mathcal{P}_{\eta} = \mathcal{P} \colon \tens{C[1]}^{*} \rightarrow
  \ccur{X}[*+n+1][][R]$ defined by
  \begin{align*}
    \mathcal{P} \left( \alpha_1, \dots, \alpha_k \right) &= \pg \left( \alpha_1,
    \dots, \alpha_k \right), \,\,\, k \geq 1 \\
    \mathcal{P} (1) &= \pg_0(1) + \q_{\emptyset,1}^{\gamma} \left( \eta
    \right).
  \end{align*}
  The map $\mathcal{P}$ is symmetric with respect to the cyclic group action and
  so it descends to a chain map $\mathcal{P} \colon \ceconnes[C] \rightarrow
  \ccur{X}[*+n+1][][R]$. Thus, it induces a map
  \[
  \mathcal{P} \colon \hecyc[C] \rightarrow
  \hcur{X}[*+n+1][][R]
  \]
  between the extended cyclic homology of $C$ and the
  cohomology of $X$. In addition, $\mathcal{P}$ descends to a map $\mathcal{P}
  \colon \cerconnes[C] \rightarrow \ccur{X}[*+n+1][L][R]$ and thus induces a map
  \[
  \mathcal{P} \colon \herconnes[C] \rightarrow \hcur{X}[*+n+1][L][R]
  \]
  between the extended and reduced cyclic homology $\herconnes[C]$ and
  $\hcur{X}[*+n+1][L][R]$. The map $\mathcal{P}_{\eta}$ depends on the choice
  of $\eta$ but the induced maps on homology depend only on the homology class
  $\eqcl{\eta}$ inside $\hcur{X}[n-1][L][R]$.
\end{thm}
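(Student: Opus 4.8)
The plan is to reduce everything to the results already proved for the non-extended Connes complex plus a single new computation on the generator $1$. By Theorems~\ref{thm:pg-connes} and~\ref{thm:pg-redconnes}, $\pg$ is a chain map $\cconnes[C]\to\ccur{X}[*+n+1][][R]$ which descends to $\crconnes[C]\to\ccur{X}[*+n+1][L][R]$, and it is cyclically symmetric; the cyclic group acts trivially on the one-dimensional summand spanned by $1$, so cyclic symmetry of $\mathcal{P}$ is automatic there and $\mathcal{P}$ descends to $\ceconnes[C]$ and to $\cerconnes[C]$. The only point that is not inherited is the chain-map identity $d\,\mathcal{P}(1)=(-1)^{n+1}\mathcal{P}(\dhoch 1)$. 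Here $\dhoch 1$ is, up to sign, the length-one chain $\mg_0$, the curvature of the $A_\infty$ structure, so $\mathcal{P}(\dhoch 1)=\pm\,\pg_1(\mg_0)$, and the reduction modulo the current of integration on $L$ in the codomain is handled exactly as in Theorem~\ref{thm:pg-redconnes} once this identity is known.

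The core of the proof is a Stokes-type structure equation for $\pg_0$. Applying the analog of Stokes' theorem to the pushforward $(evi_0^\beta)_*$ over $\M_{0,l+1}(\beta)$, summing over $l$ and $\beta$ with the combinatorial identities for redistributing the $\gamma$ insertions, and using $d\gamma=0$, all interior contributions cancel and only the codimension-one boundary strata of $\M_{0,l+1}(\beta)$ survive. These are of two kinds: a boundary node splitting the domain into two disk components, and the degeneration in which the boundary circle collapses to a point of $L$, leaving a ghost disk at that point carrying a sphere bubble on which $evi_0$ lives. Reading off the boundary marked point created at the node, the first kind assembles into $\pm\,\pg_1(\mg_0)$, and the second kind is identified, as an oriented orbifold with corners, with the moduli of $J$-holomorphic spheres carrying one marked point constrained to $L$, so it contributes $\pm\,\qg_{\emptyset,1}(\zeta_L)$ with $\zeta_L=i_*1_L$. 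Thus $d\,\pg_0(1)=\pm\,\pg_1(\mg_0)\pm\qg_{\emptyset,1}(\zeta_L)$. On the other hand, by the properties of $\qg_{\emptyset,1}$ established in Section~\ref{ssec:qemptyset} (closed-sphere moduli have only codimension-two boundary, and $d\gamma=0$), $\qg_{\emptyset,1}$ is a chain map, so $d\,\qg_{\emptyset,1}(\eta)=\pm\,\qg_{\emptyset,1}(d\eta)=\mp\,\qg_{\emptyset,1}(\zeta_L)$. Adding the two equations, the $\qg_{\emptyset,1}(\zeta_L)$ terms cancel and we are left with $d\,\mathcal{P}(1)=\pm\,\pg_1(\mg_0)$, as required; the real labor is checking that the signs fixed earlier for $\pg$, $\qg_{\emptyset,1}$, and $\dhoch$ make this cancellation exact and match the factor $(-1)^{n+1}$.

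For the dependence on $\eta$, suppose $\eta$ and $\eta'$ both satisfy $d\eta=d\eta'=-\zeta_L$ and define the same class in $\hcur{X}[n-1][L][R]$, so $\eta'-\eta=d\xi$ for a current $\xi$ on $X$. Define $h\colon\ceconnes[C]\to\ccur{X}[*+n][][R]$ to vanish on all summands of length $\ge 1$ and to send $1\mapsto\pm\,\qg_{\emptyset,1}(\xi)$. Using once more that $\qg_{\emptyset,1}$ is a chain map, and that $\dhoch 1$ has length one so $h(\dhoch 1)=0$, a short check gives $\mathcal{P}_{\eta'}-\mathcal{P}_{\eta}=d\circ h\pm h\circ\dhoch$; hence the two cochain maps are homotopic and induce the same map on $\hecyc[C]$, and the same $h$ works verbatim after reducing modulo $1_L$ and passing to $\ccur{X}[*+n][L][R]$, giving the statement for $\herconnes[C]$.

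The main obstacle I anticipate is the geometric identification and sign bookkeeping in the structure equation for $\pg_0$: one must verify that the collapsing-boundary stratum of $\M_{0,l+1}(\beta)$ is precisely the fiber product of the genus-zero sphere moduli with $L$ over $X$, with the orientation induced by the relative spin structure, and that the resulting pushforward is exactly $\qg_{\emptyset,1}(\zeta_L)$ carrying the sign that cancels the $\qg_{\emptyset,1}(d\eta)$ contribution, all in the presence of the interior marked points decorated by $\gamma$. The remaining ingredients---cyclic symmetry, the reductions modulo $1_L$ and modulo $\zeta_L$, and the chain homotopy---are formal given Theorems~\ref{thm:pg-connes} and~\ref{thm:pg-redconnes}.
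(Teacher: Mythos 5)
Your proposal is correct and follows essentially the same route as the paper: reduce to the weight-zero generator, establish $d\,\pg_0(1)=(-1)^{n+1}\pg_1(\mg_0(1))+\q_{\emptyset,1}^{\gamma}(i_*1_L)$, cancel the sphere term against $\q_{\emptyset,1}^{\gamma}(d\eta)$ using that $\q_{\emptyset,1}^{\gamma}$ is a chain map, quotient the codomain by $i_*1_L$ for the reduced version, and get $\eta$-independence from exactness of the difference (your explicit weight-zero homotopy $h$ is just a packaged form of the paper's direct argument). The geometric ``main obstacle'' you anticipate requires no new work: the collapsed-boundary stratum and its sign are already handled in Proposition~\ref{prop:pstructure} (the $\delta_{k,0}$ term, via Proposition~\ref{rem:pintgluing} and Lemma~\ref{lm:bbbp}), so the needed structure equation for $\pg_0$ is obtained by specializing to $k=0$ and redistributing the $\gamma$ insertions.
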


The map $\mathcal{P}_\eta$ depends on the choice of $\gamma$ as well. In fact, even the domain of $\mathcal{P}_\eta$ depends on $\gamma$. We expect that the $\pt$ maps for pseudoisotopies treated in Section~\ref{sec:isot} can be used to show that cohomologous choices of $\gamma$ give quasi-isomorphic chain complexes in the domain, and that the induced $\mathcal{P}$ agree on homology. Similarly, the domain chain complex should be independent of $J$ up to quasi-isomorphism, and the induced $\mathcal{P}$ on homology should be independent of $J.$ See Section~\ref{ssec:ocps} for further discussion.

The property of inducing open-closed maps on Hochschild homology and the subsequent variants is a consequence of $\pg$ satisfying the structure equations given in Proposition~\ref{prop:pstructure}. These equations are similar in flavor to the structure equations of an $A_\infty$-algebra, in that they describe codimension-1 behavior of the moduli spaces involved.
We further show that, again similarly to the $A_\infty$ operations (cf.~\cite[Theorem 3]{ST1}), the maps $\pg$ satisfy properties reminiscent of the Gromov-Witten axioms:
\begin{thm}\label{thm:pgprop}
Suppose $\d_{t_0}\gamma=1\in A^0(X)\otimes R$ and $\d_{t_1}\gamma=\gamma_1\in A^2(X)\otimes R$.
Then the operations $\pg_k$ satisfy the following properties.
\begin{enumerate}
	\item\label{pprop1} (Fundamental class)
		$\d_{t_0}\pg_k=0$.
	\item\label{pprop2} (Divisor)
		$\d_{t_1}\p^{\gamma,\beta}_k=\int_\beta\gamma_1 \cdot\p^{\gamma,\beta}_k$, assuming $i^*\gamma_1=0$, where $i:L\hookrightarrow X$ is the inclusion.
	\item\label{pprop3} (Energy zero)
		The operations $\pg_k$ are deformations of the usual push-forward of differential forms in the sense that
		\[
		\p^{\gamma,\beta_0}_1(\alpha)= (-1)^{(n+1)(|\a|+1)}i_*\a,
\qquad
    \p^{\gamma,\beta_0}_k=0, \quad k \neq 1.
		\]
\end{enumerate}
\end{thm}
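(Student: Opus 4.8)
Let me plan how I would prove this. The three properties are statements about the operators $\p^{\gamma,\beta}_k$ defined via push-pull along evaluation maps on $\M_{k,l+1}(\beta)$, and each should follow from an elementary manipulation of this definition combined with a standard geometric input about the moduli spaces — exactly mirroring the proof of the analogous axioms for the $A_\infty$ operators $\q_{k,l}$ in \cite[Theorem 3]{ST1}.

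Let me think about each part.

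**Fundamental class.** We have $\partial_{t_0}\gamma = 1 \in A^0(X)\otimes R$. Since $\p^{\gamma,\beta}_k$ depends on $\gamma$ only through the factors $\bigwedge_{j=1}^l (evi_j^\beta)^*\gamma$ summed over $l$ with weight $1/l!$, differentiating with respect to $t_0$ brings down one factor of $\partial_{t_0}\gamma = 1$. So $\partial_{t_0}\p^{\gamma,\beta}_k$ is a sum over $l\ge 1$ of terms where one of the $l$ interior points carries the constant function $1$ and contributes, after the combinatorial bookkeeping of which point is differentiated (giving a factor $l$ that cancels part of $1/l!$), the pushforward $(evi_0)_*$ of a form pulled back from $\M_{k,l}(\beta)$ by forgetting that marked point. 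The key geometric input is that the forgetful map $\M_{k,l+1}(\beta)\to\M_{k,l}(\beta)$ forgetting an interior point labelled $j\ge 1$ has fibers of positive dimension (the disk has a real $1$-dimensional, or rather $2$-dimensional, family of positions for the forgotten point), so the integration along that fiber of a form pulled back from the base is zero. One must be slightly careful about the $l=0$ versus $l=1$ indexing and about signs, but the upshot is $\partial_{t_0}\p^{\gamma,\beta}_k = 0$ for every $\beta$, hence $\partial_{t_0}\pg_k=0$.

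**Divisor.** Now $\partial_{t_1}\gamma = \gamma_1\in A^2(X)\otimes R$ with $i^*\gamma_1 = 0$. Differentiating $\p^{\gamma,\beta}_k$ in $t_1$ again brings down one factor $(evi_j^\beta)^*\gamma_1$ on a new interior marked point; after relabelling, this is the pushforward along $evi_0$ of the pullback under the forgetful map $\pi\colon\M_{k,l+1}(\beta)\to\M_{k,l}(\beta)$ of the original integrand, wedged with $(evi_{l+1}^\beta)^*\gamma_1$ on the extra point. The standard divisor computation: push $\gamma_1$ forward along the fiber of $\pi$ first. Since $d\gamma_1 = 0$ and $i^*\gamma_1 = 0$ — the latter ensuring no boundary contribution from the extra interior point colliding with the disk boundary — the integral of $(evi_{l+1})^*\gamma_1$ over the fiber of $\pi$ equals the constant $\int_\beta \gamma_1$. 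This is where the hypothesis $i^*\gamma_1 = 0$ is essential; without it, there would be a codimension-one boundary stratum where the interior point bubbles off onto the boundary, spoiling the fiber integral. Granting this, $\partial_{t_1}\p^{\gamma,\beta}_k = (\int_\beta\gamma_1)\cdot\p^{\gamma,\beta}_k$. I would cite the relevant fiber-integration lemma from \cite{ST1} or its companion rather than reprove it.

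**Energy zero.** For $\beta = \beta_0$, the moduli space $\M_{k,l+1}(\beta_0)$ consists of constant disk maps, so it is $L\times\M_{k,l+1}^{\mathrm{pt}}$ for the corresponding moduli of marked disks, provided that is nonempty, i.e. provided the stability condition $k + 2l + 2 \ge 3$, i.e. $(k,l)\ne(0,0),(1,0)$, holds — but for constant maps we further need $k\ge 1$ to have a disk at all in the relevant configuration, and genuine stability forces $k+2(l+1)\ge 3$. When $k=1, l=0$: $\M_{1,1}(\beta_0)$ is a single point (the disk with one boundary and one interior point, rigid), $evb_1$ and $evi_0$ both equal to the constant value, so $evi_0{}_*$ of $evb_1^*\alpha$ is just $i_*\alpha$; chasing the sign prefactor $(-1)^{\sum(n+j)(|\alpha_j|+1)}$ with $k=1,j=1$ gives $(-1)^{(n+1)(|\alpha|+1)}$, matching the claim. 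For all other $(k,l)$ with $\beta=\beta_0$: the relevant moduli space, when a constant map is stable, fibers over $L$ with fiber $\M^{\mathrm{pt}}_{k,l+1}$ of positive dimension — except one must check the push-pull actually vanishes. Here the argument is dimensional: $evi_0$ factors through the projection to $L$, and for $k\ge 2$ or $l\ge 1$ the fiber of $\M_{k,l+1}(\beta_0)\to L$ has positive dimension while the integrand is pulled back (via $evb_j$, $evi_j$, all of which factor through $L$ for constant maps) from $L$, so integration over the fiber kills it. When $k=0$ there is no boundary marked point and the term is simply not part of $\p^\gamma_k$ for $k\ge 1$; when $(k,l)=(2,0),(3,0)$ the fiber is $\M^{\mathrm{pt}}_{k,1}$, still positive-dimensional.

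**Main obstacle.** I expect the genuinely delicate point to be the divisor axiom — specifically, verifying cleanly that $i^*\gamma_1 = 0$ eliminates exactly the boundary term in the fiber integral over the forgetful map, and getting all the Koszul signs in the commutation of $\partial_{t_1}$ past the wedge factors and the pushforward to come out to precisely $\int_\beta\gamma_1$ with no residual sign. The fundamental-class and energy-zero parts are, by comparison, routine once one has the forgetful-map fiber-integration formalism set up; the main work there is careful sign bookkeeping and the enumeration of which $(k,l,\beta)$ give rigid (zero-dimensional fiber) moduli spaces. Throughout, I would lean on the analogous statements and their proofs for the $A_\infty$ operators in \cite{ST1}, since $\pg$ is built from the same moduli spaces with the interior evaluation $evi_0$ playing the role of the output.
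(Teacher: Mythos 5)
Your proposal takes essentially the same route as the paper: the paper's proof of this theorem simply invokes Propositions~\ref{cl:pfund}, \ref{cl:pdiv}, and~\ref{prop:p_zero}, which are proved exactly by the three arguments you outline (interior forgetful map for the fundamental class, fiber integration over the forgetful map with $i^*\gamma_1=0$ for the divisor as in the $\q$-operator case of~\cite{ST1}, and the constant-map dimension count for energy zero). One small slip: $\M_{1,1}(\beta_0)$ is not a single point but a copy of $L$ (the marked domain is rigid while the constant value varies), though your conclusion $(evi_0)_*evb_1^*\alpha = i_*\alpha$ with the sign $(-1)^{(n+1)(|\alpha|+1)}$ is exactly what the paper obtains.
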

\Cref{thm:pgprop}~\eqref{pprop3} is used in the construction of open Gromov-Witten invariants in~\cite{T19}. We expect that \Cref{thm:pgprop}~\eqref{pprop1} and~\eqref{pprop2} will be useful in proving the fundamental class and divisor axioms of those invariants.

Additionally, in Section~\ref{sec:isot} we discuss a version of $\p$ operators defined on pseudoisotopies. Pseudoisotopies arise, for example, from varying the underlying data like $\gamma$ or $J$.
The discussion is carried out under regularity assumptions on the family moduli spaces similar to those already assumed for $\M_{k,l+1}(\beta).$

\subsubsection{Regularity assumptions}\label{ssec:reg}

As mentioned, we proceed with the regularity assumptions set in~\cite{ST1}, namely, that moduli spaces are smooth orbifolds with corners and the evaluation maps $evb_0$, $ev_0$, when defined, are proper submersions.
In~\cite[Example 4.1-Remark 1.5]{ST1} we show that the regularity assumptions hold
for homogeneous spaces.
In particular, $(\P^n,\RP^n)$ with the standard symplectic and complex structures, or more generally, Grassmannians, flag varieties and products thereof, satisfy our regularity assumptions.
Using the theory of the virtual fundamental class from ~\cite{Fukaya,Fukaya2,FOOOtoricI,FOOOtoricII,FOOO1},~\cite{HoferWysockiZehnder,HoferWysockiZehnder1,HoferWysockiZehnder2,HoferWysockiZehnder3,LiWehrheim}, or~\cite{AbouzaidMcLeanSmith,AbouzaidMcLeanSmith2,HirschiHugtenburg}, our results are expected to extend to general target manifolds.

\subsection{Outline of the paper}

In Section~\ref{sec:Hochschild complexes} we give a construction of the various versions of Hochschild and cyclic homology of an arbitrary curved $A_\infty$-algebra.
In Section~\ref{sec:background} we establish notation that will be used throughout the subsequent text and cite previously proven results. Notably, we cite the construction and properties of closed operators (operators modeled on spaces of stable sphere maps) and closed-open operators (operators modeled on spaces of stable disk maps, with an output at the boundary).
In Section~\ref{sec:p} we construct the $\p$ operators and prove their basic properties, in the model of differential forms and currents.
In Section~\ref{sec:OChoch}, we take the geometric realization of the homologies defined in Section~\ref{sec:Hochschild complexes} that comes from the Fukaya $A_\infty$-algebra of a Lagrangian submanifold, and verify that the $\p$ operators from Section~\ref{sec:p} descend to maps on those homologies.
Finally, in Section~\ref{sec:isot} we verify properties for a version of the $\p$ operators defined on pseudoisotopies.

\subsection{Acknowledgements}
\leavevmode

P.~G., J.~S., and S.~T., were partially supported by ERC starting grant~337560. P.~G. and J.~S. were partially supported by ISF grant~569/18. J.~S. was partially supported by ISF grant~1127/22 and the Miriam and Julius Vinik Chair in Mathematics.
S.~T. was partially supported by NSF grant DMS-1638352, ISF grant~2793/21, and the Colton Foundation.

\section{Hochschild and cyclic homologies of a curved \texorpdfstring{$A_\infty$}{A-infty} algebra}
\label{sec:Hochschild complexes}

In this section, we give a general, algebraic construction of the types of Hochschild and cyclic homologies of a curved $A_\infty$ algebra $(A,\mu)$ that we need. Subsequently, in Section~\ref{sec:OChoch}, we apply these constructions to the particular case of the $A_\infty$-algebra $(C,\mg)$ defined in the introduction.
When $A$ corresponds to a unital DGA, our construction of the Hochschild, cyclic, and reduced cyclic complexes given in Sections~\ref{ssec:hoch}, \ref{sssec:connes}, and~\ref{sssec:normNred}, respectively, are isomorphic to the standard complexes described for example in~\cite{Loday}.
We refer to
\cite[Appendix C]{GitermanSolomon2} for more details.
The construction of the extended cyclic homology in Section~\ref{sssec:extend} is new, and is designed to fix the issues that arise from non-zero curvature, as occurs in our application.

\subsection{Notation}\label{ssec:notlist}
In what follows, we will always work in the category of $\mathbb{Z}$-graded algebras
and modules. Fix a field $k$ of characteristic zero and let $R$ be
a graded-commutative $k$-algebra. Let $\nu$ be a valuation on $R$ with respect to which $R$ is complete.

Let $A$ be a $\mathbb{Z}$-graded left $R$-module endowed with a valuation also denoted by $\nu$, with respect to which $A$ is complete. All undecorated tensor products will be
taken over $R$. We will denote by
\begin{equation*}
  \tens{A} := \bigoplus_{i=0}^{\infty} A^{\otimes i}, \qquad \tensr{A} :=
  \bigoplus_{i=1}^{\infty} A^{\otimes i}
\end{equation*}
the tensor algebra and the reduced tensor algebra, respectively.
The tensors products and direct sums above are implicitly completed with respect to $\nu$. Both algebras carry two natural gradings, the \textbf{weight grading} and the \textbf{degree grading}. An elementary tensor $l = a_1 \otimes \dots \otimes a_k$ has weight $k$ and degree $\degb{a_1}
+ \dots + \degb{a_k}$. We will often think informally of such an elementary
tensor $l$ as representing a list $(a_1,\dots,a_k)$ of $k$ elements from $A$.
When $k = 0$, the empty tensor product
$a_1 \otimes \dots \otimes a_k$ is to be interpreted as $1_R \in \tens{A}$ and
thought of as the empty list. In what follows, we will often need to start
with $l$, split it into several consecutive lists and apply operations to
certain parts of the splitting. To do that, it will be convenient to introduce
the following notation, taken from and used extensively in \cite{GitermanSolomon2}:
\begin{enumerate}
  \item \textbf{Splitting}. Given $r \in \NN$, we denote by $l_{(1)} \ootimes
    l_{(2)} \ootimes \cdots \ootimes l_{(r)}$ the element in $\tens{A}^{\ootimes
    r}$ which is the \textbf{sum} of all possible splittings of $l$ into $r$
    consecutive, possibly empty, lists. For example, if $l = a_1 \otimes a_2
    \otimes a_3$ and $r = 2$ then
    \begin{equation*}
      l_{(1)} \ootimes l_{(2)} = 1 \ootimes (a_1 \otimes a_2) + a_1 \ootimes a_2 +
      (a_1 \otimes a_2) \ootimes 1
    \end{equation*}
    where we use the symbol $\ootimes$ to denote the ``external'' tensor product
    so that we won't confuse it with the internal tensor product appearing in the definition of $\tens{A}$.
    We will also need to
    iterate this construction so we will write expressions such as
    \begin{equation}\label{eq:sweedler}
      l_{(1)} \ootimes l_{(2)} \ootimes l_{(3)} = l_{(11)} \ootimes l_{(12)} \ootimes
      l_{(2)} = l_{(1)} \ootimes l_{(21)} \ootimes l_{(22)}
    \end{equation}
    which are equal.
        \begin{rem}
    The map which sends $l$ to $l_{(1)} \ootimes \dots \ootimes
    l_{(r)}$ is precisely the iterated deconcatenation coproduct
    $\Delta^{r-1}$ on $\tens{A}$, see~\cite[Section 1.2]{LodayVallette}. This notation
    is sometimes called \textbf{Sweedler's notation}. The equality in~\eqref{eq:sweedler} expresses the coassociativity of $\Delta$.
    \end{rem}
  \item \textbf{Application}. Given a map $\psi \colon \tens{A} \rightarrow A$,
    we denote by
    \begin{equation*}
      l_{(1)} \otimes \dots \otimes l_{(i-1)} \otimes \psi \left( l_{(i)} \right)
      \otimes l_{(i+1)} \otimes l_{(r)}
    \end{equation*}
    the element of $\tens{A}$ which is the sum of all elements which are
    obtained by splitting $l$ into $r$ consecutive, possibly empty, lists,
    applying $\psi$ to the $i$-th list and taking the product \textbf{inside}
    $\tens{A}$. For example, if $l = a_1 \otimes a_2 \otimes a_3, r = 2$ and $i
    = 1$ we have
    \begin{align*}
      \psi \left( l_{(1)} \right) \otimes l_{(2)} &=
      \psi(1) \otimes \left( a_1 \otimes a_2 \right) + \psi(a_1) \otimes a_2 +
      \psi(a_1 \otimes a_2) \otimes 1 \\
      &= \psi(1) \otimes a_1 \otimes a_2 + \psi(a_1) \otimes a_2 +
	\psi(a_1,a_2) \in \tens{A}.
    \end{align*}
    Note that we use only the ``internal'' tensor product and identify $\psi
    \left( a_1 \otimes a_2 \right) \otimes 1$ with $\psi \left( a_1 \otimes a_2
    \right) = \psi \left( a_1, a_2 \right)$. We will also allow our notation to
    include signs as in the following expression:
    \begin{equation*}
      \begin{aligned}
        (-1)^{\degb{l_{(1)}}} \psi \left( l_{(1)} \right) \otimes l_{(2)} ={}&
	\psi(1) \otimes a_1 \otimes a_2 +
        (-1)^{\degb{a_1}} \psi(a_1) \otimes a_2\\
		&+ (-1)^{\degb{a_1} + \degb{a_2}} \psi(a_1 \otimes a_2).
      \end{aligned}
    \end{equation*}
\end{enumerate}

From now on, we work with the (reduced) tensor algebra of the shifted module $A[1]$, with the naturally induced grading. In particular, given an elementary tensor
$l = a_1 \otimes \dots \otimes a_k \in \tens{A[1]}$,
the degree $\degb{l}$ differs from the degree of the corresponding element in $\tens{A}$ by $k$.

The \textbf{structure of an $\Ainf$-algebra} on $A$ is given by a map
\[
\mu \colon \tens{A[1]} \rightarrow
A[1]
\]
of degree one such that $\nu(\mu(l)) \geq \nu(l)$ and $\nu(\mu(1)) > 0$, which satisfies
\[
  (-1)^{\degb{l_{(1)}}} \mu \left( l_{(1)} \otimes \mu(l_{(2)}) \otimes l_{(3)} \right) = 0.
\]
The equation above is called \textbf{the $A_\infty$ relations}.
Denote by $\mu_k \colon A[1]^{\otimes k} \rightarrow A[1]$
the composition of the inclusion of $A[1]^{\otimes k} \hookrightarrow
\tens{A[1]}$ with $\mu$. The \textbf{hat extension} of $\mu$ is the map
\[
\hat{\mu} \colon \tens{A[1]} \rightarrow \tens{A[1]}
\]
which is given by
\begin{equation}
  \hat{\mu}(l) = (-1)^{\degb{l_{(1)}}} l_{(1)} \otimes \mu(l_{(2)}) \otimes
  l_{(3)}.
\end{equation}
The hat extension $\hat{\mu}$ is the unique coderivation of the tensor coalgebra
whose natural projection onto $A[1]$ coincides with $\mu$. The $\Ainf$ relations
can equivalently be written as $\hat{\mu} \circ \hat{\mu} = 0$. Note that we allow
$\mu$ to satisfy $\mu_0 \neq 0$, i.e, the $\Ainf$-algebra can be curved.

\subsection{Hochschild homology of an \texorpdfstring{$\Ainf$}{A-infty} algebra}\label{ssec:hoch}

Let $(A,\mu)$ be an $\Ainf$-algebra. The Hochschild homology of $A$ (with
coefficients in $A$) is the homology of the following cochain complex. The
Hochschild cochain complex, as a graded $R$-module, is just the reduced tensor
algebra on $A[1]$ with the natural induced grading. We denote it by
$\choch[A] = \tensr{A[1]}$. The differential on the Hochschild cochain complex is
given by
\[
  \begin{aligned}
    \dhoch[x \otimes l] ={}& (-1)^{\degb{x} + \degb{l_{(1)}}}
    x \otimes l_{(1)} \otimes \mu(l_{(2)}) \otimes l_{(3)}
    \\
    &+ (-1)^{\degb{l_{(3)}} \cdot \left( \degb{x} + \degb{l_{(1)}} +
    \degb{l_{(2)}} \right)}
    \mu \left( l_{(3)} \otimes x \otimes l_{(1)} \right) \otimes l_{(2)}
    \\
    ={}& (-1)^{\degb{x}} x \otimes \hat{\mu}(l) +
    (-1)^{\degb{l_{(3)}} \cdot \left( \degb{x} + \degb{l_{(1)}} + \degb{l_{(2)}} \right)}
    \mu \left( l_{(3)} \otimes x \otimes l_{(1)} \right) \otimes l_{(2)}.
  \end{aligned}
\]
Here, $x \in A[1]$ and $l \in \tens{A[1]}$. It is standard that the $\Ainf$
relations imply $\d^2_{\textrm{hoch}} = 0$. We show this in the lemma below, in order to illustrate the use of our notation conventions.
\begin{lm}
  The map $\dhoch$ satisfies $\d^2_{\textrm{hoch}} = 0$.
\end{lm}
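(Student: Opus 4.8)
The goal is to verify $\dhoch^2 = 0$ by expanding $\dhoch(\dhoch(x \otimes l))$ using the Sweedler-style notation and showing that all terms cancel in pairs or vanish by the $A_\infty$ relations $\hat\mu \circ \hat\mu = 0$. First I would apply $\dhoch$ to the expression $\dhoch(x \otimes l)$, which itself has two groups of terms: the ``$x \otimes \hat\mu(l)$'' part and the ``wrap-around'' part $\pm\mu(l_{(3)} \otimes x \otimes l_{(1)}) \otimes l_{(2)}$. Expanding $\dhoch$ on each of these produces four groups of terms (two from each), so the heart of the computation is to organize these four groups and match them up. The key organizing principle is the coassociativity of the deconcatenation coproduct, expressed in \eqref{eq:sweedler}, which lets me relabel iterated splittings freely.

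\begin{proof}
Apply $\dhoch$ to both summands of $\dhoch(x \otimes l)$. On the first summand, using that $\dhoch$ acts as $x' \otimes l' \mapsto (-1)^{\degb{x'}} x' \otimes \hat\mu(l') + (\text{wrap-around})$, and since $x$ is unchanged in $x \otimes \hat\mu(l)$, we obtain
\[
(-1)^{\degb{x}} x \otimes \hat\mu(\hat\mu(l)) + (\text{wrap-around applied to } x \otimes \hat\mu(l)).
\]
The first term here vanishes by the $A_\infty$ relations $\hat\mu \circ \hat\mu = 0$. On the second summand of $\dhoch(x \otimes l)$, the new ``$x' \otimes \hat\mu(l')$'' contribution has $x' = \mu(l_{(3)} \otimes x \otimes l_{(1)})$ and $l' = l_{(2)}$, while the new wrap-around contribution wraps a $\mu$ around the boundary of the list $\mu(l_{(3)} \otimes x \otimes l_{(1)}) \otimes l_{(2)}$. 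One now checks, using coassociativity \eqref{eq:sweedler} to rename the nested splittings of $l$, that the wrap-around-of-first-summand terms cancel against the $x' \otimes \hat\mu(x')$-of-second-summand terms together with those wrap-around-of-second-summand terms in which the outer $\mu$ does not swallow the inner $\mu$, while the remaining wrap-around-of-second-summand terms, in which the outer $\mu$ does swallow the inner $\mu(l_{(3)} \otimes x \otimes l_{(1)})$, assemble into an instance of the $A_\infty$ relation for the combined string and hence vanish. Tracking the Koszul signs through each identification, using $\degb{\mu(m)} = \degb{m} + 1$ and the sign conventions fixed above, confirms that every surviving term cancels.
\end{proof}

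\textbf{Main obstacle.} The conceptual content is routine --- this is the standard fact that the Hochschild differential of a (possibly curved) $A_\infty$-algebra squares to zero, and the proof is ``just'' the $A_\infty$ relations plus coassociativity. The real obstacle is bookkeeping: keeping the Koszul signs consistent across the relabelings of iterated splittings, and correctly separating the wrap-around terms into the part that pairs off by cancellation and the part that reassembles into an $A_\infty$ relation. Since the stated purpose of including this lemma is precisely to illustrate the notation, I expect the write-up to be an explicit, careful term-by-term expansion rather than an appeal to a black box.
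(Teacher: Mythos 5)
Your overall strategy is the paper's: expand $\dhoch\circ\dhoch$ on $x\otimes l$, kill the term $x\otimes\hat{\mu}(\hat{\mu}(l))$ by the $A_\infty$ relations, use coassociativity to relabel iterated splittings, and dispose of the rest by pairwise cancellation plus one more application of $\mu\circ\hat{\mu}=0$. But the cancellation pattern you describe is not the one that actually occurs, and as stated it would fail. First, the class of ``wrap-around-of-second-summand terms in which the outer $\mu$ does not swallow the inner $\mu$'' is empty: in the Hochschild differential the head of the tensor is always inside the wrap-around $\mu$, so when you apply the wrap-around part of $\dhoch$ to $\mu(l_{(3)}\otimes x\otimes l_{(1)})\otimes l_{(2)}$, the new $\mu$ necessarily contains the old one. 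Second, and more seriously, it is not true that all wrap-around terms coming from $(-1)^{\degb{x}}x\otimes\hat{\mu}(l)$ cancel against the $x'\otimes\hat{\mu}(l')$ terms of the second summand: only those in which the previously inserted factor $\mu(l_{(2)})$ lands \emph{outside} the wrap-around $\mu$ do. The terms in which $\mu(l_{(2)})$ is swallowed by the wrap-around survive this cancellation, and they are needed later.

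Consequently, the leftover second-summand wrap-around terms do not ``assemble into an instance of the $A_\infty$ relation'' on their own; they are only one of the three groups of terms in $(\mu\circ\hat{\mu})(l_{(3)}\otimes x\otimes l_{(1)})\otimes l_{(2)}=0$, namely the group where the inner $\mu$'s argument contains $x$. The other two groups (inner $\mu$ acting inside $l_{(1)}$, respectively inside $l_{(3)}$) are precisely the first-summand wrap-around terms you tried to cancel away. The correct bookkeeping, as in the paper, is: (i) $x\otimes\hat{\mu}\hat{\mu}(l)$ vanishes; (ii) the first-summand wrap-around terms with $\mu(l_{(2)})$ not swallowed cancel against the second-summand terms of the form $\pm\,\mu(l_{(3)}\otimes x\otimes l_{(1)})\otimes\hat{\mu}(l_{(2)})$-type; (iii) the remaining three groups --- two from the first summand, one from the second --- are identified, after relabeling the five-fold splitting, with $\pm(\mu\circ\hat{\mu})(l_{(3)}\otimes x\otimes l_{(1)})\otimes l_{(2)}$ and vanish together. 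Since the entire content of this lemma is exactly this matching (plus the Koszul signs, which you defer), the misidentification is a genuine gap rather than a cosmetic one.
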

\begin{proof}
  We have
  \[
    \begin{aligned}
       \left( \dhoch \circ \dhoch \right)  \left( x \otimes l \right)
       ={}&
       \dhoch \left( (-1)^{\degb{x}} x \otimes \hat{\mu}(l) \right)    \\
    &+ \dhoch \left( (-1)^{\degb{l_{(3)}} \cdot \left( \degb{x} + \degb{l_{(1)}}
    + \degb{l_{(2)}} \right)} \mu(l_{(3)} \otimes x \otimes l_{(1)}) \otimes
    l_{(2)} \right).
    \end{aligned}
  \]
  Let's start by computing $\dhoch[(-1)^{\degb{x}} x \otimes \hat{\mu}(l)]$. Recall that the $A_\infty$ relations translate to $\hat\mu\circ\hat\mu=0$. So,
  \begin{equation*}
    \begin{aligned}
    \dhoch &\left( (-1)^{\degb{x}} x \otimes \hat{\mu}(l) \right) =
     (-1)^{\degb{x} + \degb{x}} x \otimes
    {\hat{\mu} \left( \hat{\mu} \left( l \right) \right)} +
    \\
    &(-1)^{\degb{x} + \degb{l_{(1)}} + \degb{l_{(32)}} \cdot \left( \degb{x} +
    \degb{l_{(1)}} + \degb{\mu(l_{(2)})} + \degb{l_{(31)}} \right)}
    \mu \left( l_{(32)} \otimes x \otimes l_{(11)} \right) \otimes l_{(12)} \otimes
    \mu(l_{(2)}) \otimes l_{31} +
    \\
    & (-1)^{\degb{x} + \degb{l_{(1)}} + \degb{l_{(33)}} \cdot \left( \degb{x} +
    \degb{l_{(1)}} + \degb{\mu(l_{(2)})} + \degb{l_{(31)}} + \degb{l_{(32)}} \right)}
	    \mu \left( l_{(33)} \otimes x \otimes l_{(1)} \otimes \mu(l_{(2)})
	    \otimes l_{(31)} \right)
	    \otimes l_{(32)} +
    \\
    & (-1)^{\degb{x} + \degb{l_{(1)}} + \left( \degb{l_{(13)}} +
    \degb{\mu(l_{(2)}} +
    \degb{l_{(3)}} \right) \cdot \left( \degb{x} + \degb{l_{(11)}} +
    \degb{l_{(12)}} \right)}
	    \mu \left( l_{(13)} \otimes \mu(l_{(2)}) \otimes l_{(3)} \otimes x
	    \otimes l_{(11)}
	    \right) \otimes l_{(12)}.
    \end{aligned}
  \end{equation*}
  All three terms in the expression above involve splitting the list $l$ into
  five consecutive lists. In the first term, we first split $l$ into $l_{(1)}
  \otimes l_{(2)} \otimes l_{(3)}$ and then both $l_{(1)}$ and $l_{(3)}$ into
  two lists. In the second term, we first split $l$ into $l_{(1)} \otimes
  l_{(2)} \otimes l_{(3)}$ and then split $l_{(3)}$ into three consecutive lists
  and so on. Since the specific order in which we perform the splitting doesn't
  matter, we can rewrite the expressions above as
  \begin{equation*} \begin{aligned}
     \dhoch & \left( (-1)^{\degb{x}} x \otimes \hat{\mu}(l) \right) =
     \\
     ={}& (-1)^{\degb{x} + \degb{l_{(1)}} + \degb{l_{(2)}} + \degb{l_{(5)}} \cdot
             \left( \degb{x} + \degb{l_{(1)}} + \degb{l_{(2)}} + \degb{\mu(l_{(3)})} +
             \degb{l_{(4)}} \right)}
     \mu \left( l_{(5)} \otimes x \otimes l_{(1)} \right) \otimes l_{(2)}
     \otimes \mu(l_{(3)}) \otimes l_{(4)} +
     \\
     & (-1)^{\degb{x} + \degb{l_{(1)}} + \degb{l_{(5)}} \cdot \left( \degb{x} +
	     \degb{l_{(1)}} + \degb{\mu(l_{(2)})} + \degb{l_{(3)}} +
	     \degb{l_{(4)}} \right)}
     \mu \left( l_{(5)} \otimes x \otimes l_{(1)} \otimes \mu(l_{(2)}) \otimes
     l_{(3)} \right) \otimes l_{(4)} +
     \\
     & (-1)^{\degb{x} + \degb{l_{(1)}} + \degb{l_{(2)}} + \degb{l_{(3)}} +
	     \left( \degb{l_{(3)}} + \degb{\mu(l_{(4)})} + \degb{l_{(5)}}
	     \right) \cdot
	     \left( \degb{x} + \degb{l_{(1)}} + \degb{l_{(2)}} \right)}
     \mu \left( l_{(3)} \otimes \mu(l_{(4)}) \otimes l_{(5)} \otimes x \otimes
     l_{(1)} \right) \otimes l_{(2)}
     \\
     ={}& (-1)^{{\degb{l_{(5)}} + \degb{x} + \degb{l_{(1)}} + \degb{l_{(2)}}} +
	     \degb{l_{(5)}} \cdot \left( \degb{x} + \degb{l_{(1)}} + \degb{l_{(2)}} +
	     \degb{l_{(3)}} + \degb{l_{(4)}} \right)}
     \mu \left( {l_{(5)} \otimes x \otimes l_{(1)}} \right) \otimes
     {l_{(2)}} \otimes \mu(l_{(3)}) \otimes l_{(4)} +
     \\
     & (-1)^{{\degb{l_{(5)}} + \degb{x} + \degb{l_{(1)}}}
	     + \degb{l_{(5)}} \cdot \left( \degb{x} + \degb{l_{(1)}} +
	     \degb{l_{(2)}} + \degb{l_{(3)}} + \degb{l_{(4)}} \right)}
     \mu \left({ l_{(5)} \otimes x \otimes l_{(1)}} \otimes
     \mu(l_{(2)}) \otimes l_{(3)} \right) \otimes l_{(4)} +
     \\
     & (-1)^{{\degb{l_{(3)}}} + \left( \degb{x} + \degb{l_{(1)}} +
             \degb{l_{(2)}} \right) \cdot \left( \degb{l_{(3)}} + \degb{l_{(4)}} +
	     \degb{l_{(5)}} \right)} \mu \left( {l_{(3)}} \otimes
     \mu(l_{(4)}) \otimes l_{(5)} \otimes x \otimes l_{(1)} \right)
     \otimes l_{(2)}.
  \end{aligned} \end{equation*}
  Next we will compute the term $\dhoch[ (-1)^{\degb{l_{(3)}} \cdot \left(
    \degb{x} + \degb{l_{(1)}} + \degb{l_{(2)}} \right)} \mu(l_{(3)} \otimes x
    \otimes l_{(1)}) \otimes l_{(2)}]$. We have
  \begin{align*}
    \dhoch & \left(
    (-1)^{\degb{l_{(3)}} \cdot \left( \degb{x} + \degb{l_{(1)}} + \degb{l_{(2)}}
          \right)}
    \mu(l_{(3)} \otimes x \otimes l_{(1)}) \otimes l_{(2)} \right) =
    \\
   ={} & (-1)^{\degb{l_{(3)}} \cdot \left( \degb{x} + \degb{l_{(1)}} + \degb{l_{(21)}} +
	    \degb{l_{(22)}} + \degb{l_{(23)}} \right) + \degb{\mu(l_{(3)}
	    \otimes x \otimes l_{(1)})} + \degb{l_{(21)}}} \\
&\hspace{15em} \cdot
    \mu(l_{(3)} \otimes x \otimes l_{(1)}) \otimes l_{(21)} \otimes
    \mu(l_{(22)}) \otimes l_{(23)} +
    \\
    & (-1)^{\degb{l_{(3)}} \cdot \left( \degb{x} + \degb{l_{(1)}} + \degb{l_{(21)}} +
    	    \degb{l_{(22)}} + \degb{l_{(23)}} \right) + \degb{l_{(23)}} \cdot \left(
	    \degb{\mu(l_{(3)} \otimes x \otimes l_{(1)})} + \degb{l_{(21)}} +
	    \degb{l_{(22)}}
	    \right)}\\
    &\hspace{15em}\cdot \mu \left( l_{(23)} \otimes \mu(l_{(3)} \otimes x \otimes l_{(1)}) \otimes
    l_{(21)} \right) \otimes l_{(22)} \\
    ={}& (-1)^{{\degb{l_{(3)}} + \degb{x} + \degb{l_{(1)}} + \degb{l_{(21)}}} +
      	    \degb{l_{(3)}} \cdot \left( \degb{x} + \degb{l_{(1)}} + \degb{l_{(21)}} +
    	    \degb{l_{(22)}} + \degb{l_{(23)}} \right) + 1}\\
     &\hspace{15em}\cdot\mu( {l_{(3)} \otimes x \otimes l_{(1)}}) \otimes
     {l_{(21)}}
     \otimes \mu(l_{(22)}) \otimes l_{(23)} +
    \\
    & (-1)^{{\degb{l_{(23)}}} + \left( \degb{l_{(23)}} + \degb{l_{(3)}}
	    \right) \cdot \left( \degb{x} + \degb{l_{(1)}} + \degb{l_{(21)}} +
	    \degb{l_{(22)}} \right)}
     \mu \left( {l_{(23)}} \otimes \mu(l_{(3)} \otimes x \otimes l_{(1)})
     \otimes l_{(21)} \right) \otimes l_{(22)}.
  \end{align*}
  Again, we have here two expressions which involve splitting the list $l$ into
  five consecutive lists. Thus, we can rewrite the expressions above as
  \begin{align*}
    \dhoch & \left(
    (-1)^{\degb{l_{(3)}} \cdot \left( \degb{x} + \degb{l_{(1)}} + \degb{l_{(2)}} \right)}
    \mu(l_{(3)} \otimes x \otimes l_{(1)}) \otimes l_{(2)} \right) =
    \\
   = {} & (-1)^{{\degb{l_{(5)}} + \degb{x} + \degb{l_{(1)}} + \degb{l_{(2)}}} +
      	    \degb{l_{(5)}} \cdot \left( \degb{x} + \degb{l_{(1)}} + \degb{l_{(2)}} +
    	    \degb{l_{(3)}} + \degb{l_{(4)}} \right) + 1}
     \mu( {l_{(5)} \otimes x \otimes l_{(1)}}) \otimes {l_{(2)}}
     \otimes \mu(l_{(3)}) \otimes l_{(4)} +
    \\
    & (-1)^{{\degb{l_{(4)}}} + \left( \degb{l_{(4)}} + \degb{l_{(5)}} \right)
	    \cdot \left( \degb{x} + \degb{l_{(1)}} + \degb{l_{(2)}} + \degb{l_{(3)}}
	    \right)}
     \mu \left( {l_{(4)}} \otimes \mu(l_{(5)} \otimes x \otimes l_{(1)})
     \otimes l_{(2)} \right) \otimes l_{(3)}.
  \end{align*}
  Combining both expressions and canceling, we are left with
  \begin{align*}
    ( \dhoch &
    \circ \dhoch )  \left( x \otimes l \right) =
    \\
    ={}& (-1)^{{\degb{l_{(5)}} + \degb{x} + \degb{l_{(1)}}} +
    \degb{l_{(5)}} \cdot \left( \degb{x} + \degb{l_{(1)}} + \degb{l_{(2)}} +
    \degb{l_{(3)}} + \degb{l_{(4)}} \right)} \mu \left( { l_{(5)}
    \otimes x \otimes l_{(1)}} \otimes \mu(l_{(2)}) \otimes l_{(3)} \right)
     \otimes l_{(4)} +
    \\
    & (-1)^{{\degb{l_{(3)}}} + \left( \degb{x} + \degb{l_{(1)}} +
    \degb{l_{(2)}} \right) \cdot \left( \degb{l_{(3)}} + \degb{l_{(4)}} +
    \degb{l_{(5)}} \right)} \mu \left( {l_{(3)}} \otimes \mu(l_{(4)})
    \otimes l_{(5)} \otimes x \otimes l_{(1)} \right)
     \otimes l_{(2)} +
    \\
    & (-1)^{{\degb{l_{(4)}}} + \left( \degb{l_{(4)}} + \degb{l_{(5)}}
    \right) \cdot \left( \degb{x} + \degb{l_{(1)}} + \degb{l_{(2)}} +
    \degb{l_{(3)}} \right)}
     \mu \left( {l_{(4)}} \otimes \mu(l_{(5)} \otimes x \otimes l_{(1)})
     \otimes l_{(2)} \right) \otimes l_{(3)}.
  \end{align*}
  To see that this is zero, let's start with $x \otimes l$, split $l$ into three
  parts and rotate one part so that it appears before $x$. With the appropriate
  signs, we get the expression
  \begin{equation*}
    (-1)^{\degb{l_{(3)}} \cdot \left( \degb{x} + \degb{l_{(1)}} +
    \degb{l_{(2)}} \right)} {l_{(3)} \otimes x \otimes l_{(1)}} \otimes l_{(2)}.
  \end{equation*}
  We   can apply $\mu  \circ \hat{\mu}$ to the first part $l_{(3)} \otimes x \otimes l_{(1)}$ of the
  expression and tensor the result with $l_{(2)}$. Doing so, we get
  \begin{align*}
    0 ={} & (-1)^{\degb{l_{(3)}} \cdot \left( \degb{x} + \degb{l_{(1)}} + \degb{l_{(2)}} \right)}
    \left( \mu \circ \hat{\mu} \right) \left( l_{(3)} \otimes x \otimes l_{(1)} \right) \otimes l_{(2)} =
    \\
  = {} &  (-1)^{\degb{l_{(3)}} \cdot \left( \degb{x} + \degb{l_{(11)}} + \degb{l_{(12)}} +
	     \degb{l_{(13)}} + \degb{l_{(2)}} \right) + {\degb{l_{(3)}} +
	     \degb{x} + \degb{l_{(11)}}}}
     \mu \left( {l_{(3)} \otimes x \otimes l_{(11)}} \otimes \mu(l_{(12)})
     \otimes l_{(13)} \right) \otimes l_{(2)} +
    \\
    &  (-1)^{\left( \degb{l_{(31)}} + \degb{l_{(32)}} \right) \cdot \left( \degb{x} +
	     \degb{l_{(11)}} + \degb{l_{(12)}} + \degb{l_{(2)}} \right) +
	     {\degb{l_{(31)}}}}
     \mu \left({l_{(31)}} \otimes \mu(l_{(32)} \otimes x \otimes l_{(11)})
     \otimes l_{(12)} \right) \otimes l_{(2)} +
    \\
    &  (-1)^{\left( \degb{l_{(31)}} + \degb{l_{(32)}} + \degb{l_{(33)}} \right) \cdot
    	     \left( \degb{x} + \degb{l_{(1)}} + \degb{l_{(2)}} \right) +
	     {\degb{l_{(31)}}}}
     \mu \left( {l_{(31)}} \otimes \mu(l_{(32)}) \otimes l_{(33)}
     \otimes x \otimes l_{(1)} \right) \otimes l_{(2)}.
  \end{align*}
  Here we again have expressions involving splitting $l$ into five consecutive
  lists and so ``performing the change of variables'', we get
  \begin{align*}
    0
    ={} &  (-1)^{{\degb{l_{(5)}} + \degb{x} + \degb{l_{(1)}}} + \degb{l_{(5)}} \cdot
	     \left( \degb{x} + \degb{l_{(1)}} + \degb{l_{(2)}} + \degb{l_{(3)}} + \degb{l_{(4)}}
	     \right)}
     \mu \left({l_{(5)} \otimes x \otimes l_{(1)}} \otimes \mu(l_{(2)}) \otimes
     l_{(3)}
     \right) \otimes l_{(4)} +
    \\
    &  (-1)^{{\degb{l_{(4)}}} + \left( \degb{l_{(4)}} + \degb{l_{(5)}} \right)
	     \cdot \left( \degb{x} + \degb{l_{(1)}} + \degb{l_{(2)}} + \degb{l_{(3)}}
	     \right)}
     \mu \left( {l_{(4)}} \otimes \mu(l_{(5)} \otimes x \otimes l_{(1)})
     \otimes l_{(2)} \right) \otimes l_{(3)} +
    \\
    &  (-1)^{{\degb{l_{(3)}}} + \left( \degb{l_{(3)}} + \degb{l_{(4)}} +
	     \degb{l_{(5)}} \right) \cdot \left( \degb{x} + \degb{l_{(1)}} + \degb{l_{(2)}}
	     \right)}
     \mu \left({l_{(3)}} \otimes \mu(l_{(4)}) \otimes l_{(5)} \otimes x \otimes l_{(1)}
     \right) \otimes l_{(2)}
  \end{align*}
  which is precisely the expression we got for $\left( \dhoch \circ \dhoch
  \right) \left( x \otimes l \right)$.
\end{proof}
The cohomology of the complex $\choch[A]$ is called the \textbf{Hochschild homology}
of $A$ (with coefficients in $A$) and is denoted by $\hhoch[A]$. Note that we use
cohomological grading on the Hochschild homology.

\subsection{Cyclic homology of an \texorpdfstring{$\Ainf$}{A-infty} algebra}

In general, there are several different constructions of chain complexes whose
cohomology gives us the cyclic homology of an algebra. In this section we will
verify that one of the constructions, Connes' complex, once interpreted
correctly, works almost verbatim not only for an algebra but also for an $\Ainf$-algebra, possibly with a non-zero curvature term.

\subsubsection{Connes' Complex for cyclic homology}\label{sssec:connes}

The construction of Connes' complex is based on the following basic operator.
Let $\t \colon \choch[A] \rightarrow \choch[A]$ be given
by

\[
  \t[x_1 \otimes \dots \otimes x_n] =
  (-1)^{\degr{x_n} \cdot \left( \degr{x_1} + \dots + \degr{x_{n-1}} \right)} x_n \otimes
  x_1 \otimes \dots \otimes x_{n-1}.
\]
Using our list notation, we have
\[
  \t[l \otimes x] = (-1)^{\degr{x} \cdot \degr{l}} x \otimes l.
\]
This operator has degree zero with respect to the grading on $\choch[A]$.

\begin{lm} \label{lm:tlemma} We have the identity $\dhoch \circ (1 - \t) = (1 - \t) \circ
  \hat{\mu}$.
\end{lm}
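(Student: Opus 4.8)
The identity is the curved $A_\infty$ analogue of the classical relation $b\circ(1-t)=(1-t)\circ b'$ between the Hochschild and bar differentials, and the plan is to prove it by a direct expansion in the list notation of Section~\ref{ssec:notlist}, organised so that only a handful of terms actually need to be matched.

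The first step is to rewrite $\dhoch$ so that its difference from $\hat\mu$ is visible. Partitioning the terms of $\hat\mu(x\otimes l)$ according to whether the distinguished first letter $x$ lies before, inside, or after the $\mu$-block, one gets respectively the three pieces $(-1)^{\degb x}x\otimes\hat\mu(l)$, $\mu(x\otimes l_{(1)})\otimes l_{(2)}$, and $\mu_0\otimes x\otimes l$. The first two of these are exactly the first summand of $\dhoch(x\otimes l)$ and the part of its second summand with $l_{(3)}$ empty, so
\[
 \dhoch(x\otimes l)=\hat\mu(x\otimes l)-\mu_0\otimes x\otimes l+W(x\otimes l),
\]
where $W$ collects the terms $(-1)^{\degb{l_{(3)}}(\degb x+\degb{l_{(1)}}+\degb{l_{(2)}})}\,\mu(l_{(3)}\otimes x\otimes l_{(1)})\otimes l_{(2)}$ with $l_{(3)}$ nonempty, i.e.\ the genuinely wrap-around contributions. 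Substituting this into the claim and simplifying reduces the lemma to the identity
\[
 (W-\mu_0\otimes(-))\circ(1-\t)=\hat\mu\circ\t-\t\circ\hat\mu.
\]

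For the second step I would expand both sides and match them. On the right, a summand of $\hat\mu$ whose $\mu$-block avoids the last letter of the list contributes equally to $\hat\mu\t$ and to $\t\hat\mu$ --- in both orders $\t$ only transports that last letter to the front, and the Koszul signs agree once one uses $\degb{\mu(B)}=\degb B+1$ --- so these terms cancel, and $\hat\mu\t-\t\hat\mu$ is left with: from $-\t\hat\mu$, the terms in which $\hat\mu$ acts on a suffix of $x\otimes l$ (or inserts $\mu_0$ after the last letter) and $\t$ then moves the $\mu$-output to the front; and, from $\hat\mu\t$, the terms in which $\hat\mu$ acts on a prefix of $\t(x\otimes l)$ containing its new first letter (or inserts $\mu_0$ at the front). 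On the left, $W(x\otimes l)-W(\t(x\otimes l))$ produces the terms in which $\mu$ acts on a consecutive block of letters that wraps around the end of the list, and $-\mu_0\otimes x\otimes l+\mu_0\otimes\t(x\otimes l)$ supplies the two curvature terms. The curvature terms match directly; for the remaining terms there is an evident bijection of blocks --- a wrap-around block of $W(x\otimes l)$ matches a prefix block of $\hat\mu(\t(x\otimes l))$, and a wrap-around block of $-W(\t(x\otimes l))$ matches a suffix block of $-\t\hat\mu(x\otimes l)$ --- once the letters are relabelled cyclically.

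The one genuine difficulty, and the bulk of the work, is the sign bookkeeping: reconciling the sign of the wrap-around term of $\dhoch$, which places the $\mu$-output at the \emph{front}, with that of a generic term of $\hat\mu$, which places it in the \emph{middle}; and tracking the Koszul sign incurred each time $\t$ is pushed past a $\mu$-output, or past $\mu_0$, whose degree in $A[1]$ is $1$. I would dispatch this exactly as in the proof of the preceding lemma: rewrite everything as a sum over splittings of the underlying list into consecutive pieces, ``perform the change of variables'' so that the two sides become sums over the same index set, and verify that the signs then agree term by term. No step requires a new idea beyond careful sign chasing.
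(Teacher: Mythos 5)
Your first step is sound and is genuinely different in organization from the paper's proof (which expands $\dhoch\circ(1-\t)$ and $(1-\t)\circ\hat{\mu}$ in full on $x\otimes l\otimes z$ and cancels): the decomposition $\dhoch(x\otimes l)=\hat{\mu}(x\otimes l)-\mu_0(1)\otimes x\otimes l+W(x\otimes l)$, with $W$ the wrap-around terms (nonempty $l_{(3)}$), is correct, and it does reduce the lemma to $(W-\mu_0(1)\otimes(\,\cdot\,))\circ(1-\t)=\hat{\mu}\circ\t-\t\circ\hat{\mu}$. Your cancellation of those terms of $\hat{\mu}\circ\t$ and $\t\circ\hat{\mu}$ whose $\mu$-block neither contains the last letter nor is the empty insertion after it is also correct, with exactly the sign bookkeeping you indicate.

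The gap is in the matching of what remains: the ``evident bijection'' you assert does not exist. For a list of length $n$, $W(x\otimes l)$ has $n(n-1)/2$ wrap-around terms (a nonempty $l_{(3)}$ and an arbitrary $l_{(1)}$), while $\hat{\mu}(\t(x\otimes l))$ has only $n$ terms whose block is a prefix containing the new first letter; so for $n\ge 4$ no bijection between these families is possible, and already for $n=2$ your matching leaves the two $\mu_1(z)$-terms on the right unaccounted for. Two further internal cancellations are missing. On the left, a term of $W(x\otimes l)$ whose wrap-around part $l_{(3)}$ has length at least two coincides, sign included, with a term of $W(\t(x\otimes l))$ whose block is the same cyclic interval (its $L_{(1)}$ is nonempty); these pairs cancel inside $W\circ(1-\t)$, so only blocks beginning exactly at the last letter $z$ survive from $W(x\otimes l)$, and only blocks equal to suffixes of $x\otimes l$ of length at least two survive from $W(\t(x\otimes l))$. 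On the right, the term whose $\mu$-block is precisely the last letter $z$ (a length-one suffix block for $\t\circ\hat{\mu}$, a length-one prefix block for $\hat{\mu}\circ\t$) cancels internally between the two, even though it is not in your ``block avoids the last letter'' family. Only after these cancellations do the surviving $n-1$ terms on each side match up as you intend, together with the curvature terms, which you do treat correctly. The strategy is repairable along these lines, but as written the key combinatorial step fails; you should also record the weight-one case, where the reduced identity amounts to checking $\t(\hat{\mu}(x))=\hat{\mu}(x)$, which your block description does not cover and which the paper verifies separately.
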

\begin{proof}
  The proof is a lengthy but straightforward calculation.  We'll first verify the
  identity for (elementary) tensors of weight greater than or equal to two. Such
  tensors can be written in our notation as $x \otimes l \otimes z$ when $x,z
  \in A[1]$ and $l = y_1 \otimes \dots y_k$ for $k \geq 0$ (when $k = 0$ this
  means that $l = 1$ and we are working with $x \otimes l \otimes z = x \otimes
  z$). We have
  \begin{align*}
    \dhoch \left(x \otimes l \otimes z \right) ={}
        &\mu(x \otimes l \otimes z) +
    \\
    &\mu(x \otimes l_{(1)}) \otimes l_{(2)} \otimes z +
    \\
    &(-1)^{\degb{x} + \degb{l_{(1)}}} x \otimes l_{(1)} \otimes \mu(l_{(2)}) \otimes l_{(3)}
    \otimes z +
    \\
    &(-1)^{\degb{x} + \degb{l_{(1)}}} x \otimes l_{(1)} \otimes \mu(l_{(2)} \otimes z) +
    \\
    &(-1)^{\degb{x} + \degb{l} +\degb{z}} x \otimes l \otimes z \otimes \mu(1) +
    \\
    &(-1)^{\left( \degb{l_{(3)}} + \degb{z} \right) \cdot \left( \degb{x} +
    \degb{l_{(1)}} + \degb{l_{(2)}} \right)} \mu \left( l_{(3)} \otimes z \otimes x \otimes
    l_{(1)} \right) \otimes l_{(2)}.
  \end{align*}
  Similarly, we have
  \begin{align*}
    \left( \dhoch \circ \t \right) & \left( x \otimes l \otimes z \right) =
    (-1)^{\degb{z} \cdot \left( \degb{x} + \degb{l} \right)}
    \dhoch[z \otimes x \otimes l]
    \\
    ={} & (-1)^{\degb{z} \cdot \left( \degb{x} + \degb{l} \right) + \degb{z}}
    z \otimes \mu(1) \otimes x \otimes l +
    \\
    &(-1)^{\degb{z} \cdot \left( \degb{x} + \degb{l} \right) + \degb{z}}
    z \otimes \mu(x \otimes l_{(1)}) \otimes l_{(2)} +
    \\
    &(-1)^{\degb{z} \cdot \left( \degb{x} + \degb{l}
    \right) + \degb{z} + \degb{x} + \degb{l_{(1)}}}
    z \otimes x \otimes l_{(1)} \otimes \mu(l_{(2)}) \otimes l_{(3)} +
    \\
    &(-1)^{\degb{z} \cdot \left( \degb{x} + \degb{l} \right) +
    \degb{l_{(2)}} \cdot \left( \degb{z} +\degb{x} + \degb{l_{(1)}} \right)}
    \mu(l_{(2)} \otimes z) \otimes x \otimes l_{(1)} +
    \\
    &(-1)^{\degb{z} \cdot \left( \degb{x} + \degb{l_{(1)}} + \degb{l_{(2)}} + \degb{l_{(3)}}
    \right) + \degb{l_{(3)}} \cdot \left( \degb{z} + \degb{x} + \degb{l_{(1)}} +
    \degb{l_{(2)}} \right)}
    \mu(l_{(3)} \otimes z \otimes x \otimes l_{(1)}) \otimes l_{(2)} +
    \\
    &(-1)^{\degb{z} \cdot \left( \degb{x} + \degb{l} \right) +
    \left( \degb{x} + \degb{l} \right) \cdot \degb{z}}
    \mu(x \otimes l \otimes z)
    \\
    ={}& (-1)^{\degb{z} \cdot \left( \degb{x} + \degb{l} + 1 \right)}
    z \otimes \mu(1) \otimes x \otimes l +
    \\
    & (-1)^{\degb{z} \cdot \left( \degb{x} + \degb{l} + 1 \right)}
    z \otimes \mu(x \otimes l_{(1)}) \otimes l_{(2)} +
    \\
    & (-1)^{\degb{z} \cdot \left( \degb{z} + \degb{l} + 1 \right) + \degb{x} +
      \degb{l_{(1)}}}
    z \otimes x \otimes l_{(1)} \otimes \mu(l_{(2)}) \otimes l_{(3)} +
    \\
    & (-1)^{\left( \degb{x} + \degb{l_{(1)}} \right) \cdot \left( \degb{z} +
      \degb{l_{(2)}} \right)}
    \mu(l_{(2)} \otimes z) \otimes x \otimes l_{(1)} +
    \\
    & (-1)^{\left( \degb{l_{(3)}} + \degb{z} \right) \cdot \left( \degb{x} +
        \degb{l_{(1)}} + \degb{l_{(2)}} \right)}
    \mu(l_{(3)} \otimes z \otimes x \otimes l_{(1)}) \otimes l_{(2)} +
    \\
    & \mu(x \otimes l \otimes z).
  \end{align*}
  Each expression above is a sum of six terms (where some of the terms are
  themselves sums of expressions of a certain type). Subtracting and canceling identical
  terms, we get
  \begin{align*}
    \left( \dhoch \circ (1 - \t) \right)  \left( x \otimes l \otimes z \right) ={}
    &\mu(x \otimes l_{(1)}) \otimes l_{(2)} \otimes z +
    \\
    &(-1)^{\degb{x} + \degb{l_{(1)}}} x \otimes l_{(1)} \otimes \mu(l_{(2)}) \otimes l_{(3)}
    \otimes z +
    \\
    &(-1)^{\degb{x} + \degb{l_{(1)}}} x \otimes l_{(1)} \otimes \mu(l_{(2)} \otimes z) +
    \\
    &(-1)^{\degb{x} + \degb{l} +\degb{z}} x \otimes l \otimes z \otimes \mu(1) +
    \\
    & (-1)^{\degb{z} \cdot \left( \degb{x} + \degb{l} + 1 \right) + 1}
    z \otimes \mu(1) \otimes x \otimes l +
    \\
    &(-1)^{\degb{z} \cdot \left( \degb{x} + \degb{l} + 1 \right) + 1}
    z \otimes \mu(x \otimes l_{(1)}) \otimes l_{(2)} +
    \\
    &(-1)^{\degb{z} \cdot \left( \degb{x} + \degb{l} + 1 \right) + \degb{x} +
      \degb{l_{(1)}} + 1} z \otimes x \otimes l_{(1)} \otimes \mu(l_{(2)}) \otimes l_{(3)} +
    \\
    &(-1)^{\left( \degb{x} + \degb{l_{(1)}} \right) \cdot \left( \degb{z} +
    \degb{l_{(2)}} \right) + 1}
    \mu(l_{(2)} \otimes z) \otimes x \otimes l_{(1)}.
  \end{align*}
  Next, we have
  \begin{align*}
    \hat{\mu} \left( x \otimes l \otimes z \right) ={}
    & \mu(1) \otimes x \otimes l \otimes z +
    \\
    & \mu(x \otimes l_{(1)}) \otimes l_{(2)} \otimes z +
    \\
    & \mu(x \otimes l \otimes z) +
    \\
    & (-1)^{\degb{x} + \degb{l_{(1)}}} x \otimes l_{(1)} \otimes \mu(l_{(2)}) \otimes l_{(3)}
    \otimes z +
    \\
    & (-1)^{\degb{x} + \degb{l_{(1)}}} x \otimes l_{(1)} \otimes \mu(l_{(2)} \otimes z)
    \\
    & (-1)^{\degb{x} + \degb{l} + \degb{z}} x \otimes l \otimes z \otimes \mu(1),
  \end{align*}
  which implies that
  \begin{align*}
    \left( \t \circ \hat{\mu} \right)  \left( x \otimes l \otimes z \right) ={}
    &(-1)^{\degb{z} \cdot \left( \degb{\mu(1)} + \degb{x} + \degb{l} \right)}
    z \otimes \mu(1) \otimes x \otimes l +
    \\
    &(-1)^{\degb{z} \cdot \left( \degb{\mu(x \otimes l_{(1)})} + \degb{l_{(2)}} \right)}
    z \otimes \mu(x \otimes l_{(1)}) \otimes l_{(2)} +
    \\
    &\mu(x \otimes l \otimes z) +
    \\
    &(-1)^{\degb{x} + \degb{l_{(1)}} + \degb{z} \cdot \left( \degb{x} + \degb{l_{(1)}} +
    \degb{\mu(l_{(2)})} + \degb{l_{(3)}} \right)}
    z \otimes x \otimes l_{(1)} \otimes \mu(l_{(2)}) \otimes l_{(3)} +
    \\
    &(-1)^{\degb{x} + \degb{l_{(1)}} + \degb{\mu(l_{(2)} \otimes z)} \cdot \left(
    \degb{x} + \degb{l_{(1)}} \right)}
    \mu(l_{(2)} \otimes z) \otimes x \otimes l_{(1)} +
    \\
    &(-1)^{\degb{x} + \degb{l} + \degb{z} + \degb{\mu(1)} \cdot \left( \degb{x}
    + \degb{l} + \degb{z} \right)}
    \mu(1) \otimes x \otimes l \otimes z
    \\
    ={}&(-1)^{\degb{z} \cdot \left( \degb{x} + \degb{l} + 1 \right)}
    z \otimes \mu(1) \otimes x \otimes l +
    \\
    &(-1)^{\degb{z} \cdot \left( \degb{x} + \degb{l} + 1 \right)}
    z \otimes \mu(x \otimes l_{(1)}) \otimes l_{(2)} +
    \\
    &\mu(x \otimes l \otimes z) +
    \\
    &(-1)^{\degb{z} \cdot \left( \degb{x} + \degb{l} + 1 \right)  +
    \degb{x} + \degb{l_{(1)}}}
    z \otimes x \otimes l_{(1)} \otimes \mu(l_{(2)}) \otimes l_{(3)} +
    \\
    &(-1)^{\left( \degb{x} + \degb{l_{(1)}} \right) \cdot \left(\degb{z} +
    \degb{l_{(2)}} \right)}
    \mu(l_{(2)} \otimes z) \otimes x \otimes l_{(1)} +
    \\
    &\mu(1) \otimes x \otimes l \otimes z.
  \end{align*}
  Again, each expression above is a sum of six terms. Subtracting and
  canceling the two identical terms, we get
  \begin{align*}
    \left( \left( 1 - \t \right) \circ \hat{\mu} \right) \left( x \otimes l
    \otimes z \right) ={}
    & \mu(x \otimes l_{(1)}) \otimes l_{(2)} \otimes z +
    \\
    & (-1)^{\degb{x} + \degb{l_{(1)}}} x \otimes l_{(1)} \otimes \mu(l_{(2)}) \otimes l_{(3)}
    \otimes z +
    \\
    & (-1)^{\degb{x} + \degb{l_{(1)}}} x \otimes l_{(1)} \otimes \mu(l_{(2)} \otimes z) +
    \\
    & (-1)^{\degb{x} + \degb{l} + \degb{z}}
    x \otimes l \otimes z \otimes \mu(1) +
    \\
    &(-1)^{\degb{z} \cdot \left( \degb{x} + \degb{l} + 1 \right) + 1}
    z \otimes \mu(1) \otimes x \otimes l +
    \\
    &(-1)^{\degb{z} \cdot \left( \degb{x} + \degb{l} + 1 \right) + 1}
    z \otimes \mu(x \otimes l_{(1)}) \otimes l_{(2)} +
    \\
    &(-1)^{\degb{z} \cdot \left( \degb{x} + \degb{l} + 1 \right) +
    \degb{x} + \degb{l_{(1)}} + 1}
    z \otimes x \otimes l_{(1)} \otimes \mu(l_{(2)}) \otimes l_{(3)} +
    \\
    &(-1)^{\left( \degb{x} + \degb{l_{(1)}} \right) \cdot \left(\degb{z} +
    \degb{l_{(2)}} \right) + 1}
    \mu(l_{(2)} \otimes z) \otimes x \otimes l_{(1)}.
  \end{align*}
  This shows the identify for elementary tensors of weight greater than or equal
  to two. To check the remaining case, we note that if $x \in A[1]$ then $(1 -
  \t)(x) = 0$  and so $\left( \dhoch \circ (1 - \t) \right)(x) = 0$. Meanwhile,
  \begin{align*}
    \hat{\mu}(x) = \mu(x) + \mu(1) \otimes x + (-1)^{\degb{x}} x \otimes \mu(1),
  \end{align*}
  which implies that
  \begin{align*}
    \left( \t \circ \hat{\mu} \right)(x) &= \mu(x) + (-1)^{\degb{x} \cdot
    \degb{\mu(1)}} x \otimes \mu(1) + (-1)^{\degb{x} + \degb{\mu(1)} \cdot
    \degb{x}} \mu(1) \otimes x \\
    &= \mu(x) + (-1)^{\degb{x}} x \otimes \mu(1) + \mu(1) \otimes x.
  \end{align*}
  Subtracting both expressions, we get that $\left( \dhoch \circ (1 - \t)
  \right)(x) = \left( \left( 1 - \t \right) \circ \hat{\mu} \right)(x) = 0$.
\end{proof}
\Cref{lm:tlemma} is a generalization of Lemma 2.1.1 from \cite{Loday}, where the
identity is shown for associative algebras.
We can interpret \cref{lm:tlemma} as stating that the map
\begin{equation*}
  1 - \t \colon \left( \tensr{A[1]}, \hat{\mu} \right) \rightarrow \left( \choch[A][], \dhoch \right)
\end{equation*}
is a morphism of chain complexes. It follows that the Hochschild differential $\dhoch$ descends to the quotient $\choch[A][] / \im (1 - \t)$. The quotient chain complex is called the \textbf{Connes complex} and we will
denote it by $\cconnes[A]$. The cohomology of the complex will be denoted by
$\hcyc[A]$ and is called the \textbf{cyclic homology} of $A$.

\subsubsection{Extended cyclic homology}\label{sssec:extend}

The curvature term $\mu_0(1)$ is not a closed element in the Hochschild chain
complex, but it becomes closed in the cyclic complex, as shown in the following lemma.

\begin{lm}
    The element $\mu_0 \left ( 1 \right)$ defines a cocycle in $\cconnes[A]$.
\end{lm}
\begin{proof}
    Let us compute the action of the Hochschild differential on the curvature term $\mu_0(1)$. From the $A_\infty$ relations we get
    $\mu_1(\mu_0(1))=0$, so

    \begin{equation}\label{eq:dex}
      \dhoch[\mu_0(1)] = {\mu_1 \left( \mu_0(1) \right)} + (-1)^{\degr{\mu_0(1)}}
      \mu_0(1) \otimes \mu_0(1) = -\mu_0(1) \otimes \mu_0(1).
    \end{equation}

    This is clearly non-zero if $\mu_0(1) \neq 0$, so the element $\mu_0(1)$ doesn't
    define a Hochschild homology class. However, since $\mu_0(1)$ has degree one in
    $A[1]$ and we assume that $2$ is invertible, we have
    \[
      \begin{aligned}
         \left( 1 - \t \right) \left( \frac{\mu_0(1) \otimes \mu_0(1)}{2} \right) &=
         \frac{1}{2} \left( \mu_0(1) \otimes \mu_0(1) - (-1)^{\degr{\mu_0(1)} \cdot
         \degr{\mu_0(1)}} \mu_0(1) \otimes \mu_0(1) \right)
         \\
         &= \mu_0(1) \otimes \mu_0(1).
      \end{aligned}
    \]
    Therefore, in the cyclic complex $\cconnes[A]$, the class of $\mu_0 \left ( 1 \right)$ is closed.
\end{proof}

In general, the class of $\mu_0(1)$ need not be exact, but we can make it exact by adding to the cyclic complex a copy of $R$. In other words, we can extend the cyclic chain complex so
that it becomes a quotient not only of the reduced tensor algebra $\tensr{A[1]}$
but of the full tensor algebra $\tens{A[1]}$. Let us define
\[
  \ceconnes[A] := \cconnes[A] \oplus R^{*} = \choch[A] / \im (1 - \t) \oplus
  R^{*} = \tens{A[1]} / \im (1 - \t),
\]
where the action of $\t$ is extended so that it acts as identity on $R$. Note
that $R$ is possibly graded, so the definition possibly modifies every
homogeneous piece $\cconnes[A][r]$. In the simplest case, $R = k$ is a field concentrated in
degree zero and this construction only adds a copy of the base field in degree
zero. The differential on this complex will still be denoted by $\dhoch$ and is
defined on $R$ by the requirement that $\dhoch[1] = \mu_0(1)$ where $1 \in R$ is
the unit, and extended $R$-linearly. Note that $\dhoch$
does not define a differential on $\choch[A][] \oplus R = \tens{A[1]}$, as our
calculation~\eqref{eq:dex} shows, so this is a slight abuse of notation.
The homology of the extended cylic
complex $\ceconnes[A]$ with respect to $\dhoch$ will be denoted by $\hecyc[A]$
and will be called the \textbf{extended cyclic homology} of $A$. When $\mu_0(1)
= 0$, this construction is not really useful as it only adds a copy of $R$ with
zero differential.
In general, this construction immediately gives:
\begin{lm}
The element $\mu_0 \left ( 1 \right)$ defines an exact cocycle in $\ceconnes[A]$.
\end{lm}

\subsection{Normalized and reduced homologies}\label{sssec:normNred}

In order to define normalized and reduced complexes, we need the presence of a unit.

\begin{dfn}\label{dfn:unital}
 Let $(A,\mu)$ be an $\Ainf$-algebra. A strict unit for $A$ is an
  element $\overline{e} \in A[1]^{-1}$ which satisfies
  \begin{enumerate}
    \item $\mu_k(x_1, \dots, x_k) = 0$ if $k \neq 2$ and $x_i = \overline{e}$
      for some $1 \leq i \leq k$.
    \item $\mu_2(\overline{e}, x) = x$ and
      $\mu_2(x, \overline{e}) = (-1)^{\degr{x} + 1} x$, for all $x \in A[1]$.
  \end{enumerate}
\end{dfn}

Assume $(A,\mu)$ is equipped with a unit. Then the following holds:
\begin{lm}
    Let $(A,\mu,\overline{e})$ be a unital $\Ainf$-algebra.
    \begin{enumerate}
        \item\label{it:degen_hoch}
         The submodule of the Hochschild complex $\choch[A] = \tensr{A[1]}$ given by
            \begin{gather*}
              D^{*}(A) = \langle x_0 \otimes \dots \otimes x_k \, \mid \,
              x_0, \dots, x_k \in A[1], \, \exists \, 1 \leq i \leq k \textrm{ such that
              } x_i = \overline{e} \rangle
            \end{gather*}
            is a subcomplex of $\left( \choch[A], \dhoch \right)$.
        \item\label{it:degen_con}
        The submodule of Connes' complex $\cconnes[A] = \tensr{A[1]} / \im (1 - \t)$ given by
            \begin{equation*}
                E_{\lambda}^{*}(A) = \langle [x_1 \otimes \dots \otimes x_k] \, \mid \,
                x_1, \dots, x_k \in A[1], \, \exists \, 1 \leq i \leq k \textrm{ such that
            } x_i = \overline{e} \rangle
            \end{equation*}
            is a subcomplex of $\left( \cconnes[A], \dhoch \right)$.
    \end{enumerate}
\end{lm}
\begin{proof}
We shall consider
elements of $\choch[A]$ of the form $x_1 \otimes \dots \otimes x_k$ where $x_i
\in A[1]$ such that $x_i = \overline{e}$ for some $1 \leq i \leq k$, and compute
the action of the Hochschild differential on them. We have three cases:

\begin{enumerate}[label={(\alph*)}]
  \item\label{it:e_begin}
  The unit appears in the beginning of the list. Such elements can be
    written as
    \begin{equation*}
      \overline{e} \otimes l = \overline{e} \otimes \underbrace{x_1 \otimes
      \dots \otimes x_k}_{l}.
    \end{equation*}
    Then
    \begin{align*}
  	\dhoch \left( \overline{e} \otimes l \right) ={}&
	(-1)^{\degb{\overline{e}} + \degb{l_{(1)}}} \overline{e} \otimes l_{(1)} \otimes
	\mu(l_{(2)}) \otimes l_{(3)}  + \mu_2(\overline{e}, x_1) \otimes x_2 \otimes
	\dots \otimes x_k +\\
	&
 	(-1)^{\degb{x_k} \cdot \left( \degb{\overline{e}} + \sum_{i=1}^{k-1}
  	\degb{x_i} \right)} \mu_2(x_k, \overline{e}) \otimes x_1
  	\otimes \dots \otimes x_{k-1} \\
  	={}&
	(-1)^{\degb{l_{(1)}} - 1} \overline{e} \otimes l_{(1)} \otimes \mu(l_{(2)}) \otimes l_{(3)} +
  	x_1 \otimes \dots \otimes x_k +\\
	&
  	(-1)^{\degb{x_k} \cdot \left( \degb{\overline{e}} + \sum_{i=1}^{k-1}
  	\degb{x_i} \right) + \degb{x_k} + 1} x_k \otimes x_1 \otimes \dots \otimes
  	x_{k-1} \\
	={}& (-1)^{\degb{l_{(1)}} - 1} \overline{e} \otimes l_{(1)} \otimes \mu(l_{(2)})
	\otimes l_{(3)} + \left( 1 - \t \right) \left( l \right).
    \end{align*}
  \item\label{it:e_mid}
   The unit appears in the middle of the list. Such elements can be written as
    \begin{equation*}
      x_0 \otimes l \otimes \overline{e} \otimes s =
      x_0 \otimes \underbrace{x_1 \otimes \dots \otimes x_i}_{l} \otimes \overline{e} \otimes
      \underbrace{x_{i+1} \otimes \dots \otimes x_k}_{s}
    \end{equation*}
    with $0 \leq i < k$. Then  

    \begin{align*}
      \dhoch \left( x_0 \otimes l \otimes \overline{e} \otimes s \right) ={} &
      (-1)^{\degb{x_0} + \degb{l_{(1)}}} x_0 \otimes l_{(1)} \otimes \mu(l_{(2)}) \otimes l_{(3)}
      \otimes \overline{e} \otimes s +\\
      &
      (-1)^{\degb{x_0} + \degb{l} + \degb{\overline{e}} + \degb{s_{(1)}}} x_0 \otimes l
      \otimes \overline{e} \otimes s_{(1)} \otimes \mu(s_{(2)}) \otimes s_{(3)} +\\
      &
      (-1)^{\degb{s_{(2)}} \left( \degb{x_0} + \degb{l} + \degb{\overline{e}} +
      \degb{s_{(1)}} \right)} \mu(s_{(2)} \otimes x_0 \otimes l_{(1)}) \otimes l_{(2)} \otimes
      \overline{e} \otimes s_{(1)} +\\
      &
      (-1)^{\degb{x_0} + \sum_{j=1}^{i-1} \degb{x_i}} x_0 \otimes x_1 \otimes \dots
      \otimes x_{i-1} \otimes \mu_2(x_i, \overline{e}) \otimes s +\\
      &
      (-1)^{\degb{x_0} + \degb{l}} x_0 \otimes l \otimes \mu_2(\overline{e},
      x_{i+1}) \otimes x_{i+2} \otimes \dots \otimes x_k \\
      ={}&
      (-1)^{\degb{x_0} + \degb{l_{(1)}}} x_0 \otimes l_{(1)} \otimes \mu(l_{(2)}) \otimes l_{(3)}
      \otimes \overline{e} \otimes s +\\
      &
      (-1)^{\degb{x_0} + \degb{l} + \degb{s_{(1)}} - 1} x_0 \otimes l \otimes
      \overline{e} \otimes s_{(1)} \otimes \mu(s_{(2)}) \otimes s_{(3)} +\\
      &
      (-1)^{\degb{s_{(2)}} \left( \degb{x_0} + \degb{l} + \degb{s_{(1)}} - 1 \right)}
      \mu(s_{(2)} \otimes x_0 \otimes l_{(1)}) \otimes l_{(2)} \otimes \overline{e} \otimes
      s_{(1)}.
    \end{align*}
  \item\label{it:e_end}
  The unit appears at the end of the list. Such elements can be written as
    \begin{equation*}
      x_0 \otimes l \otimes \overline{e} = x_0 \otimes \underbrace{x_1 \otimes \dots
      \otimes x_k}_{l} \otimes \overline{e}
    \end{equation*}
    with $k \geq 0$. Then
    \begin{align*}
      \dhoch \left( x_0 \otimes l \otimes \overline{e} \right)
      ={}& \mu(x_0 \otimes
      l_{(1)}) \otimes l_{(2)} \otimes \overline{e} +\\
      & (-1)^{\degb{x_0} + \degb{l_{(1)}}} x_0 \otimes l_{(1)} \otimes \mu(l_{(2)})
      \otimes l_{(3)} \otimes \overline{e} +\\
      &
      (-1)^{\degb{x_0} + \degb{l} + \degb{\overline{e}}} x_0 \otimes l \otimes
      \overline{e} \otimes \mu_0(1) +\\
      &
      (-1)^{\degb{x_0} + \sum_{i=1}^{k-1} \degb{x_i}} x_0 \otimes x_1 \otimes \dots
      \otimes x_{k-1} \otimes \mu_2(x_k, \overline{e}) +\\
      &
      (-1)^{\degb{e} \cdot \left( \degb{x_0} + \degb{l} \right)}
      \mu_2(\overline{e}, x_0) \otimes l \\
      ={} &
      \mu(x_0 \otimes l_{(1)}) \otimes l_{(2)} \otimes \overline{e} +\\
      &
      (-1)^{\degb{x_0} + \degb{l_{(1)}}} x_0 \otimes l_{(1)} \otimes \mu(l_{(2)}) \otimes
      l_{(3)} \otimes \overline{e} +\\
      &
      (-1)^{\degb{x_0} + \degb{l} -1} x_0 \otimes l \otimes
      \overline{e} \otimes \mu_0(1).
    \end{align*}
\end{enumerate}
    Part~\eqref{it:degen_hoch} follows from cases~\ref{it:e_mid} and~\ref{it:e_end}, while part~\eqref{it:degen_con} follows from all three cases.
\end{proof}

The quotient complex
\[
  \chochnorm[A] = \left( \choch[A] / D^{*}(A), \dhoch \right)
\]
is called the \textbf{normalized Hochschild complex}.
In the case where $A$ is an associative algebra (with no curvature term), the complex $D^{*}(A)$ is called the \textbf{degenerate chain complex} and has a trivial homology.
The quotient
complex
 \begin{equation*}
  \crconnes[A] := \left( \ceconnes[A] / E_{\lambda}^{*}(A), \dhoch
\right)
\end{equation*}
is called the \textbf{reduced cyclic complex} and its cohomology is
denoted by $\hrconnes[A]$. The submodule $E_{\lambda}^{*}(A)$ is also a
subcomplex of the extended complex $\left( \ceconnes[A], \dhoch \right)$.
The quotient complex
\begin{equation*}
  \cerconnes[A] := \left( \ceconnes[A] / E_{\lambda}^{*}(A), \dhoch
\right)
\end{equation*}
is called the \textbf{extended and reduced cyclic complex} and its
cohomology is denoted by $\herconnes[A]$.

\section{Geometric background}\label{sec:background}

In Section ~\ref{ssec:currents},~\ref{ssec:int} we discuss currents and operations on them,
and cite needed results proven elsewhere, primarily in ~\cite{ST4}.
In Section ~\ref{ssec:qemptyset} we define the closed maps. In
Section ~\ref{ssec:not} we establish useful notation and conventions and in
~\ref{ssec:q} we define the closed-open maps and cite their properties,
proven in ~\cite{ST1}.

Throughout, we use conventions on orbifolds with corners and orientation thereof from~\cite{ST4}, see there for full detail. The only difference is that, for simplicity, in the current manuscript we write ``smooth'' instead of what was called ``strongly smooth'' in~\cite{ST4}.

\subsection{Currents}\label{ssec:currents}

Let $M$ be a compact oriented orbifold with corners. Denote by $\ccur{M}[k]$ the
space of currents of cohomological degree $k$, that is, the dual space of
differential forms $A^{\dim M - k}(M)$. Differential forms are identified as a
subspace of currents by
\begin{gather*}
  \varphi_{M} = \varphi \colon \cdiff{M}[k] \hookrightarrow \ccur{M}[k],\\
  \varphi(\eta)(\alpha):=\int_M\eta\wedge\alpha,\quad \alpha\in \cdiff{M}[\dim M
  -k].
\end{gather*}
Accordingly, for a general current $\zeta,$ we may use the notation
\begin{equation}\label{eq:diw}
  \zeta(\alpha)=\int_{M} \zeta\wedge\alpha.
\end{equation}
Note that the identification $\varphi$ depends on the orientation of $M$. Following~\cite[Section 6]{ST4}, we
have the following operations on currents:
\begin{enumerate}
  \item \textbf{Exterior derivative}. The exterior derivative $d \colon
    \ccur{M}[k] \rightarrow \ccur{M}[k+1]$ of a current $\zeta \in
    \ccur{M}[k]$ is defined by the formula
    \begin{equation*}
      d\zeta(\alpha) :=(-1)^{|\zeta| + 1}\zeta(d\alpha).
    \end{equation*}
    Clearly we have $d^2(\zeta) = 0$ so we get a cochain complex $\left(
    \ccur{M}[*], d \right)$.
    The choice of sign guarantees that if $M$ is
    closed, the definition generalizes the exterior derivative of differential
    forms. That is, we have $d\varphi(\eta) = \varphi(d\eta)$ and so $\varphi$
    is a chain map, and the chain complex $\cdiff{M}[*]$ can be identified as a
    \textbf{subcomplex} of $\ccur{M}[*]$.
  \item \textbf{Push-forward}. Given a proper morphism of orbifolds with corners
    $f \colon M \rightarrow N$, denote by $\rdim f:=\dim N-\dim M$. The
    push-forward
    \begin{equation} \label{eq:pfc}
      f_{*} \colon \ccur{M}[k] \rightarrow \ccur{N}[k - \rdim f]
    \end{equation}
    is defined by the formula
    \begin{equation*}
      (f_{*}(\zeta))(\alpha) := (-1)^{|\alpha| \cdot \rdim f} \zeta(f^{*} \alpha).
    \end{equation*}
    When the map $f$ is a relatively oriented
    submersion, $N$ is oriented and $M$ is endowed with the orientation which is
    compatible with $f$ and $N$, the choice of sign guarantees that the
    definition generalizes the push-forward of differential forms, namely, integration over the fiber. That is, we
    have $f_{*} \left( \varphi_{M} \left( \eta \right) \right) = \varphi_{N}
    \left( f_{*} \left( \eta \right) \right)$. This follows from parts \eqref{prop:pushcomp}-\eqref{prop:pushpull} of Proposition~\ref{prop:proppp} below. The push-forward commutes with
    the exterior derivative so we have $d \left( f_{*} \left( \zeta \right)
    \right) = f_{*}(d \zeta)$.
  \item \textbf{Pullback}. Given a relatively oriented
    \textbf{submersion} $f \colon M \rightarrow N$, the pullback
    \begin{equation} \label{eq:pbc}
      f^{*} \colon \ccur{N}[k] \rightarrow \ccur{M}[k]
    \end{equation}
    is defined by the formula
    \begin{equation*}
      \left( f^{*} \left( \zeta \right) \right)(\alpha) := \zeta \left( f_{*}
      \left( \alpha \right) \right).
    \end{equation*}
    When the current $\zeta$ is a differential form, $N$ is oriented and we endow $M$
    with the orientation which is compatible with $f$ and $N$, this generalizes
    the usual pullback.  That is, we have $f^{*} \left( \varphi_{N} \left( \eta
    \right) \right) = \varphi_{M} \left( f^{*} \left( \eta \right) \right)$. This follows from parts \eqref{prop:pushcomp}-\eqref{prop:pushpull} of Proposition~\ref{prop:proppp} below.
    When $M$ has no boundary, the pullback commutes with the exterior derivative
    so we have $d \left( f^{*} \left( \zeta \right) \right) = f^{*} \left( d \left( \zeta
    \right) \right)$, while if $M$ has a boundary, we have an additional term. See Proposition~\ref{stokes}.
    The pullback is functorial.  That is, if $g
    \colon N \rightarrow L$ is also a relatively oriented submersion, we have $\left( g \circ f \right)^{*}
    \left( \zeta \right) = f^{*} \left( g^{*} \left( \zeta \right) \right)$.
    This follows from Proposition~\ref{prop:proppp}\eqref{prop:pushcomp}.
  \item \textbf{Exterior product}. In general, the exterior product of two
    currents is not defined. However, given a current $\zeta \in \ccur{M}[k]$
    and a differential form $\beta \in \cdiff{M}[l]$, the exterior product
    $\zeta \wedge \beta$ is a current of degree $k + l$ defined by the formula
    \begin{equation*}
      \left( \zeta \wedge \beta \right)(\alpha) := \zeta( \beta \wedge \alpha).
    \end{equation*}
    When the current $\zeta$ is a differential form, this definition generalizes
    the usual exterior product. That is, we have $\varphi(\eta) \wedge \beta =
    \varphi(\eta \wedge \beta)$. In order to maintain compatibility with the
    usual exterior derivative, one also defines $\beta \wedge \zeta$ by
    $(-1)^{|\beta| \cdot |\zeta|} \zeta \wedge \beta$. This gives $\ccur{M}$
    the structure of a graded-symmetric dg-bimodule over $\cdiff{M}$. With the above
    definitions, the usual push-pull formula
    \begin{equation*}
      f_{*} \left( f^{*} \left( \zeta \right) \wedge \beta \right) =
      \zeta \wedge f_{*} \left( \beta \right)
    \end{equation*}
    holds also for currents as specified in Proposition~\ref{prop:proppp}\eqref{prop:pushpull}.
\end{enumerate}

We will also find it convenient to work with a modified complex of currents
where we ``kill'' a specific closed element. Let $M$ be a smooth $m$-dimensional
manifold and let $\zeta \in \ccur{M}[k]$ be a closed current of degree $k$.
Since $\zeta$ is closed, we have the following short exact sequence of
complexes:
\begin{equation*}
  0 \rightarrow \gen{\zeta} \hookrightarrow \ccur{M}[*]
  \xrightarrowdbl \ccur{M}[*] / \gen{\zeta} \rightarrow 0
\end{equation*}
Let us denote by $\ccur{M}[*][\zeta] := \ccur{M}[*] / \gen{\zeta}$ the quotient
complex and denote its cohomology by $\hcur{M}[*][\zeta]$. Note that for $j \neq
k-1,k$ we have $\hcur{M}[j][\zeta] = \hcur{M}[j]$. For $j\in \Set{k-1,k}$, the
short exact sequence of complexes gives us a long exact sequence in cohomology:
\begin{equation*}
  \dots \rightarrow 0 \rightarrow \hcur{M}[k-1] \rightarrow \hcur{M}[k-1][\zeta]
  \rightarrow \gen{\zeta} \rightarrow \hcur{M}[k] \rightarrow \hcur{M}[k][\zeta]
  \rightarrow 0 \rightarrow \dots
\end{equation*}
We have two cases:
\begin{enumerate}
  \item If $\zeta$ is exact, the inclusion map $\gen{\zeta} \rightarrow
    \hcur{M}[k]$ is the zero map and so $\hcur{M}[k][\zeta] \cong \hcur{M}[k]$
    and we get a short exact sequence
    \begin{equation*}
       0 \rightarrow \hcur{M}[k-1] \rightarrow \hcur{M}[k-1][\zeta]
       \rightarrow \gen{\zeta} \rightarrow 0.
    \end{equation*}
    Choosing a splitting for this sequence amounts to choosing a current $\mu
    \in \ccur{M}[k-1]$ with $d\mu = \zeta$ and then we get $\hcur{M}[k-1][\zeta]
    \cong \hcur{M}[k-1] \oplus \gen{\mu}$.
  \item If $\zeta$ is not exact, the inclusion map $\gen{\zeta} \rightarrow
    \hcur{M}[k]$ is injective and so $\hcur{M}[k-1][\zeta] \cong \hcur{M}[k-1]$
    and we get a short exact sequence
    \begin{equation*}
       0 \rightarrow \gen{\zeta} \rightarrow \hcur{M}[k] \rightarrow
       \hcur{M}[k][\zeta] \rightarrow 0
    \end{equation*}
    which shows that $\hcur{M}[k][\zeta] \cong \hcur{M}[k] / \gen{[\zeta]}$. In
    this case, $\hcur{M}[*][\zeta]$ is the same as
    $\hcur{M}[*]$, except the (non-trivial) cohomology class $\zeta$ is killed.
\end{enumerate}

Now, let $L \subseteq M$ be a smooth oriented submanifold of codimension $n$ and
denote by $i \colon L \rightarrow M$ the inclusion.
Let $1_L\in A^0(L;\R)$ be the constant zero form with value $1$.
The submanifold $L$
gives us a naturally associated current
\begin{equation*}
  \zeta_{L} (\alpha) = i_{*} \left( 1_L \right) (\alpha) =
  \int_{L} i^{*}(\alpha)
\end{equation*}
and in this case we will use the notation $\ccur{M}[k][\zeta_{L}] =
\ccur{M}[k][L]$ and $\hcur{M}[k][\zeta_{L}] = \hcur{M}[k][L]$. Consider
$i_*(\eqcl{L}) \in H_{m-n}(M;\R)$ the class of $L$ inside of $M$. We have two
cases
\begin{enumerate}
  \item If $i_{*}(\eqcl{L}) = 0$ (that is, $L$ is homologically trivial in $M$) we
    can choose a smooth singular chain $S$ with $\eqcl{\partial S} =
    i_{*}(\eqcl{L})$ and then
    \begin{equation*}
      \left( d \zeta_{S} \right)(\alpha) = (-1)^{n} \int_{S} d\alpha = (-1)^n
      \int_{\partial S} \alpha = (-1)^n \int_{L} i^{*}(\alpha)
    \end{equation*}
    so $d \zeta_{S} = (-1)^{n} \zeta_{L}$. Hence
    $\hcur{M}[n][L] \cong \hcur{M}[n]$ while $\hcur{M}[n-1][L] \cong
    \hcur{M}[n-1] \oplus \gen{\zeta_{S}}$.
  \item If $i_{*}(\eqcl{L}) \neq 0$ then by de Rham's theorem $\zeta_{L}$ is not
    exact and $\hcur{M}[n-1][L] \cong \hcur{M}[n-1]$ while $\hcur{M}[n][L] \cong
    \hcur{M}[n] / \gen{[\zeta_{L}]}$.
\end{enumerate}
The resulting complex $\ccur{M}[*][L]$ is dual to the complex
\begin{equation*}
  \ker \left( \zeta_{L} \right) =  \Set{\alpha \in \cdiff{M}}[\int_{L} i^{*}
  \alpha = 0] \subseteq \cdiff{M}
\end{equation*}
of differential forms on $M$ whose integral on $L$ vanishes.

\begin{rem}
In the special case when $L\subset X$ is a Lagrangian submanifold, the complex
$\ker(\zeta_L)$ is the one denoted, e.g., in~\cite{ST3} or~\cite{T19}, by $\widehat{A}^*(X,L)$, the cohomology of which was denoted by $\Hh^*(X,L;R)$. There, it was used as the complex from which interior constraints are taken for open Gromov-Witten invariants. Since $\ccur{M}[*][L]$ is dual to the complex $\ker(\zeta_L),$ it follows that $\hcur{X}[2n-*][L][] \simeq \Hh^*(X,L)^\vee.$ In the special case when $H^*(L;\R)\simeq H^*(S^n;\R)$, for $* \neq 0$ we get $\Hh^*(X,L)=H^*(X,L),$ the last expression being the standard relative cohomology. Thus, by Poincar\'e duality, for $* \neq n$ we have $\hcur{X}[*][L][] \simeq H_*(X\setminus L).$
The chain $S$ is the same chain used in~\cite{ST3} in combination with a bounding cochain to define enhanced open Gromov-Witten invariants, as explained in~\cite[Remark 4.12]{ST3}.
\end{rem}

\subsection{Integration}\label{ssec:int}

The following proposition is proved in Theorem~1 (for differential forms) and Proposition~6.1 (for currents) in~\cite{ST4}. They are concerned with integration properties of maps between smooth orbifolds with corners -- more specifically, with pull-back and push-forward properties for forms and currents.

\begin{prop}\label{prop:proppp}
Assume all maps below are relatively oriented.
\begin{enumerate}
	\item \label{normalization}
	Let $f:M\to pt$ and $\alpha\in A^m(M)\otimes R$. Then
	\[
	f_*\alpha=\begin{cases}
	\int_M\alpha,& m=\dim M,\\
	0,&\text{otherwise}.
	\end{cases}
	\]
	\item\label{prop:pushcomp}
		Let $g: P\to M$, $f:M\to N,$ be proper submersions and $\alpha\in A^*(P;R)$. Then
		\[
		\left(f_*\circ g_*\right)(\a)=(f\circ g)_*(\a).
		\]
The same formula hold for proper smooth maps $f,g,$ if $\a\in\A^*(P;R)$.
	\item\label{prop:pushpull}
		Let $f:M\to N$ be a proper submersion, $\alpha\in A^*(N;R),$ $\beta\in A^*(M;R)$. Then
		\[
		f_*(f^*\alpha\wedge\beta)=\alpha\wedge f_*\beta.
		\]d
The same formula holds if $f$ is proper but is not necessarily a submersion with $\a \in A^*(N;R)$ and $\beta \in \A^*(M;R)$, or if $f$ is a proper submersion with $\a\in \A^*(N;R)$ and $\beta\in A^*(M;R)$.
	\item\label{prop:pushfiberprod}
		Consider a pull-back diagram of smooth maps
		\[
		\xymatrix{
		{M\times_N P}\ar[r]^{\quad p}\ar[d]^{q}&
        {P}\ar[d]^{g}\\
        {M}\ar[r]^{f}&N\, ,
		}
		\]
		where $g$ is a proper submersion. Let $\alpha\in A^*(P;R).$ Then
		\[
		q_*p^*\alpha=f^*g_*\alpha.
		\]
The same holds for $\a\in \A^*(P;R)$ if $f$ is a submersion and $g$ is proper.
\end{enumerate}
\end{prop}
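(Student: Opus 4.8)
The plan is to follow the same template for all four items: first establish the statement for differential forms by a computation in suitable local models, then transfer it to currents by duality. For differential forms, the push-forward $f_*$ along a relatively oriented proper submersion $f\colon M\to N$ of orbifolds with corners is integration over the fiber, and after passing to orbifold charts adapted simultaneously to all the maps in play, $f$ becomes a projection $\widetilde U\times F\to\widetilde U$ with $F$ a (possibly cornered) manifold carrying the fiber orientation. Each identity then reduces to an elementary statement about such projections, checked by a direct coordinate computation and globalized with a partition of unity subordinate to a cover by adapted charts.

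Concretely, item~\eqref{normalization} is immediate: for $f\colon M\to pt$ the fiber is all of $M$, so $f_*\alpha$ is the integral of $\alpha$ over $M$ when $\alpha$ has top degree and vanishes for degree reasons otherwise. For item~\eqref{prop:pushcomp} with forms, one chooses a chart in which $P\cong\widetilde U\times F_1\times F_2$ with $g$ the projection forgetting $F_2$ and $f$ the projection forgetting $F_1$; then integrating over $F_2$ and then over $F_1$ equals integrating over $F_1\times F_2$ by Fubini, once one checks that the product orientation on the composite fiber matches the conventions of \cite{ST4}. For item~\eqref{prop:pushpull} with forms, in the model $f\colon\widetilde U\times F\to\widetilde U$ the form $f^*\alpha$ is pulled back from the base and hence factors out of the integral over $F$, producing $\alpha\wedge f_*\beta$ up to the sign from commuting $\alpha$ past the fiber directions. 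For item~\eqref{prop:pushfiberprod} with forms, since $g$ is a submersion the map $f$ is automatically transverse to $g$, so $M\times_N P$ is a smooth orbifold with corners; in adapted charts it is again a projection whose fibers are identified with those of $g$ via $p$, and the fiber-product orientation convention of \cite{ST4} is exactly the one under which pull-back and integration over the fiber commute.

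The current versions follow formally from the form versions together with the defining formulas for $d$, $f_*$, $f^*$ and $\wedge$ on currents and the functoriality $(f\circ g)^*=g^*\circ f^*$ of pull-back of forms. For instance, in item~\eqref{prop:pushcomp} for currents one unwinds $(f\circ g)_*\zeta(\alpha)=(-1)^{|\alpha|\,\rdim(f\circ g)}\zeta\big((f\circ g)^*\alpha\big)$, rewrites $(f\circ g)^*\alpha=g^*f^*\alpha$, and regroups using $\rdim(f\circ g)=\rdim f+\rdim g$ to recognize $\big(f_*(g_*\zeta)\big)(\alpha)$; items~\eqref{prop:pushpull} and \eqref{prop:pushfiberprod} for currents are handled the same way, each pairing against a test form being reduced to a statement already proved for forms, with the signs dictated by the degree- and $\rdim$-dependent factors in the definitions.

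I expect the main obstacle to be the orientation and sign bookkeeping for orbifolds with corners: one must fix mutually consistent conventions for the fiber orientation of a relatively oriented submersion, for boundary orientations, and for product and fiber-product orientations, so that all four identities hold with no stray signs beyond those already built into the current operations. A secondary technical point is that the fibers here are cornered manifolds rather than closed ones, so integration over the fiber and its functoriality must be justified in that generality; the statements as phrased carry no boundary correction (in contrast with Proposition~\ref{stokes}), and verifying this is precisely the content of \cite{ST4}, Theorem~1 (for forms) and Proposition~6.1 (for currents), which we invoke.
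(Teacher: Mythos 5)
Your proposal is correct, and it ends up in the same place as the paper: the paper gives no independent argument for this proposition at all, but simply cites \cite{ST4} (Theorem~1 for differential forms, Proposition~6.1 for currents), which is exactly the reference your sketch bottoms out in. Your outline --- fiber integration in adapted charts with a partition of unity for the form-level statements, then transferring to currents by formally unwinding the defining formulas for $f_*$, $f^*$, $\wedge$ and $\rdim$ (the sign bookkeeping in these dualizations does check out, e.g.\ $\rdim(f\circ g)=\rdim f+\rdim g$ for \eqref{prop:pushcomp} and $|f_*\beta|\equiv|\beta|+\rdim f$ for \eqref{prop:pushpull}) --- is a faithful description of how those cited results are proved, with the orientation conventions for corners and fiber products being precisely the content delegated to \cite{ST4}.
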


The following is proved in Theorem~1 (for differential forms) and Proposition~6.5 (for currents) of~\cite{ST4}. By abuse of notation, we identify a differential form with its image in currents without further comment.
If $f$ in the proposition below is not a submersion, the equality is to be understood in the space of currents dual to $A^*(N,\d N)=\{\eta \in A^*(N)\,|\,\eta|_{\d N}=0\}$.

\begin{prop}[Stokes' theorem]\label{stokes}
Let $f:M\to N$ be proper relatively oriented with $\dim M=s$, and let $\xi\in A^t(M;R)$. Then
\[
d (f_*\xi)=f_*(d \xi)+(-1)^{s+t}\big(f\big|_{\partial M}\big)_*\xi,
\]
where $\d M$ is understood as the fiberwise boundary with respect to $f$.
\end{prop}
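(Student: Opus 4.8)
The plan is to reduce the identity to the ordinary Stokes theorem on the orbifold with corners $M$ by testing both sides against differential forms on $N$. Since a current on $N$ is determined by its values on forms $\alpha\in A^*(N;R)$ of complementary degree --- and, in the case where $f$ is not a submersion, on those $\alpha$ with $\alpha|_{\partial N}=0$, as the remark preceding the statement requires --- it suffices to check that $\big(d(f_*\xi)\big)(\alpha)=\big(f_*(d\xi)\big)(\alpha)+(-1)^{s+t}\big((f|_{\partial M})_*\xi\big)(\alpha)$ for every such $\alpha$.

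First I would unwind the left-hand side using the definitions of the exterior derivative and push-forward on currents from Section~\ref{ssec:currents}: $\big(d(f_*\xi)\big)(\alpha)=(-1)^{|f_*\xi|+1}(f_*\xi)(d\alpha)=\pm\,\xi\big(f^*(d\alpha)\big)=\pm\int_M \xi\wedge f^*(d\alpha)$, where the first sign comes from the definition of $d$ on currents, the second from the sign $(-1)^{|\alpha|\cdot\rdim f}$ in the definition of $f_*$, and $f^*\alpha$ denotes the ordinary pullback of forms (defined for any smooth $f$). Using $f^*(d\alpha)=d(f^*\alpha)$ and the graded Leibniz rule, rewrite $\xi\wedge d(f^*\alpha)=(-1)^{t}\,d(\xi\wedge f^*\alpha)-(-1)^{t}\,d\xi\wedge f^*\alpha$. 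Now apply the ordinary Stokes theorem on $M$ to the top-degree form $\xi\wedge f^*\alpha$ of degree $s-1$, so that $\int_M d(\xi\wedge f^*\alpha)=\int_{\partial M}(\xi\wedge f^*\alpha)|_{\partial M}$. The facets of $M$ that map into $\partial N$ contribute zero: when $f$ is a submersion this is the content of the convention that $\partial M$ means the fiberwise boundary with respect to $f$, and when $f$ is not a submersion it is forced by $\alpha|_{\partial N}=0$, which makes $(f^*\alpha)|_{\partial M}$ vanish on exactly those facets. Hence the boundary integral equals $\int_{\partial M}\xi|_{\partial M}\wedge (f|_{\partial M})^*\alpha=\pm\big((f|_{\partial M})_*\xi\big)(\alpha)$, while $\int_M d\xi\wedge f^*\alpha=\pm\big(f_*(d\xi)\big)(\alpha)$. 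Collecting the three pairings yields the asserted identity.

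The main obstacle is the sign bookkeeping: one must carry the signs in the definitions of $d$ and $f_*$ on currents, the sign in the bimodule product $\xi\wedge f^*\alpha$ versus $f^*\alpha\wedge\xi$, the relation $|f_*\xi|=t-\rdim f$ with $\rdim f=\dim N-s$, and the ST4 orientation conventions for fibers and for $\partial M$ as a fiberwise boundary, and verify that they conspire to produce exactly $(-1)^{s+t}$ on the boundary term and $+1$ on the $f_*(d\xi)$ term. A secondary point requiring care is the orbifold-with-corners setting: one either invokes the ordinary Stokes theorem in this generality --- via orbifold charts and a partition of unity, reducing to the manifold-with-corners case and ultimately to the local model $[0,\infty)^b\times\R^a$ --- or cites it from~\cite{ST4}, and one must confirm that the "vertical" facets of $M$ (those not pulled back from $\partial N$) are precisely the ones comprising the fiberwise boundary, so that the non-submersive case and the restriction of the pairing to $A^*(N,\partial N)$ are handled coherently; for currents one may also wish to first establish the differential-form case and then pass to currents by the pairing argument above, exactly as in the division into Theorem~1 and Proposition~6.5 of~\cite{ST4}.
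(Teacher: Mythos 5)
The paper does not actually prove this proposition: it is quoted from \cite{ST4} (Theorem~1 for the form case, Proposition~6.5 for the current case), so there is no internal argument to compare with. Your strategy --- pair against test forms on $N$, pull everything back to $M$, and invoke ordinary Stokes on the orbifold with corners $M$ --- is a legitimate reconstruction, and it is essentially how the current-level statement is deduced from the form-level one; for the form-level statement itself you are in effect re-deriving \cite{ST4}'s Theorem~1 by duality rather than by the local fiber-integration computation, which is fine once one knows the pushforwards involved are smooth forms.

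There is, however, a flaw in your handling of the submersion case when $\partial N\neq\emptyset$. Your first displayed step evaluates $d(f_*\xi)(\alpha)$ using the current differential $d\zeta(\alpha)=(-1)^{|\zeta|+1}\zeta(d\alpha)$; but in the submersion case the proposition is an identity of differential forms, and $\varphi(d\omega)\neq d\varphi(\omega)$ on a manifold with boundary --- they differ by a term $\pm\int_{\partial N}\omega\wedge\alpha$ (this is exactly why the paper stresses ``if $M$ is closed'' when asserting that $\varphi$ is a chain map). Correspondingly, your claim that the facets of $M$ mapping into $\partial N$ contribute nothing ``because $\partial M$ means the fiberwise boundary'' is not a justification: when $\alpha|_{\partial N}\neq 0$ those horizontal facets contribute $\int_{f^{-1}(\partial N)}\xi\wedge f^*\alpha$, and this is precisely the discrepancy term just described. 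As written, you drop one term and misidentify another, and the two omissions cancel, but neither step is argued. The clean repair is the one you already apply in the non-submersive case: test only against $\alpha$ with $\alpha|_{\partial N}=0$ throughout (then both problems disappear), and recover the form-level identity from the fact that a smooth form on $N$ is determined by its pairings against such $\alpha$, using that pushforward along a proper submersion produces smooth forms. Beyond this, note that the actual content --- the sign bookkeeping and the Stokes theorem on orbifolds with corners for $M$ itself --- is identified but deferred rather than carried out, so the proposal is a correct plan with the stated local gap rather than a complete proof.
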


\subsection{Closed maps}\label{ssec:qemptyset}

Let $(X,\omega)$ be a closed symplectic manifold, $J$ an almost complex structure, and $L$ a Lagrangian submanifold of $X$, satisfying the assumptions specified in Section~\ref{ssec:setting}.

For $\beta\in H_2(X;\Z)$, let $\M_{l+1}(\beta)$ be the moduli space of stable $J$-holomorphic spheres with $l+1$ marked points indexed from 0 to $l$ representing the class $\beta$, and let $ev_j=ev_j^\beta:\M_{l+1}(\beta)\to X$ be the evaluation maps. Assume that all the moduli spaces $\M_{l+1}(\beta)$ are smooth orbifolds and $ev_0$ is a submersion.

For a list $\glt=(\gamma_1,\ldots,\gamma_l)\in A^*(X;R)^{\times l}$, write for short
\[
ev^*\glt:=\bigwedge_{j=1}^l ev_j^*\gamma_j.
\]
For an ordered sublist $I\subset [l]:=(1,\ldots,l)$, write
\[
ev_I^*\glt := \bigwedge_{j\in I}ev_j^*\gamma_j.
\]
Let
\begin{equation}
\varpi: H_2(X;\Z)\to \sly
\end{equation}
be the natural map coming from the long exact sequence of the pair $(X,L)$.
Recall the relative spin structure $\s$ determines a class $w_{\s} \in H^2(X;\Z/2\Z)$ such that $w_2(TL) = i^* w_{\s}$. By abuse of notation we think of $w_{\s}$ as acting on $H_2(X;\Z)$.

As in~\cite{ST1}, we define operators
\[
\q_{\emptyset,l}^{\beta} \colon
\cdiff{X}^{\otimes l} \rightarrow \cdiff{X}
\]
by
\begin{equation*}
  \q_{\emptyset,l}^{\beta} \left( \gamma_1, \dots, \gamma_l \right) :=
(-1)^{\omega_{\s}(\beta)}
  \left(
  ev_0^{\beta} \right)_{*} \left( \bigwedge_{j=1}^l \left(
  ev_{j}^{\beta} \right)^{*} \gamma_j
  \right).
\end{equation*}
Recall also that we are working under the assumption that
$\mathcal{M}_{l+1}(\beta)$ is a smooth orbifold (without boundary) and
that the map $ev_0 \colon \mathcal{M}_{l+1}(\beta) \rightarrow X$ is a
smooth proper submersion.
The moduli spaces $\mathcal{M}_{l+1}(\beta)$ come with a natural orientation induced by the natural orientation of $X$. Therefore
we can relatively orient $ev_0$ to make it compatible with the
orientations on $X$ and $\mathcal{M}_{l+1}(\beta)$. Since the group of permutations $S_{l+1}$ acts on $\mathcal{M}_{l+1}(\beta)$ by orientation-preserving diffeomorphisms, our
assumption implies that all other evaluation maps $ev_i$ are
submersions and are also consistently oriented with the orientations on $X$ and
$\mathcal{M}_{l+1}(\beta)$. This implies that in the definition of
$\q_{\emptyset,l}^{\beta}$, we can replace \textbf{one} of the differential
forms $\gamma_i$ with a current on $X$ and obtain a current. We will always use
the first input as a current and obtain operators
\begin{equation*}
  \q_{\emptyset,l}^{\beta} \colon \ccur{X} \otimes \cdiff{X}^{\otimes(l-1)}
  \rightarrow \ccur{X}
\end{equation*}
defined by exactly the same formula. The consisent orientation guarantees that those
operators extend the usual operators on differential forms in the sense that
\begin{equation*}
  \q_{\emptyset,l}^{\beta} \left( \varphi_{X} \left( \gamma_1 \right), \gamma_2,
  \dots, \gamma_l \right) = \varphi_{X} \left( \q_{\emptyset,l}^{\beta} \left( \gamma_1,
  \dots, \gamma_l \right) \right).
\end{equation*}
Since $\mathcal{M}_{l+1}(\beta)$ has no boundary,
the operations $\q_{\emptyset,l}^{\beta}$ satisfy the property that
\begin{equation*}
  d \left( \q_{\emptyset,l} \left( \gamma_1, \dots, \gamma_l \right) \right) =
  \sum_{i=1}^{l} (-1)^{\sum_{j=1}^{i-1} \norm{\gamma_i}} \q_{\emptyset,l}
  \left( \gamma_1, \dots, \gamma_{i-1}, d\gamma_i, \gamma_{i+1},\dots,\gamma_l
  \right).
\end{equation*}
Since $X$ and $\mathcal{M}_{l+1}(\beta)$ have no boundary, one can see that this
property continues to hold even if one of the inputs $\gamma_i$ is a current -- here we use the sign conventions specified in Section~\ref{ssec:currents} and Proposition~\ref{stokes}.
The previous observations extend to the operators
\begin{equation*}
  \q_{\emptyset,l} \left( \gamma_1, \dots, \gamma_l \right) := \sum_{\beta \in
  H_2(X;\ZZ)} T^{\varpi(\beta)} \q_{\emptyset,l}^{\beta} \left(
  \gamma_1, \dots, \gamma_l \right).
\end{equation*}
Now, consider the operator $\q_{\emptyset,1}^{\gamma} \colon \cdiff{X}[*][][R]
\rightarrow \cdiff{X}[*][][R]$ given by
\begin{equation}\label{eq:defq}
  \q_{\emptyset,1}^{\gamma} \left( \alpha \right) := \sum_{l \geq 0}
  \frac{1}{l!} \q_{\emptyset,l+1} \left( \alpha, \gamma^{\otimes l} \right).
\end{equation}
Let's assume $\gamma$ has degree two and is closed. Then
$\q_{\emptyset,1}^{\gamma}$ is a homogeneous map of degree $2$ that satisfies $d
\left( \q_{\emptyset,1}^{\gamma} \left( \alpha \right) \right) =
\q_{\emptyset,1}^{\gamma} \left( d \alpha \right)$.
Again, as long as $\gamma$ is a differential form, we can extend this map to
allow $\alpha$ to be a current and obtain a map $\q_{\emptyset,1}^{\gamma}
\colon \ccur{X}[*][][R] \rightarrow \ccur{X}[*+2][][R]$ which extends the previous
map on differential forms and also satisfies $d \left( \q_{\emptyset,1}^{\gamma}
\left( \zeta \right) \right) = \q_{\emptyset,1}^{\gamma} \left( d\zeta \right)$
for all currents $\zeta \in \ccur{X}[*][][R]$.

\subsection{Auxiliary notation}\label{ssec:not}

\subsubsection{Grading}

Let $\Upsilon$ be a graded-commutative algebra with grading $|\cdot|$. Denote by $\V \cdot \V$ the grading on the shifted module $\Upsilon[1]$, that is,
\[
\Vert\a\Vert:=|\a|+1,\qquad \forall \a\in \Upsilon.
\]

\subsubsection{Permutations and signs}
For a list $\lst=(\a_1,\ldots,\a_k)\in \Upsilon^{\times k}$ and a permutation $\sigma\in S_{k}$, define the weighted permutation sign by
\begin{equation}\label{eq:sgnsigmagamma}
s_\sigma(\lst):=
\sum_{\substack{i>j\\ \sigma^{-1}(i)<\sigma^{-1}(j)}}|\a_i|\cdot|\a_j|
=
\sum_{\substack{i<j\\ \sigma(i)>\sigma(j)}}|\a_{\sigma(i)}|\cdot |\a_{\sigma(j)}|\pmod 2
\end{equation}
and the shifted weighted permutation sign by
\begin{equation}\label{eq:ssig}
s_\sigma^{[1]}(\lst):
=\sum_{\substack{i<j\\ \sigma(i)>\sigma(j)}}
(|\a_{\sigma(i)}|+1) (|\a_{\sigma(j)}|+1)
=\sum_{\substack{i<j\\ \sigma(i)>\sigma(j)}}
\V\a_{\sigma(i)}\V \V\a_{\sigma(j)}\V \pmod 2.
\end{equation}
For for a list $\alt=(\alpha_1,\ldots,\alpha_k)$ and a permutation $\sigma\in S_k$, denote by $\alt^\sigma$ the $\sigma$-reordered list:
\begin{equation}\label{eq:as}
\lst^\sigma:=(\alpha_{\sigma(1)},\ldots,\alpha_{\sigma(k)}).
\end{equation}
We will typically use this notation specifically with cyclic permutations $\sigma\in \Z/k\Z \subset S_k$.

For $I\sqcup J$ a (non-ordered) splitting of
\[
[l]:=(1,2,\ldots,l)
\]
with the induced order on $I,J,$ let $\sigma_{I\sqcup J}\in S_l$ be the permutation that reorders the concatenation $I\circ J$ back into $[l]$. In particular, for a list $\gamma=(\gamma_1,\ldots,\gamma_l)$ of differential forms, we get
\[
\bigwedge_{i \in I} \gamma_i \wedge \bigwedge_{j \in J} \gamma_j = (-1)^{s_{\sigma_{I\cup J}}(\gamma)}\bigwedge_{k \in [l]} \gamma_k,
\]
where the wedge products are taken in the order of the respective lists.

\subsubsection{Sublists and splittings}
Recall that in Section~\ref{ssec:notlist} we established the notation of round brackets to describe summation over all possible ordered splittings. For example, summation over all $3$-splittings of $\a$ looks like
\[
\a_{(1)}\otimes \a_{(2)}\otimes \a_{(3)}.
\]
Further splitting $\a_{(1)}$ into $2$ sublists is written as
$
\a_{(11)}\otimes \a_{(12)}
$
and results in a $4$-splitting
\[
\a_{(11)}\otimes \a_{(12)}\otimes \a_{(2)} \otimes \a_{(3)} = \a_{(1)} \otimes \a_{(2)} \otimes \a_{(3)} \otimes \a_{(4)}
\] of $\a$.
For a \textit{specific}, though unspecified, ordered splitting, we use angle
brackets. Thus, $\a_{(1)}\otimes \a_{(2)}\otimes \a_{(3)}$ is a sum of terms
of the form
\[
\a_{\ngl{1}}\otimes \a_{\ngl{2}}\otimes \a_{\ngl{3}}.
\]
When we need to describe the terms explicitly, we use the following notation.
For a sublist of the form $(i,i+1,\ldots,j)$ inside $[k]$, write
\[
\a_{[i:j]}:=\a_{i}\otimes \a_{i+1}\otimes \cdots \otimes\a_{j}.
\]
In particular, we can now write a specific $3$-splitting in two ways:
\[
\a_{[1:i-1]}\otimes \a_{[i:j]}\otimes \a_{[j+1:k]}
= \a_{\ngl{1}}\otimes\a_{\ngl{2}}\otimes\a_{\ngl{3}}.
\]

\subsection{Closed-open maps}\label{ssec:q}

Denote by $\M_{k+1,l}(\beta) = \M_{k+1,l}(\beta;J)$ the moduli space of $J$-holomorphic genus zero open stable maps to $(X,L)$ of degree $\beta$ with one boundary component, $k+1$ boundary marked points, and $l$ internal marked points.
Denote by
\begin{gather*}
evb_j^\beta:\M_{k+1,l}(\beta)\to L, \qquad  \qquad j=0,\ldots,k, \\
evi_j^\beta:\M_{k+1,l}(\beta) \to X, \qquad \qquad j=1,\ldots,l,
\end{gather*}
the evaluation maps given by $evb_j^\beta((\Sigma,u,\vec{z},\vec{w}))=u(z_j)$ and $evi_j^\beta((\Sigma,u,\vec{z},\vec{w}))= u(w_j).$
We may omit the superscript $\beta$ when the omission does not create ambiguity.

For lists $\alt=(\a_1,\ldots,\a_k)\in A^*(L;R)^{\times k}$ and $\glt=(\gamma_1,\ldots,\gamma_l)\in A^*(X;R)^{\times l}$, write for short
\begin{equation}\label{eq:shorthand}
evb^*\alt:=\bigwedge_{j=1}^k evb_j^*\a_j,
\quad
evi^*\glt:=\bigwedge_{j=1}^l evi_j^*\gamma_j.
\end{equation}
For permutations $\sigma\in S_k$ and $\tau\in S_l$, write
\begin{equation}\label{eq:shortsig}
evb_\sigma^*\alt:=\bigwedge_{j=1}^k evb_{\sigma(j)}^*\a_j,
\quad
evi_\tau^*\glt:=\bigwedge_{j=1}^l evi_{\tau(j)}^*\gamma_j.
\end{equation}
In particular,
\[
evb^*\alt^{\sigma}=(-1)^{s_\sigma(\alt)}evb_{\sigma^{-1}}^*\alt,
\quad
evi^*\glt^{\tau}=(-1)^{s_\tau(\glt)}evi_{\tau^{-1}}^*\glt.
\]
For ordered sublists $[i:k']:=(i,i+1,\ldots,k')\subset[k]$ and $I\subset [l]$, write
\[
evb_{[i;k']}^*\a:=\bigwedge_{j\in [i;k']}evb_j^*\a_j,
\quad
evi_I^*\glt := \bigwedge_{j\in I}evi_j^*\gamma_j.
\]
In particular,
\[
evb^*\a = evb_{[1:i-1]}^*\a \wedge evb_{[i:k']}^*\a \wedge evb_{[k'+1:k]}^*\a,
\quad
evi^*\glt = (-1)^{s_{\sigma_{I\cup J}}(\glt)} evi_I^*\glt\wedge evi_J^*\glt.
\]

For a list $a=(a_1,\ldots,a_k)\in \Z_{\ge 0}^{\times k},$ define
\[
\varepsilon(a):=1 + \sum_{j=1}^kj(a_j+1).
\]
To simplify notation in the following, we allow differential forms as input, in lieu of their degrees.
In particular, for a list $\alt =(\a_1,\ldots, \a_k)\in C^{\times k}$,
\[
\varepsilon(\alt)=1+ \sum_{j=1}^kj\V\alpha_j\V.
\]

For all $\beta\in\sly$, $k,l\ge 0$,  $(k,l,\beta) \not\in\{ (1,0,\beta_0),(0,0,\beta_0)\}$, define
\[
\q_{k,l}^\beta:C^{\otimes k}\otimes A^*(X;R)^{\otimes l} \lrarr C
\]
by
\begin{align*}
\q^{\beta}_{k,l}(\alt;\glt):=
(-1)^{\varepsilon(\alt)}
(evb_0)_* \left(evi^*\glt\wedge evb^*\alt\right).
\end{align*}
The case $\q_{0,0}^{\beta}$ is understood as $-(evb_0^{\beta})_*1.$
Define
\[
\q_{1,0}^{\beta_0}(\alpha):=d\alpha,\quad \q_{0,0}^{\beta_0}:=0.
\]
Set
\begin{align*}
\q_{k,l}:=\sum_{\beta\in\sly}T^{\beta}\q_{k,l}^{\beta}.
\end{align*}
Let $\gamma \in \mI_R A^*(X;R)$ such that $|\gamma|=2$ and $d\gamma=0$. Let $b\in \mI_R A^*(L;R)$ such that $|b|=1$.
For $k\ge 0,$ define
\begin{multline}\label{eq:defqempt}
\qbg_{k,l}(\alpha_1,\ldots,\alpha_k;\delta_1,\ldots,\delta_l):=\\
\sum_{s,t}\frac{1}{(t-l)!}\sum_{\substack{1\le i_1<\cdots\;\\ \;\cdots<i_k\le s}}\!
\q_{s,t}(b^{\otimes i_1-1}\otimes\alpha_1\otimes b^{\otimes i_2-i_1-1}\otimes \cdots\otimes b^{\otimes i_k-i_{k-1}-1}\otimes\alpha_k\otimes b^{\otimes s-i_k};\otimes_{j=1}^l\delta_j\otimes\gamma^{\otimes t-l}).
\end{multline}
These operators give rise to the $A_\infty$ structure on $C$ mentioned in the introduction. Namely,
\[
\mg_k=\q_{k,0}^{0,\gamma}: C^{\otimes k}\lrarr C
\]
satisfy
\begin{equation}\label{eq:ainfty}
\sum_{\substack{k_1+k_2=k+1\\ 1\le i\le k_1}}
(-1)^{\sum_{j=1}^{i-1}\V\a_j\V}
\mg_{k_1}(\otimes_{j=1}^{i-1}\a_j\otimes \mg_{k_2}(\otimes_{j=i}^{i+k_2-1}\a_j)\otimes \otimes_{j=i+k_2}^k\a_j) = 0
\end{equation}
for any list $\a_1,\ldots,\a_k\in C$. Equation~\eqref{eq:ainfty} is called the $A_\infty$ relations and is proved, e.g., in~\cite{ST1}. The proof of Proposition~\ref{prop:pstructure} below is analogous to that.

\section{Open-closed maps on the de-Rham complex}\label{sec:p}
We define here operators similar to $\q$, with the difference that the output point lies in the interior of the disk instead of its boundary. Then, we verify properties satisfied by the resulting operators.

\subsection{Structure}

Relabel the marked points on the space $\M_{k,l+1}(\beta)$ to get
evaluation maps $evb_j^\beta:\M_{k,l+1}(\beta)\to L$, $j=1,\ldots,k$, and $evi_j^\beta:\M_{k,l+1}(\beta)\to X$, $j=0,\ldots,l$.
We do not necessarily assume that $evi_0$ is a submersion --- in fact, typically it would be impossible for $evi_0$ to be a submersion. For example, in the case $\beta = \beta_0$ where the stable maps in $\M_{k,l+1}(\beta)$ are constant, $evi_0$ maps to $L \subset X.$ Therefore, there is no way to interpret $(evi_0)_*$ as an operation between complexes of differential forms (not without virtual techniques). Rather, the output of $(evi_0)_*$ is to be understood as a current.

For a list $a=(a_1,\ldots,a_k)\in \Z_{\ge 0}^{\times k},$ define
\[
\varepsilon_p(a):=1 + \sum_{j=1}^k(n+j)(a_j+1).
\]
To simplify notation in the following, we allow differential forms as input, in lieu of their degrees.
In particular, for a list $\alt =(\a_1,\ldots, \a_k)\in C^{\times k}$,
\[
\varepsilon_p(\alt)=1+ \sum_{j=1}^k (n+j)\V\alpha_j\V.
\]

For all $\beta\in\sly$, $k,l\ge 0$,  $(k,l,\beta) \ne (0,0,\beta_0)$, define
\[
\p_{k,l}^\beta:A^*(L;R)^{\otimes k}\otimes A^*(X;R)^{\otimes l} \lrarr \ccur{X}[*][][R],
\]
for $\alt=(\a_1,\ldots, \a_k), \glt
=(\gamma_1,\ldots, \gamma_l)$, by
\begin{align*}
\p^{\beta}_{k,l}(\alt;\glt):=
(-1)^{\varepsilon_p(\alt)}
(evi_0^\beta)_* \left(evi^*\glt\wedge evb^*\alt\right).
\end{align*}
The presence of $n$ in $\varepsilon_p(\alpha)$ (contrary to $\varepsilon(\alpha)$) is motivated by the presence of $n$ in the degree of the push-forward along $evi_0$, and thus in the degree of the operator $\p^\beta_{k,l}$, as calculated in Proposition~\ref{prop:p_deg}.

Define also $\p_{0,0}^{\beta_0}=0$
and set
\begin{align*}
\pkl:=\sum_{\beta\in\sly}T^{\beta}\pkl^{\beta}.
\end{align*}

For a list $\alt=(\alpha_1,\ldots,\alpha_k)$ and a cyclic permutation $\sigma\in \Z/k\Z$,
recall the notation of $\alt^\sigma$, $s_\sigma(\alt)$, $s_\sigma^{[1]}(\alt)$, and $s_{\sigma_{I\cup J}}(\alt)$ from Section~\ref{ssec:not}.
Additionally, if $\alt$ is a list of differential forms, abbreviate
\[
d(\a):=\sum_{j=1}^k (-1)^{|\a_{[1:j-1]}|}\a_1\otimes \cdots\otimes \a_{j-1}\otimes d\a_j\otimes\a_{j+1}\otimes\cdots\otimes\a_k.
\]

The rest of the section is devoted to the proof of the following result.

\begin{prop}\label{prop:pstructure}
Consider lists $\alt=(\alpha_1,\ldots,\alpha_k),$ $\alpha_j\in A^*(L;R)$, and $\glt=(\gamma_1,\ldots,\gamma_l),$ $\gamma_j\in A^*(X;R).$
Then
\begin{multline}\label{eq:p_rel}
d\p_{k,l}(\alt;\glt)
=
\p_{k,l}(\alt; d(\glt)) \\
\quad+
\sum_{\substack{I\sqcup J=[l]\\ \sigma\in\Z/k\Z}}
(-1)^{s_\sigma^{[1]}(\alt)+|\glt| +s_{\sigma_{I\sqcup J}}(\glt)+(n+1)(|\glt_{J}|+1)}
\p_{k_1,|I|}(\q_{k_2,|J|} ((\alt^\sigma)_{\rgl{1}};\glt_J) \otimes(\alt^\sigma)_{\rgl{2}}; \glt_{I})\\
+
\delta_{k,0}\cdot(-1)^{|\gamma|} \q_{\emptyset,l+1}(\glt \otimes i_*1_L).\notag
\end{multline}
\end{prop}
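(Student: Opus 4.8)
The strategy is the standard one for structure equations of this type: the left-hand side $d\p_{k,l}(\alt;\glt)$ is computed via Stokes' theorem (Proposition~\ref{stokes}) applied to the pushforward along $evi_0^\beta$, which produces an interior term plus a boundary term. The interior term accounts for the derivatives falling on the pulled-back forms $\gamma_j$ and $\alpha_j$; since $d\alpha_j$ and $d\gamma_j$ are the only contributions once one uses that $d$ commutes with $evi_0$-pushforward up to the boundary correction, and since a form $\alpha_j$ on $L$ pulled back along $evb_j$ has $d(evb_j^*\alpha_j)=evb_j^*d\alpha_j$, this yields exactly the $\p_{k,l}(\alt;d(\glt))$ term together with a $\p_{k,l}(d(\alt);\glt)$-type term. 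The latter, however, must be reorganized: in the final identity there is no explicit $\p(d(\alt);\cdot)$ summand, so the $d\alpha_j$ contributions are to be absorbed into the $\q$-composition term via the $k_2=1$, $J=\emptyset$, $\beta_2=\beta_0$ case, using the convention $\q_{1,0}^{\beta_0}(\alpha)=d\alpha$ from Section~\ref{ssec:q}. This bookkeeping is exactly parallel to how the $A_\infty$ relations~\eqref{eq:ainfty} absorb $\mu_1=d$ into the sum, and the claim in the text that ``the proof is analogous to that of~\eqref{eq:ainfty}'' is the license to follow that template.

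\textbf{Boundary analysis.} The heart of the proof is the identification of the boundary $\partial \M_{k,l+1}(\beta)$ (fiberwise over $evi_0$). The codimension-one boundary strata of the moduli space of genus-zero open stable maps correspond to nodal configurations where the domain breaks into two components sharing a single node. There are two types. First, \emph{boundary node} degenerations: the disk splits into two disks joined at a boundary node, distributing the $k$ boundary marked points (in cyclic order, hence the sum over $\sigma\in\Z/k\Z$ — the output point $0$ on the interior sits on one component and the boundary marked points get split cyclically around it) and the $l$ interior points (the splitting $I\sqcup J=[l]$), with the degree splitting $\beta=\beta_1+\beta_2$. On the component \emph{not} carrying the interior output point, the node becomes a boundary marked point and one gets a $\q_{k_2,|J|}^{\beta_2}$ factor (closed-open map, output on the boundary); feeding that output as a boundary input into the component carrying $evi_0$ gives the $\p_{k_1,|I|}^{\beta_1}$ factor. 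Matching orientations and reorganizing the pulled-back forms via the sign conventions in Section~\ref{ssec:not} produces the sign $(-1)^{s_\sigma^{[1]}(\alt)+|\glt|+s_{\sigma_{I\sqcup J}}(\glt)+(n+1)(|\glt_J|+1)}$. Second, \emph{sphere bubbling}: an interior sphere bubble splits off. Generically the interior node can be absorbed (no new boundary, the configuration is still smooth), \emph{except} in the case $k=0$, where the disk component carries no boundary marked points and can itself shrink — the boundary collapses to a point — leaving a sphere carrying all $l$ interior inputs plus a marked point constrained to lie on $L$ (the image of the ghost disk). This is precisely the term $\delta_{k,0}(-1)^{|\gamma|}\q_{\emptyset,l+1}(\glt\otimes i_*1_L)$, with $i_*1_L$ the current of integration on $L$ recording the constraint. (When $k\ge 1$ the boundary cannot collapse because marked points must remain distinct, so $\delta_{k,0}$ is correct.)

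\textbf{Main obstacle.} The delicate part is not the combinatorial structure — that is dictated by the geometry of the boundary — but the \textbf{sign verification}, and within that, specifically the handling of the collapsing-boundary stratum, which is absent from the analogous $A_\infty$ computation in~\cite{ST1} and is the genuinely new phenomenon here (it is the reason $\pg_0$ fails to extend to a chain map without the $\q_{\emptyset,1}^\gamma(\eta)$ correction, as discussed in the introduction). One must carefully compare the orientation on $\M_{0,l+1}(\beta)$ near the collapsed stratum with the product orientation on $\M_{l+1}(\beta')$ (spheres), track the sign $(-1)^{|\gamma|}$ coming from commuting $d$ past the $\gamma$-factors and from the $\varepsilon_p$ normalization, and confirm the constraint current is exactly $i_*1_L$ with no extra sign. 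The remaining sign identities (boundary-node strata) are a lengthy but routine induction-free computation, organized exactly as in the proof of~\eqref{eq:ainfty}: expand Stokes, match each boundary stratum, push all signs through the reordering lemmas of Section~\ref{ssec:not}, and check the convention-dependent endpoint cases ($\beta_i=\beta_0$ with small $k_i$) separately against the definitions $\q_{1,0}^{\beta_0}(\alpha)=d\alpha$, $\q_{0,0}^{\beta}=-(evb_0^\beta)_*1$, and $\p_{0,0}^{\beta_0}=0$.
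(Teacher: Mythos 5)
Your proposal follows essentially the same route as the paper's proof: Stokes' theorem applied to the pushforward along $evi_0$, with the $d\alpha_j$ terms absorbed into the general sum via the convention $\q_{1,0}^{\beta_0}=d$ and cyclic symmetry, the two-disk boundary strata identified with the $\p_{k_1,|I|}(\q_{k_2,|J|}(\cdot);\cdot)$ terms (as in Lemma~\ref{lm:pstar} together with the gluing sign of Proposition~\ref{rem:pbdgluing}), and the $k=0$ boundary-collapse stratum producing the $\q_{\emptyset,l+1}(\glt\otimes i_*1_L)$ term (Lemma~\ref{lm:bbbp}). The only slight imprecision is your parenthetical reason for the factor $\delta_{k,0}$ --- the correct point is that for $k\ge 1$ boundary-collapse configurations are not codimension one in the moduli space, rather than that marked points must remain distinct --- but this does not change the argument.
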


Roughly speaking, Proposition~\ref{prop:pstructure} describes the boundary of the moduli space of disks with boundary constraints in $\alpha$, interior constraints in $\gamma$, and an interior output point. The different types of boundary components are specified in the different summands on the right-hand side, as follows.
The first summand describes boundary components where one of the interior constraints (in $\gamma$) goes to its boundary (i.e., we get a constraint in $d\gamma_j$ for one of the $j$). The case where one of the boundary constraints goes to its boundary (i.e., where we get a constraint of the form $d\alpha_j$ for one of the $j$) is included in the second summand, in view of the definition $\q_{1,0}^{\beta_0}=d$.
The rest of the second summand (when $\q$ comes with degree other than $\beta_0$) describes boundary components that arise from a disk bubbling off at the boundary, with some of the boundary constraints ($(\alpha^\sigma)_{(1)}$) and some of the interior constraints ($\gamma_J$) on it.
The third summand is only relevant to the case with no boundary constraints, that is, when $k=0$; then another boundary component appears from the degeneration of the boundary of the disk to a point. In the presence of boundary marked points we would consider such a degeneration an instance of interior bubbling (from a ghost disk), which is a configuration of condimension $2$.

\smallskip
We defer the proof of Proposition~\ref{prop:pstructure} to Section~\ref{sec:str_pf} below, with the exception of the following lemma that will also be used in Section~\ref{ssec:p_prop}.

\begin{lm}\label{lm:epsilonsigma}
For $\alt=(\a_1,\ldots,\a_k)$ and $\sigma\in \Z/k\Z$, we have
\[
\varepsilon_p(\alt^\sigma)- \varepsilon_p(\alt)
=
\varepsilon(\alt^\sigma)- \varepsilon(\alt)
=
\sum_{\substack{j<i\\ \sigma(j)>\sigma(i)}}(|\alpha_{\sigma(i)}|-|\alpha_{\sigma(j)}|).
\]
\end{lm}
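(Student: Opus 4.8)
The plan is to reduce the statement to elementary bookkeeping with the inversions of $\sigma$; no geometry enters.

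\emph{First equality.} I would begin by noting that $\varepsilon_p$ and $\varepsilon$ differ by a reordering-invariant quantity:
\[
\varepsilon_p(\alt) - \varepsilon(\alt) = \sum_{j=1}^k (n+j)\V\alpha_j\V - 1 - \sum_{j=1}^k j\V\alpha_j\V = n\sum_{j=1}^k \V\alpha_j\V - 1 ,
\]
and the right-hand side depends only on the multiset $\{\alpha_1,\dots,\alpha_k\}$. Hence $\varepsilon_p(\alt^\sigma) - \varepsilon(\alt^\sigma) \equiv \varepsilon_p(\alt) - \varepsilon(\alt) \pmod 2$, and rearranging yields $\varepsilon_p(\alt^\sigma) - \varepsilon_p(\alt) \equiv \varepsilon(\alt^\sigma) - \varepsilon(\alt) \pmod 2$, which is the first claimed equality.

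\emph{Second equality.} Here I would write $\varepsilon(\alt^\sigma) - \varepsilon(\alt) = \sum_{j=1}^k j\V\alpha_{\sigma(j)}\V - \sum_{j=1}^k j\V\alpha_j\V$ and reindex the second sum via $j = \sigma(m)$ to get $\sum_{m=1}^k (m - \sigma(m))\V\alpha_{\sigma(m)}\V$. The combinatorial heart of the matter is the identity
\[
m - \sigma(m) = \#\{i < m : \sigma(i) > \sigma(m)\} - \#\{i > m : \sigma(i) < \sigma(m)\},
\]
obtained by writing $m-1 = \#\{i : i < m\}$ and $\sigma(m) - 1 = \#\{i : \sigma(i) < \sigma(m)\}$ and splitting each count according to $i < m$ or $i > m$. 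Substituting, $\sum_m (m - \sigma(m))\V\alpha_{\sigma(m)}\V = \sum_{i<m,\,\sigma(i)>\sigma(m)} \V\alpha_{\sigma(m)}\V - \sum_{i>m,\,\sigma(i)<\sigma(m)} \V\alpha_{\sigma(m)}\V$; swapping the summation labels $i \leftrightarrow m$ in the second sum turns it into $\sum_{i<m,\,\sigma(i)>\sigma(m)} \V\alpha_{\sigma(i)}\V$, so the whole expression equals $\sum_{i<m,\,\sigma(i)>\sigma(m)} \left( \V\alpha_{\sigma(m)}\V - \V\alpha_{\sigma(i)}\V \right)$. Since $\V\alpha_{\sigma(m)}\V - \V\alpha_{\sigma(i)}\V = |\alpha_{\sigma(m)}| - |\alpha_{\sigma(i)}|$, relabelling the indices $(i,m)$ as $(j,i)$ gives exactly the stated sum.

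I do not expect any real obstacle: the argument is routine once the inversion-count identity for $m - \sigma(m)$ is written down correctly, and since all equalities are needed only modulo $2$, the $-1$ terms in $\varepsilon$ and $\varepsilon_p$ and the shift relating $\V\cdot\V$ to $|\cdot|$ all drop out of the differences. Note that cyclicity of $\sigma$ plays no role; the identity holds for every $\sigma \in S_k$, and one restricts to $\Z/k\Z$ only because that is the case used in Proposition~\ref{prop:pstructure}.
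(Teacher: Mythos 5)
Your proof is correct, and it finishes the key computation by a genuinely different (and more general) route than the paper. Both arguments treat the first equality the same way: $\varepsilon_p(\alt)-\varepsilon(\alt)=n\sum_{j}\Vert\alpha_j\Vert-1$ depends only on the total degree, hence is reordering-invariant; note that your own computation gives this as an exact integer identity, so there is no need to retreat to a congruence mod $2$ -- the lemma asserts an honest equality and your argument already delivers it. Both proofs also reduce the second equality, by the same reindexing, to evaluating $\sum_{m}(m-\sigma(m))\,|\alpha_{\sigma(m)}|$. The divergence is in how $m-\sigma(m)$ is handled. The paper exploits cyclicity explicitly: writing $\sigma$ as the shift by $t$, it notes $m-\sigma(m)$ is $-t$ for $m\le k-t$ and $k-t$ otherwise, and identifies the inversion set of $\sigma$ with the rectangle $\{j\le k-t\}\times\{i\ge k-t+1\}$, from which the double sum falls out. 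You instead use the identity $m-\sigma(m)=\#\{i<m:\sigma(i)>\sigma(m)\}-\#\{i>m:\sigma(i)<\sigma(m)\}$, valid for every $\sigma\in S_k$, and swap summation labels in the second count; this proves the lemma for arbitrary permutations with no case analysis, and your closing remark that cyclicity plays no role is accurate. What the paper's more concrete route buys is precisely that rectangle description of the inversions of a cyclic shift, which is reused verbatim later (in the proof of Proposition~\ref{prop:pstructure}, for the permutations $\sigma_i$); your route is cleaner and more general but does not produce that by-product.
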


\begin{proof}
The first identity follows from $\V\alt^\sigma\V=\V\lst\V$. For the second identity, compute
\begin{align*}
\varepsilon(\alt^\sigma)&-\varepsilon(\alt)=
\sum_{j=1}^kj(|\alpha_{\sigma(j)}|-|\alpha_j|)\\
&=\sum_{j=1}^kj|\alpha_{\sigma(j)}|-\sum_{m=1}^k\sigma(m)|\alpha_{\sigma(m)}|\\
&=\sum_{j=1}^k|\alpha_{\sigma(j)}|(j-\sigma(j)).
\end{align*}
Let $t\in\{1,\ldots,k\}$ such that $t\equiv \sigma(1)-1\pmod k$. Since $\sigma$ is cyclic, it follows that for all $j$ we have
$\sigma(j)\equiv j+t\pmod k$. More specifically, for $j\le k-t,$ we have $\sigma(j)=j+t$, and thus $j-\sigma(j)=-t$; for $j\ge k-t+1,$ we have $\sigma(j)=j+t-k,$ and thus $j-\sigma(j)=k-t$.
Therefore, the above expression equals
\[
 \sum_{j=1}^k|\alpha_{\sigma(j)}|(j-\sigma(j))=
(k-t)\sum_{j=k-t+1}^k |\alpha_{\sigma(j)}|-t\cdot\sum_{j=1}^{k-t}|\alpha_{\sigma(j)}|\\
=\sum_{j=1}^{k-t}\sum_{i=k-t+1}^k(|\alpha_{\sigma(i)}|-|\alpha_{\sigma(j)}|).
\]
Note that
\[
\{j<i \mbox{ and } \sigma(j)>\sigma(i)\} \iff \{j\le k-t \mbox{ and } i\ge k-t+1\}.
\]
Therefore,
\[
\sum_{j=1}^{k-t}\sum_{i=k-t+1}^k(|\alpha_{\sigma(i)}|-|\alpha_{\sigma(j)}|)
= \sum_{\substack{j<i\\ \sigma(j)>\sigma(i)}}(|\alpha_{\sigma(i)}|-|\alpha_{\sigma(j)}|).
\]

\end{proof}

\subsection{Properties}\label{ssec:p_prop}

The $\p$ operators can be shown to behave very similarly to the $\q$ operators in many ways.

\subsubsection{Linearity}
\begin{prop}\label{prop:p_lin}
The $\p$ operators are $R$-multilinear in the sense that for all $a \in R$, $\alt=(\a_1,\ldots,\a_k)\in A^*(L;R)^{\otimes k}$, and $\glt=(\gamma_1,\ldots,\gamma_l)\in A^*(X;R)^{\otimes l}$, we have
\[
\qquad\p_{k,l}^\beta(\a_1,\ldots,\a_{i-1},a\cdot\a_i,\ldots,\a_k; \glt)
		=(-1)^{|a|\cdot \big(n+1+\V \a_{[1:i-1]}\V+|\glt|\big)}
		a\cdot\p_{k,l}^\beta(\alt;\glt),
\]
and if $a \in R$ we have
\[
\p_{k,l}^\beta(\alt;\gamma_1,\ldots,a\cdot\gamma_i,\ldots,\gamma_l) =(-1)^{|a|\cdot|\gamma_{[1:i-1]}|}
		a\cdot\p_{k,l}^\beta(\alt;\glt).
\]
\end{prop}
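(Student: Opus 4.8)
The plan is to prove both identities by unwinding the definition $\p^{\beta}_{k,l}(\alt;\glt)=(-1)^{\varepsilon_p(\alt)}(evi_0^\beta)_*(evi^*\glt\wedge evb^*\alt)$ and carefully tracking Koszul signs, exactly as in the corresponding linearity statements for the $\q$ operators in~\cite{ST1}. Throughout, multiplication by $a\in R$ on a form or current on $\M_{k,l+1}(\beta)$ is identified with the wedge product by the constant form $1\otimes a$, so no sign is hidden in the $R$-module structure itself.

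For the boundary-input identity, I first observe that rescaling $\a_i$ by $a$ changes only the $i$-th summand of $\varepsilon_p$, and since $\V a\cdot\a_i\V=|a|+\V\a_i\V$ this increases $\varepsilon_p(\alt)$ by $(n+i)|a|$. Next, $R$-linearity of pullback gives $evb_i^*(a\cdot\a_i)=a\cdot evb_i^*\a_i$, so inside the pushforward the scalar $a$ appears immediately before $evb_i^*\a_i$; commuting it to the front past the form $evi^*\glt\wedge evb_1^*\a_1\wedge\cdots\wedge evb_{i-1}^*\a_{i-1}$, of degree $|\glt|+|\a_{[1:i-1]}|$, yields a Koszul sign $(-1)^{|a|(|\glt|+|\a_{[1:i-1]}|)}$. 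Finally, to move $a$ out of $(evi_0^\beta)_*$, I write it as $(evi_0^\beta)^*(a)$, where $a$ is regarded as the constant form $1\otimes a\in A^{|a|}(X;R)$, and invoke the push--pull formula Proposition~\ref{prop:proppp}\eqref{prop:pushpull} (legitimate here since $evi_0^\beta$ is proper, with the remaining wedge factor viewed as a current): $(evi_0^\beta)_*\big((evi_0^\beta)^*(a)\wedge\omega\big)=a\wedge(evi_0^\beta)_*\omega$, with no extra sign. Collecting the three contributions gives total sign $(n+i)|a|+|a||\glt|+|a||\a_{[1:i-1]}|$, which modulo $2$ equals $|a|\cdot(n+1+\V\a_{[1:i-1]}\V+|\glt|)$ using $\V\a_{[1:i-1]}\V=|\a_{[1:i-1]}|+(i-1)$; this is exactly the asserted identity.

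The interior-input identity follows by the same argument, but it is lighter: $\varepsilon_p(\alt)$ is unaffected, and after extracting $a$ from $evi_i^*(a\cdot\gamma_i)$ one only commutes it past $evi_1^*\gamma_1\wedge\cdots\wedge evi_{i-1}^*\gamma_{i-1}$ (sign $(-1)^{|a||\gamma_{[1:i-1]}|}$) and then pulls it out of the pushforward as above, reproducing the stated sign $(-1)^{|a|\cdot|\gamma_{[1:i-1]}|}$.

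The only delicate point — the step I expect to be the main obstacle — is ensuring that extracting the graded scalar $a$ from the pushforward introduces no spurious factor of $\rdim(evi_0^\beta)$. Reducing this step to the push--pull identity Proposition~\ref{prop:proppp}\eqref{prop:pushpull}, rather than invoking ``$R$-linearity of $f_*$'' directly, sidesteps the issue cleanly; everything else is routine sign bookkeeping.
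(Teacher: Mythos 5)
Your proposal is correct and follows essentially the same route as the paper: the paper likewise computes the change $(n+i)|a|$ in $\varepsilon_p$, picks up the Koszul sign $(-1)^{|a|(|\glt|+|\a_{[1:i-1]}|)}$ from commuting $a$ past $evi^*\glt\wedge evb_{[1:i-1]}^*\alt$, and pulls $a$ out of $(evi_0)_*$ (which you justify, correctly and a bit more explicitly than the paper, via Proposition~\ref{prop:proppp}\eqref{prop:pushpull}), with the interior-input case handled identically but without any change in $\varepsilon_p$.
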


\begin{proof}

For the first identity, consider
\begin{multline*}
(evi_0)_*(evi^*\glt\wedge evb_{[1:i-1]}^*\alt\wedge evb_i^*(a\a_i)\wedge evb_{[i+1:k]}^*\alt)=\\
=
(-1)^{|a|(|\glt|+|\alt_{[1:i-1]}|)}
a\cdot(evi_0)_*(evi^*\glt\wedge evb^*\alt).
\end{multline*}
The corresponding change in $\varepsilon_p$ is
\[
\varepsilon_p(\a_1,\ldots,\a_{i-1},a\a_i,\a_{i+1},\ldots ,\a_k)
-\varepsilon_p(\a_1,\ldots,\a_k)
=(i+n)\cdot |a|.
\]
Together, this gives the required result.

Similarly, for the second identity,
\begin{multline*}
(evi_0)_*(evi_{[1:i-1]}^*\glt\wedge evi_i^*(a\gamma_i)\wedge evi_{[i+1:l]}^*\glt\wedge evb^*\alt)=\\
=
(-1)^{|a|\cdot |\glt_{[1:i-1]}|}
a\cdot(evb_0)_*(evi^*\glt\wedge evb^*\alt),
\end{multline*}
while $\varepsilon_p$ is not affected.
\end{proof}

\subsubsection{Unit}
\begin{prop}\label{prop:p_unit}
For $\a_1,\ldots,\a_k\in A^*(L;R)$ and $\glt=(\gamma_1,\ldots,\gamma_l)\in A^*(X;R)^{\otimes l}$,
\begin{equation*} \p_{k+1,l}^{\beta}(\alpha_1,\ldots,\alpha_{i-1},1_L,\alpha_{i},\ldots,\alpha_k ;\glt)=
\begin{cases}
0,&(k+1,l,\beta)\ne (1,0,\beta_0),\\
(-1)^{n+1}i_*1_L,&(k+1,l,\beta)= (1,0,\beta_0).
\end{cases}
		\end{equation*}
\end{prop}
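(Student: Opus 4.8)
The plan is to follow the proof of the corresponding unit property for the $\q$ operators in~\cite{ST1}: isolate the exceptional case $(k+1,l,\beta)=(1,0,\beta_0)$, and for every other case reduce to a forgetful‑map argument that drops the marked point carrying $1_L$.

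For the non‑exceptional cases I would use the map $\pi\colon\M_{k+1,l+1}(\beta)\to\M_{k,l+1}(\beta)$ that forgets the $i$-th boundary marked point and stabilizes. Under the standing regularity assumptions $\pi$ is a relatively oriented proper submersion with one‑dimensional fibers, and this is exactly where the exclusion $(k+1,l,\beta)\ne(1,0,\beta_0)$ enters: for $(k,l,\beta)=(0,0,\beta_0)$ the target $\M_{0,1}(\beta_0)$ is not a valid moduli space, since a constant disk with a single interior marked point is unstable. The key facts are that the interior evaluation at the marked point $0$ satisfies $evi_0^{\beta}=evi_0^{\beta}\circ\pi$, that every remaining boundary and interior evaluation on $\M_{k+1,l+1}(\beta)$ is the $\pi$-pullback of the matching evaluation on $\M_{k,l+1}(\beta)$, and that $(evb_i^{\beta})^*1_L=1$ is a $0$-form and hence may be deleted from the wedge product with no sign. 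So the integrand defining $\p_{k+1,l}^{\beta}(\alpha_1,\ldots,1_L,\ldots,\alpha_k;\glt)$ equals $\pi^*\omega$, where $\omega:=evi^*\glt\wedge evb^*\alt$ with $\alt=(\alpha_1,\ldots,\alpha_k)$; then by functoriality of push‑forward and the push‑pull formula (parts \eqref{prop:pushcomp} and \eqref{prop:pushpull} of Proposition~\ref{prop:proppp}),
\[
(evi_0^{\beta})_*\pi^*\omega=(evi_0^{\beta})_*\bigl(\pi_*\pi^*\omega\bigr)=(evi_0^{\beta})_*\bigl(\omega\wedge\pi_*1\bigr)=0,
\]
since push‑forward along the one‑dimensional fibers of $\pi$ drops form degree by $1$ and therefore kills the $0$-form $1$. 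Multiplying by $T^\beta$ and summing over $\beta$ yields the first case.

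For the exceptional case one has $i=1$, and the relevant space $\M_{1,1}(\beta_0)$ consists of constant disk maps with one boundary and one interior marked point. Since the moduli space of stable disks with one interior and one boundary marked point is a single point, $\M_{1,1}(\beta_0)\cong L$, and under this identification $evb_1$ is the identity and $evi_0=i\colon L\hookrightarrow X$, compatibly relatively oriented by the orientation conventions of~\cite{ST4} (this is the energy‑zero configuration). Hence
\[
\p_{1,0}^{\beta_0}(1_L)=(-1)^{\varepsilon_p((1_L))}(evi_0)_*\bigl((evb_1)^*1_L\bigr)=(-1)^{(n+1)\V 1_L\V}\,i_*1_L=(-1)^{n+1}i_*1_L,
\]
using $\V 1_L\V=|1_L|+1=1$.

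The main obstacle is not the sign bookkeeping, which is trivial because $1_L$ is even, but the geometric input behind the first case: one must confirm that the boundary‑point forgetful map between the orbifold moduli spaces is a relatively oriented proper submersion with one‑dimensional fibers, and that all the evaluation maps factor through it as asserted. This is standard and runs parallel to~\cite{ST1}.
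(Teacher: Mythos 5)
Your proposal is correct and follows essentially the same route as the paper: the paper likewise handles all cases other than $(k+1,l,\beta)=(1,0,\beta_0)$ by the boundary-point forgetful map $\pi_i\colon\M_{k+1,l+1}(\beta)\to\M_{k,l+1}(\beta)$ (citing the argument of \cite[Proposition 3.2]{ST1} with $evi_0$ in place of $evb_0$, which is exactly the push--pull computation you spell out with $\pi_*1=0$), and then treats the exceptional case by identifying $\M_{1,1}(\beta_0)$ with $L$ via the evaluation map, giving $\p^{\beta_0}_{1,0}(1_L)=(-1)^{(n+1)\cdot 1}i_*1_L$. Your identification of the exceptional case as the one where stabilization fails, and your sign computation $\varepsilon_p((1_L))=n+1$, match the paper's proof.
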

\begin{proof}

Roughly speaking, when one of the inputs is fixed in the unit class, the operation vanishes because it factors through a moduli space of strictly lower dimension. More rigorously:

For $(k+1,l+1,\beta)\ne(1,1,\beta_0)$, consider $\pi_i:\M_{k+1,l+1}(\beta)\to\M_{k,l+1}(\beta)$, the map that forgets the $i$-th boundary marked point, shifts the degrees of the following points, and stabilizes the result. The case $(k+1,l+1,\beta)=(1,1,\beta_0)$ is exactly when stabilization is impossible; in all other cases $\pi$ is well defined.
Then, the argument in~\cite[Proposition 3.2]{ST1} with $evi_0$ instead of $evb_0$ shows that $\p_{k+1,l}^{\beta}(\alpha_1,\ldots,\alpha_{i-1},1,\alpha_{i},\ldots,\alpha_k ;\glt)=0$.

It remains to analyze the exceptional case $(k+1,l+1,\beta)=(1,1,\beta_0)$.
In this case, the evaluation $ev$ identifies $\M_{1,1}(\beta_0)$ with $L$, preserving orientation, and  $evi_0=i\circ ev$, $evb_1=ev$. So,
\[
\p^{\beta_0}_{1,0}(1_L)=(-1)^{(n+1)\cdot 1}\cdot i_*ev_*ev^*1_L=(-1)^{n+1}i_*1_L.
\]
\end{proof}

\subsubsection{Degree}
\begin{prop}\label{prop:p_deg}
For $k\ge 0$ and $\glt=(\gamma_1,\ldots,\gamma_l)$ with $|\gamma_j|=2$ for all $j$,
the map
\[
\pkl(\; ;\glt):C^{\otimes k}\lrarr \A^*(X;R)
\]
is of degree $n+1-k$.
\end{prop}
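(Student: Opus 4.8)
The plan is to compute the degree of $\pkl(\alt;\glt)$ term by term over $\beta\in\sly$ and to observe that the factor $T^\beta$, which has degree $\mu(\beta)$, precisely absorbs the $\beta$-dependence. The prefactor $(-1)^{\varepsilon_p(\alt)}$ is irrelevant for degree, and since $\wedge$, pullback, push-forward, and multiplication by elements of $R$ all act predictably on total degrees (the last two up to sign), it suffices to track the total degree of $(evi_0^\beta)_*(evi^*\glt\wedge evb^*\alt)$ as a function of $|\alt|=\sum_{j=1}^k|\alpha_j|$, working with total degrees throughout so that $R$-coefficients cause no trouble.

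First I would note that, since $|\gamma_j|=2$ for each $j$, the form $evi^*\glt=\bigwedge_{j=1}^l(evi_j^\beta)^*\gamma_j$ has degree $2l$, while $evb^*\alt=\bigwedge_{j=1}^k(evb_j^\beta)^*\alpha_j$ has degree $|\alt|$; hence $evi^*\glt\wedge evb^*\alt$, viewed as a current on $\M_{k,l+1}(\beta)$, has degree $2l+|\alt|$. Next I would recall the dimension count $\dim_\R\M_{k,l+1}(\beta)=n-3+\mu(\beta)+k+2(l+1)$, as used in the proof of Proposition~\ref{prop:pstructure}, so that
\[
\rdim(evi_0^\beta)=\dim_\R X-\dim_\R\M_{k,l+1}(\beta)=2n-\big(n-3+\mu(\beta)+k+2(l+1)\big)=n+1-\mu(\beta)-k-2l.
\]
By the degree conventions for the push-forward of currents from Section~\ref{ssec:currents} (consistent, for instance, with $i_*1_L=\zeta_L$ lying in degree $n$), $(evi_0^\beta)_*$ shifts the current degree by $\rdim(evi_0^\beta)$, so $\pkl^\beta(\alt;\glt)$ is a current of degree $(2l+|\alt|)+\rdim(evi_0^\beta)=|\alt|+n+1-\mu(\beta)-k$. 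Consequently $T^\beta\pkl^\beta(\alt;\glt)$ has degree $\mu(\beta)+|\alt|+n+1-\mu(\beta)-k=|\alt|+n+1-k$, independent of $\beta$ (the single excluded summand $\p_{0,0}^{\beta_0}=0$ imposing no constraint), and summing over $\beta$ shows that $\pkl(\alt;\glt)$ is homogeneous of degree $|\alt|+n+1-k$; that is, $\pkl(\ ;\glt)$ raises degree by $n+1-k$.

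The argument is essentially bookkeeping. The only step needing care is pinning down the sign of the degree shift under $(evi_0^\beta)_*$, which must be read off from the currents formalism of Section~\ref{ssec:currents} rather than from fiber integration of forms, since $evi_0^\beta$ is not assumed to be a submersion here. The structural reason the degree is $\beta$-independent is that the Maslov contribution $-\mu(\beta)$ to $\rdim(evi_0^\beta)$ exactly cancels the $+\mu(\beta)$ carried by $T^\beta$.
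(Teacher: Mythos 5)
Your proof is correct and follows essentially the same route as the paper: the paper likewise computes $|T^\beta\pkl^\beta(\alt;\glt)|=\mu(\beta)+|\alt|+2l+\rdim(evi_0)$ with $\rdim(evi_0)=2n-(n-3+\mu(\beta)+k+2l+2)$, so that the $\mu(\beta)$ terms cancel and the degree is $|\alt|+n+1-k$ for every $\beta$. Your extra sanity check of the current push-forward degree convention via $i_*1_L$ is consistent with how the paper actually uses $\rdim$ in its computation, so there is nothing to add.
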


\begin{proof}
It is enough to check that, for any $\beta$, the map
\[
T^{\beta}\pkl^\beta(\; ;\glt):C^{\otimes k}\lrarr \A^*(X;R)
\]
is of degree $n+1-k$.

Recalling the degree of push-forward~\eqref{eq:pfc}, for any $\alt=(\a_1,\ldots,\a_k)$ compute
\begin{align*}
|T^{\beta}\pkl^{\beta}(\alt;\glt)|
={}& \mu(\beta)+|\alt|+|\gamma|+\rdim(evi_0)\\
={}& \mu(\beta)+|\alt|+2l+(2n-(n-3+\mu(\beta)+k +2l+2))\\
={}& |\alt|+n+1-k.
\end{align*}

\end{proof}

\subsubsection{Symmetry}
\begin{prop}[Cyclic symmetry of boundary input]\label{prop:p_sgn}
For a cyclic permutation $\sigma\in\Z/k\Z$, $\alt=(\alpha_1,\ldots,\alpha_k)$, and $\glt=(\gamma_1,\ldots,\gamma_l)$, we have
\[
\pkl(\alt;\glt)=
(-1)^{s_\sigma^{[1]}(\alt)}\pkl(\alt^\sigma;\glt)
\]
with $s_\sigma^{[1]}(\alt)$ as defined before Proposition~\ref{prop:pstructure}.
\end{prop}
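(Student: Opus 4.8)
\emph{Proof idea.} The plan is to realize the cyclic reordering of the boundary inputs as a geometric change of variables on the moduli space, using the diffeomorphism $\varphi_{\sigma^{-1}}\colon\M_{k,l+1}(\beta)\to\M_{k,l+1}(\beta)$ that permutes the boundary marked points, exactly as introduced in the proof of Lemma~\ref{lm:pstar}. Recall the properties we will use: $evb_j\circ\varphi_{\sigma^{-1}}=evb_{\sigma^{-1}(j)}$; the interior marked points are fixed, so $evi_j\circ\varphi_{\sigma^{-1}}=evi_j$ for all $j$, and in particular $evi_0\circ\varphi_{\sigma^{-1}}=evi_0$; and $\varphi_{\sigma^{-1}}$ changes orientation by $(-1)^{\mathrm{sgn}(\sigma)}$. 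For $k\le 1$ the only cyclic permutation is the identity and there is nothing to prove, so we may assume $k\ge 2$.

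First I would record the pullback computation. Since $\varphi_{\sigma^{-1}}$ fixes interior points, $\varphi_{\sigma^{-1}}^*(evi^*\glt)=evi^*\glt$, while on the boundary side $\varphi_{\sigma^{-1}}^*(evb^*\alt)=\bigwedge_{j=1}^k evb_{\sigma^{-1}(j)}^*\a_j=evb_{\sigma^{-1}}^*\alt$, which by the identity following~\eqref{eq:shortsig} equals $(-1)^{s_\sigma(\alt)}evb^*\alt^\sigma$. Hence $\varphi_{\sigma^{-1}}^*(evi^*\glt\wedge evb^*\alt)=(-1)^{s_\sigma(\alt)}\,(evi^*\glt\wedge evb^*\alt^\sigma)$. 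Next, from the orientation change of $\varphi_{\sigma^{-1}}$ and the defining formula for the pushforward of currents one gets $(\varphi_{\sigma^{-1}})_*\circ\varphi_{\sigma^{-1}}^*=(-1)^{\mathrm{sgn}(\sigma)}\,\mathrm{id}$, and combining this with the functoriality $(evi_0)_*=(evi_0)_*\circ(\varphi_{\sigma^{-1}})_*$ (which follows from $evi_0=evi_0\circ\varphi_{\sigma^{-1}}$ and Proposition~\ref{prop:proppp}\eqref{prop:pushcomp}) yields $(evi_0)_*\circ\varphi_{\sigma^{-1}}^*=(-1)^{\mathrm{sgn}(\sigma)}(evi_0)_*$. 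Applying this to $\omega=evi^*\glt\wedge evb^*\alt$ and unwinding the definition of $\pkl^\beta$, I obtain $\pkl^\beta(\alt^\sigma;\glt)=(-1)^{\varepsilon_p(\alt^\sigma)-\varepsilon_p(\alt)+s_\sigma(\alt)+\mathrm{sgn}(\sigma)}\pkl^\beta(\alt;\glt)$; multiplying by $T^\beta$ and summing over $\beta$ gives the same identity for $\pkl$.

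What remains is a purely combinatorial mod-$2$ sign check: that $\varepsilon_p(\alt^\sigma)-\varepsilon_p(\alt)+s_\sigma(\alt)+\mathrm{sgn}(\sigma)$ is congruent to $s_\sigma^{[1]}(\alt)$. Here I would invoke Lemma~\ref{lm:epsilonsigma}, which identifies $\varepsilon_p(\alt^\sigma)-\varepsilon_p(\alt)$ with $\sum_{j<i,\,\sigma(j)>\sigma(i)}(|\a_{\sigma(i)}|-|\a_{\sigma(j)}|)$, combine it with the fact that the number of inversions of the cyclic permutation $\sigma$ has parity $\mathrm{sgn}(\sigma)$, and expand $s_\sigma^{[1]}$ via $(|\a|+1)(|\a'|+1)=|\a||\a'|+|\a|+|\a'|+1$ using the definitions~\eqref{eq:sgnsigmagamma} and~\eqref{eq:ssig}; the terms then match. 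I do not anticipate a genuine obstacle: the only delicate points are tracking $\sigma$ versus $\sigma^{-1}$ in the pullback formulas and keeping the orientation sign of $\varphi_{\sigma^{-1}}$ straight, and the substantive geometric input (the change of variables and the orientation behavior of $\varphi_{\sigma^{-1}}$) is already in place from Lemma~\ref{lm:pstar}.
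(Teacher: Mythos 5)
Your proposal is correct and follows essentially the same route as the paper: the paper's proof of Proposition~\ref{prop:p_sgn} is exactly this computation, using the reordering sign $s_\sigma(\alt)$ from $evb^*\alt^\sigma=(-1)^{s_\sigma(\alt)}evb_{\sigma^{-1}}^*\alt$, the orientation change $(-1)^{sgn(\sigma)}$ of the relabeling diffeomorphism, Lemma~\ref{lm:epsilonsigma} for $\varepsilon_p(\alt^\sigma)-\varepsilon_p(\alt)$, and the expansion $(|\a|+1)(|\a'|+1)=|\a||\a'|+|\a|+|\a'|+1$ to assemble $s_\sigma^{[1]}(\alt)$. Your sign bookkeeping, including the identity $(evi_0)_*\circ\varphi_{\sigma^{-1}}^*=(-1)^{sgn(\sigma)}(evi_0)_*$, matches what the paper uses implicitly.
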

\begin{proof}
By Lemma~\ref{lm:epsilonsigma},
\begin{align*}
\pkl^\beta(\alt^\sigma;\glt)&=
(-1)^{\varepsilon_p(\alt^\sigma)}
(evi_0^\beta)_* \left((evi^\beta)^*\glt\wedge  (evb^\beta)^*\alt^{\sigma}\right)\\
&=(-1)^{\varepsilon_p(\alt^\sigma)+\sum_{\substack{i<j\\ \sigma(i)>\sigma(j)}}|\a_{\sigma(i)}||\a_{\sigma(j)}|}
(evi_0^\beta)_* \left((evi^\beta)^*\glt\wedge  (evb_{\sigma^{-1}}^\beta)^*\alt\right)\\
&=(-1)^{\varepsilon_p(\alt)+\sum_{\substack{i<j\\ \sigma(i)>\sigma(j)}}(|\a_{\sigma(i)}||\a_{\sigma(j)}|+|\a_{\sigma(j)}|-|\a_{\sigma(i)}|)-sgn(\sigma)}
(evi_0^\beta)_* \left((evi^\beta)^*\glt\wedge (evb^\beta)^*\alt\right)\\
&=(-1)^{\varepsilon_p(\alt)+s_\sigma^{[1]}(\alt)}
(evi_0^\beta)_* \left((evi^\beta)^*\glt\wedge  (evb^\beta)^*\alt\right)\\
&=(-1)^{s_\sigma^{[1]}(\alt)}\pkl^\beta(\alt;\glt).
\end{align*}
\end{proof}

\begin{prop}[Symmetry of interior input]\label{prop:p_int_sgn}
For any permutation $\sigma\in S_k$ and any $\alt=(\alpha_1,\ldots,\alpha_k)$ and $\glt=(\gamma_1,\ldots,\gamma_l)$, we have
\[
\p_{k,l}(\alt;\glt)=(-1)^{s_\sigma(\gamma)} \p_{k,l}(\alt;\glt^\sigma).
\]
\end{prop}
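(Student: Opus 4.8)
The plan is to adapt the proof of Proposition~\ref{prop:p_sgn}, replacing the relabeling of boundary marked points by a relabeling of the interior marked points $1,\dots,l$ while keeping the $0$-th interior marked point, which carries the output, fixed. For a permutation $\tau$ of $\{1,\dots,l\}$, extended so that $\tau(0)=0$, I would introduce the diffeomorphism
\[
\psi_\tau \colon \M_{k,l+1}(\beta) \stackrel{\sim}{\lrarr} \M_{k,l+1}(\beta), \qquad
\psi_\tau([u,(z_j)_{j=1}^k,(w_j)_{j=0}^l]) = [u,(z_j)_{j=1}^k,(w_{\tau(j)})_{j=0}^l],
\]
so that $evb_j\circ\psi_\tau = evb_j$ and $evi_0\circ\psi_\tau = evi_0$, while $evi_j\circ\psi_\tau = evi_{\tau(j)}$ for $j=1,\dots,l$. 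The crucial difference from the boundary situation in Lemma~\ref{lm:pstar}, where the analogous relabeling changes orientation by $(-1)^{sgn(\tau)}$, is that $\psi_\tau$ is orientation-\emph{preserving}: each interior marked point contributes a complex (hence even-real-dimensional) factor to the moduli space, so permuting interior marked points never affects its orientation. This is the same fact already invoked for the closed moduli spaces in Section~\ref{ssec:qemptyset}, and it follows from the orientation conventions of~\cite[Chapter 8]{FOOO}; cf.~\cite{ST1}. (Here $\sigma$ in the statement is understood to permute the $l$ interior inputs.)

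Granting this, the computation is short and entirely parallel to Proposition~\ref{prop:p_sgn}. Since $\varepsilon_p(\alt)$ depends only on $\alt$, it is the same for $\p_{k,l}^\beta(\alt;\glt)$ and $\p_{k,l}^\beta(\alt;\glt^\sigma)$. Combining the reordering identity $evi^*\glt^\sigma = (-1)^{s_\sigma(\glt)}\,evi_{\sigma^{-1}}^*\glt$ recorded after~\eqref{eq:shortsig} with the observations $evi_{\sigma^{-1}}^*\glt = \psi_{\sigma^{-1}}^*(evi^*\glt)$ and $\psi_{\sigma^{-1}}^*(evb^*\alt) = evb^*\alt$, one obtains
\[
evi^*\glt^\sigma \wedge evb^*\alt = (-1)^{s_\sigma(\glt)}\,\psi_{\sigma^{-1}}^*\left( evi^*\glt \wedge evb^*\alt \right).
\]
Since $\psi_{\sigma^{-1}}$ is an orientation-preserving diffeomorphism with $evi_0^\beta\circ\psi_{\sigma^{-1}} = evi_0^\beta$, functoriality of push-forward (Proposition~\ref{prop:proppp}\eqref{prop:pushcomp}, in its version for currents) together with this orientation statement gives $(evi_0^\beta)_*\circ\psi_{\sigma^{-1}}^* = (evi_0^\beta)_*$, whence
\[
\p_{k,l}^\beta(\alt;\glt^\sigma) = (-1)^{\varepsilon_p(\alt)+s_\sigma(\glt)}\,(evi_0^\beta)_*\psi_{\sigma^{-1}}^*\left( evi^*\glt \wedge evb^*\alt \right) = (-1)^{s_\sigma(\glt)}\,\p_{k,l}^\beta(\alt;\glt).
\]
Multiplying by $T^\beta$ and summing over $\beta\in\sly$ gives $\p_{k,l}(\alt;\glt^\sigma) = (-1)^{s_\sigma(\glt)}\p_{k,l}(\alt;\glt)$, which is the asserted identity since $(-1)^{s_\sigma(\glt)} = (-1)^{-s_\sigma(\glt)}$.

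The only genuine obstacle is the orientation claim for $\psi_\tau$; everything else is Koszul-sign bookkeeping of exactly the kind carried out in Proposition~\ref{prop:p_sgn}. If one prefers to avoid invoking this claim in the above form, one can instead factor $\sigma$ into transpositions of adjacent interior inputs and reduce to a single such transposition, for which the reordering sign is simply $|\gamma_i||\gamma_{i+1}|$; but this still rests on the same orientation input, so I would present the argument directly via the diffeomorphism $\psi_{\sigma^{-1}}$ as above.
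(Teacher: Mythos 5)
Your proposal is correct and follows essentially the same route as the paper: the paper's proof likewise observes that $\varepsilon_p$ is independent of $\glt$ and that relabeling interior marked points is an orientation-preserving diffeomorphism commuting with $evi_0$ and $evb_j$, and then performs the same Koszul-sign computation modeled on Proposition~\ref{prop:p_sgn}. Your version merely makes the relabeling diffeomorphism $\psi_\tau$ and the orientation claim explicit, which is a faithful elaboration rather than a different argument.
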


\begin{proof}
Note that $\varepsilon_p$ does not depend on $\glt$, and permutation on the labels of interior marked points is an orientation preserving diffeomorphism of the moduli space. Therefore, a calculation analogous to the one in the proof of Proposition~\ref{prop:p_sgn} gives
\begin{align*}
\pkl^\beta(\alt;\glt^\sigma)&=
(-1)^{\varepsilon_p(\alt)}
(evi_0^\beta)_* \left((evi^\beta)^*\glt^{\sigma}\wedge (evb^\beta)^*\alt\right)\\
&=(-1)^{\varepsilon_p(\alt)+\sum_{\substack{i<j\\ \sigma(i)>\sigma(j)}}|\gamma_{\sigma(i)}||\gamma_{\sigma(j)}|}
(evi_0^\beta)_* \left((evi_{\sigma^{-1}}^\beta)^*\gamma \wedge (evb^\beta)^*\alt\right)\\
&=(-1)^{s_\sigma(\glt)}\pkl^\beta(\alt;\glt).
\end{align*}
\end{proof}

\subsubsection{Energy zero}
\begin{prop}\label{prop:p_zero}
For any $\alt=(\a_1,\ldots,\a_k)$, $\glt=(\gamma_1,\ldots,\gamma_l)$, we have
\[
\p_{k,l}^{\beta_0}(\alt;\glt)=
\begin{cases}
0,&(k,l)\ne (1,0),\\
(-1)^{(n+1)\V\a_1\V}i_*\alpha_1, &(k,l)=(1,0).
\end{cases}
\]
\end{prop}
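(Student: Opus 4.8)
The plan is to unwind the definition of $\p_{k,l}^{\beta_0}$ using the structure of the moduli space $\M_{k,l+1}(\beta_0)$ of constant (energy-zero) stable disk maps, exactly as in the analogous statement for the $\q$ operators (cf.~\cite[Proposition 3.1 or Lemma 2.?]{ST1}), the only change being that the output evaluation is interior, $evi_0$, rather than boundary. First I would recall that for $\beta=\beta_0$ the map component must be constant, so a stable map in $\M_{k,l+1}(\beta_0)$ is determined by a point of $L$ together with a stable configuration of $k$ boundary and $l+1$ interior marked points on a genus-zero nodal disk domain; stability forces the domain to be nontrivial unless $k+(l+1)\geq$ the stability bound. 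The key dimension count is the one already performed in the proof of Proposition~\ref{prop:p_deg}: $\dim \M_{k,l+1}(\beta_0) = n - 3 + k + 2(l+1)$, while the fiber dimension of $evi_0$ into $X$ contributes, so that $\p_{k,l}^{\beta_0}(\;;\glt)$ raises degree by $n+1-k$ when the $\gamma_j$ have degree $2$; but here I must argue directly on forms of arbitrary degree, tracking when $(evi_0)_*\bigl(evi^*\glt\wedge evb^*\alt\bigr)$ can be nonzero.

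The main case split is on $(k,l)$. For $(k,l)=(1,0)$: the moduli space $\M_{1,1}(\beta_0)$ is already identified in the proof of Proposition~\ref{prop:p_unit} with $L$ itself, via $ev$, orientation-preservingly, with $evi_0 = i\circ ev$ and $evb_1 = ev$. Hence
\[
\p_{1,0}^{\beta_0}(\a_1) = (-1)^{\varepsilon_p(\a_1)} (evi_0)_*(evb_1^*\a_1) = (-1)^{(n+1)\V\a_1\V}\, i_* ev_* ev^*\a_1 = (-1)^{(n+1)\V\a_1\V}\, i_*\a_1,
\]
since $\varepsilon_p(\a_1) = (n+1)\V\a_1\V$ and $ev_*ev^* = \id$. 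This gives the stated formula in the $(1,0)$ case. For all other $(k,l)\neq(1,0)$ with $(k,l,\beta_0)\neq(0,0,\beta_0)$ (the excluded case where $\p$ is defined to be $0$), I would use a forgetful map. When $k\geq 1$, forgetting a boundary marked point and stabilizing gives a fibration $\pi\colon \M_{k,l+1}(\beta_0)\to\M_{k-1,l+1}(\beta_0)$ (or, if $k=1$ and $l\geq 1$, forgetting the boundary point and stabilizing over the interior configuration); the pulled-back form $evb^*\alt$ together with $evi^*\glt$ is a pullback of forms from the base composed with at most the forgotten coordinate, so pushing forward along a positive-dimensional fiber of $\pi$ before pushing to $X$ kills the form for degree reasons — this is precisely the mechanism in \cite[Proposition 3.2]{ST1}, invoked there for $evb_0$ and here verbatim for $evi_0$. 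When $k=0$ and $l\geq 1$, the domain is a disk with $l+1$ interior points and stability requires $l\geq 1$; here one forgets an interior marked point (say $w_l$), the resulting $\pi$ has one-dimensional fibers contributing an extra unmatched factor, and again the push-forward vanishes.

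The hard (or at least the most delicate) part will be the bookkeeping in the last paragraph: making sure that in each residual sub-case the relevant forgetful map actually exists (i.e.\ that stabilization is possible — the sole failure being $(k,l)=(1,0)$, $\beta=\beta_0$, already handled) and that the form in question is genuinely a $\pi$-pullback, so that the fiber integral produces a zero-degree, hence vanishing, contribution. I expect this to reduce entirely to citing and lightly adapting \cite[Propositions 3.1--3.2]{ST1}: the geometric input is identical, only the target of the final evaluation changes from $L$ to $X$, which affects the overall sign (absorbed in $\varepsilon_p$ versus $\varepsilon$) but not the vanishing. No essentially new estimate is needed; the proposition is, in effect, the energy-zero specialization of the general structure already encoded in Proposition~\ref{prop:pstructure} combined with the unit property Proposition~\ref{prop:p_unit}.
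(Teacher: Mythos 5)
Your handling of the special case $(k,l)=(1,0)$ coincides with the paper's: identify $\M_{1,1}(\beta_0)$ with $L$ via $ev$, use $evi_0=i\circ ev$, $evb_1=ev$, and $\varepsilon_p(\a_1)=(n+1)\V\a_1\V$. For the vanishing cases, though, you take a different route from the paper, and it has a genuine gap. The paper uses no forgetful maps here: since every map of class $\beta_0$ is constant, all evaluation maps factor through the single map $ev$ recording the image point (with $evi_j=i\circ ev$), so the integrand is $ev^*$ of a form on $L$; the push--pull formula then gives $\p^{\beta_0}_{k,l}(\alt;\glt)=\pm\, i_*\left(\bigwedge_j i^*\gamma_j\wedge\bigwedge_j\a_j\wedge ev_*1\right)$, and $ev_*1=0$ unless the fiber dimension $k+2l-1$ of $ev$ vanishes, i.e.\ unless $(k,l)=(1,0)$. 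This single dimension count disposes of all cases uniformly.

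Your argument instead routes the vanishing through forgetful maps, and your assertion that the sole failure of stabilization is $(k,l)=(1,0)$ is false: for $(k,l)=(0,1)$ the relevant space is $\M_{0,2}(\beta_0)$ (constant disks with the two interior points $w_0,w_1$), and forgetting $w_1$ would land in $\M_{0,1}(\beta_0)$, which is unstable (a constant disk with one interior marked point has a circle of automorphisms), so the map $\pi$ you invoke does not exist; since $w_0$ carries the output $evi_0$, there is no other point you could forget. This case is not vacuous --- it is precisely the statement $\p^{\beta_0}_{0,1}\equiv 0$ invoked later in the fundamental class property, Proposition~\ref{cl:pfund} --- so your proof as written does not cover it. It is easily repaired by the direct argument above (here $ev\colon\M_{0,2}(\beta_0)\to L$ has one-dimensional fibers, so $ev_*1=0$), and indeed once you have the constant-map factorization needed to see that your integrand is a $\pi$-pullback, the forgetful map is superfluous. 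Two smaller slips: fibers of an interior forgetful map are two-dimensional, not one-dimensional (harmless for the vanishing), and the pullback claim should be justified by the constant-map factorization rather than by citing the unit-property mechanism of \cite{ST1}, whose hypothesis (a unit input at the forgotten point) is not available here.
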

	
\begin{proof}
For $\beta=\beta_0$ we have $evi_0=evi_j=i\circ evb_m$ for all $j,m.$ When $k\ge 1,$
\begin{align*}
\p^{\beta_0}_{k,l}(\alt;\glt)
&=	(-1)^{\varepsilon_p(\alt)} (evi_0)_*(evi_0^*(\wedge_{j=1}^l\gamma_j) \wedge evb_1^*(\wedge_{j=1}^k\alpha_k))\\	
&=
(-1)^{\varepsilon_p(\alt)}
i_*(evb_1)_*(evb_1^*(\wedge_{j=1}^li^*\gamma_j \wedge \wedge_{j=1}^k\alpha_k))\\
&=
(-1)^{\varepsilon_p(\alt)}i_*(\wedge_{j=1}^li^*\gamma_j \wedge \wedge_{j=1}^k\alpha_k\wedge(evb_1)_*1).
\end{align*}
For this to be nonzero, $\rdim(evb_1)$ has to be $0$. This happens if and only if
\[
0=n-3+\mu(\beta_0)+k+2l+2-n=k+2l-1,
\]
which implies $(k,l)=(1,0)$, $\M_{k,l+1}(\beta)=\M_{1,1}(\beta_0),$ and $\p^{\beta_0}_{1,0}(\alpha_1)
=(-1)^{(n+1)\V\a_1\V}i_*\alpha_1.$
		
When $k=0,$ all $j=0,\ldots,l,$ satisfy that $evi_j=i\circ ev$, where $ev:\M_{0,l+1}(\beta_0)\to L$ takes each map to the point that is its image. Then 		
\[
\p^{\beta_0}_{0,l}(\glt)=  (evi_0)_*(\wedge_{j=1}^levi_j^*\gamma_j)
		= i_*ev_*(ev^*(\wedge_{j=1}^li^*\gamma_j))
		= i_*\left(\wedge_{j=1}^li^*\gamma_j\wedge ev_*1\right).
\]
In order for this to be nonzero, it is necessary that
\[
0=\rdim(ev)=n-3+\mu(\beta_0)+2l+2-n=2l-1,
\]
which is impossible, $l$ being integer.
	
\end{proof}

\subsubsection{Fundamental class}
\leavevmode
Denote by $1_{X}\in A^0(X)$ the constant function with value $1$.
\begin{prop}\label{cl:pfund}
For all  $\alt=(\a_1,\ldots,\a_k)$, $\glt=(\gamma_1,\ldots,\gamma_{l-1})$, we have
\[
\pkl(\alt;1_X,\gamma_1,\ldots,\gamma_{l-1})=0.
\]
\end{prop}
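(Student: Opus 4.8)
The statement is that inserting the constant function $1_X \in A^0(X)$ as the interior input labeled $0$ — equivalently, since by Proposition~\ref{prop:p_int_sgn} the interior inputs can be freely permuted (with sign, but $1_X$ has degree $0$ so no sign arises), as any interior input — makes $\pkl$ vanish. The natural tool is a forgetful map argument analogous to the one used in Proposition~\ref{prop:p_unit} for the boundary unit, and to~\cite[Proposition~3.2]{ST1}. Concretely, I would use the map $\pi: \M_{k,l+1}(\beta) \to \M_{k,l}(\beta)$ that forgets the interior marked point carrying the input $1_X$, shifts the labels of the remaining interior points, and stabilizes. One must first dispose of the degenerate cases where stabilization fails: this happens precisely when forgetting that interior point destabilizes a component, i.e. for $(k,l+1,\beta)$ with too few marked points and $\beta = \beta_0$; in the relevant range these are handled either because the moduli space is empty or because the forgetful map is replaced by an evaluation-type identification, exactly as in~\cite[Proposition~3.2]{ST1}.

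In the generic case, the key point is that $ev_j^*(1_X) = 1$ is the constant function, so the form $\xi = evi^*\glt \wedge evb^*\alt$ on $\M_{k,l+1}(\beta)$ is pulled back from $\M_{k,l}(\beta)$ along $\pi$: writing $\xi = \pi^* \xi_0$ where $\xi_0$ is the analogous product of evaluation pullbacks on $\M_{k,l}(\beta)$ (using that $evi_j^\beta = evi_{j-1}^\beta \circ \pi$ for $j \ge 2$ after relabeling, and $evb_j^\beta = evb_j^\beta \circ \pi$, and that the $j=1$ factor $evi_1^*(1_X)$, or whichever slot holds $1_X$, contributes $1$). Moreover $evi_0^\beta = evi_0^\beta \circ \pi$ as well, since $\pi$ does not touch the interior point labeled $0$. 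Then the composition $evi_0^\beta = (evi_0)_{\M_{k,l}} \circ \pi$ gives, by functoriality of push-forward (Proposition~\ref{prop:proppp}\eqref{prop:pushcomp}),
\[
  (evi_0^\beta)_* \xi = (evi_0^\beta)_* \pi^* \xi_0 = \big((evi_0)_{\M_{k,l}}\big)_* \, \pi_* \pi^* \xi_0.
\]
The fiber of $\pi$ is one-dimensional (positive-dimensional), so $\pi_* \pi^* \xi_0 = \xi_0 \wedge \pi_* 1 = 0$ by Proposition~\ref{prop:proppp}\eqref{prop:pushpull} together with Proposition~\ref{prop:proppp}\eqref{normalization} applied fiberwise (integration of the constant function over a positive-dimensional fiber vanishes on the level of the form degree that survives). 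Hence $\pkl^\beta(\alt; 1_X, \glt) = 0$ for each $\beta$, and summing over $\beta$ gives the claim.

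The main obstacle I expect is bookkeeping the degenerate strata where the forgetful map $\pi$ is not defined, and checking that in each such case the contribution still vanishes — this requires knowing that for $\beta = \beta_0$ the relevant moduli spaces either are empty or, in the one borderline case, an evaluation map identifies the moduli space with $L$ and $ev^*(1_X) = 1_L$ pushes forward against a positive-dimensional fiber (or the stable map condition forces $k$ or $l$ outside the allowed range). A secondary, purely technical, point is that $evi_0$ is only a smooth map, not necessarily a proper submersion, so the identity $(evi_0^\beta)_* \pi^* = \big((evi_0)_{\M_{k,l}}\big)_* \pi_*$ must be invoked in the generality of currents — but Proposition~\ref{prop:proppp}\eqref{prop:pushcomp} is stated exactly for that case, and $\pi$ is a genuine proper submersion with positive-dimensional fibers, so Proposition~\ref{prop:proppp}\eqref{prop:pushpull} applies with the current on the target side.
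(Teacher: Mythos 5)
Your proposal is correct and takes essentially the same route as the paper: the generic case is the forgetful-map argument of Proposition~\ref{prop:p_unit} and \cite[Proposition 3.2]{ST1} (the integrand is pulled back along $\pi$, and $\pi_*\pi^*\xi_0=\xi_0\wedge\pi_*1=0$ because the fiber is positive-dimensional — in fact two-dimensional for a forgotten interior point, not one-dimensional as you wrote, though only positivity matters). The lone exceptional case, which the paper pins down as $(k,l,\beta)=(0,0,\beta_0)$, is handled exactly in the way you sketch, via the zero-energy property (Proposition~\ref{prop:p_zero}).
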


\begin{proof}
As with Proposition~\ref{prop:p_unit},
whenever the forgetful map $\pi:\M_{k,l+2}(\beta)\to\M_{k,l+1}(\beta)$ is defined, we have $\pkl(\alt;1_X,\gamma_1,\ldots,\gamma_{l-1})=0$.

The only case when $\pi$ is not defined is $(k,l,\beta)=(0,0,\beta_0)$. But $\p_{0,1}^{\beta_0}\equiv 0$
by the zero energy property, Proposition~\ref{prop:p_zero}.
\end{proof}

\subsubsection{Divisor}
\begin{prop}\label{cl:pdiv}
Let $\alt=(\a_1,\ldots,\a_k)$ and $\glt=(\gamma_1,\ldots,\gamma_{l-1})$.
Assume $\gamma'\in A^2(X,L)\otimes R$ and $d\gamma' = 0$. Then
	\begin{equation}
	\pkl^{\beta}(\alt;\gamma' \otimes \glt)=
	\left(\int_\beta\gamma'\right) \cdot\p_{k,l-1}^{\beta}(\alt;\glt).
	\end{equation}
\end{prop}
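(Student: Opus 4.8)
The plan is to mimic the divisor axiom for the closed-open maps $\q$ from~\cite[Section 3]{ST1}, adapting it to the interior-output setting. Let $\pi:\M_{k,l+1}(\beta)\to\M_{k,l}(\beta)$ be the forgetful map dropping the interior marked point carrying $\gamma'$ (say the marked point indexed $1$, after relabeling), shifting indices, and stabilizing. This map is well-defined whenever $(k,l,\beta)\neq(0,0,\beta_0)$ and similar small exceptional cases; since $\gamma'=0$ would make both sides vanish we may assume $\beta\neq\beta_0$, so $\pi$ is a fibration whose fiber over a stable map $\Sigma$ is $\Sigma$ itself (the domain), with $evi_1$ restricted to the fiber being the identity-type evaluation. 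The key input is the divisor equation at the level of moduli spaces: for a closed form $\gamma'$ representing a class that pulls back trivially to $L$, one has $(evi_1)^*\gamma' = \pi^*(\text{something})$ up to an exact term, and integrating over the fiber produces the factor $\int_\beta\gamma'$. Concretely, I would write $evi^*\glt\wedge evb^*\alt = \pi^*\omega$ for the appropriate form $\omega$ on $\M_{k,l}(\beta)$ built from the remaining evaluation maps (using that all the other $evi_j, evb_j$ factor through $\pi$), so that
\[
(evi_0^\beta)_*\big((evi_1^\beta)^*\gamma'\wedge \pi^*\omega\big) = (evi_0^\beta)_*\big(\pi^* \omega \wedge (evi_1^\beta)^*\gamma'\big),
\]
and then push forward in two stages via Proposition~\ref{prop:proppp}\eqref{prop:pushcomp}: first along $\pi$, which integrates $(evi_1)^*\gamma'$ over the fiber and yields $\int_\beta\gamma'$ by the usual degree computation, then along the induced evaluation map on $\M_{k,l}(\beta)$, which reproduces $\p_{k,l-1}^\beta(\alt;\glt)$.

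The main technical points are, first, justifying the fiber integral $\int_{\text{fiber}}(evi_1)^*\gamma' = \int_\beta\gamma'$. This is where the hypotheses $d\gamma'=0$ and $i^*\gamma'=0$ (i.e.\ $\gamma'\in A^2(X,L)\otimes R$) enter: the fiber of $\pi$ is the domain curve, a genus-zero nodal disk, and $\int$ of $evi_1^*\gamma'$ over it computes the pairing of $[\gamma']$ with the relative homology class $\beta\in H_2(X,L)$; the vanishing on $L$ is needed so that the boundary of the fiber contributes nothing and the integral is well-defined and equals $\int_\beta\gamma'$. One should also check this is compatible with the Stokes-type corrections — but since everything here is a genuine fiber integral of a closed form over a compact fiber (possibly with corners, where the boundary integrand vanishes by $i^*\gamma'=0$), no correction term survives. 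Second, one must track the sign: $\varepsilon_p(\alt)$ does not depend on $\glt$, so the only sign bookkeeping comes from commuting $(evi_1)^*\gamma'$ (degree $2$, hence even) past the other pulled-back forms, which is trivial, and from the relative-dimension shift in the push-forward formula~\eqref{eq:pfc}. Because $|\gamma'|=2$ and $\rdim\pi = -2$, the degree shifts cancel and no sign is introduced; this matches the clean statement of the proposition.

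I expect the main obstacle to be the careful identification of the fiber of $\pi$ together with the evaluation maps, and the verification that the forgetful map is an oriented fibration with the orientation conventions of~\cite{ST4} — in particular that $(evi_1^\beta)$ restricted to fibers is, up to the identification of the fiber with the domain curve, the holomorphic map $u$ itself, so that fiberwise integration of $u^*\gamma'$ genuinely computes $\langle[\gamma'],\beta\rangle$. The remaining exceptional cases (where $\pi$ fails to be defined or the relevant moduli space is special) must be handled separately, exactly as in the proof of Proposition~\ref{cl:pfund}: there the only problematic case is $(k,l-1,\beta)=(0,0,\beta_0)$, which is excluded here since $\beta\neq\beta_0$, and one invokes the energy-zero vanishing (Proposition~\ref{prop:p_zero}) to dispose of any residual low-energy contributions. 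Once the fiber integral and orientation are pinned down, the rest is a direct application of Proposition~\ref{prop:proppp}, parallel to~\cite[Proposition 3.9]{ST1}.
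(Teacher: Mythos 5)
Your proposal is correct and follows essentially the same route as the paper, whose proof simply invokes \cite[Proposition 3.9]{ST1} with $evi_0$ in place of $evb_0$: forget the interior point carrying $\gamma'$, write the remaining integrand as a pullback along the forgetful map, push forward in two stages, and use $d\gamma'=0$ together with $i^*\gamma'=0$ to show the fiberwise integral is the locally constant value $\int_\beta\gamma'$, with the degenerate $\beta_0$/unstable cases disposed of by the energy-zero property. (Your passing remark that $(evi_1)^*\gamma'$ equals a pullback up to an exact term is neither needed nor accurate, but your actual two-stage push--pull argument does not rely on it.)
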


The proof is the same as that of~\cite[Proposition 3.9]{ST1}, with $evi_0$ instead of $evb_0$.

\subsubsection{Top degree}\label{sssec:topdeg}

Given a homogeneous current $\alpha$ (or, as a special case, a differential form) with coefficients in $R$, denote by $\deg^d(\alpha)$ the degree of the current, ignoring the grading of $R$.
That is, for $\alpha =T^\beta t_1^{r_1}\cdots t_N^{r_N}\alpha'$ with $\alpha'\in A^j(L),$ we have $\deg^d(\alpha)=j.$

\begin{prop}\label{prop:p_no_top_deg}
Suppose $(k,l+1,\beta)\ne (1,1,\beta_0)$. Then, for all lists $\alt=(\a_1,\ldots,\a_k)$, $\glt=(\gamma_1,\ldots,\gamma_l)$, we have $\pkl^\beta(\alt;\glt)\in \A^{<2n}(X)\otimes R$.
\end{prop}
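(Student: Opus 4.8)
The plan is to reduce to the case where $\pkl^\beta(\alt;\glt)$ would have top geometric degree $2n$ on $X$, and to show that in that case it vanishes, unless $(k,l+1,\beta)=(1,1,\beta_0)$. Since $\pkl^\beta$ is $R$-multilinear and additive (Proposition~\ref{prop:p_lin}) and $\A^{<2n}(X)\otimes R$ is an $R$-submodule of $\A^*(X;R)$, it suffices to take $\alpha_j\in A^{p_j}(L)$ and $\gamma_j\in A^{q_j}(X)$ honest forms of pure degree, so that $\xi:=evi^*\glt\wedge evb^*\alt$ is a form of pure degree $\deg^d(\xi)=\sum_j p_j+\sum_j q_j$ on $\M_{k,l+1}(\beta)$ and $\pkl^\beta(\alt;\glt)=(-1)^{\varepsilon_p(\alt)}(evi_0^\beta)_*\xi$. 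By the same dimension count as in the proof of Proposition~\ref{prop:p_deg}, the geometric degree of this current is $\deg^d(\xi)+\rdim(evi_0^\beta)=\deg^d(\xi)+2n-\dim\M_{k,l+1}(\beta)$. A form on $\M_{k,l+1}(\beta)$ has degree at most $\dim\M_{k,l+1}(\beta)$, so $\deg^d(\xi)\le\dim\M_{k,l+1}(\beta)$, and if the inequality is strict then $\pkl^\beta(\alt;\glt)$ already has geometric degree $<2n$ and we are done. Thus it suffices to show: if $\xi$ is a top-degree form on $\M_{k,l+1}(\beta)$ and $(k,l+1,\beta)\ne(1,1,\beta_0)$, then $\pkl^\beta(\alt;\glt)=0$ --- which we will deduce from $\xi=0$.

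If $\beta=\beta_0$ this is immediate from the energy-zero property, Proposition~\ref{prop:p_zero}: $\p_{k,l}^{\beta_0}$ is identically zero unless $(k,l)=(1,0)$, and $(k,l,\beta)=(1,0,\beta_0)$ is exactly the excluded triple. Now assume $\beta\ne\beta_0$; if $\M_{k,l+1}(\beta)=\emptyset$ there is nothing to prove, so assume it is non-empty. Since the domain of any stable map of class $\beta\ne\beta_0$ carries a non-constant disk component, forgetting the interior marked point labeled $0$ (at which $evi_0^\beta$ evaluates) and stabilizing gives a well-defined smooth morphism of orbifolds with corners $\mathrm{ft}\colon\M_{k,l+1}(\beta)\to\M_{k,l}(\beta)$, with target again a moduli space of the kind we consider --- a smooth orbifold with corners of dimension $\dim\M_{k,l+1}(\beta)-2\ge 0$ under our regularity assumptions. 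The form $\xi=evi^*\glt\wedge evb^*\alt$ involves only the evaluation maps $evi_1,\dots,evi_l$ and $evb_1,\dots,evb_k$, i.e.\ evaluations at marked points other than interior point $0$, each of which factors through $\mathrm{ft}$; hence $\xi=\mathrm{ft}^*\xi'$ for the corresponding form $\xi'$ on $\M_{k,l}(\beta)$, and $\mathrm{ft}^*$ preserves $\deg^d$.

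The conclusion is then forced: in the remaining case $\deg^d(\xi')=\deg^d(\xi)=\dim\M_{k,l+1}(\beta)=\dim\M_{k,l}(\beta)+2>\dim\M_{k,l}(\beta)$, so $\xi'$ is a differential form of degree exceeding $\dim\M_{k,l}(\beta)$, hence $\xi'=0$; therefore $\xi=\mathrm{ft}^*\xi'=0$ and $\pkl^\beta(\alt;\glt)=(-1)^{\varepsilon_p(\alt)}(evi_0^\beta)_*\xi=0$. The only non-routine ingredient is the geometric input about $\mathrm{ft}$: its existence as a smooth morphism of orbifolds with corners, the compatibilities $evi_j=\widetilde{evi}_j\circ\mathrm{ft}$ and $evb_j=\widetilde{evb}_j\circ\mathrm{ft}$ of the evaluation maps, and the fact that $\M_{k,l}(\beta)$ is again a smooth orbifold with corners of the expected dimension for $\beta\ne\beta_0$ --- standard facts about moduli of stable maps, used in the same way in the proof of Proposition~\ref{prop:p_unit} and in~\cite{ST1,FOOO}. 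Every triple for which $\M_{k,l}(\beta)$ would be unstable has $\beta=\beta_0$, and those are disposed of by Proposition~\ref{prop:p_zero} as above.
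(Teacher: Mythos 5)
Your proposal is correct and takes essentially the same route as the paper: pure-degree reduction, the dimension count showing top degree forces $\deg^d(\xi)=\dim\M_{k,l+1}(\beta)$, and the observation that $\xi$ is pulled back along the forgetful map dropping $w_0$ to a space of dimension two less, hence vanishes. Your case split (all of $\beta=\beta_0$ via the energy-zero property, forgetful map only for $\beta\ne\beta_0$) is if anything slightly cleaner than the paper's, which invokes energy zero only for $(2,1,\beta_0)$; the one small imprecision is that for $\beta\ne\beta_0$ the non-constant component guaranteeing stability after forgetting need not be a disk component (it may be a sphere bubble), but this does not affect the argument.
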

\begin{proof}
In the case $(k,l+1,\beta)=(2,1,0)$, the Energy Zero property, Proposition~\ref{prop:p_zero}, gives $\p_{2,0}^{\beta_0}(\alt)=0$. Thus, assume $(k,l+1,\beta)\not\in  \{(1,1,\beta_0), (2,1,\beta_0)\}$.

Assume without loss of generality that $\pkl^\beta(\alt;\glt)$ is homogeneous with respect to the grading $\deg^d.$ Let $evb_j^{l+1},evi_j^{l+1},$ be the evaluation maps for $\M_{k,l+1}(\beta).$ Write
\[
\xi:=(evi^{l+1})^*\glt\wedge(evb^{l+1})^*\alt,
\]
that is, $\pkl^\beta(\alt;\glt) =(-1)^{\varepsilon_p(\alt)}(evi_0^{l+1})_*\xi$. If
$
\deg^d(\pkl^\beta(\alt;\glt))=2n,
$
then
\[
2n=\deg^d(\xi)+\rdim (evi_0)
=\deg^d(\xi)+2n-\dim\M_{k,l+1}(\beta),
\]
so $\deg^d(\xi)=\dim\M_{k,l+1}(\beta)$.

On the other hand, if $\pi:\M_{k,l+1}(\beta)\to\M_{k,l}(\beta)$ is the map that forgets $w_0$, and $evb_j^l,evi_j^l,$ are the evaluation maps for $\M_{k,l}(\beta),$ then $\xi=\pi^*\xi'$ where
\[
\xi'=(evi^l)^*\glt\wedge (evb^l)^*\alt \in A^*(\M_{k,l}(\beta)).
\]
In particular
\[
\deg^d(\xi')=\deg^d(\xi)=\dim\M_{k,l+1}(\beta)>\dim\M_{k,l}(\beta).
\]
Therefore, $\xi'=0$ and so $\xi=0$.
\end{proof}

\subsubsection{Proof of \texorpdfstring{\Cref{thm:pgprop}}{Theorem 6}}

\begin{proof}[Proof of Theorem~\ref{thm:pgprop}]
Properties~\eqref{pprop1},~\eqref{pprop2}, and~\eqref{pprop3},
follow from Propositions~\ref{cl:pfund}, \ref{cl:pdiv}, and~\ref{prop:p_zero}, respectively.

\end{proof}

\subsection{Proof of the structure equations}\label{sec:str_pf}
In this section, we prove Proposition~\ref{prop:pstructure}.
We start with a series of lemmas required for the proof.

We decompose $\varepsilon(\a)$ into the part that depends on the elements $\a_j$ and the part that depends only on the number of inputs. Namely, for $k\in \Z_{\ge 0}$ and $\alt=(\a_1,\ldots,\a_k)$ we set
\begin{equation}\label{eq:epsprime}
\varepsilon'(k)=1+\sum_{j=1}^kj =\frac{k(k+1)}{2}+1,
\quad
\varepsilon''(\alt)=\sum_{j=1}^kj|\a_j|.
\end{equation}

\begin{figure}[ht]
\centering
\scalebox{.7}{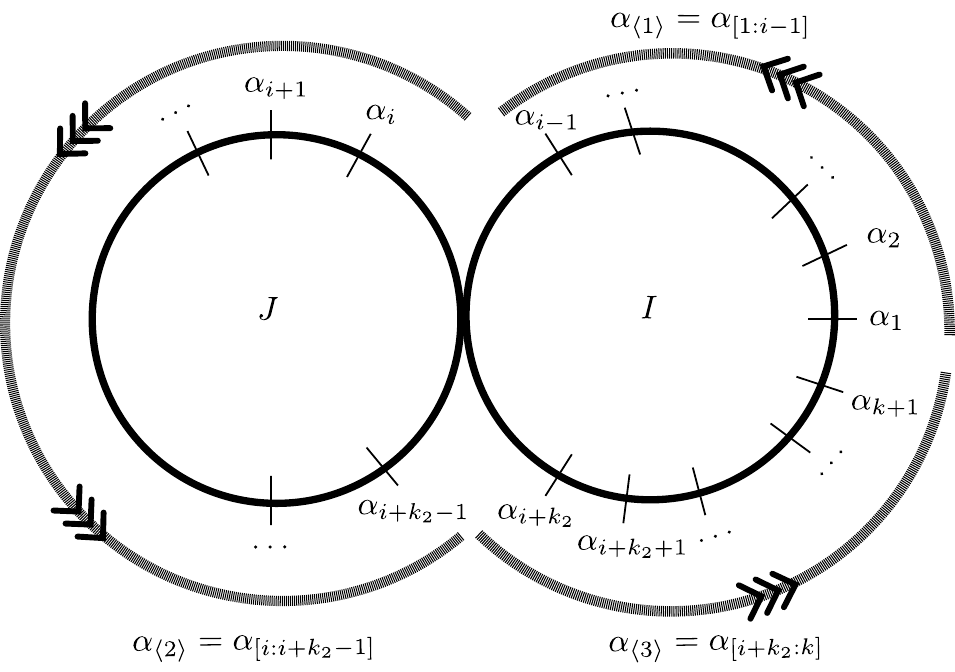}
\caption{A stable curve with two disk components, together with the distribution of inputs for each marked point.}
\label{fig:two_discs_splitting}
\end{figure}

In the next two lemmas, we use our notation for splitting lists of inputs. For the geometric meaning of
the splitting and all the parameters involved, we refer to \Cref{fig:two_discs_splitting}.
\begin{lm}[{\cite[Lemma 2.10]{ST1}}]\label{lm:sumofepsilons}
Let $\alt=(\a_1,\ldots,\a_k)\in A^*(L;R)^{\otimes k}$ and $\glt=(\gamma_1,\ldots,\gamma_l)\in A^*(X;R)^{\otimes l}.$
Fix a partition $I\sqcup J$ of $[l]$.
Take $i\in [k+1]$ and $k_2\in [0:k+1-i]$, and consider the splitting
\begin{align*}
\a
&= \a_{[1:i-1]}\otimes \a_{[i:i+k_2-1]}\otimes \a_{[i+k_2:k]}\\
&= \a_{\ngl{1}}\otimes\a_{\ngl{2}}\otimes \a_{\ngl{3}}.
\end{align*}
Set $k_1:= k+1-k_2$.
Then
\begin{enumerate}
  \item $\varepsilon'(k_1)+\varepsilon'(k_2)
      \equiv\varepsilon'(k)+ k+k_1k_2\pmod 2.$
  \item
  \begin{multline*}
\qquad\varepsilon''(\alt_{\ngl{1}},|\alt_{\ngl{2}}|+|\glt_J|+k_2, \alt_{\ngl{3}}) +\varepsilon''(\alt_{\ngl{2}})
      \equiv\\
     \equiv\varepsilon''(\alt)+ik_2+k_2|\alt_{\ngl{3}}|+ |\alt|+|\alt_{\ngl{1}}|+ i|\glt_J|\pmod 2.
  \end{multline*}
  \item
  \begin{multline*}
  \qquad \varepsilon(\alt_{\ngl{1}},|\alt_{\ngl{2}}|+|\glt_J|+k_2, \alt_{\ngl{3}})+ \varepsilon(\alt_{\ngl{2}})
\equiv\\
\equiv
\varepsilon(\alt)+|\alt|+k+|\alt_{\ngl{1}}|
+i|\glt_J|+k_2|\alt_{\ngl{3}}|+k_1k_2+ik_2
\pmod 2.
\end{multline*}
\end{enumerate}
\end{lm}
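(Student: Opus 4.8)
The lemma is cited from \cite[Lemma~2.10]{ST1}, and its proof is a self-contained mod-$2$ bookkeeping exercise, so the plan is simply to carry out that arithmetic. Since $\varepsilon=\varepsilon'+\varepsilon''$ by~\eqref{eq:epsprime}, part~(3) is obtained by adding the congruences of parts~(1) and~(2); hence I would prove~(1) and~(2) first and read off~(3) at the end.

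For~(1) I would substitute $\varepsilon'(m)=1+\tfrac{m(m+1)}{2}$ directly, so that
$\varepsilon'(k_1)+\varepsilon'(k_2)-\varepsilon'(k)=1+\tfrac12\bigl(k_1^2+k_1+k_2^2+k_2-k^2-k\bigr)$.
Now use the hypothesis $k_1+k_2=k+1$ together with $k_1^2+k_2^2=(k_1+k_2)^2-2k_1k_2=(k+1)^2-2k_1k_2$; the numerator collapses to $2k+2-2k_1k_2$, so the whole expression equals $k+2-k_1k_2$, which is $\equiv k+k_1k_2\pmod 2$. That is exactly the asserted congruence.

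For~(2) I would expand $\varepsilon''$ of the three-block list $\bigl(\alt_{\ngl{1}},\,[\,\deg=|\alt_{\ngl{2}}|+|\glt_J|+k_2\,],\,\alt_{\ngl{3}}\bigr)$ by tracking the position of each entry: the forms of $\alt_{\ngl{1}}$ stay at positions $1,\dots,i-1$; the inserted slot sits at position $i$ and contributes $i\bigl(|\alt_{\ngl{2}}|+|\glt_J|+k_2\bigr)$; and a form $\a_j$ of $\alt_{\ngl{3}}$ (original index $j\ge i+k_2$) moves to position $j-k_2+1$, contributing $(j-k_2+1)|\a_j|$. Adding $\varepsilon''(\alt_{\ngl{2}})=\sum_{j=i}^{i+k_2-1}(j-i+1)|\a_j|$, the coefficient of $|\a_j|$ becomes $j$ for $j<i$, then $i+(j-i+1)=j+1$ for $i\le j\le i+k_2-1$, and $j-k_2+1$ for $j\ge i+k_2$; peeling the ``$j|\a_j|$'' part off each range reconstitutes $\varepsilon''(\alt)$, leaving the remainder $|\alt_{\ngl{2}}|-(k_2-1)|\alt_{\ngl{3}}|+i|\glt_J|+ik_2$. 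Finally I would reduce mod $2$ using $-(k_2-1)\equiv k_2+1$ and $|\alt_{\ngl{2}}|+|\alt_{\ngl{3}}|=|\alt|-|\alt_{\ngl{1}}|\equiv|\alt|+|\alt_{\ngl{1}}|$, which turns the remainder into $|\alt|+|\alt_{\ngl{1}}|+k_2|\alt_{\ngl{3}}|+i|\glt_J|+ik_2$, giving~(2); then adding the congruences of~(1) and~(2) yields~(3). There is no genuine obstacle here beyond keeping the indices straight; the one point meriting care is that the inserted slot at position $i$ weights the degrees of $\alt_{\ngl{2}}$ by $i$ while $\varepsilon''(\alt_{\ngl{2}})$ weights them by their internal positions, and one must check these combine to weight $j+1$ on $\a_j$ so that the excess over $\varepsilon''(\alt)$ is precisely $|\alt_{\ngl{2}}|$ — this is what makes the $k_2$-dependent terms in the final formula come out right.
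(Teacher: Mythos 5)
Your computation is correct: part (1) is the stated quadratic identity using $k_1+k_2=k+1$, your position-by-position bookkeeping in part (2) gives exactly the remainder $|\alt_{\ngl{2}}|+(1-k_2)|\alt_{\ngl{3}}|+i|\glt_J|+ik_2$, which reduces mod $2$ to the claimed expression, and part (3) is indeed just $\varepsilon=\varepsilon'+\varepsilon''$ applied to (1) plus (2). The paper itself gives no proof of this lemma (it is quoted from \cite[Lemma 2.10]{ST1}), but your argument is the expected direct verification and matches the paper's handling of the shifted analogue, Lemma~\ref{lm:sumofepsilonsp}, whose third identity is likewise obtained as the sum of the first two.
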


Similarly to~\eqref{eq:epsprime}, we consider the two parts of $\varepsilon_p$:
\[
\varepsilon'_p(k)= kn+\frac{k(k+1)}{2} =\varepsilon'(k)+kn-1,
\quad
\varepsilon''_p(\alt)= n|\alt|+\sum_{j=1}^kj|\a_j| =\varepsilon''(\alt)+n|\alt|.
\]

\begin{lm}\label{lm:sumofepsilonsp}
Let $\alt=(\a_1,\ldots,\a_k)\in A^*(L;R)^{\otimes k}$ and $\glt=(\gamma_1,\ldots,\gamma_l)\in A^*(X;R)^{\otimes l}.$ Fix  a splitting $\a=\a_{\ngl{1}}\otimes\a_{\ngl{2}}\otimes \a_{\ngl{3}}$ and a partition $I\sqcup J$ of $[l]$, take $i\in [k+1]$ and $k_2\in [0:k+1-i]$ such that $\alt_{\ngl{2}}=\alt_{[i:i+k_2-1]}$, and set $k_1:= k+1-k_2$.
Then
\begin{enumerate}
  \item $\varepsilon'_p(k_1)+\varepsilon'(k_2)
      \equiv\varepsilon'_p(k)+k+k_1k_2+k_2n+n\pmod 2.$
  \item
  \begin{multline*}
\qquad\varepsilon''_p(\alt_{\ngl{1}}, |\alt_{\ngl{2}}|+|\glt_{J}|+k_2, \alt_{\ngl{3}}) +\varepsilon''(\alt_{\ngl{2}})
      \equiv\\
     \equiv\varepsilon''_p(\alt)+(i+n)k_2 +k_2|\alt_{\ngl{3}}|+ |\alt|+|\alt_{\ngl{1}}|+ (i+n)|\glt_{J}|\pmod 2.
  \end{multline*}
  \item
  \begin{multline*}
  \qquad \varepsilon_p(\alt_{\ngl{1}},|\alt_{\ngl{2}}| +|\glt_{J}|+k_2, \alt_{\ngl{3}})+ \varepsilon(\alt_{\ngl{2}})
\equiv\\
\equiv
\varepsilon_p(\alt)+|\alt|+k+|\alt_{\ngl{1}}|
+(i+n)|\glt_{J}|+k_2|\alt_{\ngl{3}}|+k_1k_2+ik_2+n
\pmod 2.
\end{multline*}
\end{enumerate}
\end{lm}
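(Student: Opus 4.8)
The plan is to reduce each of the three parts to the corresponding statement of \Cref{lm:sumofepsilons} by bookkeeping the discrepancy between the $p$-decorated signs and their undecorated counterparts. The inputs I would use are the two elementary identities recorded just above the statement, namely $\varepsilon'_p(m)=\varepsilon'(m)+mn-1$ and $\varepsilon''_p(\alt)=\varepsilon''(\alt)+n|\alt|$, together with the additivity relations $\varepsilon_p=\varepsilon'_p+\varepsilon''_p$ and $\varepsilon=\varepsilon'+\varepsilon''$; the latter two follow at once from the definitions since $\V\a_j\V=|\a_j|+1$. After substituting these, everything becomes arithmetic modulo $2$.

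For part (1), I would substitute $\varepsilon'_p(k_1)=\varepsilon'(k_1)+k_1n-1$, apply \Cref{lm:sumofepsilons}(1), and use $k_1=k+1-k_2$ to rewrite $k_1n\equiv kn+n+k_2n\pmod 2$; rewriting $\varepsilon'(k)+kn-1=\varepsilon'_p(k)$ then gives the claim. For part (2), the key observation is that the list $(\alt_{\ngl{1}},|\alt_{\ngl{2}}|+|\glt_{J}|+k_2,\alt_{\ngl{3}})$ has total degree $|\alt|+|\glt_{J}|+k_2$, so the $p$-correction on that side contributes $n(|\alt|+|\glt_{J}|+k_2)$ whereas on the right it contributes only $n|\alt|$; feeding \Cref{lm:sumofepsilons}(2) into the difference and collecting the surviving $n$-terms turns $ik_2$ and $i|\glt_{J}|$ into $(i+n)k_2$ and $(i+n)|\glt_{J}|$, which is exactly what is asserted. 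For part (3), I would add the congruences from (1) and (2), using $\varepsilon_p=\varepsilon'_p+\varepsilon''_p$ on the list of $k_1$ elements $(\alt_{\ngl{1}},\cdot,\alt_{\ngl{3}})$ and $\varepsilon=\varepsilon'+\varepsilon''$ on $\alt_{\ngl{2}}$; the terms $k_2n$ from (1) and $(i+n)k_2$ from (2) then combine as $2nk_2+ik_2\equiv ik_2\pmod 2$, and collecting terms matches the stated right-hand side. Equivalently one could rerun the direct argument used for \Cref{lm:sumofepsilons}(3) with $\varepsilon$ replaced throughout by $\varepsilon_p$.

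There is no genuine obstacle here: \Cref{lm:sumofepsilons} has already done the combinatorial work, and \Cref{lm:sumofepsilonsp} is its ``shift by $n$'' avatar. The only point requiring care is the parity bookkeeping — in particular remembering that $2nk_2\equiv 0$ and $-k_2n\equiv k_2n$ modulo $2$, and checking that the stray constant $\pm 1$ in the relation between $\varepsilon'_p$ and $\varepsilon'$ appears on both sides and hence cancels.
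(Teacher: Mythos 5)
Your proposal is correct and follows the same route as the paper: substitute $\varepsilon'_p(m)=\varepsilon'(m)+mn-1$ and $\varepsilon''_p(\alt)=\varepsilon''(\alt)+n|\alt|$ (with the new entry having degree $|\alt_{\ngl 2}|+|\glt_J|+k_2$, so total degree $|\alt|+|\glt_J|+k_2$), reduce parts (1) and (2) to the corresponding parts of Lemma~\ref{lm:sumofepsilons}, and obtain part (3) as the sum of the first two. The parity bookkeeping you describe matches the paper's computation exactly.
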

\begin{proof}
We deduce the result from Lemma~\ref{lm:sumofepsilons}.
To see the first identity, compute
\begin{multline*}
\varepsilon'_p(k_1)+\varepsilon'(k_2)
=\varepsilon'(k_1)+k_1n-1+\varepsilon'(k_2)
      \equiv
\varepsilon'(k)+ k+k_1k_2+ k_1n-1
=\\
=\varepsilon'_p(k)-kn+1+k+k_1k_2+k_1n -1
\equiv
\varepsilon'_p(k)+k+k_1k_2+k_2n+n.
\end{multline*}
For the second identity,
\begin{align*}
\varepsilon''_p(&\alt_{[1:i-1]},|\alt_{[i:i+k_2-1]}| +|\glt_{J}|+k_2, \alt_{[i+k_2:k]}) +\varepsilon''(\alt_{[i:i+k_2-1]})\equiv\\
 &     \equiv
\varepsilon''(\alt)+ik_2+k_2|\alt_{[i+k_2:k]}|+ |\alt|+|\alt_{[1:i-1]}|+ i|\glt_{J}|+n|\alt|+n|\glt_{J}|+nk_2\\
&\equiv
\varepsilon''_p(\alt)-n|\alt|
+ik_2+k_2|\alt_{[i+k_2:k]}|+ |\alt|+|\alt_{[1:i-1]}|+ i|\glt_{J}|+n|\alt|+n|\glt_{J}|+nk_2\\
&\equiv
\varepsilon''_p(\alt)+
(i+n)k_2+k_2|\alt_{[i+k_2:k]}|+ |\alt|+|\alt_{[1:i-1]}|+ (i+n)|\glt_{J}|.
\end{align*}
The third identity is the sum of the first two.

\end{proof}

The next result follows from~\cite[Proposition~8.10.3]{FOOO}:
\begin{prop}\label{rem:pbdgluing} 
Fix $k,l\in \Z_{\ge 0},$ $\beta\in \sly$. Let $k_i,\beta_i,$ ($i=1,2$) be such that $k_1+k_2=k+1$ and $\beta_1+\beta_2=\beta$. Let $I\sqcup J=[l]$ be a partition of the interior labels except zero.
Let
$
B_{k_1,k_2;I,J}\subset \d\M_{k,l+1}(\beta)
$
be the boundary component where a generic point is a stable map of two disk components, the last $k_1-1$ boundary marked points and the interior marked points with indices in $I\cup\{0\}$ lie on one disk, while the rest of the marked points lie on the other. See Figure~\ref{fig:p_eq}.
Then the map
\[
\vartheta:\M_{k_1,I\cup \{0\}}(\beta_1)\prescript{}{evb_1^{\beta_1}}\times_{evb_0^{\beta_2}} \M_{k_2+1,J}(\beta_2)\stackrel{\sim}{\lrarr} B_{k_1,k_2;I,J}
\]
changes the orientation by $(-1)^{k_2k_1+k_2+n}$.
\end{prop}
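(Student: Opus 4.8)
The plan is to deduce the statement from the orientation analysis of the boundary of the moduli space of stable disk maps carried out in \cite[Chapter~8]{FOOO}, and in particular from \cite[Proposition~8.10.3]{FOOO}, after translating it into the orientation and sign conventions of \cite{ST4} that are used throughout this paper.

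First I would fix the identifications. A generic point of $B_{k_1,k_2;I,J}$ is a stable map whose domain consists of two disk components glued at a single boundary node: the node is the boundary marked point $1$ of the first component, which carries the last $k_1-1$ original boundary marked points and the interior marked points with indices in $I\cup\{0\}$ and represents $\beta_1$, and it is simultaneously the boundary marked point $0$ of the second component, which carries the first $k_2$ original boundary marked points and the interior marked points with indices in $J$ and represents $\beta_2$. With these labels $\vartheta$ is exactly the gluing map, and the set $B_{k_1,k_2;I,J}$ carries two a priori distinct orientations: the one it inherits as a boundary face of $\M_{k,l+1}(\beta)$, and the one induced through $\vartheta$ by the fiber-product orientation on $\M_{k_1,I\cup\{0\}}(\beta_1)\times_L\M_{k_2+1,J}(\beta_2)$, the fiber product being formed along $evb_1^{\beta_1}$ and $evb_0^{\beta_2}$ over the $n$-dimensional Lagrangian $L$. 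The assertion is that their ratio is $(-1)^{k_2k_1+k_2+n}$.

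Next I would invoke \cite[Proposition~8.10.3]{FOOO}, which supplies precisely this comparison in a normalized situation, and then account for the discrepancies between the two sets of conventions. There are three sources of sign to keep track of: first, the combinatorial sign produced by transporting the block of marked points lying on the second component past the block lying on the first once the node is smoothed, which accounts for the $(-1)^{k_2k_1+k_2}$ contribution (interior marked points contribute nothing, as $X$ is even-dimensional, and the $\gamma$-constraints play no role here); second, the factor $(-1)^n$ coming from the orientation convention for the fiber product over $L$, i.e.\ from the $n$ tangent directions of $L$ that are identified at the node; and third, a potential further sign from the placement of the interior output marked point $0$ on the first component, which I expect to be trivial, since the permutations moving it, and the symmetric-group relabelings of interior marked points, act by orientation-preserving diffeomorphisms (cf.\ Section~\ref{ssec:q} and \cite{ST1}). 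Adding the exponents modulo $2$ gives $k_2k_1+k_2+n$.

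The step I expect to be the main obstacle is the reconciliation in the previous paragraph: matching the way \cite{FOOO} and \cite{ST4} orient the relevant fiber products and order boundary versus interior marked points, and, above all, checking that having the output marked point in the \emph{interior} (rather than on the boundary, as in the analysis of the $\q$-operators in \cite{ST1}) does not change the gluing sign beyond what is recorded here. Once this dictionary is pinned down the rest is a finite sign chase, with the $(-1)^n$ being exactly the contribution of $\dim L$ to the fiber-product orientation along the node.
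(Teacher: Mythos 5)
Your approach coincides with the paper's: the proposition is obtained there by directly invoking \cite[Proposition~8.10.3]{FOOO} and translating its boundary-orientation statement into the conventions of \cite{ST4}, with the sign $(-1)^{k_1k_2+k_2+n}$ arising exactly as you describe (the combinatorial reordering of boundary marked points plus the $(-1)^n$ from the fiber product over $L$, the interior points and the interior output contributing nothing). So the proposal is correct and follows essentially the same route as the paper.
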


\begin{figure}[ht]
\centering
\includegraphics[width=9cm]{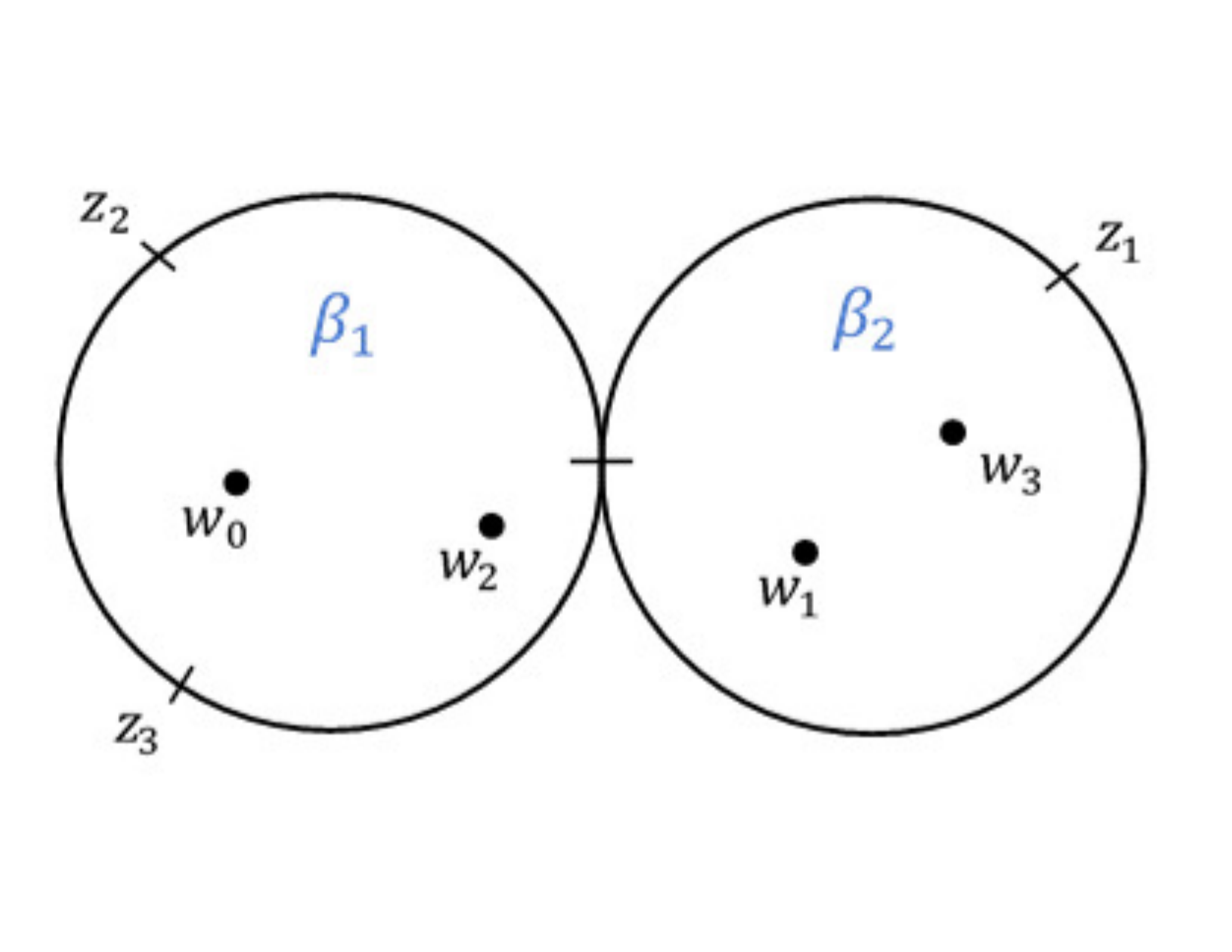}
\caption{The domain of an element of $B_{3,1;\{2\},\{1,3\}}\subset \d\M_{3,4}(\beta)$.}
\label{fig:p_eq}
\end{figure}

\begin{lm}\label{lm:pstar}
Let $k$, $l$, $\beta$, $k_i$, $\beta_i$, $I$, and $J$, be as in Lemma~\ref{rem:pbdgluing}, and let $\sigma\in \Z/k\Z$. Write $l_1:=|I|,$ $l_2:=|J|$.
Let $\alt=(\a_1,\ldots,\a_k)\in A^*(L;R)^{\otimes k}$ and $\glt=(\gamma_1,\ldots,\gamma_l)\in A^*(X;R)^{\otimes l}.$
Take a splitting $\a^\sigma=\a^\sigma_{\ngl{1}}\otimes\a^\sigma_{\ngl{2}}$
in which $\a^{\sigma}_{\ngl{j}}$ has length $k_j$ for $j=1,2$.
Let $B\subset\M_{k,l+1}(\beta)$ be the boundary component described as follows.
A generic point of $B$ is a stable map of two disk components. One of the components has on it the $k_1-1$ boundary marked points with the last indices of the list
$\left( \sigma(1), \dots, \sigma(k) \right)$, and the interior marked points with indices in $I\cup\{0\}$. The other has the other $k_2$ boundary marked points and the interior marked points with indices in $J$ on it.
Then
\[
(evi_0|_B)_*\left( evi^*\glt\wedge evb^*\alt \right)=
(-1)^*
\p_{k_1,l_1}^{\beta_1} (\q_{k_2,l_2}^{\beta_2}(\alt_{\ngl{2}}^\sigma;\glt_{J}), \alt_{\ngl{1}}^{\sigma};\glt_{I}),
\]
with
$ *=s^{[1]}_\sigma(\alt)
+\varepsilon_p(\alt)+|\alt|
+(1+n)|\glt_{J}|+k+s_{\sigma_{I\cup J}}(\glt)$.
\end{lm}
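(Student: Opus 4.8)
The plan is to pull the current $(evi_0|_B)_*(evi^*\glt\wedge evb^*\alt)$ back to the gluing model of $B$, factor the resulting push-forward into an inner stage that produces the $\q$-factor and an outer stage that produces the $\p$-factor, and then assemble the sign $*$.

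First I would make $B$ explicit via gluing. After rewriting $evb^*\alt$ in the order determined by $\sigma$ — a reordering of the differential forms $evb_j^*\a_j$ paid for by a Koszul sign — and relabeling the boundary marked points accordingly, $B$ becomes the stratum $B_{k_1,k_2;I,J}$ of Proposition~\ref{rem:pbdgluing}, so the gluing map $\vartheta$ identifies $B$ with the fiber product $W$ of $\M_{k_1,I\cup\{0\}}(\beta_1)$ and $\M_{k_2+1,J}(\beta_2)$ over $L$ (along $evb_1^{\beta_1}$ and $evb_0^{\beta_2}$) and changes orientation by $(-1)^{k_1k_2+k_2+n}$. Writing $p_1,p_2$ for the two projections of $W$, the interior output restricts so that $evi_0\circ\vartheta=evi_0^{\beta_1}\circ p_1$; hence, by Proposition~\ref{prop:proppp}\eqref{prop:pushcomp}, $(evi_0|_B)_*(evi^*\glt\wedge evb^*\alt)=(-1)^{k_1k_2+k_2+n}(evi_0^{\beta_1})_*(p_1)_*\vartheta^*(evi^*\glt\wedge evb^*\alt)$.

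Next I would factor the integrand over $W$ as $p_1^*(\text{disk-}1\text{ part})\wedge p_2^*(\text{disk-}2\text{ part})$: the disk-$1$ part carries $evi_I^*\glt$ and the $\a^\sigma_{\ngl{1}}$-boundary forms, the disk-$2$ part carries $evi_J^*\glt$ and the $\a^\sigma_{\ngl{2}}$-boundary forms, and separating the $I$- and $J$-factors of $evi^*\glt$ costs $(-1)^{s_{\sigma_{I\cup J}}(\glt)}$. Because $evb_0^{\beta_2}$ is a proper submersion, base change (Proposition~\ref{prop:proppp}\eqref{prop:pushfiberprod}) gives $(p_1)_*p_2^*=(evb_1^{\beta_1})^*(evb_0^{\beta_2})_*$, and the push--pull formula (Proposition~\ref{prop:proppp}\eqref{prop:pushpull}) pulls the disk-$1$ part out of $(p_1)_*$. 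By the definition of $\q^{\beta_2}_{k_2,l_2}$, the inner push-forward $(evb_0^{\beta_2})_*$ of the disk-$2$ part is $(-1)^{\varepsilon(\a^\sigma_{\ngl{2}})}\q^{\beta_2}_{k_2,l_2}(\a^\sigma_{\ngl{2}};\glt_J)$; relocating this output form to the first boundary slot (a Koszul sign determined by its degree) and comparing the remaining outer push-forward $(evi_0^{\beta_1})_*$ with the definition of $\p^{\beta_1}_{k_1,l_1}$ yields a sign times $\p^{\beta_1}_{k_1,l_1}(\q^{\beta_2}_{k_2,l_2}(\a^\sigma_{\ngl{2}};\glt_J),\a^\sigma_{\ngl{1}};\glt_I)$, as desired.

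The remaining work, and the only real obstacle, is to collect the accumulated signs modulo $2$: the orientation sign $(-1)^{k_1k_2+k_2+n}$ of $\vartheta$; the interior reordering sign $(-1)^{s_{\sigma_{I\cup J}}(\glt)}$; the boundary reordering sign, which appears in the unshifted grading and which I would convert to $s^{[1]}_\sigma(\alt)$ at the cost of $\varepsilon_p(\alt^\sigma)-\varepsilon_p(\alt)$ via Lemma~\ref{lm:epsilonsigma}; the Koszul sign from relocating the $\q$-output; and the three $\varepsilon$-signs, namely $\varepsilon_p(\alt)$, the $\varepsilon(\a^\sigma_{\ngl{2}})$ coming from $\q$, and the $\varepsilon_p$ of the $\p$-input list, which combine precisely via Lemma~\ref{lm:sumofepsilonsp} (with $|\a^\sigma_{\ngl{2}}|+|\glt_J|+k_2$ serving as the degree of the $\q$-output). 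Carrying out this bookkeeping produces $*=s^{[1]}_\sigma(\alt)+\varepsilon_p(\alt)+|\alt|+(1+n)|\glt_J|+k+s_{\sigma_{I\cup J}}(\glt)$. I expect the interplay between the degree shift and the splitting of $\varepsilon_p$ into its degree-dependent and degree-independent parts — i.e.\ getting Lemmas~\ref{lm:epsilonsigma} and~\ref{lm:sumofepsilonsp} to supply exactly the right cancellations — to be where essentially all the care lies; the geometric content is a direct application of Proposition~\ref{rem:pbdgluing} together with the functoriality package of Proposition~\ref{prop:proppp}.
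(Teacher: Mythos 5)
Your outline follows the paper's proof step for step: reduce $B$ to the stratum $B_{k_1,k_2;I,J}$ by relabeling the boundary points, invoke Proposition~\ref{rem:pbdgluing} for the gluing orientation, factor the form over the fiber product, use base change and push--pull from Proposition~\ref{prop:proppp}, identify the inner and outer push-forwards with $\q^{\beta_2}_{k_2,l_2}$ and $\p^{\beta_1}_{k_1,l_1}$, and assemble the sign via Lemmas~\ref{lm:epsilonsigma} and~\ref{lm:sumofepsilonsp}.

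There is, however, a genuine sign gap, and in a lemma whose entire content is the sign it matters. The relabeling diffeomorphism $\varphi_{\sigma^{-1}}$ of $\M_{k,l+1}(\beta)$ that carries $B$ to $B_{k_1,k_2;I,J}$ is not orientation preserving: it changes orientation by $(-1)^{sgn(\sigma)}$, so the identification of the fiber product with $B$ changes orientation by $(-1)^{k_1k_2+k_2+n+sgn(\sigma)}$, whereas your displayed push-forward identity carries only $(-1)^{k_1k_2+k_2+n}$. This omission cannot be absorbed later, because the conversion you propose is short by exactly the same parity: expanding $s_\sigma^{[1]}(\alt)=\sum_{\substack{i<j\\ \sigma(i)>\sigma(j)}}(|\a_{\sigma(i)}|+1)(|\a_{\sigma(j)}|+1)$ gives $s_\sigma^{[1]}(\alt)\equiv s_\sigma(\alt)+\big(\varepsilon_p(\alt^\sigma)-\varepsilon_p(\alt)\big)+sgn(\sigma)\pmod 2$, so Lemma~\ref{lm:epsilonsigma} supplies the middle term but not $sgn(\sigma)$. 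With the ingredients exactly as you list them, the bookkeeping closes to $*+sgn(\sigma)$ rather than $*$; the paper's $\delta_1=k_1k_2+k_2+n+sgn(\sigma^{-1})$ is precisely what restores the claimed formula. A lesser point: in the factorization $p_1^*\xi_1\wedge p_2^*\xi_2$ you price only the separation of $\glt$ into its $I$- and $J$-parts, but there is also a cross Koszul term from moving the second disk's forms past the first disk's boundary forms (the other summand of the paper's $\delta_2$); it is needed, together with the sign you do record for relocating the $\q$-output, for Lemma~\ref{lm:sumofepsilonsp} to cancel everything down to the stated $*$.
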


\begin{proof}
For any cyclic permutation $\sigma\in\Z/k\Z$, let
\[
\varphi_\sigma:\M_{k,l+1}(\beta)\stackrel{\sim}{\lrarr}\M_{k,l+1}(\beta)
\]
be the diffeomorphism defined by
\[
\varphi_\sigma([u,(z_j)_{j=1}^k,(w_j)_{j=0}^l])=[u,(z_{\sigma(j)})_{j=1}^k, (w_j)_{j=0}^l].
\]
The map $\varphi_\sigma$ changes orientation by $(-1)^{sgn(\sigma)}$.
Let $B_{k_1,k_2;I,J}$ be the boundary component as in Proposition~\ref{rem:pbdgluing}, and take the permutation $\sigma$ for which
\[
\varphi_{\sigma^{-1}}(B_{k_1,k_2;I,J})=B.
\]
Then
\[
\M_{k_1,I\cup\{0\}}(\beta_1) \prescript{}{evb_1^{\beta_1}}\times_{evb_0^{\beta_2}}\M_{k_2+1,J}(\beta_2) \xrightarrow[\vartheta]{\;\;\sim\;\;}B_{k_1,k_2;I,J}\xrightarrow[\varphi_{\sigma^{-1}}]{\;\;\sim\;\;}B
\]
is a diffeomorphism with a total change of orientation by the sign
\[
\delta_1:=k_2k_1+k_2+n+sgn(\sigma^{-1}).
\]

Consider the following pull-back diagram
\[
\xymatrix{
{\M_{k_1,I\cup\{0\}}(\beta_1)\times_L\M_{k_2+1,J}(\beta_2)}\ar[r]^(.65){p_2}\ar[d]^{p_1}&
{\M_{k_2+1,J}(\beta_2)}\ar[d]^{evb_0^{\beta_2}}\\
{\M_{k_1,I\cup\{0\}}(\beta_1)}\ar[r]^{\quad evb_1^{\beta_1}}&L\;.
}
\]
Write $\alt=(\a_1,\ldots,\a_k)$ and $\glt=(\gamma_1,\ldots,\gamma_l)$, and set
\[
\xi:=evi^*\glt\wedge evb^*\alt,
\]
and abbreviate $\bar{\xi}:=\vartheta^*\varphi_{\sigma^{-1}}^*\xi.$ Thus,
\begin{align*}
\bar{\xi}
&=\vartheta^*\varphi_{\sigma^{-1}}^*\Big( evi^*\gamma\wedge evb^*\alpha\Big)\\
&=\vartheta^*\Big(evi^*\gamma\wedge evb_{\sigma^{-1}}^*\alpha\Big)\\
&=(-1)^{s_\sigma(\alpha)}\cdot
\vartheta^*\Big(evi^*\gamma\wedge evb^*\alpha^{\sigma}\Big),
\end{align*}
with $s_\sigma(\alt)$ defined by~\eqref{eq:sgnsigmagamma}.
Set
\begin{gather*}
\xi_1:=\bigwedge_{j\in I}(evi_j^{\beta_1})^*\gamma_j \wedge\bigwedge_{j=2}^{k_1} (evb_j^{\beta_1})^*\alpha_{\sigma(j+k_2-1)}\in A^*(\M_{k_1,I\cup\{0\}}(\beta_1)),\\
\xi_2:=\bigwedge_{j\in J}(evi_j^{\beta_2})^*\gamma_j\wedge \bigwedge_{j=1}^{k_2} (evb_j^{\beta_2})^*\alpha_{\sigma(j)}\in A^*(\M_{k_2+1,J}(\beta_2)).
\end{gather*}
Then
\[
\vartheta^*\Big(\bigwedge_{j=1}^l evi_j^*\gamma_j\wedge \bigwedge_{j=1}^kevb_{j}^*\alpha_{\sigma(j)}\Big)=(-1)^{\delta_2}p_1^*\xi_1\wedge p_2^*\xi_2
\]
with
\[
\delta_2:=(|\alt_{\ngl{1}}^\sigma|+|\glt_J|)\cdot|\alt_{\ngl{2}}^\sigma| +s_{\sigma_{I\cup J}}(\glt).
\]
So,
\begin{align*}
(evi_0|_{B})_*{\xi}={}&
(-1)^{\delta_1}(evi_0^{\beta_1})_*(p_1)_*\bar\xi\\
={}& (-1)^{s_\sigma(\alt)+\delta_1+\delta_2}(evi_0^{\beta_1})_*(p_1)_* (p_1^*\xi_1\wedge p_2^*\xi_2)\\
={}&(-1)^{s_\sigma(\alt)+\delta_1+\delta_2}(evi_0^{\beta_1})_*(\xi_1\wedge(p_1)_*(p_2)^*\xi_2)\\
={}&(-1)^{s_\sigma(\alt)+\delta_1+\delta_2}(evi_0^{\beta_1})_* (\xi_1\wedge(evb_1^{\beta_1})^*(evb_0^{\beta_2})_*\xi_2)\\
={}&(-1)^{s_\sigma(\alt)+\delta_1+\delta_2+|(evb_0^{\beta_2})_* \xi_2|\cdot|\alt^\sigma_2|}(evi_0^{\beta_1})_*\Bigg(
\bigwedge_{j\in I}(evi_j^{\beta_1})^*\gamma_j\wedge\\
&\quad\wedge(evb_1^{\beta_1})^*(evb_0^{\beta_2})_*\xi_2
\wedge\bigwedge_{j=2}^{k_1}(evb_0^{\beta_1})^*\a_{\sigma(j+k_2-1)}
\Bigg)\\
={}&(-1)^*
\p_{k_1,l_1}^{\beta_1} (\q_{k_2,l_2}^{\beta_2}(\alt
^\sigma_{\ngl{1}};\glt_J),\alt^{\sigma}_{\ngl{2}};\glt_I)
\end{align*}
with
\[
*=s_\sigma(\alt)+\delta_1+\delta_2+|(evb_0^{\beta_2})_* \xi_2|\cdot|\alt^\sigma_{\ngl{2}}|+ \varepsilon_p((evb_0^{\beta_2})_*\xi_2,\alt^\sigma_{\ngl{2}})+ \varepsilon(\alt^\sigma_{\ngl{1}}).
\]
By Lemma~\ref{lm:epsilonsigma} and Lemma~\ref{lm:sumofepsilonsp} applied to $\alt^\sigma$ in the case $i=1,$ we get
\begin{align*}
*\equiv&\sum_{\substack{j<m\\\sigma(j)>\sigma(m)}} |\a_{\sigma(j)}|\cdot|\a_{\sigma(m)}| +k_2k_1+k_2+n+sgn(\sigma^{-1})
+(|\alt^\sigma_{\ngl{1}}|+|\glt_J|)\cdot|\alt_{\ngl{2}}^\sigma| +s_{\sigma_{I\cup J}}(\glt)+\\
&+(|\alt^\sigma_{\ngl{1}}|+|\glt_J|+k_2)\cdot|\alt^\sigma_{\ngl{2}}|
+\varepsilon_p(\alt^\sigma)+|\alt^\sigma|+k+n
+(1+n)|\glt_{J}|+k_2|\alt_{\ngl{2}}^\sigma|+k_1k_2+k_2\\
&\equiv
\sum_{\substack{j<m\\\sigma(j)>\sigma(m)}} |\a_{\sigma(j)}|\cdot|\a_{\sigma(m)}| +sgn(\sigma)
+\varepsilon_p(\alt^\sigma)+|\alt^\sigma|
+(1+n)|\glt_{J}|+k+s_{\sigma_{I\cup J}}(\glt)\\
&\equiv
\sum_{\substack{j<m\\\sigma(j)>\sigma(m)}} |\a_{\sigma(j)}|\cdot|\a_{\sigma(m)}| +sgn(\sigma)
+\sum_{\substack{i<j\\ \sigma(i)>\sigma(j)}}(|\a_{\sigma(j)}|
-|\a_{\sigma(i)}|)+\\
&+\varepsilon_p(\alt)+|\alt|
+(1+n)|\glt_{J}|+k+s_{\sigma_{I\cup J}}(\glt)\\
&\equiv
s^{[1]}_\sigma(\alt)
+\varepsilon_p(\alt)+|\alt|
+(1+n)|\glt_{J}|+k+s_{\sigma_{I\cup J}}(\glt).
\end{align*}
\end{proof}

\begin{prop}[{\cite[Proposition 2.12]{ST1}}]\label{rem:pintgluing} 
Let $l\in\Z_{\ge 0},\,\beta\in\sly$, and $\hat\beta \in H_2(X;\Z)$ with $\pr(\hat\beta) = \beta.$
Let
$
B\subset \d\M_{0,l}(\beta)
$
be the boundary component where a generic point is a sphere of class $\hat\beta$ intersecting $L$ at a marked point. Such spheres arise when the boundary of a disk collapses to a point. Equivalently, one can view this as interior bubbling from a ghost disk component. Note that the ghost disk is not stable.
Then the map
\[
\vartheta:
L\times_X\M_{l+1}(\hat\beta)\stackrel{\sim}{\lrarr}B.
\]
satisfies $sgn(\vartheta)=(-1)^{n+1+w_{\s}(\hat\beta)}$.
\end{prop}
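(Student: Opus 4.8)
The plan is to follow the orientation computation of \cite[Proposition~2.12]{ST1}, which treats precisely this degeneration, and to explain why the stated sign emerges; the conventions used there---orientations of moduli of disks via the relative spin structure as in \cite[Chapter~8]{FOOO}, the natural orientation of moduli of spheres, and the fiber-product and boundary conventions of \cite[Section~6]{ST4}---agree with ours up to the harmless relabelling of ``strongly smooth'' as ``smooth''. First I would fix the local model for $B$ inside $\d\M_{0,l}(\beta)$: a generic point of $B$ is a stable map whose domain consists of a sphere component of class $\hat\beta$ carrying the marked points $w_1,\dots,w_l$ together with a node $w_0$, attached to a ghost disk with no special points; since the ghost disk is unstable it is dropped, so the matching condition $u(w_0)\in L$ at the node gives the identification $\vartheta\colon L\times_X\M_{l+1}(\hat\beta)\stackrel{\sim}{\lrarr}B$, the fiber product being over $i\colon L\hookrightarrow X$ and $ev_0$. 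A nearby \emph{smooth} disk is produced by the standard gluing construction: one grafts a small holomorphic disk onto the sphere at $w_0$, mapping into a chart around $p=u(w_0)$, with gluing (neck) parameter $\rho\in[0,\epsilon)$; as $\rho\to 0$ the boundary loop of this disk contracts to $p$ and one recovers $B$. Thus a collar of $B$ is $B\times[0,\epsilon)$ with $\rho$ a normal coordinate, and the equality $\mu(\beta)=2c_1(\hat\beta)$ (which holds since $\beta=\pr(\hat\beta)$) confirms that $B$ is a codimension-one face, as the picture requires.

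Next I would compare the two orientations of $TB$ whose discrepancy is, by definition, $sgn(\vartheta)$: the boundary orientation induced from the $\s$-orientation of $\M_{0,l}(\beta)$ with the outward-normal-first rule of \cite[Section~6]{ST4}, and the fiber-product orientation assembled from the orientation of $L$, the natural orientation of $\M_{l+1}(\hat\beta)$, and the canonical orientation of $X$. Writing the first through the linearized gluing exact sequence relating $T\M_{0,l}(\beta)$ restricted to $B$, the normal line $\R\,\d_\rho$, and $TB$, and the second through the fiber-product exact sequence
\[
0\to TB\to TL\oplus T\M_{l+1}(\hat\beta)\to TX\to 0.
\]
one finds that $sgn(\vartheta)$ is the product of two contributions. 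The first is the index-theoretic factor $(-1)^{w_\s(\hat\beta)}$: since the boundary loop of the glued disk is contractible, the real Cauchy--Riemann operator of the glued disk is homotopic, across the neck, to the linearized operator of the sphere equipped with the boundary data prescribed by $\s$, and comparing the determinant line of the former (oriented by the spin trivialization) with the canonical complex orientation of the latter produces exactly $(-1)^{w_\s(\hat\beta)}$---a contribution of the same nature as the sign $(-1)^{w_\s(\cdot)}$ appearing in \cite[Chapter~8]{FOOO} and in the definition of the closed operators of Section~\ref{ssec:qemptyset}, now evaluated on the sphere class $\hat\beta$. The second is a purely conventional factor $(-1)^{n+1}$, combining the sign built into the boundary-orientation rule with the sign incurred by commuting the normal coordinate $\d_\rho$ past the $n$-dimensional factor $TL$ in the fiber-product orientation.

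The hard part will be exactly this bookkeeping: forcing the two half-signs above to combine to $(-1)^{n+1+w_\s(\hat\beta)}$ rather than to something of the opposite parity. One must pin down the precise fiber-product orientation rule of \cite{ST4} (which factor is written first, and whether there is an extra sign because $\dim X = 2n$ is even), the precise boundary-orientation rule for orbifolds with corners, and---most delicately---the homotopy identifying the glued-disk operator with the sphere operator, checking that the relative spin trivialization is transported correctly across the collapsing neck so that no additional sign slips in. Since \cite[Proposition~2.12]{ST1} carries out this entire analysis under conventions compatible with ours, the cleanest route is to invoke that result directly; the outline above records why the answer takes the stated form.
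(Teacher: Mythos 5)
The paper gives no proof of this proposition: it is quoted verbatim as \cite[Proposition 2.12]{ST1}, and your proposal likewise ends by invoking that result directly, so the two approaches coincide. Your preliminary sketch of how the sign $(-1)^{n+1+w_\s(\hat\beta)}$ should decompose into a relative-spin contribution $(-1)^{w_\s(\hat\beta)}$ and a convention-dependent factor $(-1)^{n+1}$ is a reasonable heuristic, but the actual bookkeeping lives entirely in the cited reference.
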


\begin{lm}\label{lm:bbbp}
Let $B\subset\d\M_{0,l+1}(\beta)$ be a boundary component as in Proposition~\ref{rem:pintgluing}. Then
\[
(evi_0|_B)_*(\bigwedge_{j=1}^levi_j^*\gamma_j)
=
(-1)^{n+1}\q^{\hat\beta}_{\emptyset,l+1}(\gamma_1,\ldots,\gamma_l,i_*1_L).
\]
\end{lm}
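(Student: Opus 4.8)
The plan is to transport the left-hand side to the fiber product $L\times_X\M_{l+2}(\hat\beta)$ by means of the identification $\vartheta$ of Proposition~\ref{rem:pintgluing} (used with $l$ replaced by $l+1$), push the result forward along the projection to $\M_{l+2}(\hat\beta)$, and recognize the remaining factor as a pullback of the current $\zeta_L=i_*1_L$.

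I would begin by fixing conventions. For a generic point of $B$ the sphere component carries the interior marked points $0,\dots,l$ of $\M_{0,l+1}(\beta)$ together with the nodal point at which the ghost disk is attached; I index this nodal point as marked point $l+1$, so that Proposition~\ref{rem:pintgluing} yields a diffeomorphism $\vartheta\colon L\times_X\M_{l+2}(\hat\beta)\xrightarrow{\sim}B$, changing orientation by $(-1)^{n+1+w_{\s}(\hat\beta)}$, where the fiber product is taken over $ev_{l+1}^{\hat\beta}$ and $i\colon L\to X$. Write $p\colon L\times_X\M_{l+2}(\hat\beta)\to\M_{l+2}(\hat\beta)$ and $q\colon L\times_X\M_{l+2}(\hat\beta)\to L$ for the two projections. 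By construction of $\vartheta$ the evaluation maps are compatible: $evi_j^{\beta}|_B\circ\vartheta=ev_j^{\hat\beta}\circ p$ for $j=0,\dots,l$; in particular $evi_0^{\beta}|_B\circ\vartheta=ev_0^{\hat\beta}\circ p$ and $\vartheta^{*}\left(\bigwedge_{j=1}^l(evi_j^{\beta})^{*}\gamma_j\right)=p^{*}\left(\bigwedge_{j=1}^l(ev_j^{\hat\beta})^{*}\gamma_j\right)$.

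Next comes the computation. From $(f\circ g)_*=f_*\circ g_*$ for currents (Proposition~\ref{prop:proppp}\eqref{prop:pushcomp}) together with the orientation change of $\vartheta$,
\[
(evi_0^{\beta}|_B)_*\left(\bigwedge_{j=1}^l(evi_j^{\beta})^{*}\gamma_j\right)=(-1)^{n+1+w_{\s}(\hat\beta)}\,(ev_0^{\hat\beta})_*\,p_*\,p^{*}\left(\bigwedge_{j=1}^l(ev_j^{\hat\beta})^{*}\gamma_j\right).
\]
Since $ev_{l+1}^{\hat\beta}$ is a proper submersion (by the standing regularity assumption and the $S_{l+2}$-action on $\M_{l+2}(\hat\beta)$), the map $p$ is a proper morphism, namely a closed embedding, so the variant of Proposition~\ref{prop:proppp}\eqref{prop:pushpull} for proper non-submersions gives $p_*p^{*}\eta=\eta\wedge p_*1$, and the variant of Proposition~\ref{prop:proppp}\eqref{prop:pushfiberprod} applied to the pullback square with legs $ev_{l+1}^{\hat\beta}$ and $i$ gives $p_*1=p_*q^{*}1_L=(ev_{l+1}^{\hat\beta})^{*}(i_*1_L)$. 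Substituting and comparing with
\[
\q_{\emptyset,l+1}^{\hat\beta}(\gamma_1,\dots,\gamma_l,i_*1_L)=(-1)^{\omega_{\s}(\hat\beta)}\,(ev_0^{\hat\beta})_*\left(\bigwedge_{j=1}^l(ev_j^{\hat\beta})^{*}\gamma_j\wedge(ev_{l+1}^{\hat\beta})^{*}(i_*1_L)\right)
\]
yields the asserted formula.

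The main obstacle is the sign bookkeeping in this last comparison: one has to track the orientation of the fiber product that is implicit both in Proposition~\ref{rem:pintgluing} and in Proposition~\ref{prop:proppp}\eqref{prop:pushfiberprod}, the Koszul sign incurred when the degree-$n$ current $(ev_{l+1}^{\hat\beta})^{*}(i_*1_L)$ is commuted past the forms $(ev_j^{\hat\beta})^{*}\gamma_j$, and the identity (modulo $2$) between $w_{\s}(\hat\beta)$ and the exponent $\omega_{\s}(\hat\beta)$ occurring in the definition of $\q_{\emptyset,l+1}^{\hat\beta}$. These should combine to leave exactly the prefactor $(-1)^{n+1}$; the remaining ingredients --- the geometric identification of $B$, the compatibility of evaluation maps, and the applications of Proposition~\ref{prop:proppp} --- are formal.
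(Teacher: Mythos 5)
Your proposal is correct and follows essentially the same route as the paper's proof: transport the integrand through $\vartheta$ with the orientation sign $(-1)^{n+1+w_{\s}(\hat\beta)}$, apply push--pull along the projection to $\M_{l+2}(\hat\beta)$, identify the pushforward of $1$ with $ev_{l+1}^*(i_*1_L)$ via the fiber-product formula, and cancel $(-1)^{w_{\s}(\hat\beta)}$ against the sign in the definition of $\q^{\hat\beta}_{\emptyset,l+1}$. The Koszul-sign worry you flag is in fact vacuous, since the current $ev_{l+1}^*(i_*1_L)$ occupies the last slot both in the push--pull output and in $\q^{\hat\beta}_{\emptyset,l+1}(\gamma_1,\dots,\gamma_l,i_*1_L)$, so no commutation is needed and only the prefactor $(-1)^{n+1}$ remains, exactly as in the paper.
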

\begin{proof}
Define $\xi$ on $\M_{0,l+1}(\beta)$ by
$\xi:=\bigwedge_{j=1}^l evi_j^*\gamma_j$. Let $\vartheta$ be the gluing map from Proposition~\ref{rem:pintgluing} and set $\xi'=\vartheta^*\xi.$
Consider the pullback diagram
\[
\xymatrix{
{L\times_X\M_{l+2}(\hat\beta)}\ar[r]^(.6){p_2}\ar[d]^{p_1}&
{\M_{l+2}(\hat\beta)}\ar[d]^{ev_{l+1}}\\
{L}\ar[r]^{i}&X
}
\]
and define $\xi''$ on $\M_{l+2}(\hat\beta)$ by $\xi'':=\bigwedge_{j=1}^l ev_j^*\gamma_j.$

Then
\begin{align*}
(evi_0)_*\xi
={}&(-1)^{n+1+w_s(\hat\beta)}(ev_0)_*(p_2)_*\xi'\\
={}&
(-1)^{n+1+w_s(\hat\beta)}(ev_0)_*(p_2)_*(p_2^*\xi''\wedge p_1^*1_L)\\
={}&
(-1)^{n+1+w_s(\hat\beta)}(ev_0)_*(\xi''\wedge (p_2)_*p_1^*1_L)\\
={}&
(-1)^{n+1+w_s(\hat\beta)}(ev_0)_*(\xi''\wedge ev_{l+1}^*i_*1_L)\\
={}&
(-1)^{n+1} \q^{\hat\beta}_{\emptyset,l+1}(\gamma_1,\ldots,\gamma_l, i_*1_L).
\end{align*}

\end{proof}

\begin{proof}[Proof of Proposition~\ref{prop:pstructure}]
The proof is based on Stokes' theorem, Proposition~\ref{stokes}, similarly to the proof of~\cite[Proposition 2.6]{ST1}.
Write $\alt=(\a_1,\ldots,\a_k)$ and $\glt=(\gamma_1,\ldots,\gamma_l)$ and apply Stokes to
\[
f=evi_0:\M_{k,l+1}(\beta)\to X ,\quad
\xi=evi^*\gamma\wedge evb^*\alpha.
\]

\textit{Left-hand side: $df_*\xi$.}
This contributes $(-1)^{\varepsilon_p(\alt)}d\p_{k,l}^\beta(\alt;\glt).$

\textit{Right-hand side, first summand: $f_*(d\xi)$.}
This contributes
\[
\sum_{i=1}^k
(-1)^{\varepsilon_p(\alt)+i+n+ |\glt|+|\alt_{[1:i-1]}|}\pkl^\beta(\alt_{[1:i-1]}, d\alpha_i,\alt_{[i+1:k]};\glt)
+
(-1)^{\varepsilon_p(\alt)}\pkl^\beta (\alt;d(\glt)).
\]
Denote by $\sigma_i\in S_k$ the cyclic permutation such that $\sigma_i(1)=i.$ In other words, $\sigma_i\in \Z/k\Z$ is given by adding $i-1$ modulo $k$. As in the proof of Lemma~\ref{lm:epsilonsigma}, we have
\begin{align*}
\{j<m \mbox{ and } \sigma_i(j)>\sigma_i(m)\} &\iff \{j\le k-(i-1) \mbox{ and } m\ge k-(i-1)+1\}\\
&\iff \{j\le k-i+1 \mbox{ and } m\ge k-i+2\}.
\end{align*}
Therefore,
\begin{align*}
s_{\sigma_i}^{[1]}(\alt_{[1:i-1]},d\alpha_i,\alt_{[i+1:k]})&=
\sum_{\substack{1< j\le k-i+1\\ k-i+2\le m\le k}}(|\a_{\sigma(j)}|+1)(|\a_{\sigma(m)}|+1)+\sum_{m=k-i+2}^k(|\a_{i}|+1+1)(|\a_{\sigma(m)}|+1)\\
&=s_{\sigma_i}^{[1]}(\alt)+\sum_{m=k-i+2}^k(|\a_{\sigma(m)}|+1)\\
&=s_{\sigma_i}^{[1]}(\alt)+\sum_{m=k-i+2}^k(|\a_{m+i-1-k}|+1)\\
&=s_{\sigma_i}^{[1]}(\alt)+\sum_{j=1}^{i-1}(|\a_j|+1)\\
&=s_{\sigma_i}^{[1]}(\alt)+|\alt_{[1:i-1]}|+i-1.
\end{align*}
By Proposition~\ref{prop:p_sgn} we then have
\begin{align*}
\sum_{i=1}^k(-1&)^{\varepsilon_p(\alt) +i+n+|\glt|+|\alt_{[1:i-1]}|} \pkl^\beta(\alt_{[1:i-1]}, d\alpha_i,\alt_{[i+1:k]};\glt)=\\
&=
\sum_{i=1}^k(-1)^{\varepsilon_p(\alt) +i+n+|\glt|+|\alt_{[1:i-1]}| +s_{\sigma_i}^{[1]}(\alt_{[1:i-1]},d\a_i,\alt_{[i+1:k]})} \pkl^\beta(d\a_i,\alt_{[i+1:k]},\alt_{[1:i-1]};\glt)\\
&=
\sum_{i=1}^k(-1)^{\varepsilon_p(\alt)+ n+1 + |\glt|+s_{\sigma_i}^{[1]}(\alt)} \pkl^\beta(d\a_i,\alt_3,\alt_1;\glt)\\
&=
\sum_{\sigma\in\Z/k\Z}(-1)^{\varepsilon_p(\alt)+
n+1+|\glt|+s_{\sigma}^{[1]}(\alt)} \pkl^\beta(d\a_{\sigma(1)},\a_{\sigma(2)},\ldots,\a_{\sigma(k)};\glt).
\end{align*}

\textit{Right-hand side, first type of boundary contribution:}
 Consider the contribution $(f|_{B})_*\xi$ where $B\subset \d\M_{k,l+1}(\beta)$ is as in Lemma~\ref{lm:pstar}.
Since
\[
\dim(\M_{k,l+1}(\beta))=n-3+\mu(\beta)+k+2(l+1)
\equiv k+n+1 \pmod 2
\]
and $|\xi| = |\alt| + |\glt|,$
the contribution of $(f|_B)_*\xi$ to Stokes' theorem comes with the sign $(-1)^{s+t}=(-1)^{|\alt| + |\glt| + k+n+1}.$
By Lemma~\ref{lm:pstar}, we have
\[
(f|_B)_*\xi=
(-1)^*\;
\p_{k_1,l_1}^{\beta_1} (\q_{k_2,l_2}^{\beta_2}(\alt_{\ngl{2}}^\sigma;\glt_{J}), \alt_{\ngl{1}}^{\sigma};\glt_{I}),
\]
with a total sign contribution given by
\begin{align*}
*+|\alt|&+|\glt|+k+n+1\equiv\\
\equiv{}&
s^{[1]}_\sigma(\alt)
+\varepsilon_p(\alt)+|\alt|
+(1+n)|\glt_{J}|+k+s_{\sigma_{I\cup J}}(\glt)
+|\alt|+|\glt|+k+n+1\\
\equiv{}&
s^{[1]}_\sigma(\alt)
+\varepsilon_p(\alt)
+s_{\sigma_{I\cup J}}(\glt)
+|\glt|+(n+1)(|\glt_{J}|+1) \pmod 2.
\end{align*}
Thus, the total contribution of $B$ is
\[
(-1)^{s+t}(f|_B)_*\xi=(-1)^{\varepsilon_p(\alt)+|\glt|+s_{\sigma_{I\cup J}}(\glt)+s_\sigma^{[1]}(\alt)+(n+1)(|\glt_{J}|+1)} \cdot\p_{k_1,l_1}^{\beta_1}(\q_{k_2,l_2}^{\beta_2} (\alt_{\ngl{2}}^\sigma;\glt_J),\alt_{\ngl{1}}^\sigma;\glt_I).
\]

\textit{Right-hand side, second type of boundary contribution:}
In the case $k=0$, we also take $(f|_B)_*\xi$ for
$B$ as in Proposition~\ref{rem:pintgluing}.
By Lemma~\ref{lm:bbbp}, the contribution of this component equals
\begin{align*}
(-1)^{s+t}(evi_0|_B)_*(\bigwedge_{j=1}^{l}evi_j^*\gamma_{j})=
(-1)^{|\gamma|}\q_{\emptyset,l+1}^{\hat\beta}(\gamma_1,\ldots,\gamma_l,i_*1_L).
\end{align*}

\textit{Combining contributions:}
\begin{align*}
(-1)^{\varepsilon_p(\alt)}&d\pkl^\beta(\alt;\glt)=
(-1)^{\varepsilon_p(\alt)} \pkl^\beta(\alt;d(\glt))+\\
&+ \sum_{\sigma\in\Z/k\Z} (-1)^{\varepsilon_p(\alt)+|\glt|+s_\sigma^{[1]}(\alt)+(n+1)}
\pkl^\beta(d\alpha_{\sigma(1)},\alpha_{\sigma(2)},\ldots,\alpha_{\sigma(k)}; \glt)+\\
&+\hspace{-1em} \sum_{\substack{\sigma\in\Z/k\Z,\\
k_1+k_2=k+1,\; I\sqcup J=[l] \\\beta_1+\beta_2=\beta\\ (k_2,|J|,\beta_2)\ne(1,0,\beta_0)}} \hspace{-1.5em}(-1)^{\varepsilon_p(\alt) +|\glt|+s_{\sigma_{I\cup J}}(\glt)+s_\sigma^{[1]}(\alt)+(n+1)(|\glt_{J}|+1)}
\p^{\beta_1}_{k_1,|I|}(\q_{k_2,|J|}^{\beta_2} (\alt^\sigma_{(2)};\glt_J),\alt^\sigma_{(1)};\glt_I)+\\
&+(-1)^{|\gamma|}\delta_{k,0}\cdot \q_{\emptyset,l+1}^{\hat\beta}(\gamma_1,\ldots,\gamma_l,i_*1_L).
\end{align*}
Dividing by $(-1)^{\varepsilon_p(\alt)}$, multiplying by $T^\beta$ and summing over $\beta$, we get the required equation.

\end{proof}

\subsection{Bulk and boundary deformation}

Let $\gamma \in \mI_R A^*(X;R)$ such that $|\gamma|=2$ and $d\gamma=0$. Let $b\in \mI_R A^*(L;R)$ such that $|b|=1$.
Recall the deformed closed~\eqref{eq:defq} and closed-open maps~\eqref{eq:defqempt}, and define, for $k,l\ge 0$ and $\alt=(\alpha_1,\ldots,\alpha_k)\in \big(A^*(L;R)\big)^{\otimes k}$, $\eta=(\eta_1,\ldots,\eta_l)\in \big(A^*(X;R)\big)^{\otimes l}$, the deformed open-closed maps by
\begin{equation}\label{eq:defp}
\pbg_{k,l}(\alt;\eta):=\sum_{s,t\ge 0}
\sum_{\sum_{j=0}^{k-1}i_j=s}
\frac{1}{t!}
\p_{k+s,l+t}(b^{\otimes i_0}\otimes \a_1\otimes b^{\otimes i_1}\otimes \cdots \otimes b^{\otimes i_{k-1}}\otimes \a_k ;\eta\otimes \gamma^{\otimes t}).
\end{equation}
Then a similar structure equation is satisfied by $\pbg$:
\begin{cor}
Consider lists $\alt=(\alpha_1,\ldots,\alpha_k)$, $\alpha_j\in A^*(L;R)$, and $\eta=(\eta_1,\ldots,\eta_l)$, $\eta_j\in A^*(X;R)$.
Then
\begin{multline}\label{eq:pbg_rel}
d\pbg_{k,l}(\alt;\eta)
=
\pbg_{k,l}(\alt; d(\eta))+ \\
\qquad+
\sum_{\substack{I\sqcup J=[l]\\ \sigma\in\Z/k\Z}}
(-1)^{s_\sigma^{[1]}(\alt)+|\eta| +s_{\sigma_{I\sqcup J}}(\eta)+(n+1)(|\eta_{J}|+1)}
\pbg_{k_1,|I|}(\qbg_{k_2,|J|} ((\alt^\sigma)_{(1)};\eta_J) \otimes(\alt^\sigma)_{(2)}; \eta_{I})+\\
+
\delta_{k,0}\cdot(-1)^{|\eta|}\qg_{\emptyset,l+1}(\eta\otimes i_*1_L).\notag
\end{multline}
\end{cor}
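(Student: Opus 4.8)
The plan is to deduce~\eqref{eq:pbg_rel} from Proposition~\ref{prop:pstructure} by unwinding the definitions~\eqref{eq:defp}, \eqref{eq:defqempt} and~\eqref{eq:defq} of the deformed operators and re-summing, in the same spirit in which the deformed $A_\infty$ relations and the structure equations for the deformed closed--open maps are obtained from their undeformed versions in~\cite{ST1}. All sums converge because $b,\gamma\in\mI_R$ and all tensor products are implicitly completed, so the manipulations are legitimate.

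First I would apply $d$ to the defining formula~\eqref{eq:defp} of $\pbg_{k,l}(\alt;\eta)$, commute $d$ past the sum, and apply Proposition~\ref{prop:pstructure} to each summand $\p_{k+s,l+t}(b^{\otimes i_0}\otimes\a_1\otimes\cdots\otimes\a_k;\eta\otimes\gamma^{\otimes t})$. This produces three kinds of contributions. The contribution $\p_{k+s,l+t}(\,\cdot\,;d(\eta\otimes\gamma^{\otimes t}))$ collapses, since $d\gamma=0$, to the one in which only an $\eta_j$ is differentiated, so re-summing over $s,t$ recovers $\pbg_{k,l}(\alt;d(\eta))$. The boundary-collapse contribution $\delta_{k+s,0}\,(-1)^{|\eta\otimes\gamma^{\otimes t}|}\,\q_{\emptyset,l+t+1}(\eta\otimes\gamma^{\otimes t}\otimes i_*1_L)$ is non-zero only when $s=0$, hence only when $k=0$; since $\gamma$ has even degree, the sign is $(-1)^{|\eta|}$ and $i_*1_L$ may be moved past the $\gamma$'s freely, so summing $\tfrac1{t!}$ over $t$ identifies $\sum_t\tfrac1{t!}\q_{\emptyset,l+t+1}(\eta\otimes\gamma^{\otimes t}\otimes i_*1_L)$ with $\qg_{\emptyset,l+1}(\eta\otimes i_*1_L)$, giving the last term of~\eqref{eq:pbg_rel}.

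The real work is reorganizing the remaining contribution, the splitting sum $\sum_{\sigma\in\Z/(k+s)\Z,\,I\sqcup J}(\pm)\,\p_{k_1,|I|}\bigl(\q_{k_2,|J|}((\mathrm{full}^{\sigma})_{(1)};\mathrm{int}_J)\otimes(\mathrm{full}^{\sigma})_{(2)};\mathrm{int}_I\bigr)$, where $\mathrm{full}$ is the $b$-padded boundary list of length $k+s$ and $\mathrm{int}$ the $\gamma$-padded interior list of length $l+t$. The key observation is that a cyclic rotation of $\mathrm{full}$ preserves the cyclic order of the genuine inputs $\a_1,\dots,\a_k$ among themselves, so each such term corresponds to the data of a cyclic rotation $\sigma'\in\Z/k\Z$ of the $\a_i$, a two-part ordered splitting $\alt^{\sigma'}=(\alt^{\sigma'})_{(1)}\otimes(\alt^{\sigma'})_{(2)}$, a partition $I'\sqcup J'=[l]$ together with a choice of how many of the $t$ extra copies of $\gamma$ enter each interior sublist, and a distribution of the $s$ copies of $b$ among the slots surrounding the inner block and those surrounding the outer block. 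Because Proposition~\ref{prop:pstructure} already sums over both cut positions, the $b$'s in a run straddling the inner/outer boundary are counted exactly once, matching the fact that~\eqref{eq:defqempt} pads $b$'s at both ends of its list while~\eqref{eq:defp} pads only on the left. Summing over the $b$'s and extra $\gamma$'s attached to the inner block converts $\sum\tfrac1{(t-|J'|)!}\q_{k_2',|J'|}(\cdots)$ into $\qbg_{k_2',|J'|}((\alt^{\sigma'})_{(1)};\eta_{J'})$ by~\eqref{eq:defqempt}, and summing over those attached to the outer block, with the $\qbg$-output occupying the first genuine slot, converts the outer $\p$ into $\pbg_{k_1',|I'|}$ by~\eqref{eq:defp}. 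Since $\V b\V=0$ and $|\gamma|$ is even, inserting $b$'s leaves the shifted cyclic signs $s^{[1]}_\sigma$ unchanged and reordering $\gamma$'s is sign-free, so the prefactor in~\eqref{eq:p_rel} passes verbatim to the one in~\eqref{eq:pbg_rel}, and comparing coefficients of $\pbg_{k_1,|I|}(\qbg_{k_2,|J|}(\cdots);\cdots)$ on both sides yields the identity.

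I expect this bijective reindexing, together with the attendant sign and degenerate-case bookkeeping, to be the main obstacle: one must match the two-arc split of Proposition~\ref{prop:pstructure} with the nested ``$b$'s around the inner inputs, then $b$'s around the outer inputs'' structure of~\eqref{eq:defqempt}--\eqref{eq:defp}, and keep track of empty inner or outer blocks, the excluded triple $(k_2,|J|,\beta_2)=(1,0,\beta_0)$ (whose terms reproduce the $db$- and $d\a_i$-contributions now housed inside $\qbg_{0,0}$ and $\qbg_{1,0}$), and the conventions $\q_{0,0}^{\beta_0}=\p_{0,0}^{\beta_0}=0$ and $\q_{0,0}^{\beta}=-(evb_0^\beta)_*1$. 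Once this dictionary is in place, \eqref{eq:pbg_rel} follows exactly as the analogous deformed structure equations do in~\cite{ST1}.
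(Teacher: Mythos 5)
Your proposal is correct and is essentially the paper's own argument: the paper disposes of this corollary in one line as an immediate consequence of Propositions~\ref{prop:pstructure} and~\ref{prop:p_sgn}, which is exactly the expansion-and-resummation you carry out, with the cyclic symmetry of Proposition~\ref{prop:p_sgn} being precisely what justifies your ``straddling run'' bookkeeping, i.e.\ the sign-free rotation identifying the trailing $b$'s of the outer arc with the $b$'s placed before the $\qbg$-output in~\eqref{eq:defp}, while the evenness of $|\gamma|$ handles the interior reshuffling. One cosmetic slip: $\Vert b\Vert=|b|+1=2$, not $0$; only its parity is what makes the inserted $b$'s sign-neutral.
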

\begin{proof}
This is an immediate consequence of Proposition~\ref{prop:pstructure} and Proposition~\ref{prop:p_sgn}.
\end{proof}

\section{Open-closed maps on the Hochschild and cyclic complexes}\label{sec:OChoch}
In this section, we interpret the properties of the open-closed maps from Section~\ref{sec:p} in
terms of the maps induced on various Hochschild and cyclic chain complexes introduced in
Section~\ref{sec:Hochschild complexes}.

\subsection{A geometric realization of Hochschild homology}

We work over the field $k=\R$ and the algebra $R$ given in~\eqref{eq:R}. For the $R$-module we take the $A_\infty$-algebra
\begin{equation*}
    A = C = A^*(L;R),
\end{equation*}
so that the Hochschild cochain complex $\choch[A]$ becomes
\[
\choch[C]=\tensr{C[1]} =\bigoplus_{j=1}^\infty C[1]^{\otimes j}.
\]
We keep using the notation $|\cdot |$ for the grading of $C$ and $\V\cdot \V$ for grading of $C[1]$, and thus also of $\choch[C]$.
For lists, sublists, and splittings, recall the notation from Sections~\ref{ssec:notlist} and~\ref{ssec:not}.
The maps $\{\mg_k\}_{k=0}^\infty$ give an $A_\infty$ structure on $C$, and we denote the induced operator on $\tens{C[1]}$ by
\[
\mu=\mg=\sum_{k\ge 0} \mg_k:\tens{C[1]} \lrarr C[1].
\]
In particular, the Hochschild boundary operator
$\dhoch \colon \tensr{C[1]} \rightarrow \tensr{C[1]}$
takes the form
\begin{align*}
\dhoch[x \otimes \a]
={}& (-1)^{\V x\V + \V\a_{(1)}\V}
    x \otimes \a_{(1)} \otimes \mg(\a_{(2)}) \otimes \a_{(3)}    \\
&+ (-1)^{\V\a_{(3)}\V \cdot \left( \V x\V + \V\a_{(1)}\V + \V\a_{(2)}\V \right)}
    \mg \left( \a_{(3)} \otimes x \otimes \a_{(1)} \right) \otimes \a_{(2)},
\end{align*}
with $x\in C[1]$ and $\a=\a_1\otimes\cdots\otimes\a_{k-1}\in C[1]^{\otimes k-1}$.

The rest of the constructions from Section~\ref{sec:Hochschild complexes} follow through, giving rise to cyclic, extended, reduced, and normalized versions.

\subsection{The maps}
Recall the operators $\pg_k:C[1]^{\otimes k} \to \A^*(X;R)$ defined by~\eqref{eq:pkg} and the induced operator
\begin{equation}
  \p^{\gamma} = \sum_{k = 1}^{\infty} \p_k^{\gamma} \colon \choch[C]=\tensr{C[1]}
  \rightarrow  \ccur{X}[*][][R].
\end{equation}
This operator uses all possible moduli spaces of disks with \textbf{at least one} marked
boundary point. The results of the previous section show that $\pg$ satisfies
the following properties:

\begin{enumerate}
  \item (\textbf{Structure Equations}) By \cref{prop:pstructure}, we have
    \begin{equation} \label{eq:pg-ainf-structure}
      d \left( \pg \left( \alpha \right) \right) =
      \sum_{\sigma \in \Z/k\Z} (-1)^{s_\sigma^{[1]}(\alpha) + (n+1)} \pg \left( \mg \left(
      \alpha_{(1)}^{\sigma} \right) \otimes \alpha_{(2)}^{\sigma} \right).
    \end{equation}
  \item (\textbf{Unit}) By \cref{prop:p_unit}, we have
    \begin{align}
      &\pg \left( \alpha_1, \dots, \alpha_{i-1}, 1_{L}, \alpha_{i+1}, \dots,
      \alpha_k \right) = 0, \qquad 1 \leq i \leq k, k \geq 2,
      \label{eq:pg-unit-generic}  \\
      &\pg \left( 1_L \right) = (-1)^{n+1}i_{*} \left( 1_L \right).
      \label{eq:pg-unit-special}
    \end{align}
  \item (\textbf{Degree}) By \cref{prop:p_deg},
   the map $\pg$ is homogeneous of degree $n + 1$.
  \item (\textbf{Cyclic Symmetry}) By \cref{prop:p_sgn}, we have
    \begin{equation} \label{eq:pg-cyclic-symmetry}
      \pg \left( \alpha_1, \dots, \alpha_k \right) = (-1)^{\| \alpha_k \| \cdot
	\left( \| \alpha_1 \| + \dots + \| \alpha_{k-1} \| \right)} \pg \left(
	\alpha_k, \alpha_1, \dots, \alpha_{k-1} \right).
    \end{equation}
\end{enumerate}

We first interpret the structure equations as showing that $\pg$ defines a
chain map from the Hochschild complex of $C$ to the complex of de-Rham currents on $X$. In other words, we deduce Theorem~\ref{thm:pg-hoch} from the structure equations.

\begin{proof}[Proof of Theorem~\ref{thm:pg-hoch}]
  This will follow from \eqref{eq:pg-ainf-structure} after a sign
  calculation. Let us write $\alpha = \alpha_1 \otimes \dots \otimes \alpha_k$ and set $l
  = \alpha_2 \otimes \dots \otimes \alpha_k$ so that $\alpha = \alpha_1 \otimes
  l$. Then using the definition of the Hochschild differential and the cyclic
  symmetry of $\pg$, we have
  \begin{align*}
    \pg \left( \dhoch \left( \alpha \right) \right) ={}& (-1)^{\degs{\alpha_1} +
    \degs{l_{(1)}}} \pg \left( \alpha_1 \otimes l_{(1)} \otimes \mg \left(
    l_{(2)} \right) \otimes l_{(3)} \right) \\
    &+ (-1)^{\degs{l_{(3)}} \cdot \left( \degs{\alpha_1} + \degs{l_{(1)}} +
    \degs{l_{(2)}} \right)} \pg \left( \mg \left( l_{(3)} \otimes \alpha_1
    \otimes l_{(1)} \right) \otimes l_{(2)} \right) \\
    ={}&  (-1)^{\left( \degs{l_{(2)}} + \degs{l_{(3)}} \right) \cdot \left(
    \degs{\alpha_1} + \degs{l_{(1)}} \right)}
    \pg \left( \mg \left( l_{(2)} \right) \otimes l_{(3)} \otimes \alpha_1
    \otimes l_{(1)} \right) \\
    &+ (-1)^{\degs{l_{(3)}} \cdot \left(
    \degs{\alpha_1} + \degs{l_{(1)}} +
    \degs{l_{(2)}} \right)} \pg \left( \mg \left( l_{(3)} \otimes \alpha_1
    \otimes l_{(1)} \right) \otimes l_{(2)} \right).
  \end{align*}
  We want to compare this expression to the expression appearing in the
  structure equation \eqref{eq:pg-ainf-structure}. In the expression for $\pg
  \left( \dhoch \left( \alpha \right) \right)$ we are summing over all possible
  ways to split $l$ into three lists $l = l_{\gen{1}} \otimes l_{\gen{2}} \otimes
  l_{\gen{3}}$ while in the expression appearing in the structure equation we are
  summing over all cyclic permutations $\sigma \in \Z /k \Z$ and for each
  permutation, over all splittings of the permuted list $\alpha^{\sigma}$ into
  two lists $\alpha^{\sigma} = \alpha^{\sigma}_{\gen{1}} \otimes
  \alpha^{\sigma}_{\gen{2}}$. We need to describe a bijective correspondence
  between both descriptions and verify that the associated signs match.

  Fix some cyclic permutation $\sigma \in \Z/k\Z$ and a specific   splitting of $\alpha^{\sigma}$ into two lists $\alpha^{\sigma} =
  \alpha^{\sigma}_{\gen{1}} \otimes \alpha^{\sigma}_{\gen{2}}$ where
  \begin{equation*}
    \alpha^{\sigma}_{\gen{1}} = \alpha_{\sigma(1)} \otimes \dots \otimes
    \alpha_{\sigma(r)}, \,\,\, \alpha^{\sigma}_{\gen{2}} = \alpha_{\sigma(r+1)}
    \otimes \dots \otimes \alpha_{\sigma(k)}.
  \end{equation*}
  We have two distinct cases:
  \begin{enumerate}
    \item Assume there exists $1 \leq i \leq r$ such that $\alpha(i) = 1$ (that is,
      $\alpha_1$ appears in $\alpha^{\sigma}_{\gen{1}}$). In this case, let us set
      \begin{align*}
	\alpha^{\sigma} &= \underbrace{\alpha_{\sigma(1)} \otimes \dots \otimes
	  \alpha_{\sigma(i-1)}}_{l_{\gen{3}}} \otimes \alpha_1 \otimes
	  \underbrace{\alpha_{\sigma(i+1)} \otimes
	  \dots \otimes \alpha_{\sigma(r)}}_{l_{\gen{1}}} \otimes
	  \underbrace{\alpha_{\sigma(r+1)} \otimes \dots \otimes
	  \alpha_{\sigma(k)}}_{l_{\gen{2}}} \\
	  &= \underbrace{l_{\gen{3}} \otimes \alpha_1 \otimes
	  l_{\gen{1}}}_{\alpha^{\sigma}_{\gen{1}}} \otimes
	  \underbrace{l_{\gen{2}}}_{\alpha^{\sigma}_{\gen{2}}}.
	\end{align*}
	Note that
	\begin{equation*}
	  s_{\sigma}^{[1]}(\alpha) = \degs{l_{\gen{3}}} \cdot \left( \degs{\alpha_1} +
	  \degs{l_{\gen{1}}} + \degs{l_{\gen{2}}} \right)
	\end{equation*}
	and so
	\begin{align*}
	   (-1)^{s_\sigma^{[1]}(\alpha)}
	   &\pg \left( \mg \left( \alpha_{\gen{1}}^{\sigma} \right) \otimes
	   \alpha_{\gen{2}}^{\sigma} \right) = \\
	   (-1)^{\degs{l_{\gen{3}}} \cdot \left( \degs{\alpha_1} +
	   \degs{l_{\gen{1}}} + \degs{l_{\gen{2}}} \right)}
	   &\pg \left( \mg \left( l_{\gen{3}} \otimes
	   \alpha_1 \otimes l_{\gen{1}} \right) \otimes l_{\gen{2}} \right).
	\end{align*}
      \item Assume there exists $r+1 \leq i \leq k$ such that $\alpha(i) = 1$ (that
	is, $\alpha_1$ appears in $\alpha^{\sigma}_{\gen{2}}$). In this case, let us
	set
	\begin{align*}
	\alpha^{\sigma} &= \underbrace{\alpha_{\sigma(1)} \otimes \dots \otimes
	  \alpha_{\sigma(r)}}_{l_{\gen{2}}} \otimes
	  \underbrace{\alpha_{\sigma(r+1)} \otimes
	  \dots \otimes \alpha_{\sigma(i-1)}}_{l_{\gen{3}}} \otimes \alpha_1 \otimes
	  \underbrace{\alpha_{\sigma(i+1)} \otimes \dots \otimes
	  \alpha_{\sigma(k)}}_{l_{\gen{1}}} \\
	  &= \underbrace{l_{\gen{2}}}_{\alpha^{\sigma}_{\gen{1}}} \otimes
	  \underbrace{l_{\gen{3}} \otimes \alpha_1 \otimes
	  l_{\gen{1}}}_{\alpha^{\sigma}_{\gen{2}}}.
	\end{align*}
	Note that
	\begin{equation*}
	  s_{\sigma}^{[1]}(\alpha) = \left( \degs{l_{\gen{2}}} + \degs{l_{\gen{3}}}
	  \right) \cdot \left( \degs{\alpha_1} +\degs{l_{\gen{1}}} \right).
	\end{equation*}
	and so
	\begin{align*}
	   (-1)^{s_\sigma^{[1]}(\alpha)}
	   &\pg \left( \mg \left( \alpha_{\gen{1}}^{\sigma} \right) \otimes
	   \alpha_{\gen{2}}^{\sigma} \right) = \\
	   (-1)^{\left( \degs{l_{\gen{2}}} + \degs{l_{\gen{3}}} \right) \cdot
	   \left( \degs{\alpha_1} + \degs{l_{\gen{1}}} \right)} &\pg \left( \mg
	   \left( l_{\gen{2}} \right) \otimes l_{\gen{3}} \otimes \alpha_1
	   \otimes l_{\gen{1}} \right).	   	
	 \end{align*}
  \end{enumerate}
  This analysis shows that we indeed have a bijective correspondence with
  matching signs and hence
  \begin{equation*}
    \pg \left( \dhoch \left( \alpha \right) \right) =  \sum_{\sigma \in \Z/k\Z}
    (-1)^{s_\sigma^{[1]}(\alpha)} \pg \left( \mg \left( \alpha_{(1)}^{\sigma}
    \right) \otimes \alpha_{(2)}^{\sigma} \right).
  \end{equation*}
By the structure equations, the right-hand side equals $(-1)^{n+1}d\pg(\a)$, as desired.
\end{proof}

\begin{rem}
  In fact the calculation above shows that the Hochschild differential on
  Connes cyclic complex can be written more symmetrically as
  \begin{equation*}
    \dhoch \left( \cyccl{l} \right) = \sum_{\sigma \in \Z/k\Z}
    (-1)^{s_\sigma(l)} \cyccl{ \mu \left( l^{\sigma}_{(1)} \right) \otimes
    l^{\sigma}_{(2)}}.
  \end{equation*}
  Given a map $\psi \colon \choch[A] \rightarrow M$ which is cyclic-symmetric
  (meaning, $\psi \circ \t = \psi$), we have
  \begin{equation*}
    \psi \left( \dhoch \left( l \right) \right) = \sum_{\sigma \in \Z/k\Z}
    (-1)^{s_\sigma(l)} \psi \left( \mu \left( l^{\sigma}_{(1)} \right) \otimes
    l^{\sigma}_{(2)} \right).
  \end{equation*}
\end{rem}

Taking into account the unit property~\eqref{eq:pg-unit-generic} of $\pg$, we
immediately get that $\pg$ descends to the normalized Hochschild complex, which \textit{proves Theorem~\ref{thm:pg-normhoch}}.

Finally, the cyclic symmetry~\eqref{eq:pg-cyclic-symmetry} of $\pg$ and Theorem~\ref{thm:pg-hoch} show that $\pg$ descends to a chain map on the Connes cyclic complex, which \textit{proves Theorem~\ref{thm:pg-connes}}.

Next, we will discuss the relation between $\pg$ and the reduced cyclic complex.
Since $\pg \left( 1_L \right) = (-1)^{n+1}i_{*} \left( 1_L \right)$, the map $\pg$ does
not descend to the reduced cyclic complex without changing the codomain of
$\pg$. To fix this, we can work with the complex $\ccur{X}[*][L][R]$ which is
obtained from $\ccur{X}[*][][R]$ by quotienting out by $i_{*} \left( 1_L \right)$, as discussed in detail in Section~\ref{ssec:currents}. Then
\textit{Theorem~\ref{thm:pg-redconnes} immediately follows}.

Finally, let us try and incorporate the map $\pg_{0}$ which gives us
contributions from the moduli spaces of disks with no boundary points. Consider
the operator $\pg_{+} \colon \tens{C[1]} \rightarrow \ccur{X}[*+n+1][][R]$
defined by \begin{equation*}
  \pg_{+} = \sum_{k=0}^{\infty} \pg_k.
\end{equation*}
Using \cref{prop:pstructure}, we see that
\begin{align*}
  d \left( \pg_{+}(1) \right) &= d \left( \pg_{0}(1) \right) = d \left( \sum_{l
  \geq 0} \frac{1}{l!} \p_{0,l} \left(1;\gamma^{\otimes l} \right) \right) \\
  &=
  (-1)^{n+1}\sum_{l \geq 0} \frac{1}{l!} \left( \sum_{m + k = l} {l \choose m}  \p_1
  \left( \q_1 \left( 1 ; \gamma^{\otimes m} \right); \gamma^{\otimes k} \right)
  \right)
  +
  \sum_{l \geq 0} \frac{1}{l!} \q_{\emptyset,l+1} \left( i_{*} \left( 1_L
  \right) \otimes \gamma^{\otimes l} \right) \\
  &= (-1)^{n+1}\pg_{1} \left( \mg_{0}(1) \right) + \q_{\emptyset,1}^{\gamma} \left(
  i_{*} \left( 1_L \right) \right).
\end{align*}
Since $\pg_{+}$ is symmetric with respect to the cyclic group action, it induces
a map  on
the extended cyclic complex
\[
\pg_{+} \colon \ceconnes[C] \rightarrow \ccur{X}[*+n+1][][R].
\]
The calculation above shows that the map $\pg_{+}$
does not commute with the differentials on elements of weight zero but we have
instead
\begin{gather*} \left( \pg_{+} \circ \dhoch \right)(1) = \left( \pg_{+}
  \left( \mg_{0}(1) \right) \right) =
  (-1)^{n+1}\big(\left( d \circ \pg_{+} \right)(1) -
  \q_{\emptyset,1}^{\gamma} \left( i_{*} \left( 1_L \right) \right)\big), \\
  \left( \pg_{+} \circ \dhoch \right) \left( \alpha_1, \dots \alpha_k \right) =
  (-1)^{n+1}\left( d \circ \pg_{+} \right) \left( \alpha_1, \dots, \alpha_k \right),
  \,\,\, k \geq 1.
\end{gather*}

\begin{proof}[Proof of Theorem~\ref{thm:pg-extended}]
  Since $\mathcal{P}$ and $\pg_{+}$ act the same on all elements except those of
  weight zero, we only need to check that $\left( d \circ \mathcal{P} \right)(1)
  = \left( \mathcal{P} \circ \dhoch \right)(1)$. And indeed, since $d\eta =
  -\zeta_L  = - i_{*} \left( 1_{L} \right)$ we
  have
  \begin{align*}
    d \left( \mathcal{P}(1) \right)
    &= d \left( \pg_0(1) \right) + d \left(
    \q_{\emptyset,1}^{\gamma} \left( \eta \right) \right)
    = d \left( \pg_0(1)
    \right) + \q_{\emptyset,1}^{\gamma} \left( d \eta \right) \\
    &= d \left( \pg_0(1) \right) - \q_{\emptyset,1}^{\gamma} \left(\zeta_L \right) \\
    &= (-1)^{n+1}\pg_{1} \left( \dhoch(1) \right) \\
    &=(-1)^{n+1} \mathcal{P} \left( \dhoch(1) \right).
  \end{align*}
  By replacing the codomain $\ccur{X}[* + n + 1][][R]$ with $\ccur{X}[* + n +
  1][L][R]$, we quotient out $i_{*} \left( 1_L \right)$ and then
  $\mathcal{P}(\alpha_1, \dots, \alpha_k)$ vanishes whenver $k \geq 1$ and
  $\alpha_i = 1_L$ for some $1 \leq i \leq k$. Thus, $\mathcal{P}$ also descends
  to a map $\mathcal{P} \colon \cerconnes[C] \rightarrow \ccur{X}[*+n+1][L][R]$
  as required. Finally, if $[\zeta_1] = [\zeta_2]$ in $\hcur{X}[n-1][L][R]$ then
  we can write $\zeta_1 = \zeta_2 + d\nu$ and then since
  $\q_{\emptyset,1}^{\gamma}$ is a chain map we have
  \begin{equation*}
    \q_{\emptyset,1}^{\gamma}(\zeta_1) = \q_{\emptyset,1}^{\gamma}(\zeta_2) +
    d \left( \q_{\emptyset,1}^{\gamma}(\nu) \right)
  \end{equation*}
  so $\mathcal{P}_{\zeta_1}$ and $\mathcal{P}_{\zeta_2}$ induce the same map on
  homology.
\end{proof}

\section{Pseudoisotopy}\label{sec:isot}

In this section, we construct operators $\pt$, which can be thought of as a family of $\p$ operators, and establish their properties.
The family is parameterized by the interval $I:=[0,1]$, and we think of
the dga
\[
\mR := A^*(I;R)
\]
as the underlying ring.
We prove that the $\pt$ operators satisfy properties analogous to the $\p$ operators. We then briefly discuss the suitable extensions of Theorems~\ref{thm:pg-hoch}-\ref{thm:pg-extended}, and invariance statements as their potential consequence.

Throughout, fix a family of $\omega$-tame almost complex structures $\{J_t\}_{t\in I}$.

\subsection{Structure}
For all $\beta\in\sly$, $k,l\ge 0$,  $(k,l,\beta) \ne (0,0,\beta_0)$, define
\[
\Mt_{k,l+1}(\beta):=\{(t,u)\,|\,t\in I, u\in\M_{k+1,l}(\beta;J_t)\}.
\]
The moduli space $\Mt_{k,l+1}(\beta)$ comes with evaluation maps
\begin{gather*}
\evbt_j:\Mt_{k,l+1}(\beta)\lrarr I\times L, \quad j\in\{1,\ldots,k\},\\
\evbt_j(t,(\Sigma,u,\vec{z},\vec{w})):=(t,u(z_j)),
\end{gather*}
and
\begin{gather*}
\evit_j:\Mt_{k,l+1}(\beta)\lrarr I\times X, \quad j\in\{0,\ldots,l\},\\
\evit_j(t,(\Sigma,u,\vec{z},\vec{w})):=(t,u(w_j)).
\end{gather*}
As with the usual moduli spaces, we assume all $\Mt_{k,l+1}(\beta)$ are smooth orbifolds with corners.
In the special case when the family $\{J_t\}_{t\in I}$ is constant $J_t\equiv J$ or has the form $J_t=\varphi_t^*J$ for a family of symplectomorphisms $\varphi_t$, the smoothness of $\Mt_{k,l+1}(\beta)$ is exactly equivalent to the smoothness of $\M_{k,l+1}(\beta;J_t)$ for any one $t$ and therefore for all $t$. In general, smoothness of $\M_{k,l+1}(\beta;J_t)$ for all $t$ does not follow from smoothness of $\Mt_{k,l+1}(\beta)$. It is only imposed in Section~\ref{sssec:psp} where it is needed.

Let
\[
p:I\times L\lrarr I,\qquad p_\M: \Mt_{k,l+1}(\beta)\lrarr I,
\]
denote the projections.

Define
\[
\pt_{k,l}^{\beta}:A^*(I\times L;R)^{\otimes k}\otimes A^*(I\times X;R)^{\otimes l}\lrarr \A^*(I\times X;R)
\]
by
\begin{gather*}
\pt_{k,l}^{\beta}(\otimes_{j=1}^k\at_j;\otimes_{j=1}^l\gt_j):= (-1)^{\varepsilon_p(\at)}(\evit_0)_*(\bigwedge_{j=1}^l\evit_j^*\gt_j \wedge\bigwedge_{j=1}^k\evbt_j^*\at_j)),
\quad (k,l,\beta)\ne (0,0,\beta_0),\\
\pt_{0,0}^{\beta_0}:=0.
\end{gather*}
Denote by
\[
\pt_{k,l}:A^*(I\times L;R)^{\otimes k}\otimes A^*(I\times X;R)^{\otimes l}\lrarr \A^*(I\times X;R)
\]
the sum over $\beta$:
\begin{gather*}
\pt_{k,l}(\otimes_{j=1}^k\at_j;\otimes_{j=1}^l\gt_j):=
\sum_{\beta\in \sly}
T^{\beta}\pt_{k,l}^{\beta} (\otimes_{j=1}^k\at_j;\otimes_{j=1}^l\gt_j).
\end{gather*}

Relabelling the marked points of $\Mt_{k+1,l}(\beta)$ into $\vec{z}=(z_0,\ldots,z_k)$, $\vec{w}=(w_1,\ldots,w_l)$, define
\begin{equation}\label{eq:qt}
\qt_{k,l}^{\beta}: A^*(I\times L;R)^{\otimes k}\otimes A^*(I\times X;R)^{\otimes l}\lrarr A^*(I\times L;R)
\end{equation}
for $(k,l,\beta) \not\in\{ (1,0,\beta_0),(0,0,\beta_0)\}$ by
\[
\qt_{k,l}^{\beta}(\otimes_{j=1}^k\at_j;\otimes_{j=1}^l\gt_j):= (-1)^{\varepsilon(\at)}(\evbt_0)_*(\bigwedge_{j=1}^l\evit_j^*\gt_j \wedge\bigwedge_{j=1}^k\evbt_j^*\at_j)),
\]
and set $\qt_{1,0}^{\beta_0}(\at)=d\at$ and $\qt_{0,0}^{\beta_0}:=0$.
Denote by
\[
\qt_{k,l}:A^*(I\times L;R)^{\otimes k}\otimes A^*(I\times X;R)^{\otimes l}\lrarr A^*(I\times L;R)
\]
the sum over $\beta$:
\[
\qt_{k,l}(\otimes_{j=1}^k\at_j;\otimes_{j=1}^l\gt_j):=
\sum_{\beta\in \sly}
T^{\beta}\qt_{k,l}^{\beta}(\otimes_{j=1}^k\at_j;\otimes_{j=1}^l\gt_j).
\]

Lastly, define similar operations using spheres,
\[
\qt_{\emptyset,l}:A^*(I\times X;R)^{\otimes l}\lrarr A^*(I\times X;R),
\]
as follows. For $\beta\in H_2(X;\Z)$ let
\[
\Mt_{l+1}(\beta):=\{(t,u)\;|\,t\in I, u\in \M_{l+1}(\beta;J_t)\}.
\]
For $j=0,\ldots,l,$ let
\begin{gather*}
\evt_j^\beta:\Mt_{l+1}(\beta)\to I\times X,\\
\evt_j^\beta(t,(\Sigma,u,\vec{w})):=(t,u(w_j)),
\end{gather*}
be the evaluation maps. Assume that all the moduli spaces $\Mt_{l+1}(\beta)$ are smooth orbifolds and $\evt_0$ is a submersion. Recall that $w_{\s} \in H^2(X;\Z/2\Z)$ is the class with $w_2(TL) = i^* w_{\s}$ determined by the relative spin structure $\s$. Set
\begin{gather*}
\qt_{\emptyset,l}^\beta(\gt_1,\ldots,\gt_l):=
(-1)^{w_\s(\beta)}
(\evt_0^\beta)_*(\bigwedge_{j=1}^l(\evt_j^\beta)^*\gt_j)
\quad \mbox{for $(l,\beta)\ne (1,0),(0,0)$},\\
\qt_{\emptyset,1}^0:= 0,\qquad \qt_{\emptyset,0}^0:= 0,\\
\qt_{\emptyset,l}(\gt_1,\ldots,\gt_l):=
\sum_{\beta\in H_2(X)}T^{\pr(\beta)}\qt_{\emptyset,l}^\beta(\gt_1,\ldots,\gt_l).
\end{gather*}

\begin{prop}\label{prop:ptstr}
Consider lists $\at=(\at_1,\ldots,\at_k),$ $\at_j\in A^*(I\times L;R)$, and $\gt=(\gt_1,\ldots,\gt_l),$ $\gt_j\in A^*(I\times X;R).$
Then
\begin{multline}
d\pt_{k,l}(\at;\gt)
=
\pt_{k,l}(\at; d(\gt))+ \\
+
\sum_{\substack{I\sqcup J=[l]\\ \sigma\in\Z/k\Z}}
(-1)^{s_\sigma^{[1]}(\at)+|\gt| +s_{\sigma_{I\sqcup J}}(\gt)+(n+1)(|\gt_J|+1)}
\pt_{k_1,|I|}(\qt_{k_2,|J|} ((\at^\sigma)_{(1)};\gt_J) \otimes(\at^\sigma)_{(2)}; \gt_{I})\\
+
\delta_{k,0}\cdot(-1)^{|\gt|} \qt_{\emptyset,l+1}(\gt\otimes i_*1).\notag
\end{multline}
\end{prop}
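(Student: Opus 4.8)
The plan is to reduce Proposition~\ref{prop:ptstr} to the non-family statement, Proposition~\ref{prop:pstructure}, by running the identical Stokes-theorem argument on the family moduli spaces $\Mt_{k,l+1}(\beta)$. First I would observe that the construction of $\pt$, $\qt$, and $\qt_{\emptyset,\bullet}$ over the ground ring $\mR = A^*(I;R)$ is formally the same as the construction of $\p$, $\q$, and $\q_{\emptyset,\bullet}$ over $R$, with $L$, $X$ replaced by $I\times L$, $I\times X$ and the evaluation maps replaced by their family versions. The only structural differences are: (i) the ground ring now carries a nonzero de Rham differential coming from the $I$ factor, and (ii) the base $I$ has a boundary $\partial I = \{0,1\}$, so the family moduli space $\Mt_{k,l+1}(\beta)$ has, in addition to the codimension-one strata already present in $\M_{k,l+1}(\beta;J_t)$, the two ``endpoint'' strata $\{t=0\}$ and $\{t=1\}$ sitting inside $\partial\Mt_{k,l+1}(\beta)$. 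I would note at the outset that these endpoint strata do \emph{not} contribute new terms to the structure equation: the differential $d(\gt)$ on the right-hand side is the full de Rham differential on $I\times X$ (including $dt$-components), and exactly the same bookkeeping that makes $d\circ\pt$ and $\pt\circ d$ into the left-hand side and first right-hand term already accounts for the $t$-direction. Concretely, when applying Stokes to $f = \evit_0$, the fiberwise boundary $\partial\Mt_{k,l+1}(\beta)$ with respect to $\evit_0\colon\Mt_{k,l+1}(\beta)\to I\times X$ retains the $I$-direction in its image, so the $\{t=0,1\}$ faces are swept into the $f_*(d\xi)$ term rather than producing boundary pushforwards, and the analysis is genuinely parallel to the closed case.

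The main body of the argument is then a step-by-step transcription. Step one: establish the family analogues of the gluing/orientation lemmas --- namely a family version of Proposition~\ref{rem:pbdgluing} (the two-disk boundary stratum, now with the extra trivial $I$-factor, so the orientation sign $(-1)^{k_2k_1+k_2+n}$ is unchanged since $I$ contributes nothing to the sign) and of Proposition~\ref{rem:pintgluing} (the collapsing-boundary stratum giving a sphere through $L$, again with unchanged sign). These follow from~\cite[Proposition~8.10.3]{FOOO} and~\cite[Proposition~2.12]{ST1} applied to the family, or simply by noting the $I$-factor is a product and orientations are compatible with products. Step two: the sign lemmas, Lemma~\ref{lm:epsilonsigma}, Lemma~\ref{lm:sumofepsilons}, and Lemma~\ref{lm:sumofepsilonsp}, are purely combinatorial in the degrees $|\at_j|$, $|\gt_j|$ and hence hold verbatim with $\mR$-valued forms; I would simply cite them. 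Step three: the computations of $(\evit_0|_B)_*\xi$ on each type of boundary stratum --- the analogues of Lemma~\ref{lm:pstar} and Lemma~\ref{lm:bbbp} --- go through word for word, using Proposition~\ref{prop:proppp} (push-pull, functoriality of pullback, fiber-product compatibility), all of which are stated for general smooth maps of orbifolds with corners and hence apply to the family evaluation maps. Step four: assemble via Stokes (Proposition~\ref{stokes}), exactly as in the proof of Proposition~\ref{prop:pstructure}, using $\dim\Mt_{k,l+1}(\beta) = \dim\M_{k,l+1}(\beta) + 1 \equiv k+n \pmod 2$; the extra $+1$ in the dimension shifts the Stokes sign $(-1)^{s+t}$, but this shift is exactly compensated by the fact that $\varepsilon_p$ and the degree count now also see the $I$-direction --- or, more cleanly, one checks that the $+1$ is absorbed into the definition of the full differential $d(\gt)$ on $I\times X$ and the signs rebalance just as they did in the closed case. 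Collecting terms, dividing by $(-1)^{\varepsilon_p(\at)}$, multiplying by $T^\beta$ and summing over $\beta$ yields the stated equation.

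I expect the main obstacle to be purely a matter of careful sign-accounting at the interface between the $I$-direction and the rest: one must confirm that the extra unit of dimension contributed by $I$, the $dt$-components hidden inside $d(\gt)$, and the Koszul signs from commuting forms past the $I$-factor all conspire so that no spurious $\{t=0,1\}$ boundary term survives and no sign gets flipped relative to Proposition~\ref{prop:pstructure}. A clean way to organize this, which I would adopt, is to treat $\mR = A^*(I;R)$ as the ``new $R$'' throughout, so that the family $\p$-operators are literally the $\p$-operators of Section~\ref{sec:p} over the ground ring $\mR$ but with one subtlety: the de Rham differential $d$ on $\A^*(I\times X;R)$ restricts on the $\mR$-factor to the nonzero differential of $\mR$, whereas in Section~\ref{sec:p} the ring $R$ was a dga with trivial differential. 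Hence the one genuine new point to verify is that $d(\gt)$ in the statement is indeed the \emph{total} differential (de Rham on $I\times X$), and that the proof of Proposition~\ref{prop:pstructure} used only Stokes' theorem and the operadic identities --- both of which are insensitive to whether the ground ring has a differential --- so that the argument closes. Everything else is a citation or a routine repetition, and I would present it as such rather than re-deriving it.
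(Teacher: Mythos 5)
Your overall strategy is the same as the paper's: rerun the Stokes argument of Proposition~\ref{prop:pstructure} on the family moduli spaces, note that the $\{t=0,1\}$ faces do not enter because Proposition~\ref{stokes} only sees the fiberwise boundary of $\evit_0$ (your phrase that they are ``swept into the $f_*(d\xi)$ term'' is not quite the right description, but the conclusion is right), and observe that all the combinatorial sign lemmas and push--pull identities carry over verbatim. However, there is a genuine gap in your sign accounting, and it sits exactly at the one place where the family case differs from the closed case. The paper's proof records two changes: the orientation sign of the family version of the gluing map in Proposition~\ref{rem:pbdgluing} becomes $(k_1k_2+k_2+n)+1$, i.e.\ it picks up an extra unit because the fiber product is now taken over $I\times L$ rather than $L$; and the dimension entering the Stokes sign becomes $\dim\Mt_{k,l+1}(\beta)=\dim\M_{k,l+1}(\beta)+1$. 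These two extra units cancel, which is why the final formula is unchanged.

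You assert instead that the gluing sign is \emph{unchanged} (``the $I$-factor is a product and orientations are compatible with products'') and then propose that the extra $+1$ from the dimension shift is compensated by ``$\varepsilon_p$ and the degree count now also seeing the $I$-direction'' or by being ``absorbed into the definition of the full differential $d(\gt)$.'' Neither mechanism exists: $\varepsilon_p$ is defined with the same $n=\dim L$ in both settings, the degree $t=|\xi|$ appearing in $(-1)^{s+t}$ undergoes no systematic shift, and the total de Rham differential on $I\times X$ is already what produces the left-hand side and the $\pt_{k,l}(\at;d(\gt))$ term---it cannot also absorb a parity discrepancy in the boundary contributions. With your (unchanged) gluing sign, the two-disk boundary terms (and, by the same reasoning, the sphere-bubble term when $k=0$) would come out with an overall extra factor of $-1$ relative to the stated equation, so the argument as written does not close. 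The missing ingredient is precisely the orientation computation for the family gluing map, where commuting the interval factor through the fiber product over $I\times L$ contributes the extra $+1$ that cancels against $\dim\Mt=\dim\M+1$ in the Stokes sign.
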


\begin{proof}
The proof is analogous to Proposition~\ref{prop:pstructure}, with the following two differences:
The gluing sign from Proposition~\ref{rem:pbdgluing} becomes $(k_1k_2+k_2+n)+1,$
and the contribution of $s=\dim M$ to the sign of Proposition~\ref{stokes} becomes $\dim \Mt_{k,l+1}(\beta) = \dim \M_{k,l+1}(\beta) + 1$, so the total computation of the sign results in the same value.
\end{proof}

\subsection{Properties}
The properties formulated for the $\p$-operators can be equally well formulated for the $\pt$-operators, with similar proofs. Below we discuss them explicitly, and add one that is specific to the pseudoisotopy context.

\subsubsection{Linearity}
\begin{prop}\label{prop:pt_lin}
The operations $\pt$ are $\mR$-multilinear in the sense that for all $f\in \mR,$
\[
\ptkl^\beta(\at_1,\ldots,\at_{i-1},f.\at_i,\ldots,\at_k;\etat)
	=(-1)^{|f|\cdot\big(n+1+\V\at_{[1:i-1]}\V+|\etat|\big)}
	f.\ptkl^\beta(\at_1,\ldots,\at_k;\etat),
\]
and for $f\in A^*(I;R),$ we have
\[
\ptkl^\beta(\lstt;\gt_1,\ldots,f.\gt_i,\ldots,\gt_l)
=(-1)^{|f|\cdot|\gt_{[1:i-1]}|}
		f.\ptkl^\beta(\lstt;\gt_1,\ldots,\gt_l).
\]
\end{prop}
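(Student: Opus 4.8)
The plan is to reduce both identities to the corresponding statements for $\p_{k,l}$ (Proposition~\ref{prop:p_lin}) by observing that the operators $\pt_{k,l}^\beta$ are built out of exactly the same ingredients, now over the base ring $\mR = A^*(I;R)$ instead of $R$: a pull-back of differential forms along the evaluation maps $\evbt_j,\evit_j$, followed by a wedge product, followed by a push-forward along $\evit_0$. The sign prefactor $(-1)^{\varepsilon_p(\at)}$ with $\varepsilon_p(\at)=\sum_{j=1}^k(n+j)\V\at_j\V$ depends only on the degrees $\V\at_j\V$, which are unchanged when an $\at_i$ is multiplied by a homogeneous element $f$ of even or odd degree (the degree shifts by $|f|$, exactly as in the non-family case). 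So the only thing to check is how the two non-sign operations — pull-back/wedge and push-forward — interact with multiplication by $f\in\mR$.

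First I would treat the boundary-input linearity. Write $\xit:=\bigwedge_{j=1}^l\evit_j^*\gt_j\wedge\bigwedge_{j=1}^k\evbt_j^*\at_j$. Inserting $f.\at_i$ in place of $\at_i$ replaces $\evbt_i^*\at_i$ by $\evbt_i^*(f.\at_i) = (\evbt_i^*f)\wedge(\evbt_i^*\at_i)$; since $f\in\mR=A^*(I;R)$ and $\evbt_i = (p_\M,\,\text{ev part})$ factors through the projection $p_\M:\Mt_{k,l+1}(\beta)\to I$, the form $\evbt_i^*f = p_\M^*f$ is independent of $i$. Graded-commuting this factor to the front past the forms of total degree $|\glt|+\V\at_{[1:i-1]}\V$ (here I must be careful to use $|\cdot|$ versus $\V\cdot\V$ consistently, matching the statement's exponent $n+1+\V\at_{[1:i-1]}\V+|\etat|$) produces the sign $(-1)^{|f|(|\glt|+|\at_{[1:i-1]}|)}$, and then the push-forward $(\evit_0)_*$ commutes with wedging by $p_\M^*f$: indeed $p_\M = p\circ\evit_0$ (both sides are the projection to $I$), so $p_\M^*f = \evit_0^*(p^*f)$, and the push-pull formula Proposition~\ref{prop:proppp}\eqref{prop:pushpull} (valid for a current $\beta$ paired with a pulled-back form) gives $(\evit_0)_*(\evit_0^*(p^*f)\wedge\xit) = p^*f\wedge(\evit_0)_*\xit$ up to the graded sign from moving $p^*f$ back out front. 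Collecting the sign from $\varepsilon_p$ (which contributes $(i+n)|f|\equiv (n+1+\V\at_{[1:i-1]}\V)|f|$ after accounting for $\V\at_{[1:i-1]}\V\equiv i-1$ modulo parity bookkeeping) with the sign from the two graded commutations yields the claimed prefactor $(-1)^{|f|(n+1+\V\at_{[1:i-1]}\V+|\etat|)}$.

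The interior-input linearity is the same computation with the simplification that $\varepsilon_p$ does not depend on $\glt$ at all; inserting $f.\gt_i$ with $f\in A^*(I;R)$ replaces $\evit_i^*\gt_i$ by $(\evit_i^*f)\wedge(\evit_i^*\gt_i)$, and moving $\evit_i^*f$ to the front past the forms indexed $1,\dots,i-1$ gives $(-1)^{|f|\cdot|\gt_{[1:i-1]}|}$; again $\evit_i^*f = p_\M^*f = \evit_0^*p^*f$ so push-pull extracts it from $(\evit_0)_*$. The main (really the only) obstacle is the sign bookkeeping: verifying that the various parity identities relating $|\cdot|$ and $\V\cdot\V$ on sublists, together with the $(i+n)|f|$ contribution of $\varepsilon_p$, assemble precisely into the stated exponents. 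This is entirely parallel to the proof of Proposition~\ref{prop:p_lin}, so I would phrase the proof as ``the identical computation as in Proposition~\ref{prop:p_lin}, using $p_\M = p\circ\evit_0$ and Proposition~\ref{prop:proppp}\eqref{prop:pushpull} to pull the factor in $\mR$ out of the push-forward'', spelling out only the two graded-commutation signs and deferring the rest to the cited computation.
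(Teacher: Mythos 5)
Your proposal is correct and follows essentially the same route as the paper's proof: pull $f$ back via the projection to $I$, graded-commute it to the front (sign $|f|\big(|\etat|+|\at_{[1:i-1]}|\big)$ in the boundary case, $|f|\cdot|\gt_{[1:i-1]}|$ in the interior case), use that $p\circ\evbt_i$ and $p_X\circ\evit_i$ coincide with $p_X\circ\evit_0$ to extract the factor by the push-pull formula, and add the shift $(n+i)|f|$ of $\varepsilon_p$ for the boundary case. The only nitpick is the parenthetical ``$\V\at_{[1:i-1]}\V\equiv i-1$'': the correct bookkeeping is that $(n+i)|f|$ combined with the commutation term $|f|\cdot|\at_{[1:i-1]}|$ gives $|f|\big(n+1+\V\at_{[1:i-1]}\V\big)$, which together with $|f|\cdot|\etat|$ is exactly the stated exponent (and no extra sign arises when the push-pull formula pulls the front factor out), as your final assembly in fact asserts.
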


\begin{proof}

Let $p_X:I\times X\to I$ be the projection.

For the first identity, consider
\begin{align*}
(\evit_0)_*(\evit^*\etat\wedge \evbt_{[1:i-1]}^* & \lstt  \wedge \evbt_i^*(p^*f\wedge\at_i)\wedge\evbt_{[i+1:k]}^*\lstt)=\\
&=(\evit_0)_*(\evit^*\etat\wedge\evbt_{[1:i-1]}^*\lstt\wedge(p\circ\evbt_i)^*f\wedge\evbt_{[i:k]}^*\lstt)\\
&=(-1)^{|f|\cdot\big(|\lstt_{[1:i-1]}|+|\etat|\big)} (\evit_0)_*((p\circ\evbt_i)^*f\wedge \evit^*\etat \wedge\evbt^*\lstt)\\
&=(-1)^{|f|\cdot\big(|\lstt_{[1:i-1]}|+|\etat|\big)} (\evit_0)_*((p_X\circ\evit_0)^*f\wedge\evit^*\etat\wedge\evbt^*\lstt)\\
&=(-1)^{|f|\cdot\big(|\lstt_{[1:i-1]}|+|\etat|\big)} (p_X^*f)\wedge(\evit_0)_*(\evit^*\etat\wedge\evbt^*\lstt).
\end{align*}
The result follows from adding the change in $\varepsilon_p$:
\[
\varepsilon_p(\at_1,\ldots,\at_{i-1},f.\at_i,\ldots,\at_k) -\varepsilon_p(\at_1,\ldots,\at_k)
=
(n+i)|f|.
\]

The second equality follows from
\begin{align*}
(\evit_0)_*(\evit_{[1:i-1]}^*\etat \wedge  \evit_i^*(p_X^*f & \wedge\gt_i)\wedge\evit_{[i+1;l]}^*\etat \wedge\evbt^*\lstt)=\\
=&(-1)^{|f|\cdot|\etat_{[1:i-1]}|} (\evit_0)_*(\evit_i^*p_X^*f\wedge\evit^*\etat \wedge\evbt^*\lstt)\\
=&(-1)^{|f|\cdot|\etat_{[1:i-1]}|} (\evit_0)_*((p_X\circ \evit_j)^*f\wedge\evit^*\etat\wedge\evbt^*\lstt)\\
=&(-1)^{|f|\cdot|\etat_{[1:i-1]}|}(\evit_0)_*((p_X\circ \evit_0)^*f\wedge\evit^*\etat\wedge\evbt^*\lstt)\\
=&(-1)^{|f|\cdot|\etat_{[1:i-1]}|}(p_X^*f) \wedge(\evit_0)_*(\wedge\evit^*\etat\wedge\evbt^*\lstt),
\end{align*}
while $\varepsilon_p$ is not affected.

\end{proof}

\subsubsection{Pseudoisotopy}\label{sssec:psp}
For $t\in I$ and $M=pt,L,X,$ denote by $j_t:M\hookrightarrow I\times M$ the inclusion $x\mapsto (t,x)$. Assume the moduli spaces $\M_{k,l+1}(\beta;J_t)$ are smooth. Denote by $\pkl^t$ the $\p$-operators associated to the complex structure $J_t$.
\begin{prop}\label{lm:pseudo}
For $\at_1,\ldots,\at_k\in A^*(I\times L;R)$ and $\gt=(\gt_1,\ldots,\gt_l)\in A^*(I\times X;R)^{\otimes l}$,
and $t\in I$, we have
\[
j_t^*\pt_{k,l}(\at;\gt)=
\pkl^t(j_t^*\at;j_t^*\gt).
\]
\end{prop}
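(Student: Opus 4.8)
The plan is to reduce the identity to the behavior of the evaluation maps under restriction to the slice $\{t\}\times M$ together with the base-change formula for push-forward along a fiber product (Proposition~\ref{prop:proppp}\eqref{prop:pushfiberprod}). First I would observe that the fiber of $p_\M\colon\Mt_{k,l+1}(\beta)\to I$ over $t\in I$ is canonically identified with $\M_{k,l+1}(\beta;J_t)$, the moduli space used to define $\pkl^t$. Denoting by $j_t^\M\colon \M_{k,l+1}(\beta;J_t)\hookrightarrow \Mt_{k,l+1}(\beta)$ the inclusion of the slice, the defining formulas give a commuting square
\[
\xymatrix{
{\M_{k,l+1}(\beta;J_t)}\ar[r]^{j_t^\M}\ar[d]^{evi_0^{\beta,t}}&{\Mt_{k,l+1}(\beta)}\ar[d]^{\evit_0^\beta}\\
{X}\ar[r]^{j_t}&{I\times X}\, ,
}
\]
and similarly for the other evaluation maps $evb_j$, $evi_j$; moreover this square is a pull-back diagram, since $\evit_0^\beta$ is compatible with the projections to $I$ and the fiber of $\{t\}\hookrightarrow I$ pulls back $\evit_0$ to $evi_0^{\beta,t}$.

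The next step is compatibility of the integrands with restriction. Because $j_t$ is an inclusion (in particular $j_t=$ the base change of the point inclusion into $I$), functoriality of pull-back gives $(j_t^\M)^*\evbt_j^*\at_j=(evb_j^{\beta,t})^*j_t^*\at_j$ and $(j_t^\M)^*\evit_j^*\gt_j=(evi_j^{\beta,t})^*j_t^*\gt_j$, and pull-back is a ring homomorphism, so
\[
(j_t^\M)^*\Big(\bigwedge_{j=1}^l\evit_j^*\gt_j\wedge\bigwedge_{j=1}^k\evbt_j^*\at_j\Big)
=\bigwedge_{j=1}^l(evi_j^{\beta,t})^*j_t^*\gt_j\wedge\bigwedge_{j=1}^k(evb_j^{\beta,t})^*j_t^*\at_j.
\]
Then I would apply Proposition~\ref{prop:proppp}\eqref{prop:pushfiberprod} to the pull-back square above: with $g=\evit_0^\beta$ a submersion onto its image after restriction and $f=j_t$ a proper submersion onto $\{t\}\times X\cong X$ (here one must phrase $j_t$ correctly as the base change of $\{t\}\hookrightarrow I$ so that the hypotheses of the proposition are met; since we work with currents, only $f$ need be a proper submersion, which it is), one gets $(evi_0^{\beta,t})_*(j_t^\M)^*\xi=j_t^*(\evit_0^\beta)_*\xi$ for $\xi$ the integrand. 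Combining with the integrand identity and the fact that the sign $\varepsilon_p(\at)$ is unchanged under $j_t^*$ (it depends only on the degrees of the $\at_j$, which are preserved since $j_t^*$ preserves the grading), this yields $j_t^*\pt_{k,l}^\beta(\at;\gt)=\pkl^{\beta,t}(j_t^*\at;j_t^*\gt)$ for each $\beta$; multiplying by $T^\beta$ and summing over $\sly$ gives the claim. One also checks the excluded base case $(k,l,\beta)=(0,0,\beta_0)$, where both sides are $0$ by definition.

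The main obstacle is the orientation/sign bookkeeping in applying the base-change formula: one must verify that the relative orientations on $\evit_0^\beta$ and on $evi_0^{\beta,t}$ are compatible, i.e.\ that restricting the chosen relative orientation of $\evit_0$ to a slice yields the relative orientation used to define $\pkl^t$. Since $\Mt_{k,l+1}(\beta)$ is oriented as a product-type family over $I$ (its orientation is the "$I$ times fiber" orientation, analogous to the conventions in Section~\ref{sec:isot}), the restriction to $\{t\}\times\M_{k,l+1}(\beta;J_t)$ recovers the standard orientation, and the relative orientation of the evaluation map is defined slice-wise; hence no extra sign appears. The only remaining subtlety is making sure Proposition~\ref{prop:proppp}\eqref{prop:pushfiberprod} is invoked with its currents hypothesis — $f$ a proper submersion, $g$ arbitrary smooth — which matches our situation since $\evit_0$ need not be a submersion. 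Once these points are checked, the proof is a formal consequence of the cited results.
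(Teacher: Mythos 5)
Your overall route---identifying the fiber of $p_\M$ over $t$ with $\M_{k,l+1}(\beta;J_t)$, restricting the evaluation maps and the integrand to the slice, commuting $j_t^*$ past the push-forward via the base-change formula, checking that $\varepsilon_p$ is unaffected, and summing over $\beta$---is exactly the argument the paper appeals to: its proof consists of saying that the proof of \cite[Proposition 4.7]{ST1} for $\qt$ goes through verbatim with $\evit_0, evi_0$ in place of $\evbt_0, evb_0$, and that proof is precisely this fiber-restriction-plus-base-change computation, including your orientation remark.

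The genuine problem is how you discharge the key step. In your pull-back square the bottom map is $f=j_t\colon X\to I\times X$, and $j_t$ is proper but \emph{not} a submersion (its differential misses the $\partial_t$ direction), so the claim that ``only $f$ need be a proper submersion, which it is'' is false, and the currents clause of Proposition~\ref{prop:proppp}\eqref{prop:pushfiberprod} does not apply; phrasing $j_t$ as a base change of $\{t\}\hookrightarrow I$ does not change this. The clause that actually powers the $\qt$ argument in \cite{ST1} is the differential-form one, which puts no condition on $f$ but requires the right-hand vertical map $g$ to be a proper submersion: there $g=\evbt_0$ is a proper submersion by the standing regularity assumptions, the push-forwards are honest forms, and $j_t^*$ is the ordinary pullback. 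For $\pt$ the corresponding map is $g=\evit_0$, which this paper explicitly does not assume to be a submersion; in that generality $(\evit_0)_*\xi$ is only a current, the pullback \eqref{eq:pbc} is not even defined along the non-submersion $j_t$, and neither clause of Proposition~\ref{prop:proppp}\eqref{prop:pushfiberprod} applies as stated. So your argument is correct (and is the paper's argument) exactly in the regime where $\evit_0^\beta$ is a proper submersion, so that the form version of base change applies with $f=j_t$ arbitrary; outside that regime you must first say in what sense $j_t^*$ of the current $(\evit_0)_*\xi$ is taken, and asserting that $j_t$ is a submersion does not fill that gap.
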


The proof is verbatim as the analogous proof for $\qt$, given in~\cite[Proposition 4.7]{ST1}, but with $\evit_0, evi_0,$ instead of $\evbt_0, evb_0,$ respectively.

\subsubsection{Unit}
\leavevmode
Denote by $1_{I\times L}$ the constant function on $A^*(I\times L)$.
\begin{prop}
For $\at_1,\ldots,\at_k\in A^*(I\times L;R)$ and $\gt=(\gt_1,\ldots,\gt_l)\in A^*(I\times X;R)^{\otimes l}$,
\begin{equation*}
	\pt_{\,k+1\!,l}^{\beta}(\at_1,\ldots,\at_{i-1},1_{I\times L},\at_{i}, \ldots,\at_k ;\otimes_{r=1}^l\gt_r)=
\begin{cases}
0,&(k+1,l,\beta)\ne (1,0,\beta_0),\\
(-1)^{n+1}i_*{1_{I\times L}},&(k+1,l,\beta)= (1,0,\beta_0).
\end{cases}
		\end{equation*}
\end{prop}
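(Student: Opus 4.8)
The plan is to mirror the proof of Proposition~\ref{prop:p_unit}, transporting the argument to the family setting. First I would set up the forgetful map: for $(k+1,l+1,\beta)\ne(1,1,\beta_0)$ there is a well-defined map $\pi_i\colon\Mt_{k+1,l+1}(\beta)\to\Mt_{k,l+1}(\beta)$ that forgets the $i$-th boundary marked point, shifts the labels of the subsequent boundary points, and stabilizes; this is just the family version of the map used in~\cite[Proposition 3.2]{ST1}, fiberwise over $I$. Since $1_{I\times L}=\pi_i^*(1_{I\times L})$ pulls back from the quotient, and the remaining inputs $\at_j$ ($j\ne i$) and $\gt_r$ can all be pulled back along $\pi_i$ after suitably relabelling, the integrand $\bigwedge\evit_j^*\gt_j\wedge\bigwedge\evbt_j^*\at_j$ is itself a pullback $\pi_i^*\xi'$. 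Because $\pi_i$ has one-dimensional fibers (the position of the forgotten boundary point on a circle), push-forward of a pulled-back form along the composite $\evit_0=\evit_0'\circ\pi_i$ vanishes by the fiber-integration/Stokes argument exactly as in the non-family case: $(\evit_0)_*\pi_i^*\xi' = (\evit_0')_*(\pi_i)_*\pi_i^*\xi' = 0$ since $(\pi_i)_*\pi_i^*\xi'=0$. This gives the first case, $\pt_{k+1,l}^\beta(\ldots,1_{I\times L},\ldots;\gt)=0$.

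Next I would isolate the exceptional case $(k+1,l+1,\beta)=(1,1,\beta_0)$, which forces $(k+1,l,\beta)=(1,0,\beta_0)$ (recall $\pt^{\beta_0}_{0,0}$ is defined to be $0$, so only $\pt^{\beta_0}_{1,0}(1_{I\times L})$ remains). Here stabilization is impossible, but the moduli space is transparent: $\Mt_{1,1}(\beta_0)=\{(t,u)\mid t\in I,\ u\in\M_{1,1}(\beta_0;J_t)\}$, and since $\M_{1,1}(\beta_0;J_t)\cong L$ via the (unique) evaluation map, independent of $t$, we get $\Mt_{1,1}(\beta_0)\cong I\times L$ orientation-preservingly, with $\evit_0 = (\id_I\times i)\circ\evt$ and $\evbt_1=\evt$ for the identification $\evt\colon\Mt_{1,1}(\beta_0)\xrightarrow{\sim}I\times L$. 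Then
\[
\pt_{1,0}^{\beta_0}(1_{I\times L}) = (-1)^{(n+1)\cdot1}(\evit_0)_*\evbt_1^*1_{I\times L} = (-1)^{n+1}(\id_I\times i)_*\evt_*\evt^*1_{I\times L} = (-1)^{n+1}i_*1_{I\times L},
\]
using $\varepsilon_p(1_{I\times L})=(n+1)\V1_{I\times L}\V=n+1$ and $\evt_*\evt^*=\id$. This yields the second case.

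The main obstacle, such as it is, is bookkeeping rather than conceptual: one must check that the forgetful map $\pi_i$ is compatible with the corner structure of the family moduli space (so that push-forward/Stokes applies cleanly over the interval, with no extra boundary term from $\partial I$ creeping in), and that the relabelling of boundary marked points together with the cyclic-order conventions does not introduce a sign discrepancy in the identification $\xi=\pi_i^*\xi'$. Both points are handled exactly as in the $\q$-analogue~\cite[Proposition 4.7]{ST1} and Proposition~\ref{prop:p_unit}: the fiber of $\pi_i$ is closed (a circle), so no boundary contribution arises, and the sign is absorbed into the already-present $\varepsilon_p$. I would therefore present the proof compactly, citing~\cite[Proposition 3.2]{ST1} for the vanishing mechanism and Proposition~\ref{prop:p_unit} for the sign computation in the exceptional case, noting only that one replaces $evb_0,\evbt_0$ by $evi_0,\evit_0$ throughout.
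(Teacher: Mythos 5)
Your proposal is correct and follows essentially the same route as the paper: the paper's proof simply repeats the argument of Proposition~\ref{prop:p_unit} (the forgetful-map vanishing of~\cite[Proposition 3.2]{ST1}) with $\Mt$, $\evit_j$, $\evbt_j$, $\pt$ in place of $\M$, $evi_j$, $evb_j$, $\p$, and handles the exceptional case via the orientation-preserving identification of $\Mt_{1,1}(\beta_0)$ with $I\times L$, exactly as you do. The sign computation $(-1)^{(n+1)\V 1_{I\times L}\V}=(-1)^{n+1}$ matches the paper's.
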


\begin{proof}
Repeat the proof of Proposition~\ref{prop:p_unit} with $\Mt$, $\evit_j$, $\evbt_j$, and $\pt$, instead of $\M$, $evi_j$, $evb_j$, and $\p$, respectively. In the case $(k,l,\beta)=(2,0,\beta),$ the map $\evit_0$ gives an orientation preserving identification of $\Mt_{1,1}(\beta_0)$ with $I\times L$, and the rest of the computation is again the same.

\end{proof}

\subsubsection{Degree}
\begin{prop}
For $k\ge 0$ and $\gt=(\gt_1,\ldots,\gt_l)$ with $|\gt_j|=2$ for all $j$,  the map
\[
\ptkl(\; ;\gt):A^*(I\times L;R)^{\otimes k}\lrarr \A^*(I\times X;R)
\]
is of degree $n+1-k$.
\end{prop}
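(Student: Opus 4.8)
The argument is the pseudoisotopy analogue of the proof of Proposition~\ref{prop:p_deg}, and it is enough to check that for every $\beta\in\sly$ the map $T^\beta\ptkl^\beta(\;;\gt)$ is homogeneous of degree $n+1-k$. First I would record the relevant dimension count: since $\Mt_{k,l+1}(\beta)$ fibers over $I$ with fibers $\M_{k+1,l}(\beta;J_t)$, we have
\[
\dim\Mt_{k,l+1}(\beta)=\dim\M_{k,l+1}(\beta)+1=n-3+\mu(\beta)+k+2(l+1)+1,
\]
while $\dim(I\times X)=2n+1$. Hence the relative dimension of the pushforward along $\evit_0$ is
\[
\rdim(\evit_0)=(2n+1)-\big(n-3+\mu(\beta)+k+2(l+1)+1\big)=2n-\big(n-3+\mu(\beta)+k+2(l+1)\big),
\]
which is exactly $\rdim(evi_0)$ from the proof of Proposition~\ref{prop:p_deg}. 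This is the crucial point: passing to the family moduli space raises the source dimension by one, but the target gains a dimension as well, so the relative dimension is unaffected.

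Now, using the degree of the pushforward of currents~\eqref{eq:pfc} and the fact that $|\gt_j|=2$ for all $j$, for any list $\at=(\at_1,\ldots,\at_k)$ I would compute
\begin{align*}
|T^\beta\ptkl^\beta(\at;\gt)|
&=\mu(\beta)+|\at|+|\gt|+\rdim(\evit_0)\\
&=\mu(\beta)+|\at|+2l+\Big(2n-\big(n-3+\mu(\beta)+k+2l+2\big)\Big)\\
&=|\at|+n+1-k,
\end{align*}
so the map raises degree by $n+1-k$, as claimed. There is no real obstacle here; the only thing to be careful about is that the definition of $\ptkl^\beta$ involves the sign $(-1)^{\varepsilon_p(\at)}$ and the wedge with the pulled-back interior and boundary classes, but neither affects the total degree, exactly as in Proposition~\ref{prop:p_deg}.
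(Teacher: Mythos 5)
Your proposal is correct and follows the same route as the paper, which simply notes that $\rdim(\evit_0)=\rdim(evi_0)$ (both the family moduli space and the target $I\times X$ gain one dimension) and then repeats the degree computation of Proposition~\ref{prop:p_deg} verbatim. Your version just spells out this dimension count explicitly, which is fine.
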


\begin{proof}
Note that $\rdim(evb_0)=\rdim(\evbt_0)$. Therefore, the proof of Proposition~\ref{prop:p_deg} is valid verbatim in our case, with $\p$ replaced by $\pt$ and $evi_0$ by $\evit_0$.

\end{proof}

\subsubsection{Symmetry}
The proofs of the following propositions are the same as Propositions~\ref{prop:p_sgn} and~\ref{prop:p_int_sgn}, respectively, with $\pt, \evit_j, \evbt_j,$ instead of $\p, evi_j, evb_j,$ respectively.

\begin{prop}[Cyclic symmetry of boundary input]\label{cl:bdsym}
For a cyclic permutation $\sigma\in\Z/k\Z$, $\at=(\at_1,\ldots,\at_k)$, and $\gt=(\gt_1,\ldots,\gt_l)$, we have
\[
\ptkl(\at;\gt)=
(-1)^{s_\sigma^{[1]}(\at)}\ptkl(\at^\sigma;\gt)
\]
with $s_\sigma^{[1]}(\lstt^\sigma)$ as defined before Proposition~\ref{prop:pstructure}.
\end{prop}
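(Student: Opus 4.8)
The plan is to transcribe, almost verbatim, the proof of Proposition~\ref{prop:p_sgn} into the pseudoisotopy setting, replacing the moduli spaces $\M_{k,l+1}(\beta)$ by the family moduli spaces $\Mt_{k,l+1}(\beta)$ and the evaluation maps $evi_j,evb_j$ by $\evit_j,\evbt_j$. First I would introduce, for each cyclic $\sigma\in\Z/k\Z$, the fiberwise relabeling diffeomorphism
\[
\widetilde\varphi_\sigma:\Mt_{k,l+1}(\beta)\xrightarrow{\ \sim\ }\Mt_{k,l+1}(\beta),\qquad \widetilde\varphi_\sigma(t,[u,(z_j),(w_j)])=(t,[u,(z_{\sigma(j)}),(w_j)]),
\]
which covers the identity on $I$ and restricts on each fiber $\{t\}\times\M_{k+1,l}(\beta;J_t)$ to the relabeling diffeomorphism $\varphi_\sigma$ of Lemma~\ref{lm:pstar}. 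Hence $\widetilde\varphi_\sigma$ changes orientation by $(-1)^{sgn(\sigma)}$, exactly as in the non-parametrized case, and it intertwines the evaluation maps: $\evit_0\circ\widetilde\varphi_\sigma=\evit_0$ and $\evbt_j\circ\widetilde\varphi_\sigma=\evbt_{\sigma(j)}$, while leaving the interior evaluation maps untouched.

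Then I would run the same sign computation as in the proof of Proposition~\ref{prop:p_sgn}. Using that $\varepsilon_p(\at)=\sum_j(n+j)\V\at_j\V$ depends only on the shifted degrees of the $\at_j$, Lemma~\ref{lm:epsilonsigma} applies verbatim and gives $\varepsilon_p(\at^\sigma)-\varepsilon_p(\at)=\sum_{i<j,\ \sigma(i)>\sigma(j)}(|\at_{\sigma(i)}|-|\at_{\sigma(j)}|)$. Reordering the wedge product $\bigwedge_j\evbt_j^*\at_{\sigma(j)}$ into $\evbt_{\sigma^{-1}}^*\at$ contributes the sign $\sum_{i<j,\ \sigma(i)>\sigma(j)}|\at_{\sigma(i)}||\at_{\sigma(j)}|$ as in~\eqref{eq:shortsig}, and rewriting $(\evit_0)_*$ of the resulting form via the change of variables $\widetilde\varphi_\sigma$ contributes the orientation sign $sgn(\sigma)$. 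Collecting the three contributions and using the definition~\eqref{eq:ssig} of $s_\sigma^{[1]}$ yields $\ptkl^\beta(\at^\sigma;\gt)=(-1)^{s_\sigma^{[1]}(\at)}\ptkl^\beta(\at;\gt)$; multiplying by $T^\beta$ and summing over $\beta\in\sly$ gives the stated identity.

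The only point that genuinely requires attention beyond formal substitution is the orientation behavior of $\widetilde\varphi_\sigma$, and I expect this to be the main (minor) obstacle: one must check that the orientation on $\Mt_{k,l+1}(\beta)$ induced by the family construction and the conventions of~\cite{ST4} is locally compatible with the product structure $I\times\M_{k+1,l}(\beta;J_t)$, so that the fiberwise permutation changes orientation by the same factor $(-1)^{sgn(\sigma)}$ as in Lemma~\ref{lm:pstar}. This is immediate because $\widetilde\varphi_\sigma$ is $\mathrm{id}_I$ times the fiberwise diffeomorphism and therefore does not affect the interval direction. Every other ingredient --- the push-forward and push-pull manipulations, the wedge reorderings, and the degree bookkeeping --- is insensitive to the extra interval factor and goes through exactly as in the proof of Proposition~\ref{prop:p_sgn}.
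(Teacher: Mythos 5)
Your proposal is correct and follows essentially the same route as the paper, which simply declares that the proof of Proposition~\ref{prop:p_sgn} carries over verbatim with $\pt,\evit_j,\evbt_j$ in place of $\p, evi_j, evb_j$; your explicit check that the relabeling diffeomorphism is $\mathrm{id}_I$ times the fiberwise one (so the orientation sign $(-1)^{sgn(\sigma)}$ and the $\varepsilon_p$, wedge-reordering bookkeeping are unchanged) is exactly the point being implicitly invoked there.
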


\begin{prop}[Symmetry of interior input]\label{prop:intsym}
For any permutation $\sigma\in S_k$ and any $\at=(\at_1,\ldots,\at_k)$ and $\gt=(\gt_1,\ldots,\gt_l)$, we have
\[
\ptkl(\at;\gt)=(-1)^{s_\sigma(\gt)}\pkl(\at;\gt^\sigma).
\]
\end{prop}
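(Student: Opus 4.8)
The plan is to transcribe the proof of Proposition~\ref{prop:p_int_sgn} into the pseudoisotopy setting, since the only new feature is the passive $I$-direction. First I would record the two facts that drive that argument in the present context. The sign $\varepsilon_p(\at)$ depends only on the list of boundary inputs $\at$ and is insensitive to $\gt$, exactly as before. Next, for the permutation $\sigma$ of the labels $1,\dots,l$ of the non-zero interior marked points appearing in the statement, let $\psi_\sigma\colon\Mt_{k,l+1}(\beta)\to\Mt_{k,l+1}(\beta)$ be the relabeling automorphism that fixes the $I$-coordinate and the marked point $w_0$ and permutes $(w_1,\dots,w_l)$, normalized so that $\evit_j\circ\psi_\sigma=\evit_{\sigma(j)}$ for $j=1,\dots,l$, while $\evit_0\circ\psi_\sigma=\evit_0$ and $\evbt_j\circ\psi_\sigma=\evbt_j$ for $j=1,\dots,k$. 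On each fiber $\M_{k+1,l}(\beta;J_t)$ this is a relabeling of interior marked points, hence orientation-preserving (as recorded in Section~\ref{ssec:qemptyset}, following \cite{ST1}); since $\psi_\sigma$ covers $\mathrm{id}_I$, it is an orientation-preserving automorphism of $\Mt_{k,l+1}(\beta)$ for the orientation the latter carries as a family over $I$.

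With these in hand, the computation mirrors the one in Proposition~\ref{prop:p_int_sgn}: starting from the definition of $\ptkl^\beta(\at;\gt^\sigma)$, insert $\psi_\sigma^*$ inside the push-forward along $\evit_0$ — legitimate because $\psi_\sigma$ is an orientation-preserving automorphism with $\evit_0\circ\psi_\sigma=\evit_0$, so the push-forward and pull-back cancel by Proposition~\ref{prop:proppp}. Pulling back reorders $\bigwedge_{j}\evit_j^*\gt_{\sigma(j)}$ into $(-1)^{s_\sigma(\gt)}\bigwedge_{j}\evit_j^*\gt_j$ via the reordering rule associated to~\eqref{eq:sgnsigmagamma}, leaves $\bigwedge_j\evbt_j^*\at_j$ untouched, and does not affect $\varepsilon_p(\at)$. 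This yields $\ptkl^\beta(\at;\gt^\sigma)=(-1)^{s_\sigma(\gt)}\ptkl^\beta(\at;\gt)$; multiplying by $T^\beta$ and summing over $\beta\in\sly$ gives the proposition.

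The only step requiring genuine verification, as opposed to literal transcription, is the orientation claim for $\psi_\sigma$: one checks it fiberwise from the orientation-preserving action of the symmetric group on interior labels, and notes that a fiberwise orientation-preserving map covering $\mathrm{id}_I$ preserves the family orientation of $\Mt_{k,l+1}(\beta)$. All remaining sign bookkeeping is identical to the non-family argument, so for those details I would simply refer to the proof of Proposition~\ref{prop:p_int_sgn} (and, as the paper already notes, the statement is the $\pt$-analogue of Proposition~\ref{prop:p_int_sgn} with $\pt,\evit_j,\evbt_j$ in place of $\p,evi_j,evb_j$).
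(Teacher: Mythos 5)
Your proposal is correct and follows essentially the same route as the paper: the paper's proof of this proposition is precisely the observation that the argument of Proposition~\ref{prop:p_int_sgn} carries over verbatim with $\pt,\evit_j,\evbt_j$ in place of $\p,evi_j,evb_j$, resting on the facts you isolate, namely that $\varepsilon_p$ is independent of the interior inputs and that relabeling interior marked points is an orientation-preserving automorphism commuting with $\evit_0$. Your extra fiberwise justification of the orientation claim over $I$ is a harmless elaboration of what the paper leaves implicit.
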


\subsubsection{Energy zero}
\begin{prop}
For any $\at=(\at_1,\ldots,\at_k)$, $\gt=(\gt_1,\ldots,\gt_l)$, we have
	\[
	\pt_{k,l}^{\beta_0}(\at_1,\ldots,\at_k;\gt_1,\ldots,\gt_l)=
	\begin{cases}
	0,&(k,l)\ne (1,0),\\
	(-1)^{(n+1)\V\at_1\V}i_*\at_1, &(k,l)=(1,0).
	\end{cases}
	\]
\end{prop}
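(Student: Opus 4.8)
The plan is to imitate verbatim the proof of the Energy Zero property for the $\p$-operators, Proposition~\ref{prop:p_zero}, replacing $\M$, $evi_j$, $evb_j$ throughout by $\Mt$, $\evit_j$, $\evbt_j$. The only input that needs to be re-examined is the relative-dimension bookkeeping, which now involves the extra interval factor.

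First I would record the structural fact about degree-zero maps: a $J_t$-holomorphic stable map of class $\beta_0$ is constant for every $t\in I$, so the almost complex structures $J_t$ play no role, and $\Mt_{k,l+1}(\beta_0)\cong I\times\M_{k,l+1}(\beta_0)$ with the orientation induced by the product. Moreover all evaluation maps factor through the same point of the domain, so $\evit_0=\evit_j=\tilde\imath\circ\evbt_m$ for all relevant $j,m$, where $\tilde\imath:=\id_I\times i\colon I\times L\hookrightarrow I\times X$. This is exactly the analogue of the identity $evi_0=evi_j=i\circ evb_m$ used in the proof of Proposition~\ref{prop:p_zero}.

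Next, for $k\ge 1$, I would write
\[
\pt_{k,l}^{\beta_0}(\at;\gt)
=(-1)^{\varepsilon_p(\at)}(\evit_0)_*\Big(\evit_0^*\big(\textstyle\bigwedge_j\gt_j\big)\wedge \evbt_1^*\big(\bigwedge_j\at_j\big)\Big)
=(-1)^{\varepsilon_p(\at)}\,\tilde\imath_*\Big(\textstyle\bigwedge_j\tilde\imath^*\gt_j\wedge\bigwedge_j\at_j\wedge(\evbt_1)_*1\Big),
\]
using the push-pull formula of Proposition~\ref{prop:proppp}\eqref{prop:pushpull} to pull the pulled-back forms out of the pushforward along $\evbt_1$. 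For this to be nonzero one needs $\rdim(\evbt_1)=0$. Since $\dim\Mt_{k,l+1}(\beta_0)=\dim\M_{k,l+1}(\beta_0)+1$ while $\dim(I\times L)=n+1$, the relative dimension is unchanged from the non-family case, $\rdim(\evbt_1)=\rdim(evb_1)=k+2l-1$, which vanishes precisely for $(k,l)=(1,0)$. In that case $\Mt_{1,1}(\beta_0)\cong I\times L$, the map $\evbt_1$ is this identification, $\evit_0=\tilde\imath\circ\evbt_1$, and $\varepsilon_p(\at_1)=(n+1)\V\at_1\V$, giving $\pt_{1,0}^{\beta_0}(\at_1)=(-1)^{(n+1)\V\at_1\V}i_*\at_1$ as claimed.

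For $k=0$ the pushforward factors through the map $\evt\colon\Mt_{0,l+1}(\beta_0)\to I\times L$ sending each constant map to its value; nonvanishing of $(\evt)_*1$ would require $\rdim(\evt)=2l-1=0$, which is impossible for integer $l$, so $\pt_{0,l}^{\beta_0}\equiv 0$. I do not expect any genuine obstacle here: the argument is a mechanical transcription of Proposition~\ref{prop:p_zero}, and the single point requiring care—that adjoining the interval $I$ changes source and target dimensions equally and hence leaves every relative dimension unchanged—is immediate.
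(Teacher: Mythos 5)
Your proposal is correct and follows the paper's own route: the paper likewise observes that $\rdim(\evit_j)=\rdim(evi_j)$ and $\rdim(\evbt_j)=\rdim(evb_j)$ (since the interval factor raises source and target dimensions equally) and then runs the proof of Proposition~\ref{prop:p_zero} verbatim with $\pt$, $\evit_j$, $\evbt_j$ in place of $\p$, $evi_j$, $evb_j$. Your spelled-out dimension counts $k+2l-1=0$ and $2l-1=0$ and the identification $\Mt_{1,1}(\beta_0)\cong I\times L$ are exactly the content of that transcription.
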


\begin{proof}
Note that $\rdim(evi_j)=\rdim(\evit_j)$ and $\rdim(evb_j)=\rdim(\evbt_j)$ for any $j$. Therefore the proof of Proposition~\ref{prop:p_zero} is valid verbatim in our case, with $\p$ replaced by $\pt$ everywhere.

\end{proof}

\subsubsection{Fundamental class}
\leavevmode
Denote by $1_{I\times X}\in A^0(I\times X)$ the constant function with value $1$.
\begin{prop}
For all  $\at=(\at_1,\ldots,\at_k)$, $\gt=(\gt_1,\ldots,\gt_{l-1})$, we have
\[
\pkl(\at;1_{I\times X},\gt_1,\ldots,\gt_{l-1})=0.
\]
\end{prop}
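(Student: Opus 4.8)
The plan is to run the proof of Proposition~\ref{cl:pfund} essentially verbatim, replacing $\M$, $evi_j$, $evb_j$, and $\p$ throughout by their pseudoisotopy counterparts $\Mt$, $\evit_j$, $\evbt_j$, and $\pt$ --- the same reduction already used for the Energy zero proposition above. Concretely, I would first introduce the forgetful map $\tilde\pi$ on the relevant family moduli space that drops the interior marked point carrying the input $1_{I\times X}$, stabilizes, and relabels the remaining interior points. Because $\Mt_{k,l+1}(\beta)$ fibers over $I$ with fiber $\M_{k+1,l}(\beta;J_t)$, this $\tilde\pi$ is the fiberwise version of the usual stabilized forgetful map; in particular it is well defined exactly when its non-family analog $\pi$ is, and $\rdim\tilde\pi=\rdim\pi<0$, since adjoining the interval factor to both source and target does not change the relative dimension.

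The second step is the degree count. As in the non-family case, the form $\bigwedge_{j}\evit_j^*\gt_j\wedge\bigwedge_{j}\evbt_j^*\at_j$ built from the remaining inputs is the $\tilde\pi$-pullback of the corresponding form on the smaller family moduli space --- here one uses that $1_{I\times X}$ is the constant function, so its pullback by the forgotten evaluation map is the constant $1$ and contributes nothing --- while $\evit_0$ factors through $\tilde\pi$. Push-pull, Proposition~\ref{prop:proppp}, then rewrites $(\evit_0)_*$ of this form as a push-forward from the smaller space of a form wedged with $(\tilde\pi)_*1$; since $\rdim\tilde\pi<0$, $(\tilde\pi)_*1$ is a form of negative degree, hence zero. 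Thus $\pt_{k,l}(\at;1_{I\times X},\gt_1,\dots,\gt_{l-1})=0$ in every case where $\tilde\pi$ exists.

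Finally I would dispose of the single degenerate configuration in which $\tilde\pi$ fails to exist, i.e.\ the pseudoisotopy analog of the case $(k,l,\beta)=(0,0,\beta_0)$ excluded in Proposition~\ref{cl:pfund}: there the relevant operator is $\pt_{0,1}^{\beta_0}$, which vanishes identically by the pseudoisotopy Energy zero property proved earlier in this subsection, so nothing more is needed. I do not anticipate a genuine obstacle here: the argument is push-pull together with a degree count rather than Stokes, so the corner structure of the family moduli spaces is irrelevant, and the only points that need (routine) checking are that the forgetful map's relative dimension and its range of definition are unaffected by the passage from $\M$ to the family space $\Mt$ --- both immediate from the fiberwise-over-$I$ description of $\Mt_{k,l+1}(\beta)$.
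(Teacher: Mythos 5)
Your proposal is correct and is essentially the paper's own argument: the paper simply declares the proof ``similar to that of Proposition~\ref{cl:pfund}'', i.e.\ the fiberwise forgetful map plus the push--pull/relative-dimension count, with the single exceptional case handled by the pseudoisotopy energy zero property, exactly as you spell out. No gaps.
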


The proof is similar to that of Proposition~\ref{cl:pfund}.

\subsubsection{Divisor}
\begin{prop}
Let $\at=(\at_1,\ldots,\at_k)$ and $\gt=(\gt_1,\ldots,\gt_{l-1})$.
Assume $\gt'\in A^2(I\times X, I\times L)\otimes R$ and $d\gt' = 0$. Then
	\begin{equation}
	\ptkl^{\beta}(\lstt;\gt'\otimes \gt)=
	\left(\int_\beta\gt'\right) \cdot\pt_{k,l-1}^{\beta}(\at;\gt).
	\end{equation}
\end{prop}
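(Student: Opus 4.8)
The plan is to mirror the proof of \Cref{cl:pdiv}, which itself follows \cite[Proposition~3.9]{ST1}, using $\evit_0$ in place of $evi_0$ and the family moduli spaces $\Mt_{k,l+1}(\beta)$ in place of $\M_{k,l+1}(\beta)$. The interval factor $I$ should be inert throughout: the forgetful map and all the evaluation maps commute with the projection to $I$, so no orientation or degree bookkeeping changes. Concretely, I would carry out the following steps.

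First I would place $\gt'$ on the interior marked point $w_1$ and introduce the forgetful map $\pi\colon\Mt_{k,l+1}(\beta)\to\Mt_{k,l}(\beta)$ that forgets $w_1$, relabels $w_2,\dots,w_l$ as $w_1,\dots,w_{l-1}$, and stabilizes. As in \cite[Proposition~3.9]{ST1}, under the standing regularity assumptions $\pi$ is a relatively oriented proper submersion whenever it is defined, and the only case in which it is not defined is $(k,l,\beta)=(0,1,\beta_0)$. In that case both sides of the claimed identity vanish: the left-hand side is $\pt_{0,1}^{\beta_0}(\;;\gt')=0$ by the Energy Zero property for $\pt$, while the right-hand side is $\bigl(\int_{\beta_0}\gt'\bigr)\cdot\pt_{0,0}^{\beta_0}=0$ since $\pt_{0,0}^{\beta_0}=0$ and $\int_{\beta_0}\gt'=0$. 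So from here on I would assume $\pi$ is defined.

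Next, letting $\evit_j^{(l)}$ and $\evbt_j^{(l)}$ denote the evaluation maps of $\Mt_{k,l}(\beta)$, I would use that all evaluation maps of $\Mt_{k,l+1}(\beta)$ except $\evit_1$ factor through $\pi$: namely $\evit_0=\evit_0^{(l)}\circ\pi$, $\evit_m=\evit_{m-1}^{(l)}\circ\pi$ for $2\le m\le l$, and $\evbt_j=\evbt_j^{(l)}\circ\pi$ for $1\le j\le k$. Writing $\xi':=\bigwedge_{j=1}^{l-1}(\evit_j^{(l)})^*\gt_j\wedge\bigwedge_{j=1}^k(\evbt_j^{(l)})^*\at_j$, the integrand defining $\pt_{k,l}^\beta(\at;\gt'\otimes\gt)$ equals $\evit_1^*\gt'\wedge\pi^*\xi'=\pi^*\xi'\wedge\evit_1^*\gt'$, the last equality carrying no sign since $|\gt'|=2$ is even. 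Applying the functoriality of push-forward and the projection formula (\Cref{prop:proppp}, parts~\eqref{prop:pushcomp} and~\eqref{prop:pushpull}), I would rewrite $(\evit_0)_*\bigl(\pi^*\xi'\wedge\evit_1^*\gt'\bigr)=(\evit_0^{(l)})_*\bigl(\xi'\wedge\pi_*(\evit_1^*\gt')\bigr)$.

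The crux — and the step I expect to be the main obstacle — is the identity $\pi_*(\evit_1^*\gt')=\bigl(\int_\beta\gt'\bigr)\cdot 1$ in $A^0(\Mt_{k,l}(\beta);R)$. This is exactly the divisor computation of \cite[Proposition~3.9]{ST1}: the fiber of $\pi$ over a generic point $(t,C)$ is the domain curve of $C$, generically a disk $D$ with $u(\partial D)\subset L$, and $\evit_1$ restricted to the fiber is $w_1\mapsto(t,u(w_1))$; since $\gt'$ is closed and $i^*\gt'=0$, Stokes' theorem gives $\int_D u^*\gt'=\langle\gt',u_*[D,\partial D]\rangle=\int_\beta\gt'$, independent of the point because contracting or adding unstable sphere and disk bubbles does not affect the relative homology class, and as $|\gt'|$ is even, fiber integration yields the asserted formula with no sign. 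The genuinely delicate points are the properness and submersivity of $\pi$ and the choice of its relative orientation on orbifolds with corners; these should go through verbatim from \cite{ST1}, with the interval factor simply carried along. Finally, since $\varepsilon_p(\at)$ depends only on the boundary inputs, assembling the pieces gives $\pt_{k,l}^\beta(\at;\gt'\otimes\gt)=\bigl(\int_\beta\gt'\bigr)\cdot(-1)^{\varepsilon_p(\at)}(\evit_0^{(l)})_*\xi'=\bigl(\int_\beta\gt'\bigr)\cdot\pt_{k,l-1}^\beta(\at;\gt)$, as desired.
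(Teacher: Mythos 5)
Your proposal is correct and follows essentially the same route as the paper, which proves this by repeating the argument of Proposition~\ref{cl:pdiv} (i.e.\ \cite[Proposition 3.9]{ST1}) with $\Mt$, $\evit_j$, $\evbt_j$, $\pt$ in place of $\M$, $evi_j$, $evb_j$, $\p$: forget the interior point carrying $\gt'$, apply the push--pull formula, and evaluate the fiber integral $\pi_*(\evit_1^*\gt')=\int_\beta\gt'$ via Stokes and the vanishing of $\gt'$ on $I\times L$. Your treatment of the exceptional case $(k,l,\beta)=(0,1,\beta_0)$ via the energy-zero property matches the paper's handling of the analogous degenerate cases.
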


As with~\cite[Proposition 4.16]{ST1}, the proof of Proposition~\ref{cl:pdiv} holds with $\Mt, \evit_j, \evbt_j,$ and $\pt,$ instead of $\M, evi_j, evb_j,$ and $\p$.

\subsubsection{Top degree}

In this section, we use the notation introduced in Section~\ref{sssec:topdeg}.

\begin{prop}
Suppose $(k,l+1,\beta)\ne (1,1,\beta_0)$. Then, for all lists $\at=(\at_1,\ldots,\at_k)$, $\gt=(\gt_1,\ldots,\gt_l)$, we have $\ptkl^\beta(\at;\gt)\in\A^{<2n+1}(I\times X)\otimes R$.
\end{prop}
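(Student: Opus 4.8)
The plan is to mimic the proof of Proposition~\ref{prop:p_no_top_deg}, substituting $\Mt$ for $\M$, the evaluation maps $\evit_j,\evbt_j$ for $evi_j,evb_j$, and the ambient dimension $2n=\dim X$ by $2n+1=\dim(I\times X)$. First I would clear away the degenerate cases exactly as there: by the Energy Zero property for $\pt$ established above, together with the convention $\pt_{0,0}^{\beta_0}:=0$, the operator $\pt_{k,l}^{\beta_0}$ is identically zero unless $(k,l)=(1,0)$, and the case $(k,l+1,\beta)=(1,1,\beta_0)$ is excluded by hypothesis. So in all remaining cases $\beta\neq\beta_0$, which ensures that the forgetful map $\pi\colon\Mt_{k,l+1}(\beta)\to\Mt_{k,l}(\beta)$ dropping the interior marked point $w_0$ is well defined (this works just as in the non-family setting, cf.~\cite{ST1}).

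Next, since $\dim(I\times X)=2n+1$ and it suffices to treat each $\deg^d$-homogeneous summand on its own, I would assume $\at$ and $\gt$ are $\deg^d$-homogeneous and reduce to proving that the component of $\ptkl^\beta(\at;\gt)$ of $\deg^d$ equal to $2n+1$ vanishes. Write
\[
\xi:=\bigwedge_{j=1}^l\evit_j^*\gt_j\wedge\bigwedge_{j=1}^k\evbt_j^*\at_j\in A^*(\Mt_{k,l+1}(\beta)),
\]
so that $\ptkl^\beta(\at;\gt)=(-1)^{\varepsilon_p(\at)}(\evit_0)_*\xi$. Since $\dim\Mt_{k,l+1}(\beta)=\dim\M_{k,l+1}(\beta)+1$, we get $\rdim(\evit_0)=2n-\dim\M_{k,l+1}(\beta)$, so the assumption $\deg^d(\ptkl^\beta(\at;\gt))=2n+1$ forces $\deg^d(\xi)=\dim\Mt_{k,l+1}(\beta)$.

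The conclusion then follows from the forgetful map: none of the forms entering $\xi$ depends on the marked point $w_0$, so $\xi=\pi^*\xi'$ with $\xi'$ the analogous product of pullbacks of the $\gt_j$ and $\at_j$ on $\Mt_{k,l}(\beta)$; since forgetting an interior marked point lowers dimension by two, $\deg^d(\xi')=\deg^d(\xi)=\dim\Mt_{k,l+1}(\beta)=\dim\Mt_{k,l}(\beta)+2>\dim\Mt_{k,l}(\beta)$, whence $\xi'=0$, so $\xi=0$ and $\ptkl^\beta(\at;\gt)=0$. I expect no real obstacle: the content is identical to Proposition~\ref{prop:p_no_top_deg}, and the only things needing (routine) attention are the bookkeeping of the low-energy/low-valence exceptions and checking that the family forgetful map behaves as in the non-family case under the standing regularity assumptions on $\Mt_{k,l+1}(\beta)$.
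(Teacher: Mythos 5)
Your proposal is correct and follows essentially the same route as the paper: reduce to the dimension count $\deg^d(\xi)=\dim\Mt_{k,l+1}(\beta)$ using $\rdim(\evit_0)=(2n+1)-\dim\Mt_{k,l+1}(\beta)$, then kill $\xi$ via the forgetful map dropping $w_0$, exactly as in Proposition~\ref{prop:p_no_top_deg}. The only (harmless) difference is bookkeeping of the degenerate cases: you dispose of all $\beta=\beta_0$ contributions at once via the energy-zero property, whereas the paper simply invokes the non-family proof, which singles out $(2,1,\beta_0)$ before applying the forgetful-map argument.
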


\begin{proof}
Follow the proof of Proposition~\ref{prop:p_no_top_deg} with $\p$ replaced by $\pt$ and $evi_0$ by $\evit_0$. In this case, $\rdim \evit_0=\dim\Mt_{k,l+1}(\beta)-2n-1$, so the assumption $\deg^d (\ptkl^\beta(\lstt;\etat))=2n+1$ is what implies $\deg^d(\xi)=\dim\Mt_{k,l+1}(\beta)$. The rest of the proof is then valid.

\end{proof}

\subsubsection{Bulk and boundary deformation}

Let $\gt \in \mI_R A^*(I\times X;R)$ such that $|\gt|=2$ and $d\gt=0$. Let $\bt\in \mI_R A^*(I\times L;R)$ such that $|\bt|=1$.
For $k,l\ge 0$ and $\at=(\at_1,\ldots,\at_k)\in \big(A^*(I\times L;R)\big)^{\otimes k}$, $\etat=(\etat_1,\ldots,\etat_l)\in \big(A^*(I\times X;R)\big)^{\otimes l}$, define the deformed maps by
\begin{equation*}
\ptbg_{k,l}(\at;\etat):=\sum_{s,t\ge 0}
\sum_{\sum_{j=0}^{k-1}i_j=s}
\frac{1}{t!}
\ptbg_{k+s,l+t}(\bt^{\otimes i_0}\otimes \at_1\otimes \bt^{\otimes i_1}\otimes \cdots \otimes \bt^{\otimes i_{k-1}}\otimes \at_k ;\etat\otimes \gt^{\otimes t}).
\end{equation*}
Again, these deformed operators satisfy a structure equation similar to that satisfied by $\pt$, that follows from Propositions~\ref{prop:ptstr} and~\ref{prop:intsym}:
\begin{cor}
Consider lists $\at=(\at_1,\ldots,\at_k)$, $\at_j\in A^*(I\times L;R)$, and $\etat=(\etat_1,\ldots,\etat_l)$, $\etat_j\in A^*(I\times X;R)$.
Then
\begin{multline*}
d\ptbg_{k,l}(\at;\etat)
=
\ptbg_{k,l}(\at; d(\etat))+ \\
\qquad+
\sum_{\substack{I\sqcup J=[l]\\ \sigma\in\Z/k\Z}}
(-1)^{s_\sigma^{[1]}(\at)+|\etat| +s_{\sigma_{I\sqcup J}}(\etat)+(n+1)(|\etat_{J}|+1)}
\ptbg_{k_1,|I|}(\qtbg_{k_2,|J|} ((\at^\sigma)_{(1)};\etat_J) \otimes(\at^\sigma)_{(2)}; \etat_{I})+\\
+
\delta_{k,0}\cdot(-1)^{|\etat|}\qt^{\gt}_{\emptyset,l+1}(\etat\otimes i_*1_L).\notag
\end{multline*}
\end{cor}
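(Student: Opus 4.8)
The plan is to follow, essentially line for line, the derivation of the structure equation~\eqref{eq:pbg_rel} for $\pbg$ from Proposition~\ref{prop:pstructure}, now feeding in the pseudoisotopy structure equation of Proposition~\ref{prop:ptstr} in place of Proposition~\ref{prop:pstructure}, and the symmetry properties of $\pt$ (Propositions~\ref{cl:bdsym} and~\ref{prop:intsym}) in place of those of $\p$ (Proposition~\ref{prop:p_sgn}). All of the analytic content — Stokes' theorem, the gluing identifications, and the sign lemmas — is already packaged into Proposition~\ref{prop:ptstr}, and, as observed in its proof, the extra interval direction and the modified gluing sign cancel, so every sign that appears will be identical to the one in~\eqref{eq:pbg_rel}. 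What remains is a purely combinatorial resummation over insertions of $\bt$ and powers of $\gt$.

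First I would unfold $\ptbg_{k,l}(\at;\etat)$ into its defining sum over the ways to intersperse $s$ copies of $\bt$ among $\at_1,\dots,\at_k$ and to append $\gt^{\otimes t}$ to $\etat$, weighted by $1/t!$; for each fixed total energy this sum is finite, so $d$ commutes with it, and I would apply Proposition~\ref{prop:ptstr} to each summand $\pt_{k+s,l+t}(\,\cdots\,)$. This produces three types of terms. The term differentiating the interior list $\etat\otimes\gt^{\otimes t}$ contributes only through the $d\etat_j$, since $d\gt=0$; resumming the $\bt$- and $\gt$-insertions reassembles $\ptbg_{k,l}(\at;d(\etat))$. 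The term carrying $\delta_{k+s,0}$ forces $s=k=0$, so no $\bt$ or $\at$ is present; resumming $\gt^{\otimes t}$ against the $1/t!$ weights turns $\qt_{\emptyset,\bullet+1}(\,\cdot\otimes i_*1_L\,)$ into $\qt^{\gt}_{\emptyset,l+1}(\etat\otimes i_*1_L)$, and this term survives only for $k=0$, matching the last line of the claim.

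The remaining, main term is the composition $\pt_{k_1,|I|}(\qt_{k_2,|J|}(\,\cdot\,;\,\cdot\,)\otimes\,\cdot\,;\,\cdot\,)$, summed over partitions $I'\sqcup J'=[l+t]$, over degree splittings $\beta_1+\beta_2=\beta$, and over cyclic permutations of the full $(k+s)$-element boundary list that includes the $\bt$'s; note that the $d\bt$ and $d\at_j$ contributions of Proposition~\ref{prop:ptstr} also sit here, inside the $\qt$-composition via $\qt_{1,0}^{\beta_0}(\,\cdot\,)=d(\,\cdot\,)$. The resummation is the heart of the argument: the interior inputs $\gt$ landing in $J'$ merge with $\etat_J$ and the weight $1/t!$ to rebuild the inner operator $\qtbg_{k_2,|J|}$ via its defining formula, those landing in $I'$ merge with $\etat_I$ to rebuild the outer $\ptbg_{k_1,|I|}$, and the $\bt$'s distributed over the two disk components are absorbed into $\qtbg$ and $\ptbg$ respectively. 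The cyclic permutation of the full boundary list descends, after realigning with the cyclic symmetry of $\pt$ (Proposition~\ref{cl:bdsym}) and reordering the interior inputs with Proposition~\ref{prop:intsym}, to a cyclic permutation $\sigma\in\Z/k\Z$ of $\at$ alone; the multinomial coefficients collapse by the same identity used in the $A_\infty$ relations~\eqref{eq:ainfty}; and the sign $(-1)^{s_\sigma^{[1]}(\at)+|\etat|+s_{\sigma_{I\sqcup J}}(\etat)+(n+1)(|\etat_J|+1)}$ emerges verbatim. The main obstacle is precisely this bookkeeping — keeping straight the two independent layers of insertions ($\bt$ split across the two disk components, $\gt$ split between $I'$ and $J'$) together with the two layers of symmetry, while confirming that no extra signs are generated — but since it is structurally identical to the computation already carried out for $\pbg$, the result is indeed immediate.
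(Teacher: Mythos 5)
Your proposal is correct and takes essentially the same route as the paper, which derives the corollary directly from Proposition~\ref{prop:ptstr} together with the symmetry properties of $\pt$ (Propositions~\ref{cl:bdsym} and~\ref{prop:intsym}), treating the unfold--apply--resum bookkeeping as the same argument already carried out for $\pbg$. Your expanded account of that bookkeeping (the $\bt$/$\gt$ redistributions, the collapse of the binomial weights, and the vanishing of extra Koszul signs because $\V\bt\V$ and $|\gt|$ are even) is exactly what the paper leaves implicit.
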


\subsection{Open-closed maps over pseudoisotopies}\label{ssec:ocps}

The definition of Hochschild and cyclic chain complexes and their various
versions, as discussed in \Cref{sec:Hochschild complexes}, can be extended to
$A_{\infty}$-algebras over a coefficient ring which is a differential graded-commutative
$k$-algebra. See \cite{GitermanSolomon2} for details.
The $\qt$ operators defined in~\eqref{eq:qt} endow $\mC:=A^*(I\times L;R)$ with the structure of an $A_\infty$-algebra over the dga $\mR = A^*(I;R)$, as discussed in~\cite{ST1}.
Using the notions above, one can immediately deduce analogs of Theorems 1-5 for the map
$\ptg$.
To avoid a lengthy digression into the technical machinery required to handle a base dga, we omit the details here.

The primary application of such constructions would be in proving invariance statements. For example, if $\gamma$ and $\gamma'$ represent the same cohomology class in $X$, we can fix $J_t\equiv J$, take $\xi$ such that $d\xi=\gamma'-\gamma$, and set $\gt: = \gamma+t(\gamma' - \gamma)+dt\wedge \xi$. Then, by Proposition~\ref{lm:pseudo}, the maps $\tilde{\p}^{\tilde{\gamma}}$ give a pseudo-isotopy between $\pg$ and $\p^{\gamma'}$, and the same is true for the induced open-closed maps on chain level. If we show that the open-closed maps on homology are pseudo-isotopy invariant, this will prove invariance of the construction under a variation of $\gamma$ within cohomology class. Note that in this case the regularity assumptions for $\Mt_{k,l+1}(\beta)$ hold if and only if they hold for $\M_{k,l+1}(\beta)$. Other invariance questions include the variation of $\omega$ and of $J$. Cf. Theorems~1 and~4 in~\cite{ST2}. We do not verify here whether or not the open-closed maps actually are invariant under pseudo-isotopy; a very similar but not identical calculation is done in Section 3.1 of~\cite{T19}.

\bibliography{../../bibliography_exp}
\bibliographystyle{../../amsabbrvcnobysame}

\end{document}